\documentclass{amsart}

\usepackage{amsmath,amssymb,amsthm,verbatim}
\usepackage{mathtools}
\usepackage{hyperref}
\usepackage{graphicx}
\usepackage{xcolor}
\usepackage{mathrsfs}
\usepackage[shortalphabetic]{amsrefs}
\usepackage[margin=3.5cm]{geometry}

\hypersetup{colorlinks=false}

\newtheorem{thm}{Theorem}[section]
\newtheorem{theorem}[thm]{Theorem}
\newtheorem{prop}[thm]{Proposition}
\newtheorem{proposition}[thm]{Proposition}
\newtheorem{lem}[thm]{Lemma}
\newtheorem{lemma}[thm]{Lemma}
\newtheorem{cor}[thm]{Corollary}
\newtheorem{corollary}[thm]{Corollary}
\newtheorem{defn}[thm]{Definition}
\newtheorem{definition}[thm]{Definition}
\newtheorem{rem}[thm]{Remark}
\newtheorem{remark}[thm]{Remark}
\newtheorem{claim}[thm]{Claim}

\newcommand{\Kosc}{K_{\mathrm{osc}}}
\newcommand{\R}{\mathbb{R}}
\newcommand{\Sph}{\mathbb{S}}
\renewcommand{\S}{\Sph}

\newcommand{\N}{\mathbb{N}}
\newcommand{\Z}{\mathbb{Z}}
\newcommand{\C}{\mathbb{C}}
\newcommand{\eps}{\varepsilon}
\newcommand{\dd}{\mathrm{d}}
\newcommand{\p}{\partial}
\newcommand{\ip}[2]{\langle #1,#2\rangle}

\newcommand{\Ds}{\p_s}

\newcommand{\Rot}{\mathsf{R}}
\newcommand{\K}{\mathcal{K}}

\allowdisplaybreaks[2]

\title[$L^2(\dd s)$-gradient flow for $E_m$]{On the generalised ideal flow of closed planar curves}
\author[J. McCoy, G. Wheeler]{James McCoy\and Glen Wheeler}

\thanks{Financial support from Discovery Projects DP180100431, DP250103952, DP250101080 and Future Fellowship FT250100880 of the Australian Research Council is gratefully acknowledged.  The first author also thanks the Chinese Academy of Sciences President's International Fellowship Initiative Visiting Fellowship scheme, whose grant 2024PVA0042 partially supported this research.  Notwithstanding several subsequent years to complete, the authors would like to dedicate this work to Prof Graham Williams on the occasion of his appointment as an Emeritus Professor at the University of Wollongong.  They each thank him for his many years of mentorship.}
\address{Department of Mathematical and Geospatial Sciences, School of Science, Royal Melbourne Institute of Technology, 124 La Trobe Street, Melbourne, Victoria 3000, Australia}
\email{james.mccoy@rmit.edu.au}
\address{Institute for Mathematics and its Applications, School of Mathematics and Applied Statistics, University of Wollongong, Northfields Ave, Wollongong, NSW 2500, Australia}
\email{glenw@uow.edu.au}
\subjclass[2000]{53C44, 35K25 \and 58J35}

\subjclass[2020]{53C44, 35K25, 58J35}

\begin{document}

\begin{abstract}
For each integer $m\ge0$ we study the \emph{$m$-ideal energy}
\[
E_m[\gamma]:=\frac12\int_\gamma k_{s^m}^2\,ds
\]
on closed immersed planar curves, where $k$ is signed curvature and $s$ is arclength; $k^2_{s^m} := (k_{s^m})^2$.
The $m$-ideal energies contain Euler's elastic energy and the Dirichlet energy for the curvature scalar as special cases ($m=0,1$).

We completely classify the closed smooth critical points of $E_m$ for all $m\ge1$: they are precisely the round multiply-covered circles.
For the steepest descent $L^2(ds)$-gradient flow of $E_m$, the \emph{$m$-ideal flow}, we prove that for each nonzero turning number there is a curvature-oscillation threshold such that every canonical relaxed flow starting from $W^{2,2}$ initial data below this threshold is immortal and exponentially asymptotic in the smooth topology to a round multiply-covered circle.
We also prove that every immortal canonical relaxed trajectory with bounded unnormalised length converges to the corresponding circle.

We furthermore treat rough initial data of class $W^{2,2}$; such data typically has infinite $E_m$ energy when $m\ge1$.
In the small-curvature-oscillation basin, every such curve generates a unique canonical relaxed length-normalised flow, smooth for every positive time, continuously dependent on the initial data, and smoothly convergent to the multiply-covered circle.
These results are known in the $m=0$ case, substantially strengthen existing work in the $m=1$ case, and are new for $m>1$.
\end{abstract}
\maketitle
\tableofcontents
\section{Introduction}\label{S:intro}
Let $\gamma:\S\to\R^2$ be a smooth closed immersed curve.
We study the family of \emph{$m$-ideal energies $E_m$}:
\begin{equation}\label{E:Em}
E_m[\gamma]:=\frac12\int_{\gamma} k_{s^m}^2\,\dd s .
\end{equation}
The case $m=0$ is Euler's elastic energy, and the case $m=1$ is the ideal energy of closed planar curves, or the Dirichlet energy of curvature.
In this article we study both smooth closed immersed critical points of $E_m$, called \emph{$m$-ideal curves}, and the steepest descent $L^2(ds)$-gradient flow of $E_m$, called the \emph{$m$-ideal flow}.

\subsection{On \texorpdfstring{$m$}{m}-ideal curves}
Euler's elastic energy $E_0$ is well-known and well-studied.
Historically $E_1$ arose in the Bernoulli-Euler story behind Euler's spiral: in an inverse elasticity problem posed by James Bernoulli, one asks for the natural shape of a thin lamina that becomes straight when a weight is attached at one end. 
Once the strip is straightened, the bending moment at arclength distance $s$ from the force is $M=Fs$; assuming the strip does not stretch, the curvature of the unstressed lamina is therefore proportional to arclength, $k(s)=cs$, so the curve is Euler's spiral \cite{Levien2008EulerSpiralHistory,Levien2009FromSpiralToSpline}. 
In modern language, Euler spirals are precisely curves with constant $k_s$, so this classical mechanical thought experiment points naturally toward energies that penalise curvature variation. 
Later, Daniel Bernoulli recast the elastica variationally, asking Euler to minimise the stored elastic energy $\int k^2\,ds$ for elastic laminae \cite{Levien2009FromSpiralToSpline}.

In the closed planar setting, stationary points of $E_1$ have been classified as (multiply covered) circles
\cite{AMWW20}.
The Euler-Lagrange operator $\K_m$ of $E_m$ is
\[
\K_m
= (-1)^{m+1}k_{s^{2m+2}}-\frac12 k\,k_{s^m}^2
+k\sum_{r=1}^{m}(-1)^{r+1}\,k_{s^{m-r}}\,k_{s^{m+r}}.
\]
This operator is quasilinear of order $2m+4$ in the immersion, and it would be natural to expect the space of critical curves to grow with $m$.
Here, to the contrary, we prove that the rigidity phenomenon for smooth, closed critical curves extends to all $m$:

\begin{thm}[Stationary solutions]\label{T:stationary-classification}
For every $m\in \mathbb{N}$, a smooth closed immersed planar curve is critical for $E_m$ if and only if it is a round multiply-covered circle.
\end{thm}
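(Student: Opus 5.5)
The plan is a scaling (Pohozaev-type) argument: first show that criticality forces $E_m[\gamma]=0$, and then classify the closed curves of zero energy.

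\emph{Converse direction.} For the ``if'' direction, take a round circle traversed $\omega$ times, $\omega\in\mathbb{Z}\setminus\{0\}$. Its curvature $k$ is constant, so $k_{s^j}\equiv0$ for all $j\ge1$. Inspect $\K_m$ term by term for $m\ge1$:
\begin{itemize}
\item the leading term $k_{s^{2m+2}}$ vanishes;
\item $\tfrac12 k\,k_{s^m}^2$ vanishes;
\item each summand $k_{s^{m-r}}k_{s^{m+r}}$ with $1\le r\le m$ contains the factor $k_{s^{m+r}}$ with $m+r\ge1$, so it vanishes.
\end{itemize}
Hence $\K_m\equiv0$, and the circle is critical.

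\emph{Criticality forces zero energy.} For the ``only if'' direction, recall the first variation formula. If $\gamma_\varepsilon$ is a smooth variation with velocity $\partial_\varepsilon\gamma = V\nu + T\,\partial_s\gamma$, then
\[
\frac{d}{d\varepsilon}E_m[\gamma_\varepsilon] = \int_\gamma \K_m\,V\,\dd s .
\]
The tangential part contributes nothing, because $E_m$ is invariant under reparametrisation. This identity is exactly what defines $\K_m$ as the Euler--Lagrange operator, so I take it as given, with whatever sign convention the paper uses.

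Now apply it to the dilation family $\gamma_\lambda=\lambda\gamma$. Under this dilation $\dd s\mapsto\lambda\,\dd s$, $k\mapsto\lambda^{-1}k$ and $\partial_s\mapsto\lambda^{-1}\partial_s$. Therefore $k_{s^m}\mapsto\lambda^{-(m+1)}k_{s^m}$, and so
\[
E_m[\lambda\gamma]=\lambda^{-(2m+1)}E_m[\gamma].
\]
Differentiating at $\lambda=1$, where the velocity is $\gamma$ itself with normal part $\langle\gamma,\nu\rangle$, gives
\[
-(2m+1)\,E_m[\gamma]=\int_\gamma \K_m\,\langle\gamma,\nu\rangle\,\dd s .
\]
If $\gamma$ is critical, the right-hand side vanishes, so $E_m[\gamma]=0$ and hence $k_{s^m}\equiv0$.

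\emph{Classifying zero-energy curves.} With $k_{s^m}\equiv0$, the curvature $k$ is a polynomial in $s$ of degree at most $m-1$. It is also $L$-periodic in $s$, where $L$ is the length of $\gamma$, since the curve is closed. A periodic polynomial is bounded on $\R$, so it is constant; here we use $m\ge1$. Thus $k\equiv k_0$. If $k_0=0$, the curve is a straight line, which cannot be closed. So $k_0\ne0$, and $\gamma$ is an arclength-parametrised circle of radius $1/|k_0|$, traversed $Lk_0/(2\pi)$ times. This quantity must be a nonzero integer, namely the turning number.

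\emph{Main obstacle.} Essentially all the content sits in the scaling identity. The only points needing care are the first variation formula with its sign convention and the vanishing of the tangential contribution. Both are standard, since $E_m$ is geometric, that is, invariant under reparametrisation.
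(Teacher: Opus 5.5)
Your proof is correct, but it reaches $E_m[\gamma]=0$ by a different mechanism from the paper's; your converse direction and your final classification of zero-energy curves coincide with the paper's.

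\textbf{The paper's route.} It builds an explicit first integral. With $M_m,N_m$ as in Definition~\ref{D:MNQ}, Lemma~\ref{L:Qs} gives $\partial_s\bigl((M_m+iN_m)e^{-i\theta}\bigr)=(-1)^{m+1}\K_m e^{-i\theta}$. So a critical curve satisfies $M_m+iN_m=Ce^{i\theta}$. Integrating and using closure ($\int e^{i\theta}\,ds=0$) gives $\int N_m\,ds=0$. A separate integration-by-parts computation, Lemma~\ref{lem:mean-Nm}, identifies $\int N_m\,ds$ with $(2m+1)(-1)^mE_m$.

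\textbf{Your route.} You bypass both computations by homogeneity. With the paper's sign convention your identity reads $\int_\gamma \K_m\langle\gamma,N\rangle\,ds=(2m+1)E_m$. The paper derives exactly this, in exactly your way, in Appendix~\ref{A:normalised-evolution}, but uses it there only for the energy evolution.

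\textbf{How the two are related.} They are two faces of the same Noether structure. Up to complex conjugation and a unimodular constant, $\mathcal Q_m$ is a primitive of $\K_mN$, i.e.\ the current of translation invariance. Integrating $\mathcal Q_m$ against $\gamma_s$ by parts gives $\int(M_m+iN_m)\,ds=(-1)^m\int\K_m(q+ih)\,ds$, with $q=\langle\gamma,T\rangle$ and $h=\langle\gamma,N\rangle$. Its imaginary part, combined with Lemma~\ref{lem:mean-Nm}, is precisely your dilation identity. Its real part is the rotation identity $\int\K_m q\,ds=0$.

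\textbf{What each buys.} Your version is shorter and more conceptual. It needs only scaling and reparametrisation invariance; the latter discards the tangential part of the dilation field via linearity of the first variation in the variation field. The paper's version gives a pointwise conserved quantity for critical curves in a position-free form. The paper reuses that for the global weak gradient inequality, Proposition~\ref{prop:weak-grad-ineq}.
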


\begin{rem}
There are no closed critical points with turning number zero for any $m$, that is, no critical curves regularly homotopic to the lemniscate.
There are no closed critical points at all when $m=0$.
\end{rem}

\begin{rem}
The rigidity phenomenon is sensitive to length constraint: the length-penalised ideal energy $E_1 + \lambda L$ has a wide variety of critical points.
The family of multiply-covered round circles bifurcates into many families, including at least closed lemniscate-like critical points and so-called ``generalised lemniscates''; see \cite{OW261}.
\end{rem}

For our second main result, we present the following gradient inequality, which may have application to related problems and thus be of independent interest.

\begin{thm}[Gradient inequality]\label{T:GI}
Fix $m\ge1$ and $\omega\in\Z\setminus\{0\}$.
There exists $C_{m,\omega}>0$ such that every smooth closed immersed curve $\gamma$ with turning number $\omega[\gamma]=\omega$ satisfies
\begin{equation}\label{E:grad-ineq}
\int_{\gamma}\K_m^2\,\dd s
\ge
C_{m,\omega}\,L[\gamma]^{-(2m+4)}\,E_m[\gamma].
\end{equation}
\end{thm}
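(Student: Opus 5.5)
Both sides of \eqref{E:grad-ineq} transform the same way under dilation. If $\gamma\mapsto\rho\gamma$, then $k_{s^j}\mapsto\rho^{-1-j}k_{s^j}$ and $ds\mapsto\rho\,ds$. Hence $E_m\mapsto\rho^{-2m-1}E_m$, $\int\K_m^2\,ds\mapsto\rho^{-4m-5}\int\K_m^2\,ds$, and $L^{-(2m+4)}E_m\mapsto\rho^{-4m-5}L^{-(2m+4)}E_m$. So we may normalise $L[\gamma]=2\pi|\omega|$ and prove $\int\K_m^2\,ds\ge C\,E_m$. I would argue by contradiction and compactness, with the classification Theorem~\ref{T:stationary-classification} as the rigidity input. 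We also need a local analysis near the circle, since $E_m$ vanishes on the critical set.

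\textbf{Step 1 (interpolation and a priori control).} With $L$ fixed, write $k=\bar k+\tilde k$, where $\bar k=2\pi\omega/L$ is fixed by the turning number and $\int\tilde k\,ds=0$. Then $E_m=\frac12\|\tilde k_{s^m}\|_2^2$, and by Poincaré--Wirtinger every lower norm $\|\tilde k_{s^j}\|_2$ with $j\le m$ is controlled by $E_m$. Next I expand $\int\K_m^2\,ds$. Its leading part is $\|k_{s^{2m+2}}\|_2^2$. The nonlinear terms are sums of products $k_{s^{a}}k_{s^b}k_{s^c}$ and of the form $\bar k\,\tilde k_{s^a}\tilde k_{s^b}$. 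I would estimate the cubic and higher terms by Gagliardo--Nirenberg interpolation between $\|\tilde k\|_2$ and $\|\tilde k_{s^{2m+2}}\|_2$, and the terms linear in $\bar k$ by the explicit linearisation (Step 2). The goal is to show that $\int\K_m^2\,ds$ bounds $\|\tilde k_{s^{2m+2}}\|_2^2$, up to errors depending on $E_m$. This gives: if $\int\K_m^2\,ds$ and $E_m$ are bounded, then $\tilde k$ is bounded in $W^{2m+2,2}$, which is the compactness we need.

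\textbf{Step 2 (linearisation at the circle).} At $\tilde k=0$ the linear part of $\K_m$ acting on $\tilde k$ is
\[
(-1)^{m+1}\tilde k_{s^{2m+2}}+\bar k^2\sum_{r}(\pm)\tilde k_{s^{2m}}+\dots .
\]
This is a constant-coefficient operator, so I would compute it on Fourier modes $e^{ins\bar k/\omega}$, $n\neq0$. The symbol is a polynomial $P(n)$ in $n$. The mode $n=0$ is excluded by the mean-zero condition. Any zeros of $P$ at $|n|=1$ correspond to translations. These do not change $k$ as a function of arclength, and in any case the leading cancellation is controlled by the explicit form of $\K_m$. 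I would show that $|P(n)|\ge c\,n^{2m}$ for all $n\ne0$ at which $\tilde k$ can be nonzero. This yields $\|L\tilde k\|_2^2\ge c\|\tilde k_{s^m}\|_2^2=2cE_m$ in the linear regime, and a perturbation argument extends it to $E_m$ small. I expect this step, namely checking that $P$ has no spurious zeros at integer frequencies (for instance the $n=\pm1$ modes) for all $m$, to be the main obstacle. If zeros appear there, I would use the closing condition $\int e^{i\theta}ds=0$ to remove those modes at second order.

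\textbf{Step 3 (global contradiction).} Suppose there is a sequence $\gamma_j$ with $L=2\pi|\omega|$ and $\int\K_m^2\,ds\le j^{-1}E_m$. There are two cases.
\begin{itemize}
\item If $E_m[\gamma_j]\to0$, Step 2 gives a contradiction.
\item If $E_m[\gamma_j]\to\infty$, rescale $\tilde k$ by its size. The nonlinear terms dominate, and I would use a homogeneity/interpolation argument to show that $\int\K_m^2\,ds\gtrsim E_m^{3}$ roughly, which again contradicts the assumption.
\end{itemize}
Otherwise $E_m$ stays in a compact range. Then $\int\K_m^2\,ds\to0$, and Step 1 gives $W^{2m+2,2}$ bounds on the curvature. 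Since curvature determines the curve up to rigid motion, we can extract a subsequential limit curve with the same length and turning number, with $\K_m=0$ and $E_m>0$. By Theorem~\ref{T:stationary-classification} this limit is a circle, so $E_m=0$, which is a contradiction. This gives the constant $C_{m,\omega}$.
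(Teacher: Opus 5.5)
The genuine gap is the case $E_m[\gamma_j]\to\infty$ in Step 3. Gagliardo--Nirenberg interpolation only bounds the nonlinear part of $\K_m$ from above. It cannot give a lower bound for $\|\K_m\|_{L^2}$, and that needs a rigidity input you never supply.

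Your target $\int\K_m^2\,ds\gtrsim E_m^3$ is also wrong for $m\ge2$. Glue a copy of a fixed bump profile, scaled to width $\lambda$, into a circle of fixed length. Scaling gives $E_m\sim\lambda^{-(2m+1)}$ and $\int\K_m^2\,ds\sim\lambda^{-(4m+5)}$, so $\int\K_m^2\,ds/E_m^3\sim\lambda^{2m-2}\to0$.

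Compactness does not rescue this case either. Rescaling to unit energy sends the length to infinity, and energy may spread out and vanish in local limits. Any limiting critical curve is an immersion of $\R$ that need not close up. Theorem~\ref{T:stationary-classification}, whose proof uses closedness through $\int_\gamma T\,ds=0$, therefore does not apply.

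The missing idea is a quantitative form of the rigidity argument itself. Take $M_m,N_m,\mathcal Q_m$ as in Lemma~\ref{L:Qs}. Then $\Ds\mathcal Q_m=(-1)^{m+1}\K_m e^{-i\theta}$, $\int_\gamma(M_m+iN_m)\,ds=i(-1)^m(2m+1)E_m$, and $\int_\gamma e^{i\theta}\,ds=0$. Hence
\[
(2m+1)E_m=\Bigl|\int_\gamma(\mathcal Q_m-\overline{\mathcal Q}_m)e^{i\theta}\,ds\Bigr|\le L\int_\gamma|\K_m|\,ds\le L^{3/2}\|\K_m\|_{L^2(\gamma)},
\]
that is, $\int_\gamma\K_m^2\,ds\ge(2m+1)^2L^{-3}E_m^2$ for every closed curve. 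At your normalised length this settles every regime with $E_m$ bounded below, with explicit constants. That covers both your case $E_m\to\infty$ and your compactness case, so no compactness is needed. Combined with the small-energy estimate it gives the theorem, which is exactly the paper's route.

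Your Step 2 is essentially the paper's small-energy argument, with one correction. The zeros of the symbol are at $n=\pm\omega$, not $n=\pm1$. They are genuine curvature modes, not translations, since translations do not change $k$. So the closure step is mandatory, not a fallback. It works because $|a_{\pm\omega}|\lesssim\|\psi\|_2^2\lesssim E_m$ for the mean-zero primitive $\psi$ of $\tilde k$, so these modes contribute only $O(E_m^2)$.
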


\subsection{On \texorpdfstring{$m$}{m}-ideal flows}

The steepest descent $L^2(ds)$-gradient flow for $E_m$ is
\begin{equation*}\label{E:GIF}
\tag{GIF$_m$}
\p_t^\perp\gamma=\K_m\,N,
\end{equation*}
where
\(
\p_t^\perp\gamma:=\ip{\p_t\gamma}{N}N
\)
denotes the normal velocity.
The scale-invariant oscillation of curvature is defined by
\begin{equation}\label{E:Kosc}
\Kosc[\gamma]
:=
L[\gamma]\int_\gamma (k-\bar k)^2\,\dd s ,
\end{equation}
where $\bar k$ is the average of the curvature (see Section \ref{S:firstvar}).
If $\Kosc[\gamma]=0$ then a smooth, closed curve $\gamma$ is a multiply-covered circle.

The flow \eqref{E:GIF} with $m=0$ is studied in \cite{AW26,MW25}.  In particular, there convergence of the normalised flow is established with an initial smallness condition on $\Kosc$.
Similarly, the $m=1$ case is treated in \cite{AMWW20}. There, however, the initial smallness is at the level of the normalised energy (a condition on $L^3E_1$).
Recently, it has been discovered that such a condition is necessary: infinitely many geometrically distinct self-similar solutions for \eqref{E:GIF} with $m=0$ and $m=1$ have been discovered in \cite{AW26}, each of which are non-circular fixed points for the normalised flows.
For further mathematical developments we refer to
\cite{MWW1,Wu2021ShortTimeExistence,MWW2,OW262}.

Our work here extends known results in two fundamental ways.
First, we establish local well-posedness for the flow with rough $W^{2,2}$ data.
Initial data in this class does not typically have bounded energy.
Second, we establish convergence for any $m$ with only a condition on $\Kosc$, unifying the stability argument and improving the $m=1$ result.



\begin{thm}[Local well-posedness for rough data]\label{T:W22-LWP-main}
Fix $m\in \mathbb{N}$ and $\omega\in\mathbb Z\setminus\{0\}$.
Let $\eta_0:\mathbb R/\mathbb Z\to\mathbb R^2$ be a unit-length, centred,
arclength-parametrised $W^{2,2}$ immersed closed curve with turning number
$\omega[\eta_0]=\omega$. Then there exists a time
$T=T(m,\eta_0)>0$ and a unique canonical gauge-fixed length-normalised
$m$-ideal flow
$\eta:\mathbb R/\mathbb Z\times[0,T]\to\mathbb R^2$
 with initial value $\eta\left( \cdot, 0 \right) =\eta_0$, where
$\eta\in C([0,T];W^{2,2})\cap C^\infty((0,T]\times\mathbb R/\mathbb Z)$.
For every positive time in the interval of existence, $\eta$ is a classical solution. The solution depends
continuously on the initial datum in the $W^{2,2}$ topology, locally around
$\eta_0$, and is the limit of every smooth approximating sequence preserving
closedness and turning number.
\end{thm}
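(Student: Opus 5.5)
Our approach is to reformulate the length-normalised, gauge-fixed flow as a scalar quasilinear parabolic equation for the tangent angle, and to solve it by maximal regularity in a time-weighted space adapted to $W^{2,2}$ data. Since $\eta_0$ is arclength-parametrised with unit length, we write $\eta_s=(\cos\theta,\sin\theta)$, where $\theta\in W^{1,2}$ and $\theta(x+1)=\theta(x)+2\pi\omega$, so that $k=\theta_s$. The canonical gauge keeps the parametrisation proportional to arclength by adding a tangential velocity, and the length normalisation rescales so that $L\equiv1$. With these choices the closed curve $\eta$ is recovered from $\theta$, with its centre fixed, by integration.

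In this gauge the angle satisfies $\theta_t=\p_s\K_m+(\text{tangential and rescaling terms})$. Its principal part is $(-1)^{m+1}\theta_{s^{2m+3}}$, which becomes $-(-\Delta)^{m+1}$ acting on $\theta$ in the periodic variable, modulo the linear drift $2\pi\omega x$. The nonlinear terms are polynomial in $\theta_s,\dots,\theta_{s^{2m+2}}$ and have lower order. The rescaling factors are nonlocal scalars, $L^{-1}$ times an integral of $k\K_m$, and are again polynomial in these derivatives. Closedness of $\eta$ is the constraint $\int e^{i\theta}=0$. The flow preserves this constraint automatically, because the equation is derived from a flow of curves. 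To make the fixed point argument close, we either add the standard Lagrange-type correction that keeps the constraint invariant, or we run the fixed point on the curve $\eta$ itself and read $\theta$ off it afterwards.

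We place $u=\theta-2\pi\omega x$ in the class
\[
X_T=\Bigl\{u\in C([0,T];W^{1,2}) : \sup_{0<t\le T} t^{j/(2m+2)}\|\p_s^{j} u(t)\|_{L^2}<\infty \text{ for } j\le 2m+4 \text{ (in an } L^p_t \text{ sense)}\Bigr\},
\]
which is a weighted maximal-regularity space. The leading operator $(-1)^{m}\p^{2m+2}$ is elliptic with constant coefficients and generates an analytic semigroup, so the Duhamel map is a contraction on a small ball of $X_T$ for $T$ small. The nonlinearity is at most cubic in derivatives, with total order matching the scaling. Using Gagliardo–Nirenberg inequalities together with the time weights, it is subcritical with respect to $W^{1,2}$, since the critical space for this scaling is $L^2$ for $k$, equivalently $\dot W^{1,2}$ for $\theta$. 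The smallness of the contraction comes from short time. Because the $W^{1,2}$ norm of $\theta_0$ alone does not give a uniform $T$, we use the continuity of the free evolution $e^{tA}u_0\to u_0$ in $W^{1,2}$, so that the weighted norms of the linear part are small for small $T$; this is why $T$ depends on $\eta_0$ and not only on its norm.

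Uniqueness and Lipschitz dependence on data near $\eta_0$ then follow from the contraction. Parabolic smoothing, obtained by bootstrapping the weights, gives $C^\infty$ regularity for $t>0$. Once the solution is smooth for positive times it coincides with the classical solution from smooth theory, which gives the classical-solution statement. Approximation follows from continuous dependence: smooth data $\eta_0^{(n)}\to\eta_0$ in $W^{2,2}$ that preserve closedness and turning number produce solutions converging in $C([0,T];W^{2,2})$ on a common interval. We expect the main obstacle to be the nonlinear estimate in the critical-scaling weighted space. The highest-order products, such as $k\,k_{s^{m-r}}k_{s^{m+r}}$ after differentiation and the nonlocal rescaling terms, must be controlled with time weights that are integrable at $t=0$. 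We would first check this by power counting, using $\|k\|_{L^2}$-scaling, and then prove it via interpolation between $t^{-1/2}$-type decay at levels $j$.
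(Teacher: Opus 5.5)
Your outline matches the paper's: an analytic semigroup, a Duhamel contraction in time-weighted spaces for $L^2$ curvature data, bootstrap smoothing, and approximation via continuous dependence (cf.\ Proposition~\ref{prop:curv-LWP}). Working with $\theta$ rather than $k=\theta_s$ is a genuine convenience, because $\theta$ carries the rotation phase that the paper has to recover separately from $\vartheta'=\Omega[u]$. There are, however, two real gaps.

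\textbf{First gap: the order of the leading operator is wrong, and your estimates are built on it.} Since $\theta_\tau=F_s+\beta_Fk$ with $F=\K_m+\Lambda h$, the principal part is $(-1)^{m+1}\p_s^{2m+4}\theta$, with symbol $-(2\pi n)^{2m+4}$. Your $(-1)^{m+1}\theta_{s^{2m+3}}$ is dispersive rather than parabolic, and $(-1)^m\p^{2m+2}$ has order $2m+2$. This is not cosmetic. The remaining terms, coming from $(P_3^{2m}(k))_s$ and from $\beta_Fk$, contain $\theta_{s^{2m+2}}$. Against an operator of order $2m+2$ they are therefore not lower order. In your norm with weights $t^{j/(2m+2)}$ they contribute $\|\p_s^{2m+2}u(\sigma)\|_{L^2}\sim\sigma^{-1}$, a non-integrable singularity in the Duhamel integral, so the contraction does not close. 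The fix is to take $r=2m+4$ and weights $t^{(j-1)/r}\|\p_s^ju\|_{L^2}$, matched to $u\in W^{1,2}$. Then the worst terms give $\|N(u(\sigma))\|_{L^2}\lesssim\sigma^{-(2m+2)/(2m+4)}$, and the Duhamel term gains a factor $T^{1/(2m+4)}$.

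Relatedly, $L^2$ curvature is subcritical, not critical: $\int|k|^p\,ds$ scales like $\rho^{1-p}$ under $\gamma\mapsto\rho\gamma$, so the invariant norm is $L^1$. The Fujita--Kato device of using continuity of the free evolution is therefore unnecessary. It is also not why $T$ depends on $\eta_0$; in the paper that dependence enters through the choice of closure chart.

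\textbf{Second gap: the closure constraint is not actually handled, and this is where the paper does its real work.} The coefficients $h=\eta\cdot N$ and $\Lambda$ are only defined when $\int_0^1e^{i\theta}\,ds=0$. The same holds for the periodic gauge function $\beta_F$, whose periodicity requires $\int kF\,ds=0$, i.e.\ $\int kh\,ds=-1$. A Duhamel iterate does not stay on this nonlinear constraint. That the geometric flow preserves closure is a property of genuine solutions, not of iterates, so the map you want to contract is not yet defined. You need one of two things.
\begin{enumerate}
\item The paper's chart: Lemmas~\ref{lem:closure-submersion}--\ref{lem:closure-chart} supply a nearby smooth closed reference curvature, a two-dimensional Fourier complement $E_\ast$ preserved by the constant-coefficient leading operator, and the graph map $\Phi_\ast$. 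The tangency computation of Lemma~\ref{lem:chart-equation-strong} then shows that the projected equation reproduces the geometric flow.
\item An explicit extension of the vector field off the constraint, together with a proof that the constraint is invariant. For instance, reconstruct $\eta$ after removing the drift $s\int_0^1e^{i\theta}$, and let $\beta_F$ be the periodic primitive of $kF-\overline{kF}$. One then finds $\frac{d}{d\tau}\int_0^1e^{i\theta}\,ds=\overline{kF}\int_0^1e^{i\theta}\,ds$. Since $\overline{kF}\in L^1(0,T)$ in the weighted class, closure propagates from $\tau=0$.
\end{enumerate}
Your other option, running the fixed point on $\eta$ itself, turns this into a quasilinear problem for the immersion that would need its own gauge analysis.
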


The basin of attraction result is as follows.

\begin{thm}[Small-$\Kosc$ basin of attraction]\label{T:small-Kosc}
Fix $m\in \mathbb{N}$ and $\omega\in\Z\setminus\{0\}$.  There exists
$\varepsilon^{W^{2,2}}_{m,\omega}>0$ with the following property.  Let
$\eta_0:\mathbb R/\mathbb Z\to\mathbb R^2$ be a unit-length, centred, arclength-parametrised $W^{2,2}$ immersed closed curve with
$\omega[\eta_0]=\omega$ and
\[
\Kosc[\eta_0]<\varepsilon^{W^{2,2}}_{m,\omega}.
\]
Then there exists a unique canonical gauge-fixed length-normalised $m$-ideal flow 
$\eta:\mathbb R/\mathbb Z\times[0,\infty)\to\mathbb R^2$
with initial value $\eta_0$, satisfying
$\eta\in C([0,\infty);W^{2,2})\cap C^\infty((0,\infty)\times\mathbb R/\mathbb Z).$
For every positive time it is a classical solution.  The flow depends continuously on the initial datum in the
$W^{2,2}$ topology inside the small-$\Kosc$ basin and is the limit, on every compact time interval, of every
smooth approximating sequence preserving closedness and turning number.  Moreover
\begin{equation}\label{eq:intro-small-Kosc-decay}
\Kosc[\eta(\tau)]
\le
C_{m,\omega}\Kosc[\eta_0]e^{-c_{m,\omega}\tau}
\end{equation}
and $\eta(\cdot,\tau)\to\eta_\infty$ in $C^\infty$ as $
\qquad(\tau\to\infty),$ where $\eta_\infty$ is the unit-length $\omega$-circle in the barycentric gauge.
Equivalently, after undoing the length normalisation for any closed $W^{2,2}$ initial curve whose unit-length
centred representative satisfies the same hypotheses, the unique canonical relaxed unnormalised flow converges
modulo translations and reparametrisation to an $\omega$-circle of finite positive length.
\end{thm}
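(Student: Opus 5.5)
Rather than work with the energy $E_m$, which is typically infinite for $W^{2,2}$ data, the plan is to run the whole stability argument at the level of the scale-invariant oscillation $\Kosc$. The local theory comes from Theorem \ref{T:W22-LWP-main}. That result gives a unique canonical gauge-fixed length-normalised flow $\eta$ on some interval $[0,T]$. The flow is smooth for positive times, depends continuously on $\eta_0$ in $W^{2,2}$, and is the limit of smooth approximations. Two tasks remain. First, the local existence time must be made uniform in the small-$\Kosc$ regime, so that the solution can be continued indefinitely. Second, we must prove the decay estimate \eqref{eq:intro-small-Kosc-decay} and smooth convergence.

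The first step is a priori estimates for smooth flows, which pass to $W^{2,2}$ data by the approximation clause of Theorem \ref{T:W22-LWP-main}. In the length-normalised gauge we have $L\equiv1$ and $\bar k=2\pi\omega$. Hence $\Kosc[\eta]=\|k-\bar k\|_{L^2}^2$, and it controls the $W^{2,2}$ norm of $\eta$ modulo the circle. We compute $\frac{d}{d\tau}\Kosc$ along the normalised flow. The leading term is $-2\|(k-\bar k)_{s^{m+2}}\|_{L^2}^2$, coming from $(-1)^{m+1}k_{s^{2m+2}}$ after integration by parts. The normalisation adds a term proportional to $\int \K_m k\,ds\cdot\Kosc$, which has the same structure. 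Every remaining term is a polynomial in $k-\bar k$ and its derivatives of total order $2m+4$ with at least three factors.

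The key estimate uses Gagliardo--Nirenberg interpolation on the unit circle, together with Wirtinger's inequality for the mean-zero quantity $k-\bar k$. With it we bound each such term by $C\,\Kosc^{\alpha}\|(k-\bar k)_{s^{m+2}}\|_{L^2}^2$ with $\alpha>0$, plus terms of the form $C\,\omega^{j}\|(k-\bar k)_{s^{m+2}}\|^{2-\beta}\Kosc^{\beta/2}$. The latter come from factors $\bar k$ and are linear or quadratic in the oscillation. These $\bar k$-terms are the linearisation about the $\omega$-circle. The fact that circles are stable critical points, recorded in Theorem \ref{T:stationary-classification} and quantified by Theorem \ref{T:GI}, is what makes the linearised operator coercive on mean-zero modes. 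Concretely, we expect to check on Fourier modes $e^{2\pi i n s}$ with $n\ne0$ that the quadratic part of $\frac{d}{d\tau}\Kosc$ is $\le -c_{m,\omega}\|(k-\bar k)_{s^{m+2}}\|^2$. The modes $n=\pm1$ need separate care, because they correspond to translations and must be removed by the barycentric gauge and closedness. Putting this together gives
\[
\frac{d}{d\tau}\Kosc\le -\bigl(c_{m,\omega}-C\Kosc^{\alpha}\bigr)\|(k-\bar k)_{s^{m+2}}\|_{L^2}^2 .
\]
Once $\Kosc<\varepsilon^{W^{2,2}}_{m,\omega}$, Wirtinger's inequality yields $\Kosc'\le -c\,\Kosc$. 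This gives \eqref{eq:intro-small-Kosc-decay}, with the constant $C_{m,\omega}$ absorbing the first short time interval, where the data is only $W^{2,2}$ and the smoothing estimates have to be used.

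I expect this mode-by-mode coercivity, uniformly in $m$, to be the main obstacle. The sign of the $\bar k$-terms is not obvious. Should they fail to close directly, my first attempt would be to replace $\Kosc$ by a Lyapunov functional equivalent to it, namely $\Kosc$ plus small multiples of higher oscillations $\|(k-\bar k)_{s^j}\|^2$. That functional is chosen to diagonalise the linearisation.

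Finally, since $\Kosc$ stays small, the $W^{2,2}$ norm stays bounded. Because the existence time in Theorem \ref{T:W22-LWP-main} depends only on this bound in the small-$\Kosc$ regime, restarting the flow gives global existence and the stated continuity on compact time intervals. Standard interpolated higher-order energy estimates for $\|k_{s^j}\|^2$ then give uniform $C^\infty$ bounds for $\tau\ge1$, and these inherit exponential decay by interpolation with $\Kosc$. Integrating the velocity in the barycentric gauge shows that $\eta(\tau)$ converges in $C^\infty$ to a limit with $k\equiv\bar k$, which is the unit-length $\omega$-circle. Undoing the rescaling, the length of the unnormalised flow solves an ODE driven by the exponentially decaying $\int k\K_m$, so it converges to a finite positive limit. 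This gives the equivalent statement for the unnormalised flow.
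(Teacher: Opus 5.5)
Your overall architecture is the paper's: differentiate $\Kosc$ along the normalised flow, treat its quadratic part exactly on Fourier modes, and interpolate the remainder. You then pass from smooth flows to $W^{2,2}$ data by approximation and restarting. The gap is that the step you defer as ``the main obstacle'' is the entire stability mechanism, and the specifics you give for it would not work.

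First, the normalisation does not merely add a multiple of $\Kosc$. Write $\kappa=2\pi\omega$, $f=k-\kappa$, $A=2k_{ss}+k^3$ and $H_0=\int_\eta k^2\,ds=\kappa^2+\Kosc$. The scaling identity $\int_\eta Ah\,ds=-H_0$ shows that the dilation contributes $-\Lambda H_0$, so $\frac{d}{d\tau}\Kosc=\int_\eta(A-H_0k)\K_m\,ds$. The piece $-\kappa^2\Lambda$ is quadratic in $f$. It is exactly what turns the weight into $A-H_0k=2(D^2+\kappa^2)f+(\text{terms at least quadratic in }f)$. Paired with the linear part $(-1)^{m+1}D^{2m}(D^2+\kappa^2)f$ of $\K_m$, this makes the quadratic part of $\frac{d}{d\tau}\Kosc$ the negative perfect square $-2\mathfrak D_m[f]=-2\int_\eta\bigl(D^m(D^2+\kappa^2)f\bigr)^2\,ds$.

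Nothing weaker will do. Expanded, $-2\mathfrak D_m[f]$ contains the anti-dissipative term $+4\kappa^2\|D^{m+1}f\|_2^2$. That term is of the same order as the dissipation on the low modes and cannot be absorbed by interpolation. The symbol of this form, $(2\pi)^{2m+4}n^{2m}(n^2-\omega^2)^2$, degenerates at $n=\pm\omega$, not at $n=\pm1$.

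Second, those resonant modes are not removed by the barycentric gauge, which fixes the position of the curve and places no constraint on its curvature. Closedness does not make them vanish either. What closure gives, and what you need, is quadratic smallness (Lemma~\ref{lem:resonant-Kosc}). Let $\psi$ be the mean-zero primitive of $f$. The identities $\int_0^1e^{2\pi i\omega s}e^{i\psi}\,ds=0=\int_0^1e^{2\pi i\omega s}\,ds$ give $\bigl|\int_0^1e^{2\pi i\omega s}\psi\,ds\bigr|\le\frac12\|\psi\|_2^2$, hence $|a_{\pm\omega}|\le C_\omega\Kosc$. Consequently $\mathfrak D_m[f]\ge\mu_{m,\omega}\Kosc-C\Kosc^2$ and $\|D^{m+2}f\|_2^2\le C(\mathfrak D_m[f]+\Kosc^2)$. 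Since $\Kosc^2\le C\Kosc\|D^{m+2}f\|_2^2$ by Wirtinger, the resonant error is absorbed for small $\Kosc$. Only then does your inequality $\frac{d}{d\tau}\Kosc\le-(c-C\Kosc^{\alpha})\|D^{m+2}f\|_2^2$ follow. Theorems~\ref{T:stationary-classification} and~\ref{T:GI} play no role in this coercivity, and no modified Lyapunov functional is needed.

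Two further points. The uniform restart does not follow from Theorem~\ref{T:W22-LWP-main}, whose existence time depends on $\eta_0$ through the closure chart. It requires the chart built at the circle itself, with complement spanned by $\cos2\pi\omega s$ and $\sin2\pi\omega s$, as in Corollary~\ref{cor:uniform-circle-W22-LWP}.

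Your endgame, by contrast, is a legitimate alternative to the paper's. Suppose time-uniform higher-order bounds for $\tau\ge1$ are available (Corollary~\ref{cor:positive-time-smoothing-Kosc}). Interpolating them against the exponentially decaying $\|f\|_2^2$ gives exponential decay of every $\|D^rf\|_2$. Hence $\K_m$, $\Lambda$, $\beta_F$, $b_F$ and $\partial_\tau\eta$ decay exponentially in every $C^r$, which yields smooth convergence directly. This avoids the paper's route through the small-$E_m$ basin and the gradient inequality.
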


We do not expect bounded length to be a generic feature of the flow (as the existence of expanders shows \cite{AW26}).
Clearly trajectories that are asymptotic to circles have bounded length.
Our final result proves that all immortal canonical relaxed trajectories with bounded unnormalised length are asymptotically circular.

\begin{thm}[Bounded length implies convergence]\label{T:bounded-length-convergence}
Fix $m\in \mathbb{N}$ and $\omega\in\Z\setminus\{0\}$.  Let
$\gamma_0:\mathbb R/\mathbb Z\to\mathbb R^2$ be a closed $W^{2,2}$ immersed curve with
$\omega[\gamma_0]=\omega$, and let $\gamma$ be an immortal canonical relaxed unnormalised $m$-ideal flow
with initial value $\gamma_0$.  Then its length-normalised barycentred representative is, on every compact time interval, the unique canonical
relaxed solution obtained from the local theory of Theorem~\ref{T:W22-LWP-main}, and $\gamma$ is smooth for every
positive time.
If
\[
\sup_{t\ge0}L[\gamma(\cdot,t)]<\infty,
\]
then $\gamma$ converges, modulo translations and reparametrisation, to an $\omega$-circle of finite positive
length.  Equivalently, the length-normalised barycentred representative converges in $C^\infty$ to the unit-length
$\omega$-circle.  The convergence is exponential in the length-normalised time, and hence exponential in the
original time after a finite positive time shift.  The canonical relaxed flow is unique and is the positive-time
limit of every smooth approximating sequence preserving closedness and turning number.
\end{thm}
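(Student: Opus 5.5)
The argument has two parts. The first is the identification and regularity statement; the second is convergence, and it rests on the gradient inequality of Theorem~\ref{T:GI} together with the small-$\Kosc$ basin of Theorem~\ref{T:small-Kosc}.

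For the first part we rescale the immortal trajectory $\gamma$ to unit length, translate its barycentre to the origin, and pass to a length-normalised time $\tau(t)$. This produces a canonical gauge-fixed length-normalised flow $\eta$ with initial value $\eta_0$, the unit-length centred representative of $\gamma_0$. On each compact interval the uniqueness clause of Theorem~\ref{T:W22-LWP-main} forces $\eta$ to coincide with the local solution, and a continuation argument along consecutive subintervals covers all of $[0,\infty)$. Positive-time smoothness and the approximation property then transfer from that theorem.

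\textbf{Step 1: energy dissipation.} For $t>0$ the curve is smooth, so the energy identity
\[
\frac{d}{dt}E_m=-\int_\gamma\K_m^2\,ds
\]
holds. Fix $t_0>0$. Then $\int_{t_0}^\infty\!\int_\gamma \K_m^2\,ds\,dt\le E_m[\gamma(t_0)]<\infty$. Write $L_{\max}=\sup_t L<\infty$. Theorem~\ref{T:GI} gives
\[
\frac{d}{dt}E_m\le -C_{m,\omega}L_{\max}^{-(2m+4)}E_m,
\]
so $E_m$ decays exponentially in $t$.

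\textbf{Step 2: control of $\Kosc$.} The next step converts decay of $E_m$ into decay of $\Kosc$. Since $k-\bar k$ has zero mean, the Wirtinger inequality applied $m$ times gives
\[
\int(k-\bar k)^2\,ds\le (L/2\pi)^{2m}\int k_{s^m}^2\,ds .
\]
Hence $\Kosc\le C L_{\max}^{2m+1}E_m\to0$.

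\textbf{Step 3: positive lower bound on length.} This is needed both for the normalised-time bookkeeping and for a limit of positive length. The length evolves by
\[
\frac{d}{dt}L=-\int k\K_m\,ds ,
\]
which Cauchy--Schwarz bounds by $\|k\|_{L^2}\|\K_m\|_{L^2}$. Moreover $\int k^2\,ds=\Kosc/L+4\pi^2\omega^2/L$, so $\|k\|_{L^2}$ stays controlled as long as $L$ stays away from $0$. Also $\|\K_m\|_{L^2}$ is square integrable in time, and combined with exponential decay I expect it to be integrable. Alternatively, once $\Kosc$ is small we have $k$ uniformly close to $\bar k$, and one can compute directly that $L$ changes only by a summable amount. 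A cleaner route is scale-invariance: with $\Kosc$ small, the normalised quantities are close to the circle, so $\K_m$ is $O(L^{-(2m+3)}\sqrt{\Kosc})$ and the length cannot collapse in finite time. I expect this step to be the main obstacle. What I would try first is to bound
\[
|\dot L|\le C L^{-(2m+2)}\Bigl(L^{2m+4}\int\K_m^2\Bigr)^{1/2}
\]
and integrate, using $L^{2m+3}$ monotonicity-type bounds near the circle.

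\textbf{Step 4: conclusion.} Choose $t_1$ with $\Kosc[\gamma(t_1)]<\varepsilon^{W^{2,2}}_{m,\omega}$. Since the flow is smooth at $t_1$, its normalised representative is admissible $W^{2,2}$ data, and by the uniqueness from the first part the remainder of the trajectory is the flow given by Theorem~\ref{T:small-Kosc} started there. That theorem yields exponential decay of $\Kosc$ in $\tau$ and $C^\infty$ convergence of $\eta$ to the unit $\omega$-circle. Undoing the normalisation, and using that $L$ converges to a positive finite limit from Step 3, $\gamma$ converges modulo translations and reparametrisation to an $\omega$-circle of finite positive length. Since $L$ is bounded above and below, $d\tau/dt$ is comparable to a constant for large $t$. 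Exponential convergence in $\tau$ therefore becomes exponential convergence in $t$ after the time shift $t_1$.
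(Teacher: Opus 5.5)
Your proposal follows the paper's route. An upper length bound plus a gradient inequality forces $E_m\to0$ for $t\ge t_0>0$. The Wirtinger estimate turns this into $\Kosc\to0$. One then restarts at a smooth time $t_1$ inside the small-$\Kosc$ basin and undoes the normalisation.

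The only substantive difference is Step 1. The paper (Proposition~\ref{prop:length-bound-Kosc}) uses the weak inequality of Proposition~\ref{prop:weak-grad-ineq}, $\|\K_m\|_{L^2}^2\ge(2m+1)^2L^{-3}E_m^2$. This gives $E_m'\le-(2m+1)^2L_{\max}^{-3}E_m^2$ and hence only $E_m=O(1/t)$. You use Theorem~\ref{T:GI} instead. Since $L^{-(2m+4)}$ is also a negative power of $L$, it likewise needs only the upper length bound, and it gives exponential decay of $E_m$, and hence of $\Kosc$, in the original time already from $t_0$. Both suffice, because only $\Kosc\to0$ is used afterwards. Yours gives a rate before the basin is entered; the paper's needs only the elementary identity of Lemma~\ref{L:Qs} at this stage.

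The one place your write-up does not close is Step 3, which you flag as the main obstacle and leave as heuristics. As it stands it is not a proof. Your first idea works as far as it goes: $|\tfrac{d}{dt}L^{3/2}|\le\tfrac32(\Kosc+4\pi^2\omega^2)^{1/2}\|\K_m\|_{L^2}$, and $\int_t^{t+1}\|\K_m(\cdot,\sigma)\|_{L^2}\,d\sigma\le E_m(t)^{1/2}$ is summable by your exponential decay. But this only shows that $L^{3/2}$ has a limit; it does not exclude the limit $0$.

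The step is, however, unnecessary. Once $\Kosc[\gamma(t_1)]$ is below threshold, the theorem you invoke in Step 4 already contains the length statement. The normalised flow enters the small-$E_m$ basin, where $\Lambda$ decays exponentially in $\tau$ (Theorem~\ref{thm:eta-converges-circle}). So $\Lambda\in L^1(d\tau)$, and \eqref{eq:length-reconstruct-main} gives $L\to L_\infty\in(0,\infty)$; this is precisely the ``finite positive length'' clause of Theorem~\ref{T:small-Kosc}. On the compact interval $[t_0,t_1]$, positivity of $L$ is automatic by continuity. Nor is a lower bound needed for the time bookkeeping: $d\tau/dt=L^{-(2m+4)}\ge L_{\max}^{-(2m+4)}$ already gives $\tau\to\infty$ as $t\to\infty$, so the restarted normalised flow describes the whole tail.

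So delete Step 3 and, in Step 4, cite the length conclusion of the small-$\Kosc$ theorem instead. The two-sided bound on $d\tau/dt$ for large $t$, and with it the transfer of exponential rates from $\tau$ to $t$, then follows exactly as in the paper's proof. The paper applies the smooth version, Theorem~\ref{thm:smooth-small-Kosc-stability}, at $t_1$; your use of the rough-data theorem there is equally valid.
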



\subsection{Organisation}

Section~\ref{S:firstvar} computes the first variation of $E_m$ and records the basic dissipation identities.
Section~\ref{S:rigid} proves the sum-of-squares identity behind the rigidity theorem and proves Theorem~\ref{T:stationary-classification}.
Section~\ref{S:grad-ineq} proves the gradient inequality, Theorem~\ref{T:GI}.
Section~\ref{S:existence} records the fixed-gauge local theory, the barycentred length-normalised gauge, and the
curvature-coordinate formulation used for rough initial data; in particular it proves Theorem~\ref{T:W22-LWP-main}.
Section~\ref{S:len-normalised} develops the length-normalised barycentred flow, records the intrinsic evolution
identities and a priori Sobolev estimates, proves the small-$\Kosc$ basin of attraction for rough data,
Theorem~\ref{T:small-Kosc}, and proves the bounded-length convergence theorem,
Theorem~\ref{T:bounded-length-convergence}.  The perturbative remainder estimates needed for the small-$\Kosc$
basin are proved in the body of Section~\ref{S:len-normalised}.
Appendices~\ref{A:normalised-evolution} and~\ref{A:normalised-apriori} contain the remaining detailed proofs of the
evolution identities and Sobolev estimates used in Section~\ref{S:len-normalised}.  Appendix~\ref{SS:linear-circle}
records the normal-graph linearisation about an $\omega$-circle.


\section{Geometric preliminaries and first variation}\label{S:firstvar}

Throughout, $\gamma:\Sph\to\R^2$ is a smooth closed immersion.
Write $s$ for arclength, $T:=\Ds\gamma$ for the unit tangent, and choose unit normal
\(
N:=\Rot T 
\),
where $\Rot(x,y):=(-y,x)$ so that $\{T,N\}$ is positively oriented.
Let $k$ denote the signed curvature; then the Frenet system is
\(
\Ds T=kN\), \(\Ds N=-kT
\).
We write the turning number as
\begin{equation}\label{E:turning}
\omega[\gamma]:=\frac{1}{2\pi}\int_\gamma k\,\dd s\in\Z.
\end{equation}
The turning number is preserved by any differentiable evolution.
The average of the curvature is 
\[
\bar k:=\frac{1}{L[\gamma]}\int_\gamma k\,\dd s
=\frac{2\pi\omega}{L[\gamma]} \mbox{.} \]

We consider normal variations $\gamma_\varepsilon$ of $\gamma$, that is, $\gamma_\eps=\gamma+\eps\varphi N$ with $\varphi\in C^{\infty}(\Sph)$.

\begin{lem}[First variation]\label{L:firstvar}
Let $m\in\N$.
Then
\begin{equation}\label{E:firstvar}
\left.\frac{\dd}{\dd\eps} E_m[\gamma_\eps] \right|_{\eps=0}= -\int_{\gamma} \K_m\,\varphi\,\dd s.
\end{equation}
where $\K_m$ is the scalar differential operator $\K_m=\K_m[k]$ of order $2m+2$ given by
\begin{equation}\label{E:Km}
\K_m
= (-1)^{m+1}k_{s^{2m+2}} - \frac12 k\,k_{s^m}^2
+ k\sum_{j=1}^{m}(-1)^{j+1} k_{s^{m-j}}\,k_{s^{m+j}}.
\end{equation}
\end{lem}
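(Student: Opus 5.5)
The proof is a direct computation of the first variation under $\gamma_\eps=\gamma+\eps\varphi N$. Write $\delta:=\frac{\dd}{\dd\eps}\big|_{\eps=0}$. We use the standard normal-variation formulas
\[
\delta(\dd s)=-k\varphi\,\dd s,\qquad \delta k=\varphi_{ss}+k^2\varphi,
\]
together with the commutator $\delta\,\p_s=\p_s\,\delta+k\varphi\,\p_s$ acting on scalars. The commutator holds because $\p_s=|\gamma_u|^{-1}\p_u$ and $\delta|\gamma_u|=-k\varphi|\gamma_u|$.

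Iterating the commutator gives, by induction on $j$,
\[
\delta k_{s^j}=\p_s^j(\delta k)+\sum_{i=0}^{j-1}\p_s^{i}\big(k\varphi\,k_{s^{j-i}}\big).
\]
Therefore
\[
\delta E_m=\int k_{s^m}\,\delta k_{s^m}\,\dd s-\frac12\int k\varphi\,k_{s^m}^2\,\dd s.
\]

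We then treat the two pieces of $\delta k_{s^m}$ separately.

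\textbf{Principal piece.} Integrating $\int k_{s^m}\p_s^m(\varphi_{ss}+k^2\varphi)\,\dd s$ by parts $m$ times on the closed curve gives
\[
(-1)^m\int k_{s^{2m}}(\varphi_{ss}+k^2\varphi)\,\dd s
=(-1)^m\int\big(k_{s^{2m+2}}+k^2k_{s^{2m}}\big)\varphi\,\dd s.
\]

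\textbf{Commutator piece.} Each term gives
\[
\int k_{s^m}\,\p_s^{i}\big(k\varphi\,k_{s^{m-i}}\big)\,\dd s
=(-1)^i\int k\varphi\,k_{s^{m+i}}k_{s^{m-i}}\,\dd s .
\]
Summing over $i=0,\dots,m-1$ and adding the $-\frac12 k k_{s^m}^2$ term produces
\[
\int k\varphi\Big(\sum_{i=0}^{m-1}(-1)^i k_{s^{m-i}}k_{s^{m+i}}-\tfrac12k_{s^m}^2\Big)\,\dd s.
\]

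The main obstacle is reconciling these bilinear terms with \eqref{E:Km}. The sum above runs over $i=0,\dots,m-1$, whereas the target has $j=1,\dots,m$, the opposite overall sign, and no separate $k^2k_{s^{2m}}$ term. Several ingredients have to combine:
\begin{itemize}
\item The $i=0$ term $k\,k_{s^m}^2$ combines with $-\frac12 k\,k_{s^m}^2$.
\item The principal-part term $(-1)^m k^2k_{s^{2m}}$ is exactly the missing $j=m$ term $k\,k_{s^0}k_{s^{2m}}$.
\item The overall minus sign in \eqref{E:firstvar} flips every sign.
\end{itemize}
Reindexing with $j=i$ and carrying out this bookkeeping should yield precisely $-\int\K_m\varphi\,\dd s$ with $\K_m$ as in \eqref{E:Km}. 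I would check this first for $m=0$, which should give the elastica operator $-k_{ss}-\frac12k^3$, and for $m=1$ as sanity tests. I would also keep careful track of the parity signs $(-1)^{i}$ versus $(-1)^{j+1}$ in the reindexing.
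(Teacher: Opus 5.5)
Your proposal is correct, and the bookkeeping you defer does close. Combining the pieces gives $\delta E_m=\int\varphi\bigl[(-1)^m k_{s^{2m+2}}+\tfrac12 k\,k_{s^m}^2+k\sum_{j=1}^{m}(-1)^j k_{s^{m-j}}k_{s^{m+j}}\bigr]\dd s=-\int\K_m\varphi\,\dd s$, through exactly the three ingredients you list.

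Your route differs from the paper's in how the commutator terms are handled. The paper writes the iterated commutator in the Leibniz-expanded binomial form $\sum_{j=0}^{m-1}\binom{m}{j+1}D^j(k\varphi)\,D^{m-j}k$. It then integrates by parts to move $D^j$ off $k\varphi$ onto the product $k_{s^m}D^{m-j}k$. That forces a second Leibniz expansion, and a separate combinatorial identity (Claim~\ref{clm:binomial-diff-identity}, $C_{m,p}=(-1)^p$, proved by a generating-function argument) is needed to collapse the result to $k_{s^m}^2+\sum_{r\ge1}(-1)^r k_{s^{m-r}}k_{s^{m+r}}$.

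You instead keep the commutator in the unexpanded telescoped form $\sum_{i=0}^{m-1}\p_s^i\bigl(k\varphi\,k_{s^{m-i}}\bigr)$. This is the paper's binomial form before applying Leibniz and the hockey-stick identity. You then put all $i$ derivatives onto the single factor $k_{s^m}$, so each term yields $(-1)^i k\varphi\,k_{s^{m-i}}k_{s^{m+i}}$ at once, with no combinatorics. In effect, the paper's Claim~\ref{clm:binomial-diff-identity} only undoes the expansion the paper itself introduced.

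The paper's binomial form has the advantage of matching the standard commutator formula it reuses for $[\nabla_\tau,D^p]$ in Appendix~\ref{A:normalised-evolution}. For the first variation alone, your argument is shorter and more transparent.
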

\begin{proof}
Fix a smooth normal variation $\{\gamma_\eps\}_{|\eps|<\eps_0}$ of $\gamma$, with
\[
\left.\p_\eps\gamma_\eps\right|_{\eps=0}=\varphi\,N.
\]
All $s$-derivatives below are taken with respect to the arclength parameter of $\gamma$ at $\eps=0$.

A standard computation gives
\begin{equation}\label{eq:ds-var}
\left.\p_\eps ds_\eps\right|_{\eps=0}=-(k\varphi)\,ds.
\end{equation}
Moreover the commutator between $\p_\eps$ and $\p_s$ is
\begin{equation}\label{eq:commutator}
\left.\p_\eps\p_s\right|_{\eps=0}=\p_s\left.\p_\eps\right|_{\eps=0}+(k\varphi)\p_s,
\qquad\text{equivalently}\qquad
\left.\p_\eps(f_s)\right|_{\eps=0}=(\p_\eps f)_s+(k\varphi)f_s.
\end{equation}
Using \eqref{eq:commutator} with $f=\gamma$ and the Frenet system, one finds
\[
\left.\p_\eps T\right|_{\eps=0}
=\left.\p_\eps\gamma_s\right|_{\eps=0}
=(\varphi N)_s+(k\varphi)\gamma_s
=\varphi_s N-\varphi kT+k\varphi T
=\varphi_s N,
\]
hence $\left.\p_\eps N\right|_{\eps=0}=-\varphi_sT$. Differentiating $T_s=kN$ in $\eps$ and taking the $N$-component yields
\begin{equation}\label{eq:k-var}
\left.\p_\eps k\right|_{\eps=0}=\varphi_{ss}+k^2\varphi.
\end{equation}

Write $D:=\p_s$. From \eqref{eq:commutator} we have $\p_\eps D = D\p_\eps + (k\varphi)D$.
Iterating this identity yields the following  binomial-coefficient formula.

\begin{claim}\label{clm:Ds-m-var}
For every smooth scalar function $f$ and every integer $m\ge 1$,
\begin{equation}\label{eq:Ds-m-var}
\left.\p_\eps(D^mf)\right|_{\eps=0}
=D^m\left(\left.\p_\eps f\right|_{\eps=0}\right)
+\sum_{j=0}^{m-1}\binom{m}{j+1}\,D^j(k\varphi)\,D^{m-j}f.
\end{equation}
\end{claim}
\begin{proof}[Proof of Claim~\ref{clm:Ds-m-var}]
For $m=1$ this is exactly \eqref{eq:commutator}.
Assume \eqref{eq:Ds-m-var} holds for some $m\ge 1$. Then, using $\p_\eps(Dg)=D(\p_\eps g)+(k\varphi)Dg$ and the induction hypothesis,
\begin{align*}
\p_\eps(D^{m+1}f)
&=\p_\eps\bigl(D(D^mf)\bigr)
=D\bigl(\p_\eps(D^mf)\bigr)+(k\varphi)D^{m+1}f\\
&=D\Bigl(D^m(\p_\eps f)+\sum_{j=0}^{m-1}\binom{m}{j+1}D^j(k\varphi)\,D^{m-j}f\Bigr)
+(k\varphi)D^{m+1}f\\
&=D^{m+1}(\p_\eps f)
+\sum_{j=0}^{m-1}\binom{m}{j+1}D^{j+1}(k\varphi)\,D^{m-j}f
+\sum_{j=0}^{m-1}\binom{m}{j+1}D^j(k\varphi)\,D^{m+1-j}f
\\&\qquad+\binom{m}{1}D^0(k\varphi)\,D^{m+1}f\\
&=D^{m+1}(\p_\eps f)
+\sum_{j=1}^{m}\binom{m}{j}D^{j}(k\varphi)\,D^{m+1-j}f
+\sum_{j=0}^{m-1}\binom{m}{j+1}D^j(k\varphi)\,D^{m+1-j}f.
\end{align*}
By Pascal's identity $\binom{m}{j}+\binom{m}{j+1}=\binom{m+1}{j+1}$ (for $1\le j\le m-1$), and collecting endpoints,
this becomes precisely \eqref{eq:Ds-m-var} with $m$ replaced by $m+1$.
\end{proof}

Set
\[
u:=k_{s^m}=D^mk.
\]
By definition, $E_m[\gamma_\eps]=\frac12\int_{\gamma_\eps} u_\eps^2\,ds_\eps$, hence
\begin{equation}\label{eq:dEm-start}
\left.\frac{\dd}{\dd\eps} E_m[\gamma_\eps]\right|_{\eps=0}
=\int_\gamma u\,\left.\p_\eps u_\eps\right|_{\eps=0}\,ds
+\frac12\int_\gamma u^2\,\left.\p_\eps ds_\eps\right|_{\eps=0}.
\end{equation}
Using \eqref{eq:ds-var}, the second term is
\begin{equation}\label{eq:length-term}
\frac12\int_\gamma u^2\,\left.\p_\eps ds_\eps\right|_{\eps=0}
=-\frac12\int_\gamma k\,u^2\,\varphi\,ds.
\end{equation}
Next, apply Claim~\ref{clm:Ds-m-var} with $f=k$:
\[
\left.\p_\eps u_\eps\right|_{\eps=0}
=\left.\p_\eps(D^mk)\right|_{\eps=0}
=D^m\left(\left.\p_\eps k\right|_{\eps=0}\right)
+\sum_{j=0}^{m-1}\binom{m}{j+1}\,D^j(k\varphi)\,D^{m-j}k.
\]
Insert \eqref{eq:k-var} into the first term:
\[
D^m\left(\left.\p_\eps k\right|_{\eps=0}\right)=D^m(\varphi_{ss}+k^2\varphi)=\varphi_{s^{m+2}}+D^m(k^2\varphi).
\]
Therefore, splitting the first integral in \eqref{eq:dEm-start} accordingly,
\begin{align}
\int_\gamma u\,\left.\p_\eps u_\eps\right|_{\eps=0}\,ds
&=\int_\gamma u\,\varphi_{s^{m+2}}\,ds
+\int_\gamma u\,D^m(k^2\varphi)\,ds
+\sum_{j=0}^{m-1}\binom{m}{j+1}\int_\gamma u\,D^j(k\varphi)\,D^{m-j}k\,ds\nonumber\\
&=:I_1+I_2+I_3.\label{eq:I1I2I3}
\end{align}

\paragraph{The highest-order term:}
integrating by parts $(m+2)$ times (no boundary terms on a closed curve),
\begin{equation}\label{eq:I1}
I_1=\int_\gamma u\,\varphi_{s^{m+2}}\,ds
=(-1)^{m+2}\int_\gamma u_{s^{m+2}}\,\varphi\,ds
=(-1)^{m}\int_\gamma k_{s^{2m+2}}\,\varphi\,ds.
\end{equation}

\paragraph{The \texorpdfstring{$k^2$}{k2}-term:}
Integrating by parts $m$ times,
\begin{equation}\label{eq:I2}
I_2=\int_\gamma u\,D^m(k^2\varphi)\,ds
=(-1)^m\int_\gamma u_{s^m}\,k^2\,\varphi\,ds
=(-1)^m\int_\gamma k_{s^{2m}}\,k^2\,\varphi\,ds.
\end{equation}

\paragraph{The commutator terms and the  binomial identity:}
for each $j$ in $I_3$, integrate by parts $j$ times to move $D^j$ off $(k\varphi)$:
\begin{equation}\label{eq:I3-IBP}
I_3
=\int_\gamma k\,\varphi\,
\sum_{j=0}^{m-1}(-1)^j\binom{m}{j+1}\,D^j\!\bigl(u\,D^{m-j}k\bigr)\,ds.
\end{equation}
We now simplify the sum in \eqref{eq:I3-IBP}.

\begin{claim}\label{clm:binomial-diff-identity}
For every smooth scalar function $f$ and $m\ge 1$, letting $u:=D^mf$, one has the pointwise identity
\begin{equation}\label{eq:diff-identity}
\sum_{j=0}^{m-1}(-1)^j\binom{m}{j+1}\,D^j\!\bigl(u\,D^{m-j}f\bigr)
=u^2+\sum_{r=1}^{m-1}(-1)^r\,D^{m-r}f\,D^{m+r}f.
\end{equation}
\end{claim}

\begin{proof}[Proof of Claim~\ref{clm:binomial-diff-identity}]
Expand each $D^j(u\,D^{m-j}f)$ by Leibniz:
\[
D^j\!\bigl(u\,D^{m-j}f\bigr)
=\sum_{p=0}^{j}\binom{j}{p}\,D^{p}u\,D^{j-p}(D^{m-j}f)
=\sum_{p=0}^{j}\binom{j}{p}\,D^{m+p}f\,D^{m-p}f.
\]
Hence the left-hand side of \eqref{eq:diff-identity} equals
\[
\sum_{p=0}^{m-1} D^{m+p}f\,D^{m-p}f
\underbrace{\sum_{j=p}^{m-1}(-1)^j\binom{m}{j+1}\binom{j}{p}}_{=:C_{m,p}}.
\]
It remains to show
\begin{equation}\label{eq:Cmp}
C_{m,p}=(-1)^p\qquad\text{for }0\le p\le m-1.
\end{equation}
We use coefficient extraction. Recall the definition 
\[
\binom{j}{p}=[x^p](1+x)^j,
\]
where $[x^p]$ denotes the coefficient of $x^p$ in a power series.

Since $\binom{j}{p}=0$ for $j<p$, we may extend the sum to start at $j=0$:
\[
C_{m,p}=[x^p]\sum_{j=0}^{m-1}(-1)^j\binom{m}{j+1}(1+x)^j.
\]
Set $y:=1+x$ and consider the finite sum
\[
S_m(y):=\sum_{j=0}^{m-1}(-1)^j\binom{m}{j+1}y^j.
\]
Multiply by $y$ and shift index $r=j+1$:
\begin{multline*}
y\, S_m(y)
=\sum_{j=0}^{m-1}(-1)^j\binom{m}{j+1}y^{j+1}
=\sum_{r=1}^{m}(-1)^{r-1}\binom{m}{r}y^{r}
=-\sum_{r=1}^{m}(-1)^{r}\binom{m}{r}y^{r}\\
= -\Bigl(\sum_{r=0}^{m}(-1)^{r}\binom{m}{r}y^{r}-1\Bigr)
=-( (1-y)^m-1)=1-(1-y)^m.
\end{multline*}
Therefore
\[
S_m(y)=\frac{1-(1-y)^m}{y}.
\]
Substituting $y=1+x$ gives
\[
\sum_{j=0}^{m-1}(-1)^j\binom{m}{j+1}(1+x)^j
=\frac{1-(1-(1+x))^m}{1+x}
=\frac{1-(-x)^m}{1+x}.
\]
Consequently,
\[
C_{m,p}=[x^p]\frac{1-(-x)^m}{1+x}.
\]
Now note that for $0\le p\le m-1$, the term $(-x)^m/(1+x)$ contributes only powers $x^q$ with $q\ge m$,
so it does  {not} affect the coefficient of $x^p$. Hence
\[
C_{m,p}=[x^p]\frac{1}{1+x}.
\]
Finally,
\[
\frac{1}{1+x}=\sum_{q=0}^{\infty}(-1)^q x^q
\quad\Longrightarrow\quad
[x^p]\frac{1}{1+x}=(-1)^p,
\]
which proves the claim.
Substituting back yields \eqref{eq:diff-identity}.
\end{proof}

Applying Claim~\ref{clm:binomial-diff-identity} with $f=k$ in \eqref{eq:I3-IBP} gives
\begin{equation}\label{eq:I3-simplified}
I_3
=\int_\gamma k\,\varphi\,
\Bigl(u^2+\sum_{r=1}^{m-1}(-1)^r\,k_{s^{m-r}}\,k_{s^{m+r}}\Bigr)\,ds.
\end{equation}

Combine \eqref{eq:dEm-start}, \eqref{eq:length-term}, \eqref{eq:I1}, \eqref{eq:I2}, \eqref{eq:I3-simplified}.
Noting that \eqref{eq:I2} can be written as
\[
I_2=\int_\gamma k\,\varphi\,\bigl((-1)^m k\,k_{s^{2m}}\bigr)\,ds,
\]
we may fold it into the alternating sum by extending $r$ to $m$:
\[
(-1)^m k\,k_{s^{2m}}=\;(-1)^m k_{s^{m-m}}\,k_{s^{m+m}}.
\]
Hence
\begin{align*}
\left.\frac{\dd}{\dd\eps} E_m[\gamma_\eps]\right|_{\eps=0}
&=(-1)^m\int_\gamma k_{s^{2m+2}}\varphi\,ds
+\int_\gamma k\,\varphi\Bigl(u^2+\sum_{r=1}^{m}(-1)^r\,k_{s^{m-r}}\,k_{s^{m+r}}\Bigr)\,ds
-\frac12\int_\gamma k\,u^2\,\varphi\,ds\\
&=\int_\gamma
\Bigl[(-1)^m k_{s^{2m+2}}
+\frac12 k\,u^2
+k\sum_{r=1}^{m}(-1)^r\,k_{s^{m-r}}\,k_{s^{m+r}}
\Bigr]\varphi\,ds.
\end{align*}
Recalling $u=k_{s^m}$ and rewriting the last sum with $(-1)^r=-(-1)^{r+1}$, this is exactly
\[
\left.\frac{\dd}{\dd\eps}\right|_{\eps=0}E_m[\gamma_\eps]
=-\int_\gamma \K_m\,\varphi\,ds,
\]
with
\[
\K_m
= (-1)^{m+1}k_{s^{2m+2}}-\frac12 k\,k_{s^m}^2
+k\sum_{r=1}^{m}(-1)^{r+1}\,k_{s^{m-r}}\,k_{s^{m+r}}.
\]
This is \eqref{E:firstvar}, \eqref{E:Km}.
\end{proof}

Let us record the energy identity for later use:

\begin{cor}[Gradient flow structure]\label{C:diss}
Let $\gamma(\cdot,t)$ solve \eqref{E:GIF} smoothly.
Then
\begin{align}
\frac{\dd}{\dd t}E_m[\gamma(\cdot,t)] &= -\int_{\gamma} \K_m^2\,\dd s.\label{E:diss-Em}
\end{align}
In particular, $E_m$ is non-increasing and $\int_0^\infty\!\int_{\gamma} \K_m^2\,\dd s\,\dd t\le E_m[\gamma_0]$.
\end{cor}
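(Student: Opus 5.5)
The plan is to compute the time derivative of $E_m$ along the flow using the first variation formula, Lemma~\ref{L:firstvar}. A solution of \eqref{E:GIF} has a velocity $\p_t\gamma = \K_m N + \xi T$ for some tangential function $\xi$. The tangential part only reparametrises the curve. Since $E_m$ is a geometric functional (an integral of $k_{s^m}^2$ against arclength, invariant under reparametrisation), the tangential component contributes nothing to $\frac{\dd}{\dd t}E_m$.

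To make this precise, I would argue in one of two ways. The first is to note that $E_m[\gamma(\cdot,t)] = E_m[\gamma(\phi_t(\cdot),t)]$ for a smooth family of diffeomorphisms $\phi_t$ chosen so that the reparametrised flow is purely normal, with velocity $\K_m N$; $\phi_t$ is obtained by solving an ODE on $\Sph$. The second is to directly check that a tangential variation $\xi T$ changes $k_{s^m}^2\,\dd s$ by the exact derivative $(\xi k_{s^m}^2)_s\,\dd s$, which integrates to zero on a closed curve.

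For the normal part I apply \eqref{E:firstvar} with $\varphi = \K_m$ at each fixed time. The derivation of Lemma~\ref{L:firstvar} used only the first-order variation field $\varphi N$ at $\eps=0$, not the particular form $\gamma+\eps\varphi N$. This gives
\[
\frac{\dd}{\dd t}E_m = -\int_\gamma \K_m\cdot \K_m\,\dd s,
\]
which is \eqref{E:diss-Em}.

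The consequences then follow at once. The right-hand side is nonpositive, so $E_m$ is non-increasing. Integrating from $0$ to $T$ and using $E_m\ge0$ gives
\[
\int_0^T\!\int_\gamma \K_m^2\,\dd s\,\dd t = E_m[\gamma_0]-E_m[\gamma(\cdot,T)] \le E_m[\gamma_0].
\]
Letting $T\to\infty$, or taking $T$ up to the maximal time, gives the stated bound by monotone convergence.

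The only point needing care is justifying that the tangential component is harmless. I expect this to be the main, though minor, obstacle, and I would handle it via the reparametrisation-invariance argument.
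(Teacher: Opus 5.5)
Your proposal is correct and follows the same route as the paper, which records this corollary without a separate proof as an immediate consequence of Lemma~\ref{L:firstvar} with $\varphi=\K_m$, followed by integration in time using $E_m\ge0$. Your check that a tangential velocity $\xi T$ changes $k_{s^m}^2\,\dd s$ only by the exact derivative $(\xi k_{s^m}^2)_s\,\dd s$ is right, and it supplies a detail the paper leaves implicit.
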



\section{Rigidity of stationary solutions}\label{S:rigid}

A structural feature of the Euler-Lagrange equation $\K_m\equiv 0$ is a sum-of-squares identity.

\begin{defn}[Auxiliary quantities]\label{D:MNQ}
Define
\[
M_m:=k_{s^{2m+1}},\qquad
N_m:=\frac12(-1)^m k_{s^m}^2+\sum_{j=0}^{m-1}(-1)^j k_{s^j}\,k_{s^{2m-j}},
\]
and
\[
Q_m:=M_m^2+N_m^2\,.
\]
\end{defn}

\begin{lem}[Derivative identity]\label{L:Qs}
For every smooth planar curve,
\begin{equation}\label{E:Qs}
\Ds Q_m = 2(-1)^{m+1}k_{s^{2m+1}}\,\K_m.
\end{equation}
Equivalently, writing $\theta$ for the tangential angle (so that $\theta_s=k$), the complex quantity
\[
\mathcal Q_m := (M_m+iN_m)e^{-i\theta}
\]
satisfies
\begin{equation}\label{E:complexQ}
\Ds\mathcal Q_m = (-1)^{m+1}\K_m\,e^{-i\theta}.
\end{equation}
\end{lem}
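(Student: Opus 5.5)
The plan is to verify two scalar identities,
\[
\Ds N_m = k\,M_m, \qquad \Ds M_m + k\,N_m = (-1)^{m+1}\K_m ,
\]
and then read off both \eqref{E:Qs} and \eqref{E:complexQ}. Granting these, the real form follows at once:
\[
\Ds Q_m = 2M_m\Ds M_m + 2N_m\Ds N_m = 2M_m\bigl(\Ds M_m + kN_m\bigr) = 2(-1)^{m+1}k_{s^{2m+1}}\K_m .
\]
For the complex form, differentiate using $\theta_s=k$:
\[
\Ds\mathcal Q_m=\bigl[(\Ds M_m+kN_m)+i(\Ds N_m-kM_m)\bigr]e^{-i\theta}.
\]
The imaginary bracket vanishes by the first identity, and the real bracket equals $(-1)^{m+1}\K_m$ by the second.

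For the second identity, note that $\Ds M_m=k_{s^{2m+2}}$. Multiplying \eqref{E:Km} by $(-1)^{m+1}$ gives
\[
(-1)^{m+1}\K_m = k_{s^{2m+2}} + \tfrac12(-1)^m k\,k_{s^m}^2 + k\sum_{r=1}^m(-1)^{m+r}k_{s^{m-r}}k_{s^{m+r}}.
\]
Now reindex the sum in $N_m$ by $j=m-r$, so that $r$ runs over $1,\dots,m$. Then $(-1)^j=(-1)^{m-r}=(-1)^{m+r}$, and $k_{s^j}k_{s^{2m-j}}=k_{s^{m-r}}k_{s^{m+r}}$. Hence $kN_m$ coincides exactly with the last two terms above, and the identity holds. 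This step is just sign bookkeeping.

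For the first identity, differentiate the sum in $N_m$ term by term:
\[
\Ds\sum_{j=0}^{m-1}(-1)^j k_{s^j}k_{s^{2m-j}} = \sum_{j=0}^{m-1}(-1)^j\bigl(k_{s^{j+1}}k_{s^{2m-j}}+k_{s^j}k_{s^{2m+1-j}}\bigr).
\]
After shifting the index in the first group, this sum telescopes. Only the $j=0$ boundary term $k\,k_{s^{2m+1}}$ survives, together with the $j=m-1$ boundary term $(-1)^{m-1}k_{s^m}k_{s^{m+1}}$. The latter is cancelled exactly by $\Ds\bigl(\tfrac12(-1)^mk_{s^m}^2\bigr)=(-1)^mk_{s^m}k_{s^{m+1}}$. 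What remains is $\Ds N_m=k\,k_{s^{2m+1}}=kM_m$.

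The only real obstacle is keeping the telescoping indices and the alternating signs straight. I would check the computation against the case $m=1$, where $N_1=-\tfrac12k_s^2+kk_{ss}$ and the claims can be verified directly.
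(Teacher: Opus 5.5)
Your proposal is correct and follows the paper's proof essentially verbatim. The paper proves the same two scalar identities, $(N_m)_s=kM_m$ by the same telescoping and $(-1)^{m+1}\K_m=(M_m)_s+kN_m$ by the same reindexing, and then reads off the complex identity. The only cosmetic difference is that the paper obtains \eqref{E:Qs} from $Q_m=|\mathcal Q_m|^2$, whereas you compute it directly as $2M_m\bigl((M_m)_s+kN_m\bigr)$, which is equally immediate.
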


\begin{proof}
We prove the complex identity \eqref{E:complexQ}; \eqref{E:Qs} then follows by taking the derivative of $Q_m=|\mathcal Q_m|^2$.

We claim that the following two identities hold pointwise:
\begin{align}
\label{eq:Nm-s}
(N_m)_s &= k\,M_m,\\[0.25em]
\label{eq:Km-real}
(-1)^{m+1}\K_m &= (M_m)_s + k\,N_m.
\end{align}

\smallskip
\noindent {Proof of \eqref{eq:Km-real}.}
Since $(M_m)_s=k_{s^{2m+2}}$, the explicit formula \eqref{E:Km} gives
\[
(-1)^{m+1}\K_m
= k_{s^{2m+2}}+\frac12(-1)^m k\,k_{s^m}^2
+ k\sum_{j=1}^{m}(-1)^{m+j} k_{s^{m-j}}\,k_{s^{m+j}}.
\]
In the sum, set $r:=m-j\in\{0,1,\dots,m-1\}$; then $m+j=2m-r$ and $(-1)^{m+j}=(-1)^{2m-r}=(-1)^r$.
Hence
\[
k\sum_{j=1}^{m}(-1)^{m+j} k_{s^{m-j}}\,k_{s^{m+j}}
= k\sum_{r=0}^{m-1}(-1)^r k_{s^{r}}\,k_{s^{2m-r}}.
\]
Together with the definition of $N_m$, this yields \eqref{eq:Km-real}.

\smallskip
\noindent {Proof of \eqref{eq:Nm-s}.}
Differentiate the definition of $N_m$:
\begin{align*}
(N_m)_s
&= (-1)^m k_{s^m}k_{s^{m+1}}
+ \sum_{j=0}^{m-1}(-1)^j\Bigl(k_{s^{j+1}}k_{s^{2m-j}} + k_{s^j}k_{s^{2m-j+1}}\Bigr)\\
&= (-1)^m k_{s^m}k_{s^{m+1}}
+ \sum_{i=1}^{m}(-1)^{i-1}k_{s^{i}}k_{s^{2m-i+1}}
+ \sum_{j=0}^{m-1}(-1)^j k_{s^j}k_{s^{2m+1-j}},
\end{align*}
where in the middle sum we reindexed $i=j+1$.
Now observe that for each $i\in\{1,\dots,m-1\}$, the term
$k_{s^{i}}k_{s^{2m-i+1}}$ appears in the last two sums with coefficients
$(-1)^{i-1}$ and $(-1)^{i}$, which cancel.
Thus only the last, and first terms respectively, remain:
\[
(N_m)_s
= (-1)^m k_{s^m}k_{s^{m+1}}
+ (-1)^{m-1}k_{s^{m}}k_{s^{m+1}}
+ k\,k_{s^{2m+1}}
= k\,k_{s^{2m+1}}
= k\,M_m,
\]
which is \eqref{eq:Nm-s}.

Using $\theta_s=k$ and the product rule,
\begin{align*}
\Ds\bigl((M_m+iN_m)e^{-i\theta}\bigr)
&=\bigl((M_m)_s+i(N_m)_s - i\theta_s(M_m+iN_m)\bigr)e^{-i\theta}\\
&=\bigl((M_m)_s + kN_m + i\bigl((N_m)_s-kM_m\bigr)\bigr)e^{-i\theta}.
\end{align*}
By \eqref{eq:Nm-s}, the imaginary part vanishes, and by \eqref{eq:Km-real} the real part equals $(-1)^{m+1}\K_m$.
This gives \eqref{E:complexQ}.

Since $Q_m=|\mathcal Q_m|^2$, we have
\[
\Ds Q_m = 2\Re\bigl(\overline{\mathcal Q_m}\,\Ds\mathcal Q_m\bigr)
=2\Re\bigl(\overline{(M_m+iN_m)e^{-i\theta}}\cdot (-1)^{m+1}\K_m e^{-i\theta}\bigr)
=2(-1)^{m+1}M_m\,\K_m,
\]
which is \eqref{E:Qs}.
\end{proof}

\begin{lem}[Mean value of $N_m$]\label{lem:mean-Nm}
For every smooth closed curve,
\begin{equation}\label{eq:mean-Nm}
\int_\gamma N_m\,ds
=\left(m+\frac12\right)(-1)^m\int_\gamma k_{s^m}^2\,ds.
\end{equation}
\end{lem}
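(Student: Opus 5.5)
The plan is to integrate the definition of $N_m$ term by term over the closed curve, using integration by parts to move derivatives until each product becomes $\pm k_{s^m}^2$. Since $\gamma$ is closed, boundary terms vanish.

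The key observation concerns a single term of the sum. For $0\le j\le m-1$, integrating by parts $m-j$ times in
\[
\int_\gamma k_{s^j}\,k_{s^{2m-j}}\,ds
\]
moves $m-j$ derivatives from the second factor onto the first. This gives
\[
\int_\gamma k_{s^j}\,k_{s^{2m-j}}\,ds=(-1)^{m-j}\int_\gamma k_{s^m}\,k_{s^m}\,ds=(-1)^{m-j}\int_\gamma k_{s^m}^2\,ds .
\]
Multiplying by the coefficient $(-1)^j$ from the definition of $N_m$, each summand contributes exactly
\[
(-1)^j(-1)^{m-j}\int_\gamma k_{s^m}^2\,ds=(-1)^m\int_\gamma k_{s^m}^2\,ds,
\]
independently of $j$.

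Summing over the $m$ values $j=0,\dots,m-1$ gives $m(-1)^m\int_\gamma k_{s^m}^2\,ds$. The leading term of $N_m$ contributes $\tfrac12(-1)^m\int_\gamma k_{s^m}^2\,ds$ directly. Adding these yields $(m+\tfrac12)(-1)^m\int_\gamma k_{s^m}^2\,ds$, which is \eqref{eq:mean-Nm}.

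The only point needing care is the sign bookkeeping in the repeated integration by parts: each transfer of one derivative contributes a factor $-1$, and exactly $m-j$ transfers are needed to balance the orders at $m$. The identity requires only smoothness and periodicity, so no further obstacle is expected.
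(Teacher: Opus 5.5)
Your proposal is correct and follows essentially the same route as the paper: integrate by parts $m-j$ times so that each summand $(-1)^j\int_\gamma k_{s^j}k_{s^{2m-j}}\,ds$ becomes $(-1)^m\int_\gamma k_{s^m}^2\,ds$, independently of $j$. Summing the $m$ identical contributions and adding the $\tfrac12(-1)^m k_{s^m}^2$ term gives the identity.
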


\begin{proof}
Using periodicity and integration by parts on $\S$,
for each $j\in\{0,\dots,m-1\}$ we have
\[
\int_\gamma k_{s^j}k_{s^{2m-j}}\,ds
=(-1)^{m-j}\int_\gamma k_{s^m}k_{s^m}\,ds
=(-1)^{m-j}\int_\gamma k_{s^m}^2\,ds.
\]
Multiplying by $(-1)^j$ yields
\[
\int_\gamma (-1)^j k_{s^j}k_{s^{2m-j}}\,ds
=(-1)^m\int_\gamma k_{s^m}^2\,ds,
\]
independent of $j$. Summing over $j=0,\dots,m-1$ and adding the remaining $\frac12(-1)^m k_{s^m}^2$ term
gives \eqref{eq:mean-Nm}.
\end{proof}

\begin{thm}[Rigidity of equilibria]\label{T:rigidity}
Fix $m\ge1$.
A smooth closed immersed curve is a critical point of $E_m$ (equivalently, $\K_m\equiv0$) if and only if it is a
round multiply-covered circle.
\end{thm}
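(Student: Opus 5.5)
The easy direction comes first. On a round $\omega$-circle the curvature $k$ is constant. Every term of $\K_m$ in \eqref{E:Km} contains a factor $k_{s^j}$ with $j\ge1$; for $m\ge1$ even $\frac12 k\,k_{s^m}^2$ carries such a factor. Hence $\K_m\equiv0$ there.

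For the converse, suppose $\K_m\equiv0$ on a smooth closed immersed curve. By Lemma~\ref{L:Qs}, specifically \eqref{E:complexQ}, the complex quantity $\mathcal Q_m=(M_m+iN_m)e^{-i\theta}$ is constant in $s$. Equivalently, by \eqref{E:Qs}, $Q_m=M_m^2+N_m^2$ is constant. Write $\mathcal Q_m\equiv c\in\C$.

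Suppose first that $c\neq0$. Then $M_m+iN_m=c\,e^{i\theta}$, so $N_m=\Im(c\,e^{i\theta})$. Now $e^{i\theta}=\gamma_s$ under the identification $\R^2\cong\C$, and $\gamma$ is closed, so $\int_\gamma e^{i\theta}\,ds=\int_\gamma\gamma_s\,ds=0$. Therefore $\int_\gamma N_m\,ds=0$. Lemma~\ref{lem:mean-Nm} then gives $(m+\tfrac12)\int_\gamma k_{s^m}^2\,ds=0$, so $k_{s^m}\equiv0$. Since $k$ is periodic, $k_{s^{m-1}}$ is a periodic function with zero derivative, hence constant, and it integrates to zero over the closed curve when $m-1\ge1$. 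So it vanishes, and iterating down gives $k_s\equiv0$, i.e.\ $k$ is constant.

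If instead $c=0$, then $M_m\equiv N_m\equiv0$, so $\int_\gamma N_m\,ds=0$ and the same argument via Lemma~\ref{lem:mean-Nm} gives $k$ constant. In fact both cases are handled uniformly: $\int_\gamma N_m\,ds=\Im\bigl(c\int_\gamma e^{i\theta}\,ds\bigr)=0$ regardless of $c$.

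It remains to note that a smooth closed curve with constant $k$ must have $k\neq0$, since a line is not closed. It is therefore a round circle traversed $|\omega|$ times, with $\omega\neq0$ by \eqref{E:turning}. This also recovers the remark that there are no critical curves with turning number zero.

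The only step needing care is the vanishing of $\int_\gamma e^{i\theta}\,ds$, which is exactly closedness of $\gamma$. Once that is in place, the argument is just the chain conservation law $\Rightarrow$ zero mean of $N_m$ $\Rightarrow$ $E_m=0$, and I do not expect a genuine obstacle.
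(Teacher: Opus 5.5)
Your proposal is correct and follows the paper's proof essentially step for step: $\mathcal Q_m$ is constant by \eqref{E:complexQ}, closedness gives $\int_\gamma N_m\,ds=0$, and Lemma~\ref{lem:mean-Nm} then forces $k_{s^m}\equiv0$, so $k$ is a nonzero constant. The case split on $c$ is unnecessary, as you note yourself, and your descending zero-mean argument for constancy of $k$ is just an equivalent phrasing of the paper's ``periodic polynomial is constant'' step.
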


\begin{proof}
Assume $\K_m\equiv 0$. By Lemma~\ref{L:Qs}, \eqref{E:complexQ} reduces to
\[
\Ds\mathcal Q_m = 0,
\]
so $\mathcal Q_m$ is constant on $\S$:
\begin{equation}\label{eq:Qm-constant}
(M_m+iN_m)e^{-i\theta}\equiv C
\qquad\text{for some constant }C\in\C.
\end{equation}
Equivalently,
\begin{equation}\label{eq:MN-as-tangent}
M_m+iN_m = C\,e^{i\theta}.
\end{equation}

Integrating \eqref{eq:MN-as-tangent} over $\gamma$ and using $e^{i\theta}=T$ (in complex notation) yields
\[
\int_\gamma (M_m+iN_m)\,ds = C\int_\gamma e^{i\theta}\,ds
= C\int_\gamma T\,ds
= C\int_\gamma \gamma_s\,ds
= C\bigl(\gamma(L)-\gamma(0)\bigr)
=0,
\]
since $\gamma$ is closed. Taking imaginary parts gives
\begin{equation}\label{eq:Nm-mean-zero}
\int_\gamma N_m\,ds = 0.
\end{equation}
By Lemma~\ref{lem:mean-Nm} we therefore obtain
\[
\left(m+\frac12\right)(-1)^m\int_\gamma k_{s^m}^2\,ds = 0,
\]
hence $k_{s^m}\equiv 0$ on $\S$. Thus $k$ is a polynomial in $s$ of degree at most $m-1$; since $k$ is periodic on $\S$,
it follows that $k$ is constant.

If the constant were zero, then $T_s=0$ and hence $T$ would be constant, which is incompatible with a
nondegenerate closed immersion.  Thus $k$ is a nonzero constant.  A smooth closed planar curve with
constant nonzero curvature is a circle traversed with constant speed; moreover,
\[
\int_\gamma k\,ds = 2\pi\,\omega[\gamma]\in 2\pi\Z
\]
determines the turning number $\omega$. Therefore $\gamma$ is the corresponding multiply-covered circle.  Conversely, if $\gamma$ is a round multiply-covered circle then $k$ is constant, so $k_{s^j}\equiv0$ for every
$j\ge1$.  The explicit formula \eqref{E:Km} gives $\K_m\equiv0$.  Thus every round multiply-covered circle is
critical.
\end{proof}

\begin{proof}[Proof of Theorem~\ref{T:stationary-classification}]
This is Theorem~\ref{T:rigidity}.
\end{proof}


\section{A gradient inequality}\label{S:grad-ineq}

In this section we prove the scale-invariant gradient inequality used in the stability argument.
The proof has two parts.  First we prove a coercive estimate in the small scale-invariant energy regime.
This is done after normalising the length to one, where all constants are dimensionless.  We then scale back.
Second, we prove a weak global gradient inequality.  Combining the small-energy estimate with the weak estimate gives
the global inequality stated in Theorem~\ref{T:GI}.

Throughout this section
\[
\mathcal I_m[\gamma]:=L[\gamma]^{2m+1}E_m[\gamma].
\]
The desired scale-invariant form is
\[
L[\gamma]^{4m+5}\int_\gamma \K_m^2\,ds
\ge C\,\mathcal I_m[\gamma],
\]
or equivalently
\begin{equation}\label{eq:grad-ineq-physical}
\|\K_m\|_{L^2(\gamma)}^2
\ge
C\,L[\gamma]^{-(4m+5)}\mathcal I_m[\gamma]
=
C\,L[\gamma]^{-(2m+4)}E_m[\gamma].
\end{equation}

\subsection{The length-one reduction}

We first work with a smooth closed curve of length $L=1$ and fixed nonzero turning number $\omega\in\Z\setminus\{0\}.$  Then
\[
\bar k=\int_\gamma k\,ds=2\pi\omega
\]
and
\[
\mathcal I_m=E_m=\frac12\int_\gamma k_{s^m}^2\,ds.
\]
It is convenient to set
\[
\kappa:=2\pi\omega,\qquad f:=k-\kappa.
\]
Then
\[
\int_\gamma f\,ds=0,\qquad f_{s^j}=k_{s^j}\quad(j\ge1).
\]

We shall repeatedly use the following consequences of the Poincar\'e--Sobolev--Wirtinger (PSW) inequality                              on a unit-length circle.  If
$u$ has zero average, then
\begin{equation}\label{eq:PW-unit}
\|u\|_{L^2} \le \frac{1}{2\pi}\|u_s\|_{L^2}.
\end{equation}
Moreover, if $u$ has zero average, then
\begin{equation}\label{eq:Sob-unit-zero}
\|u\|_{L^\infty}^2\le 2\|u\|_{L^2}\|u_s\|_{L^2}.
\end{equation}

\begin{lem}[Top-order Poincar\'e lower bound]\label{lem:P-lower-by-Im}
For every smooth closed curve and every $m\ge1$,
\begin{equation}\label{eq:P-lower-by-Im}
\|k_{s^{2m+2}}\|_{L^2(\gamma)}^2
\ge
\left(\frac{2\pi}{L}\right)^{2m+4}\|k_{s^m}\|_{L^2(\gamma)}^2
=
2(2\pi)^{2m+4}L^{-(4m+5)}\mathcal I_m[\gamma].
\end{equation}
In particular, if $L=1$ and
\[
\mathcal P:=\int_\gamma k_{s^{2m+2}}^2\,ds,
\]
then
\begin{equation}\label{eq:P-lower-unit}
\mathcal P
\ge
2(2\pi)^{2m+4}E_m.
\end{equation}
\end{lem}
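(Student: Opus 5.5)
Apply the Wirtinger inequality on a circle of length $L$ repeatedly, $m+2$ times. The constant $2\pi/L$ is the square root of the first nonzero eigenvalue of $-\p_s^2$ on the circle of length $L$. Concretely, I would work in the arclength parametrisation $s\in[0,L)$, where every function $k_{s^j}$ is $L$-periodic. The scaled form of \eqref{eq:PW-unit} reads
\[
\|u\|_{L^2(\gamma)}\le \frac{L}{2\pi}\|u_s\|_{L^2(\gamma)}
\quad\text{whenever }\int_\gamma u\,ds=0 .
\]
It follows from Parseval's identity by expanding $u$ in the Fourier modes $e^{2\pi i n s/L}$, $n\ne0$. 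Equivalently, it follows from \eqref{eq:PW-unit} by rescaling $s\mapsto s/L$.

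The key observation is that for every $j\ge1$ the function $u=k_{s^j}$ has zero mean, because it is the derivative of the periodic function $k_{s^{j-1}}$. So for $j=m,m+1,\dots,2m+1$, each of which is $\ge1$ since $m\ge1$, the inequality gives
\[
\|k_{s^j}\|_{L^2}^2\le \Bigl(\frac{L}{2\pi}\Bigr)^2\|k_{s^{j+1}}\|_{L^2}^2 .
\]
Chaining these $m+2$ inequalities from $j=m$ up to $j=2m+1$ yields
\[
\|k_{s^{2m+2}}\|_{L^2}^2\ge(2\pi/L)^{2(m+2)}\|k_{s^m}\|_{L^2}^2 ,
\]
which is the first inequality in \eqref{eq:P-lower-by-Im}. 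Alternatively, one could apply Parseval once: the $n$-th Fourier coefficient of $k_{s^{2m+2}}$ is $(2\pi i n/L)^{m+2}$ times that of $k_{s^m}$, the $n=0$ coefficient of $k_{s^m}$ vanishes, and $|n|\ge1$ gives the bound directly.

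For the equality, substitute $\|k_{s^m}\|_{L^2}^2=2E_m=2L^{-(2m+1)}\mathcal I_m$ from the definition of $\mathcal I_m$. The powers of $L$ combine as $L^{-(2m+4)}L^{-(2m+1)}=L^{-(4m+5)}$. Setting $L=1$ gives \eqref{eq:P-lower-unit}.

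There is no real obstacle here. The only point needing care is the mean-zero hypothesis at each step, and it holds because $m\ge1$: this is where the statement uses $m\ge1$, since $k$ itself does not have zero mean when $\omega\ne0$. The remaining bookkeeping is the exponent count.
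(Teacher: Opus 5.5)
Your proposal is correct and follows the paper's proof: apply the length-$L$ Wirtinger inequality $m+2$ times along the mean-zero derivatives $k_{s^m},\dots,k_{s^{2m+1}}$, then substitute $\|k_{s^m}\|_{L^2}^2=2E_m=2L^{-(2m+1)}\mathcal I_m$. You also state explicitly that every intermediate $k_{s^j}$ with $j\ge1$ has zero mean, and that this is where $m\ge1$ is needed; the paper leaves this point implicit.
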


\begin{proof}
The function $k_{s^m}$ has zero average. Applying \eqref{eq:PW-unit} iteratively on a circle of length $L$ gives
\[
\|k_{s^m}\|_2
\le
\left(\frac{L}{2\pi}\right)^{m+2}\|k_{s^{2m+2}}\|_2.
\]
Squaring and rearranging gives the first inequality.  The final identity follows from
\[
\|k_{s^m}\|_2^2=2E_m=2L^{-(2m+1)}\mathcal I_m.
\]
\end{proof}

\subsection{Splitting of the Euler--Lagrange operator}

From Lemma~\ref{L:firstvar},
\[
\K_m
=
(-1)^{m+1}k_{s^{2m+2}}
-\frac12 k\,k_{s^m}^2
+
k\sum_{j=1}^{m}(-1)^{j+1}k_{s^{m-j}}k_{s^{m+j}}.
\]
Isolating the $j=m$ term gives
\begin{equation}\label{eq:K-splitting}
\K_m
=
(-1)^{m+1}\bigl(k_{s^{2m+2}}+k^2k_{s^{2m}}\bigr)
-\sum_{j=1}^{m-1}(-1)^j k\,k_{s^{m-j}}k_{s^{m+j}}
-\frac12 k\,k_{s^m}^2.
\end{equation}
In the unit-length setting define
\begin{equation}\label{eq:F0-def}
F_0
:=
(-1)^{m+1}\bigl(k_{s^{2m+2}}+\kappa^2k_{s^{2m}}\bigr),
\qquad
R:=\K_m-F_0.
\end{equation}
Thus
\begin{equation}\label{eq:R-def-unit}
R
=
(-1)^{m+1}(k^2-\kappa^2)k_{s^{2m}}
-\sum_{j=1}^{m-1}(-1)^j k\,k_{s^{m-j}}k_{s^{m+j}}
-\frac12 k\,k_{s^m}^2.
\end{equation}

\subsection{Closure control of the resonant Fourier modes}

The only kernel modes of the frozen linear part $F_0$ are the translation modes $p=\pm\omega$.
They are not free Fourier modes of the curvature of a closed curve: closure forces them to be quadratic in the curvature oscillation.

\begin{lem}[Resonant Fourier mode bound]\label{lem:omega-mode-bound}
Assume $L=1$ and write
\[
k(s)=\sum_{p\in\Z}a_p e^{2\pi i p s}.
\]
Then
\begin{equation}\label{eq:omega-mode-bound}
|a_{\omega}|+|a_{-\omega}|
\le
C_{m,\omega}E_m.
\end{equation}
\end{lem}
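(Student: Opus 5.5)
The plan is to exploit closure of $\gamma$ through the tangent vector written in complex notation, $T=e^{i\theta}$, exactly as in the proof of Theorem~\ref{T:rigidity}. Closure gives $\int_0^1 e^{i\theta(s)}\,ds=0$. With $L=1$, $\kappa=2\pi\omega$ and $f=k-\kappa$, we have $\theta_s=k=\kappa+f$ and $\int_0^1 f\,ds=0$. Hence the primitive $F(s):=\int_0^s f\,d\sigma$ is $1$-periodic. Let $\bar F:=\int_0^1F\,ds$ and absorb $\bar F$ and $\theta(0)$ into a constant phase $\theta_*:=\theta(0)+\bar F$, setting $G:=F-\bar F$. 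Then
\[
\theta(s)=\theta_*+\kappa s+G(s),\qquad \int_0^1G\,ds=0,\qquad G_s=f .
\]
On the Fourier side, $a_0=\kappa$, the coefficients of $f$ are $a_p$ for $p\neq0$, and $\hat G_p=a_p/(2\pi i p)$ for $p\neq0$.

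Next, expand $e^{iG}=1+iG+\rho$, using the elementary bound $|e^{ix}-1-ix|\le x^2/2$, valid for all real $x$. Inserting this into the closure condition gives
\[
0=e^{i\theta_*}\Bigl(\int_0^1 e^{2\pi i\omega s}\,ds+i\int_0^1 e^{2\pi i\omega s}G\,ds+\int_0^1e^{2\pi i\omega s}\rho\,ds\Bigr).
\]
The first integral vanishes because $\omega\neq0$; this is the only place the hypothesis $\omega\ne0$ enters. The second integral equals $\hat G_{-\omega}=a_{-\omega}/(-2\pi i\omega)$. Therefore
\[
|a_{-\omega}|\le 2\pi|\omega|\int_0^1|\rho|\,ds\le \pi|\omega|\,\|G\|_{L^2}^2 .
\]
Since $k$ is real, $a_\omega=\overline{a_{-\omega}}$, so the same bound holds for $|a_\omega|$.

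Finally, convert $\|G\|_{L^2}^2$ into $E_m$ by iterating the Wirtinger inequality \eqref{eq:PW-unit}. This applies because $G$, $f$ and all $k_{s^j}$ ($j\ge1$) have zero average, and $G_s=f$, $f_{s^{j}}=k_{s^j}$. It yields
\[
\|G\|_{L^2}\le (2\pi)^{-1}\|f\|_{L^2}\le(2\pi)^{-(m+1)}\|k_{s^m}\|_{L^2}.
\]
Hence $\|G\|_{L^2}^2\le(2\pi)^{-2m-2}\cdot 2E_m$, which gives \eqref{eq:omega-mode-bound} with the explicit constant $C_{m,\omega}=4\pi|\omega|(2\pi)^{-2m-2}$.

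There is no real obstacle; the only step needing care is that the Taylor remainder bound holds globally rather than only for small $G$. This makes the estimate valid without any smallness assumption on $E_m$, and it is what makes the resonant modes genuinely quadratic in the oscillation.
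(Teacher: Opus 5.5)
Your proposal is correct and follows the paper's proof essentially step for step. The paper likewise takes the mean-zero primitive $\psi$ of $k-\kappa$ and writes the closure condition as $\int_0^1 e^{2\pi i\omega s}e^{i\psi}\,ds=0$. It then uses the global bound $|e^{ix}-1-ix|\le x^2/2$ to isolate $a_{-\omega}$, obtains $a_\omega$ from the reality of $k$, and applies the Wirtinger inequality $m+1$ times to get $\|\psi\|_2^2\le 2(2\pi)^{-2m-2}E_m$.
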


\begin{proof}
Let $f:=k-\kappa,\qquad \kappa=2\pi\omega.$  Since $\int f\,ds=0$, let $\psi$ be the mean-zero primitive of $f$:
\[
\psi_s=f,\qquad \int_0^1\psi\,ds=0.
\]
After fixing the additive constant in the tangent angle, we may write
\[
\theta(s)=\kappa s+\psi(s),
\qquad
\gamma_s=e^{i\theta(s)}.
\]
Since $\gamma$ is closed,
\[
0=\int_0^1e^{i\theta(s)}\,ds
=
\int_0^1e^{2\pi i\omega s}e^{i\psi(s)}\,ds.
\]
Also $\int_0^1e^{2\pi i\omega s}\,ds=0$, because $\omega\ne0$. Hence
\[
i\int_0^1e^{2\pi i\omega s}\psi(s)\,ds
=
-\int_0^1e^{2\pi i\omega s}\bigl(e^{i\psi(s)}-1-i\psi(s)\bigr)\,ds.
\]
Using
\[
|e^{ix}-1-ix|\le \frac12 x^2
\qquad(x\in\R)
\]
gives
\begin{equation}\label{eq:closure-psi-bound}
\left|\int_0^1e^{2\pi i\omega s}\psi(s)\,ds\right|
\le
\frac12\|\psi\|_2^2.
\end{equation}
Since
\[
f(s)=\sum_{p\ne0}a_p e^{2\pi ips},
\qquad
\psi(s)=\sum_{p\ne0}\frac{a_p}{2\pi i p}e^{2\pi ips},
\]
we have
\[
\int_0^1e^{2\pi i\omega s}\psi(s)\,ds
=
\frac{a_{-\omega}}{2\pi i(-\omega)}.
\]
Therefore
\[
|a_{-\omega}|
\le
\pi|\omega|\,\|\psi\|_2^2.
\]
By the PSW inequality applied $m+1$ times to $\psi$,
\[
\|\psi\|_2^2
\le
(2\pi)^{-2m-2}\|\psi_{s^{m+1}}\|_2^2
=
(2\pi)^{-2m-2}\|k_{s^m}\|_2^2
=
2(2\pi)^{-2m-2}E_m.
\]
Thus $|a_{-\omega}|\le C_{m,\omega}E_m$. Since $k$ is real-valued,
$a_\omega=\overline{a_{-\omega}}$, and the result follows.
\end{proof}

\begin{lem}[Fourier lower bound for $F_0$]\label{lem:F0-lower}
Assume $L=1$ and fix $\omega\in\Z\setminus\{0\}$.
There exist constants $c_\omega>0$ and $C_{m,\omega}>0$ such that
\begin{equation}\label{eq:F0-lower}
\int_\gamma F_0^2\,ds
\ge
c_\omega\,\mathcal P
-
C_{m,\omega}E_m^2,
\qquad
\mathcal P:=\int_\gamma k_{s^{2m+2}}^2\,ds.
\end{equation}
\end{lem}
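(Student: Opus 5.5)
Since $L=1$, Fourier analysis on $\R/\Z$ applies directly. Write $k=\sum_p a_p e^{2\pi i p s}$ as in Lemma~\ref{lem:omega-mode-bound}. Then
\[
k_{s^{2m+2}}+\kappa^2 k_{s^{2m}}
=\sum_{p}(2\pi i p)^{2m}\bigl((2\pi i p)^2+(2\pi\omega)^2\bigr)a_pe^{2\pi ips}
=\sum_p (-1)^{m+1}(2\pi p)^{2m}(2\pi)^2(p^2-\omega^2)a_pe^{2\pi ips}.
\]
By Parseval,
\[
\int_\gamma F_0^2\,ds=(2\pi)^{4m+4}\sum_{p\ne0}p^{4m}(p^2-\omega^2)^2|a_p|^2,
\qquad
\mathcal P=(2\pi)^{4m+4}\sum_{p\ne0}p^{4m+4}|a_p|^2 .
\]
So the whole argument is a comparison of Fourier multipliers, split into nonresonant and resonant modes.

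\textbf{Nonresonant modes} ($p\ne0,\pm\omega$). I would show there is $c_\omega>0$ with $(p^2-\omega^2)^2\ge c_\omega p^4$ for every integer $p\notin\{0,\pm\omega\}$.
\begin{itemize}
\item For $|p|\ge 2|\omega|$ we have $p^2-\omega^2\ge \tfrac34p^2$.
\item For the finitely many $p$ with $0<|p|<2|\omega|$ and $|p|\ne|\omega|$, we have $|p^2-\omega^2|\ge 1$ and $p^4\le 16\omega^4$, so $c_\omega=\min\{9/16,\,1/(16\omega^4)\}$ works.
\end{itemize}
Summing gives
\[
\int_\gamma F_0^2\,ds\ \ge\ c_\omega(2\pi)^{4m+4}\sum_{p\ne0,\pm\omega}p^{4m+4}|a_p|^2 .
\]

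\textbf{Resonant modes} ($p=\pm\omega$). These are exactly the modes missing from the right-hand side above, and they are the only real obstacle. Their contribution to $\mathcal P$ is $(2\pi)^{4m+4}\omega^{4m+4}\bigl(|a_\omega|^2+|a_{-\omega}|^2\bigr)$. Lemma~\ref{lem:omega-mode-bound} (closure) bounds this by $C_{m,\omega}E_m^2$.

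\textbf{Assembly.} Since $c_\omega\le1$,
\[
\int_\gamma F_0^2\,ds\ \ge\ c_\omega\bigl(\mathcal P-(2\pi)^{4m+4}\omega^{4m+4}(|a_\omega|^2+|a_{-\omega}|^2)\bigr)\ \ge\ c_\omega\mathcal P-C_{m,\omega}E_m^2,
\]
which is \eqref{eq:F0-lower}. The only delicate point is the resonant modes, and there the quadratic closure estimate already supplies the needed $E_m^2$ error, so no further difficulty is expected.
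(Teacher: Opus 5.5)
Your proof is correct and follows the paper's argument: Parseval for $F_0$ and $\mathcal P$, the multiplier comparison $(p^2-\omega^2)^2\ge c_\omega p^4$ on the nonresonant modes, and the closure estimate of Lemma~\ref{lem:omega-mode-bound} to bound the two resonant modes by $C_{m,\omega}E_m^2$. The only difference is cosmetic: you give the explicit constant $c_\omega=\min\{9/16,\,1/(16\omega^4)\}$, whereas the paper takes $c_\omega=\min_{p\in\Z\setminus\{0,\pm\omega\}}(1-\omega^2/p^2)^2$.
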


\begin{proof}
Using the Fourier expansion
\[
k(s)=\sum_{p\in\Z}a_p e^{2\pi ips},
\]
Parseval' identity gives
\[
\mathcal P
=
\sum_{p\in\Z}|a_p|^2(2\pi p)^{4m+4}.
\]
Moreover, by \eqref{eq:F0-def},
\[
F_0(s)
=
\sum_{p\in\Z}
a_p(2\pi)^{2m+2}p^{2m}(p^2-\omega^2)e^{2\pi ips},
\]
and hence
\[
\int_\gamma F_0^2\,ds
=
\sum_{p\in\Z}
|a_p|^2(2\pi)^{4m+4}p^{4m}(p^2-\omega^2)^2.
\]

Set
\[
c_\omega
:=
\min_{p\in\Z\setminus\{0,\pm\omega\}}
\left(1-\frac{\omega^2}{p^2}\right)^2
>0.
\]
For $p\in\Z\setminus\{0,\pm\omega\}$,
\[
p^{4m}(p^2-\omega^2)^2
=
p^{4m+4}\left(1-\frac{\omega^2}{p^2}\right)^2
\ge
c_\omega p^{4m+4}.
\]
Therefore
$$\int_\gamma F_0^2\,ds
\ge
c_\omega
\sum_{p\in\Z\setminus\{0,\pm\omega\}}
|a_p|^2(2\pi p)^{4m+4} 
=
c_\omega\left[
\mathcal P
-
(2\pi\omega)^{4m+4}\bigl(|a_\omega|^2+|a_{-\omega}|^2\bigr)
\right].$$

The resonant contribution is bounded by Lemma~\ref{lem:omega-mode-bound}:
\[
(2\pi\omega)^{4m+4}\bigl(|a_\omega|^2+|a_{-\omega}|^2\bigr)
\le
C_{m,\omega}E_m^2.
\]
This proves \eqref{eq:F0-lower}.
\end{proof}

\subsection{The perturbative remainder}

\begin{lem}[Remainder estimate]\label{lem:R-upper}
Assume $L=1$ and fix $m\ge1$, $\omega\in\Z\setminus\{0\}$.
For every $\delta\in(0,1)$ there exist constants
$\varepsilon_R=\varepsilon_R(m,\omega,\delta)>0,
\qquad
C_{\delta,m,\omega}<\infty,$
such that if
\[
E_m[\gamma]\le \varepsilon_R,
\]
then
\begin{equation}\label{eq:R-upper}
\int_\gamma R^2\,ds
\le
\delta\,\mathcal P
+
C_{\delta,m,\omega}E_m^2.
\end{equation}
\end{lem}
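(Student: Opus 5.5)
The plan is to estimate each of the three groups of terms in \eqref{eq:R-def-unit} separately in $L^2$. I would put the lower-order factors in $L^\infty$ and the highest-order factor in $L^2$, interpolate between $\|k_{s^m}\|_2^2=2E_m$ and $\mathcal P=\|k_{s^{2m+2}}\|_2^2$, and finally apply Young's inequality to split each product into $\delta\mathcal P$ plus a power of $E_m$. Since we may take $\varepsilon_R\le1$, any power $E_m^q$ with $q\ge2$ is bounded by $E_m^2$.

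\textbf{Preliminary bounds.} Write $f=k-\kappa$, which has zero mean. Applying \eqref{eq:Sob-unit-zero} and then \eqref{eq:PW-unit} iteratively (this uses $m\ge1$) gives
\[
\|f\|_\infty^2\le C\|f_s\|_2^2\le C_mE_m,\qquad \|k\|_\infty\le \kappa+C_mE_m^{1/2}\le C_{m,\omega}.
\]
For $1\le i\le m$, the same argument gives $\|k_{s^i}\|_\infty^2\le C_mE_m$. For $m\le j\le 2m+2$, the Gagliardo--Nirenberg (Fourier/H\"older) interpolation on the unit circle, applied to the zero-mean function $k_{s^m}$, gives
\[
\|k_{s^{j}}\|_2^2\le C\,E_m^{\frac{2m+2-j}{m+2}}\,\mathcal P^{\frac{j-m}{m+2}}.
\]

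\textbf{Term by term.} For the first term, $(k^2-\kappa^2)k_{s^{2m}}=f(f+2\kappa)k_{s^{2m}}$. Hence its squared $L^2$ norm is at most $C_{m,\omega}E_m\|k_{s^{2m}}\|_2^2$, and Poincar\'e gives $\|k_{s^{2m}}\|_2^2\le(2\pi)^{-4}\mathcal P$. Choosing $\varepsilon_R$ so that $C_{m,\omega}\varepsilon_R\le\delta/3$ bounds this term by $\tfrac\delta3\mathcal P$ directly.

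For the middle terms with $1\le j\le m-1$, I bound $\|k\,k_{s^{m-j}}k_{s^{m+j}}\|_2^2\le C\,E_m\,\|k_{s^{m+j}}\|_2^2$. By interpolation this is at most $C E_m^{1+\frac{m+2-j}{m+2}}\mathcal P^{\frac{j}{m+2}}$. Young's inequality with exponents $\frac{m+2}{j}$ and $\frac{m+2}{m+2-j}$ then gives $\tfrac{\delta}{3(m-1)}\mathcal P+C_\delta E_m^{\frac{2m+4-j}{m+2-j}}$. The exponent here is at least $2$.

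For the last term, $\|k\,k_{s^m}^2\|_2^2\le C\|k_{s^m}\|_\infty^2E_m$. Using \eqref{eq:Sob-unit-zero}, $\|k_{s^m}\|_\infty^2\le 2\|k_{s^m}\|_2\|k_{s^{m+1}}\|_2$, and interpolating the second factor, the term is at most $C E_m^{\frac{4m+7}{2(m+2)}}\mathcal P^{\frac{1}{2(m+2)}}$. Young's inequality with exponent $2(m+2)$ on $\mathcal P$ gives $\tfrac\delta3\mathcal P+C_\delta E_m^{\frac{4m+7}{2m+3}}$, and again the exponent is at least $2$.

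Summing the three contributions, with the elementary bound $(a+b+c)^2\le3(a^2+b^2+c^2)$ and $\delta$ rescaled accordingly, yields \eqref{eq:R-upper}.

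\textbf{Main obstacle.} The delicate step is the exponent bookkeeping: each Young split must produce a power of $E_m$ that is at least $2$, since anything smaller would not be absorbable against the $E_m^2$ error already present in Lemma~\ref{lem:F0-lower}. The first term avoids this issue only because its coefficient $f$ is small in $L^\infty$, which is exactly where the smallness $E_m\le\varepsilon_R$ is used. I would verify the homogeneity count by the scaling heuristic that every factor carrying at least one derivative contributes $E_m^{1/2}$ at the level of $\mathcal P^0$. This makes every term of $R$ at least quadratic in $E_m^{1/2}$ beyond the top-order weight, which is what forces the resulting exponents to be $\ge2$.
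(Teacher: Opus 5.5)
Your proof is correct and follows essentially the paper's route: the same three-way split of $R$, the same H\"older placement of lower-order factors in $L^\infty$, smallness of $\|k-\kappa\|_\infty$ for the $(k^2-\kappa^2)k_{s^{2m}}$ term, and Young's inequality for $k\,k_{s^m}^2$. You handle the middle terms and the factor $\|k_{s^{m+1}}\|_2$ by interpolating between $E_m$ and $\mathcal P$ and then applying Young, whereas the paper bounds $\|k_{s^{m+j}}\|_2^2\le C\mathcal P$ and absorbs using $E_m\le\varepsilon_R$. Your bound $\|k_{s^i}\|_\infty^2\le C_mE_m$ is false for $i=m$, but you only use it for $1\le i\le m-1$ and treat $k_{s^m}$ separately, so nothing breaks.
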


\begin{proof}
By \eqref{eq:R-def-unit},
\[
R
=
(-1)^{m+1}(k^2-\kappa^2)k_{s^{2m}}
-\sum_{j=1}^{m-1}(-1)^j k\,k_{s^{m-j}}k_{s^{m+j}}
-\frac12 k\,k_{s^m}^2.
\]
Hence
\begin{equation}\label{eq:R-sq-split}
\int R^2\,ds
\le
C(m)\left(T_1+\sum_{j=1}^{m-1}T_{2,j}+T_3\right),
\end{equation}
where
\[
T_1:=\int (k^2-\kappa^2)^2k_{s^{2m}}^2\,ds,
\]
\[
T_{2,j}:=\int k^2 k_{s^{m-j}}^2k_{s^{m+j}}^2\,ds
\qquad(1\le j\le m-1),
\]
and
\[
T_3:=\int k^2 k_{s^m}^4\,ds.
\]

We first record the elementary estimates used below.  Since $f=k-\kappa$ has zero average,
\eqref{eq:PW-unit} and \eqref{eq:Sob-unit-zero} give
\begin{equation}\label{eq:f-infty-unit}
\|k-\kappa\|_\infty^2
\le
C_m\|k_{s^m}\|_2^2
=
C_m E_m.
\end{equation}
Similarly, for $1\le r\le m-1$,
\begin{equation}\label{eq:lower-derivative-infty-unit}
\|k_{s^r}\|_\infty^2
\le
C_m\|k_{s^m}\|_2^2
=
C_m E_m.
\end{equation}
Furthermore,
\begin{equation}\label{eq:k-infty-unit}
\|k\|_\infty^2+\|k+\kappa\|_\infty^2
\le
C_\omega(1+E_m).
\end{equation}
Finally, for every $1\le q\le 2m+2$,
\begin{equation}\label{eq:step-up-unit}
\|k_{s^q}\|_2^2
\le
C_{m,q}\mathcal P.
\end{equation}

We estimate the three types of term.

\smallskip
\paragraph{Estimate of \texorpdfstring{$T_1$}{T1}:}
using
\[
k^2-\kappa^2=(k-\kappa)(k+\kappa),
\]
together with \eqref{eq:f-infty-unit}, \eqref{eq:k-infty-unit}, and \eqref{eq:step-up-unit},
\[
T_1
\le
\|k-\kappa\|_\infty^2
\|k+\kappa\|_\infty^2
\|k_{s^{2m}}\|_2^2
\le
C_{m,\omega}E_m(1+E_m)\mathcal P.
\]

\smallskip
\paragraph{Estimate of \texorpdfstring{$T_{2,j}$}{T2j}:}
for $1\le j\le m-1$,
\[
T_{2,j}
\le
\|k\|_\infty^2
\|k_{s^{m-j}}\|_\infty^2
\|k_{s^{m+j}}\|_2^2
\le
C_{m,\omega}E_m(1+E_m)\mathcal P.
\]

\smallskip
\paragraph{Estimate of \texorpdfstring{$T_3$}{T3}:}
using \eqref{eq:k-infty-unit},
\[
T_3
\le
\|k\|_\infty^2\|k_{s^m}\|_\infty^2\|k_{s^m}\|_2^2
\le
C_\omega(1+E_m)\|k_{s^m}\|_\infty^2 E_m.
\]
Since $k_{s^m}$ has zero average,
\[
\|k_{s^m}\|_\infty^2
\le
2\|k_{s^m}\|_2\|k_{s^{m+1}}\|_2
\le
C_m E_m^{1/2}\mathcal P^{1/2}.
\]
Therefore
\[
T_3
\le
C_{m,\omega}(1+E_m)E_m^{3/2}\mathcal P^{1/2}.
\]
By Young's inequality, for every $\delta\in(0,1)$,
\[
T_3
\le
\delta\mathcal P
+
C_{\delta,m,\omega}(1+E_m)^2E_m^3.
\]

Choose $\varepsilon_R=\varepsilon_R(m,\omega,\delta)>0$ so small that
\[
C_{m,\omega}E_m(1+E_m)\le \delta
\]
whenever $E_m\le\varepsilon_R$, and also $\varepsilon_R\le1$.
Then
\[
T_1+\sum_{j=1}^{m-1}T_{2,j}
\le
C(m)\delta\mathcal P,
\]
and
\[
T_3
\le
\delta\mathcal P+C_{\delta,m,\omega}E_m^2,
\]
because $E_m\le1$ implies $E_m^3\le E_m^2$.
Relabeling $\delta$ appropriately in the preceding estimates and using \eqref{eq:R-sq-split} yields
\eqref{eq:R-upper}.
\end{proof}

\subsection{The small-energy gradient inequality}

\begin{thm}[Gradient inequality in the small-energy regime]\label{thm:grad-ineq}
Fix $m\ge1$ and $\omega\in\Z\setminus\{0\}$.
There exist constants
$\varepsilon_{m,\omega}^{\rm sm}>0,
\qquad
C_{m,\omega}^{\rm sm}>0,$
such that every smooth closed immersed curve $\gamma$ with turning number $\omega[\gamma]=\omega$ and $\mathcal I_m[\gamma]<\varepsilon_{m,\omega}^{\rm sm}$
satisfies
\begin{equation}\label{eq:grad-ineq-final}
\int_\gamma \K_m^2\,ds
\ge
C_{m,\omega}^{\rm sm}
L[\gamma]^{-(4m+5)}\mathcal I_m[\gamma]
=
C_{m,\omega}^{\rm sm}
L[\gamma]^{-(2m+4)}E_m[\gamma].
\end{equation}
\end{thm}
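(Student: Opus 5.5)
The strategy is to reduce to unit length by scaling, combine the linear lower bound for $F_0$ with the remainder bound, and absorb the error terms using the top-order Poincaré bound. Since $\K_m$ scales like $L^{-(2m+3)}$ and $ds$ like $L$, the quantity $L^{4m+5}\int\K_m^2\,ds$ is scale invariant, and so is $\mathcal I_m$. It therefore suffices to prove
\[
\int_\gamma\K_m^2\,ds\ge C\,E_m
\]
for unit-length curves with $E_m=\mathcal I_m<\varepsilon^{\rm sm}_{m,\omega}$. Turning number is also preserved under dilation, so the constants $\kappa=2\pi\omega$ are fixed.

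On a unit-length curve we write $\K_m=F_0+R$ as in \eqref{eq:F0-def}. The elementary inequality $(a+b)^2\ge\frac12a^2-b^2$ gives
\[
\int\K_m^2\,ds\ge\tfrac12\int F_0^2\,ds-\int R^2\,ds.
\]
Lemma~\ref{lem:F0-lower} bounds the first term below by $\frac12c_\omega\mathcal P-C_{m,\omega}E_m^2$. Next we apply Lemma~\ref{lem:R-upper} with $\delta:=c_\omega/4$, which requires $E_m\le\varepsilon_R(m,\omega,c_\omega/4)$. This gives $\int R^2\le\frac14c_\omega\mathcal P+C_{m,\omega}E_m^2$. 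Combining,
\[
\int\K_m^2\,ds\ge\tfrac14c_\omega\,\mathcal P-C'_{m,\omega}E_m^2 .
\]

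The remaining step is to absorb the quadratic error $E_m^2$. This is the only delicate point, and the reason is that $\mathcal P$ may be small, so the error cannot simply be absorbed into $\mathcal P$. Instead we use \eqref{eq:P-lower-unit}, $\mathcal P\ge2(2\pi)^{2m+4}E_m$, on half of the good term:
\[
\int\K_m^2\,ds\ge\tfrac18c_\omega\mathcal P+\Bigl(\tfrac14c_\omega(2\pi)^{2m+4}-C'_{m,\omega}E_m\Bigr)E_m .
\]
Now choose $\varepsilon^{\rm sm}_{m,\omega}\le\varepsilon_R$ so small that $C'_{m,\omega}\varepsilon^{\rm sm}_{m,\omega}\le\frac18c_\omega(2\pi)^{2m+4}$. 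Then
\[
\int\K_m^2\,ds\ge\tfrac18c_\omega(2\pi)^{2m+4}E_m,
\]
which proves the unit-length inequality. The point is that the resonant and nonlinear errors are quadratic in $E_m$, while the linear gain is linear in $E_m$ through Poincaré.

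Finally we undo the scaling. Given $\gamma$ of length $L$, let $\tilde\gamma:=L^{-1}\gamma$. Then $\tilde k=Lk$ and $\tilde s=s/L$, so $\tilde\K_m=L^{2m+3}\K_m$, hence $\int\tilde\K_m^2\,d\tilde s=L^{4m+5}\int\K_m^2\,ds$. Also $E_m[\tilde\gamma]=\mathcal I_m[\gamma]$. Applying the unit-length estimate to $\tilde\gamma$ yields \eqref{eq:grad-ineq-final}, using the identity $L^{-(4m+5)}\mathcal I_m=L^{-(2m+4)}E_m$.
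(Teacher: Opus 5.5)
Your proof is correct and follows the paper's argument essentially step for step. You reduce to unit length and split $\K_m=F_0+R$, then combine Lemmas~\ref{lem:F0-lower} and~\ref{lem:R-upper} via $\|A+B\|_2^2\ge\frac12\|A\|_2^2-\|B\|_2^2$; the top-order Poincar\'e bound \eqref{eq:P-lower-unit} turns $\mathcal P$ into a term linear in $E_m$ that dominates the quadratic error $C E_m^2$ once $E_m$ is small, and you finish by rescaling. The only differences are cosmetic: you take $\delta=c_\omega/4$ instead of $c_\omega/8$, and you carry a spare $\frac18 c_\omega\mathcal P$ that you ultimately discard.
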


\begin{proof}
We first prove the assertion for length-one curves.  Thus $L=1$ and $\mathcal I_m=E_m$.  By the elementary inequality
\[
\|A+B\|_2^2\ge \frac12\|A\|_2^2-\|B\|_2^2,
\]
applied to $\K_m=F_0+R$, Lemmas~\ref{lem:F0-lower} and \ref{lem:R-upper} give, for
$E_m\le\varepsilon_R(m,\omega,c_\omega/8)$,
\[
\int_\gamma \K_m^2\,ds
\ge
\frac12\int_\gamma F_0^2\,ds-\int_\gamma R^2\,ds
\ge
c_1\,\mathcal P-C_1E_m^2,
\]
where $c_1=c_1(m,\omega)>0$ and $C_1=C_1(m,\omega)<\infty$.
By \eqref{eq:P-lower-unit},
\[
\mathcal P\ge c_2(m)E_m.
\]
Therefore
\[
\int_\gamma \K_m^2\,ds
\ge
c_1c_2E_m-C_1E_m^2.
\]
If
\[
E_m\le \frac{c_1c_2}{2C_1},
\]
then
\[
\int_\gamma \K_m^2\,ds
\ge
\frac12c_1c_2E_m.
\]
This proves the length-one estimate.

Now let $\gamma$ have arbitrary length $L>0$, and define the length-one curve
\[
\widetilde\gamma:=L^{-1}\gamma.
\]
Under the dilation $\gamma\mapsto \rho\gamma$,
\[
E_m[\rho\gamma]=\rho^{-(2m+1)}E_m[\gamma],
\qquad
\K_m[\rho\gamma]=\rho^{-(2m+3)}\K_m[\gamma].
\]
With $\rho=L^{-1}$ we obtain
\[
E_m[\widetilde\gamma]=L^{2m+1}E_m[\gamma]=\mathcal I_m[\gamma],
\]
and
\[
\int_{\widetilde\gamma}\K_m[\widetilde\gamma]^2\,d\widetilde s
=
L^{4m+5}\int_\gamma \K_m[\gamma]^2\,ds.
\]
Applying the length-one estimate to $\widetilde\gamma$ gives
\[
L^{4m+5}\int_\gamma \K_m^2\,ds
\ge
C_{m,\omega}^{\rm sm}\mathcal I_m[\gamma],
\]
which is exactly \eqref{eq:grad-ineq-final}.
\end{proof}

\subsection{A weak global gradient inequality}

The small-energy estimate is enough for the stability argument, but the global theorem in the introduction requires
a global estimate.  This follows from the complex identity in Lemma~\ref{L:Qs}.

\begin{prop}[Weak $L^2$-gradient inequality]\label{prop:weak-grad-ineq}
For every smooth closed immersed curve,
\begin{equation}\label{eq:weak-grad-ineq-Em}
E_m[\gamma]
\le
\frac{1}{2m+1}L[\gamma]^{3/2}\|\K_m\|_{L^2(\gamma)}.
\end{equation}
Equivalently,
\begin{equation}\label{eq:weak-grad-ineq-Im}
\mathcal I_m[\gamma]
\le
\frac{1}{2m+1}L[\gamma]^{2m+\frac52}\|\K_m\|_{L^2(\gamma)}.
\end{equation}
\end{prop}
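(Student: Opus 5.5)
The plan is to combine the complex identity of Lemma~\ref{L:Qs} with the mean-value formula of Lemma~\ref{lem:mean-Nm}. Lemma~\ref{lem:mean-Nm} gives
\[
(-1)^m\int_\gamma N_m\,ds=\Bigl(m+\tfrac12\Bigr)\int_\gamma k_{s^m}^2\,ds=(2m+1)E_m[\gamma],
\]
so it suffices to show that $\bigl|\int_\gamma N_m\,ds\bigr|\le L^{3/2}\|\K_m\|_{L^2}$. This is the rigidity argument of Theorem~\ref{T:rigidity} made quantitative. There, closedness forced $\int N_m\,ds=0$ when $\K_m\equiv0$. Here we instead keep track of the error term, which is linear in $\K_m$.

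The first step is to write $M_m+iN_m=\mathcal Q_m e^{i\theta}=\mathcal Q_m\gamma_s$, identifying $\R^2$ with $\C$. Next we fix a point $c\in\C$ and integrate by parts on the closed curve. Since $\mathcal Q_m$ and $\gamma-c$ are periodic, no boundary terms appear, and we get
\[
\int_\gamma (M_m+iN_m)\,ds=\int_\gamma \mathcal Q_m\,(\gamma-c)_s\,ds=-\int_\gamma \p_s\mathcal Q_m\,(\gamma-c)\,ds
=(-1)^{m}\int_\gamma \K_m\,e^{-i\theta}(\gamma-c)\,ds,
\]
where the last equality uses \eqref{E:complexQ}. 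The translation parameter $c$ is free, because $\int_\gamma\p_s\mathcal Q_m\,ds=0$.

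The one mildly delicate choice is $c$. Take $c=\gamma(s_0)$ for some point of the curve. Every point $\gamma(s)$ is joined to $c$ by an arc of the curve of length at most $L/2$, so $|\gamma-c|\le L/2$ pointwise. Taking imaginary parts and applying Cauchy--Schwarz then gives
\[
\Bigl|\int_\gamma N_m\,ds\Bigr|\le \frac L2\int_\gamma|\K_m|\,ds\le \frac12 L^{3/2}\|\K_m\|_{L^2(\gamma)}.
\]
Combined with the first display, this yields \eqref{eq:weak-grad-ineq-Em}, even with the better constant $\frac{1}{2(2m+1)}$. Even the cruder bound $|\gamma-c|\le L$ would give exactly the stated constant. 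Finally, \eqref{eq:weak-grad-ineq-Im} follows by multiplying through by $L^{2m+1}$, using $\mathcal I_m=L^{2m+1}E_m$.

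I expect no real obstacle here. The only points needing care are the sign bookkeeping in \eqref{E:complexQ}, which is harmless since we take absolute values, and the justification that integrating by parts against the non-periodic-looking factor $\gamma$ is legitimate. It is, because $\gamma$ is a closed curve and hence periodic in $s$.
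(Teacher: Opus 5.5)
Your proof is correct. It uses the same two lemmas as the paper and differs only in where the derivative lands in the final estimate.

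The paper uses $\int_\gamma e^{i\theta}\,ds=0$ to replace $\mathcal Q_m$ by $\mathcal Q_m-\overline{\mathcal Q}_m$. It then bounds the result by $L\|\mathcal Q_m-\overline{\mathcal Q}_m\|_\infty\le L\int_\gamma|\K_m|\,ds$, so the sup norm sits on the oscillation of $\mathcal Q_m$. You instead integrate by parts against the primitive $\gamma-c$ of $e^{i\theta}$ and put the sup norm on $\gamma-c$. That is where your extra factor $\frac12$ comes from. The paper's route could recover the same factor from the sharper periodic bound $\|u-\bar u\|_\infty\le\frac12\int|u_s|$.

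The real gain of your version is that it produces an exact identity rather than an inequality. The imaginary part of $e^{-i\theta}(\gamma-c)$ is $h:=\langle\gamma-c,N\rangle$. So your display, combined with Lemma~\ref{lem:mean-Nm}, says precisely $\int_\gamma\K_m h\,ds=(2m+1)E_m$. This is the dilation identity the paper derives from the scaling law $E_m[\rho\gamma]=\rho^{-(2m+1)}E_m[\gamma]$ in Appendix~\ref{A:normalised-evolution}. The real part is the rotation-invariance identity $\int_\gamma\K_m\langle\gamma-c,T\rangle\,ds=0$.

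Consequently the proposition needs nothing beyond scaling invariance of $E_m$ and $|h|\le L/2$ (with $c=\gamma(s_0)$); Lemma~\ref{L:Qs} is not essential. The paper's argument has the merit of staying inside the $\mathcal Q_m$ formalism already set up for the rigidity theorem.
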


\begin{proof}
Use the quantities $M_m,N_m$ from Definition~\ref{D:MNQ} and set
\[
\mathcal Q_m:=(M_m+iN_m)e^{-i\theta}.
\]
By Lemma~\ref{L:Qs},
\[
(\mathcal Q_m)_s=(-1)^{m+1}\K_m e^{-i\theta},
\qquad
|(\mathcal Q_m)_s|=|\K_m|.
\]
Furthermore,
\[
M_m+iN_m=\mathcal Q_m e^{i\theta}.
\]

We first compute the mean of $M_m+iN_m$. Since $\gamma$ is closed,
\[
\int_\gamma M_m\,ds=\int_\gamma k_{s^{2m+1}}\,ds=0.
\]
By Lemma~\ref{lem:mean-Nm},
\[
\int_\gamma N_m\,ds
=
\left(m+\frac12\right)(-1)^m\int_\gamma k_{s^m}^2\,ds
=
(2m+1)(-1)^mE_m.
\]
Therefore
\begin{equation}\label{eq:int-MiN-weak}
\left|\int_\gamma (M_m+iN_m)\,ds\right|
=
(2m+1)E_m.
\end{equation}

Let
\[
\overline{\mathcal Q}_m:=\frac1L\int_\gamma \mathcal Q_m\,ds.
\]
Since
\[
\int_\gamma e^{i\theta}\,ds
=
\int_\gamma T\,ds
=
\int_\gamma \gamma_s\,ds
=0,
\]
we have
\[
\int_\gamma (M_m+iN_m)\,ds
=
\int_\gamma (\mathcal Q_m-\overline{\mathcal Q}_m)e^{i\theta}\,ds.
\]
Thus
\[
\left|\int_\gamma (M_m+iN_m)\,ds\right|
\le
\int_\gamma |\mathcal Q_m-\overline{\mathcal Q}_m|\,ds
\le
L\|\mathcal Q_m-\overline{\mathcal Q}_m\|_\infty.
\]
For every periodic function $u$,
\[
\|u-\bar u\|_\infty\le \int_\gamma |u_s|\,ds.
\]
Applying this to $u=\mathcal Q_m$ gives
\[
\|\mathcal Q_m-\overline{\mathcal Q}_m\|_\infty
\le
\int_\gamma |(\mathcal Q_m)_s|\,ds
=
\int_\gamma |\K_m|\,ds
\le
L^{1/2}\|\K_m\|_{L^2(\gamma)}.
\]
Combining this with \eqref{eq:int-MiN-weak} yields
\[
(2m+1)E_m
\le
L^{3/2}\|\K_m\|_{L^2(\gamma)}.
\]
This is \eqref{eq:weak-grad-ineq-Em}. Multiplying by $L^{2m+1}$ gives
\eqref{eq:weak-grad-ineq-Im}.
\end{proof}

\begin{proof}[Proof of Theorem~\ref{T:GI}]
It remains to prove the global estimate \eqref{E:grad-ineq}.  Let $\varepsilon_{m,\omega}^{\rm sm}$ and $C_{m,\omega}^{\rm sm}$ be the constants from
Theorem~\ref{thm:grad-ineq}.  Fix a smooth closed immersed curve $\gamma$ with
$\omega[\gamma]=\omega\ne0$.  If
$\mathcal I_m[\gamma]<\varepsilon_{m,\omega}^{\rm sm},$
then Theorem~\ref{thm:grad-ineq} gives
\[
\int_\gamma \K_m^2\,ds
\ge
C_{m,\omega}^{\rm sm}
L^{-(2m+4)}E_m.
\]

If instead $\mathcal I_m[\gamma]\ge \varepsilon_{m,\omega}^{\rm sm},$ then Proposition~\ref{prop:weak-grad-ineq} gives
\[
\|\K_m\|_{L^2(\gamma)}
\ge
(2m+1)L^{-3/2}E_m.
\]
Therefore
\begin{equation*}
\int_\gamma \K_m^2\,ds
\ge
(2m+1)^2L^{-3}E_m^2 
=
(2m+1)^2\mathcal I_m\,L^{-(2m+4)}E_m 
\ge
(2m+1)^2\varepsilon_{m,\omega}^{\rm sm}\,
L^{-(2m+4)}E_m.
\end{equation*}

Thus \eqref{E:grad-ineq} holds in all cases with
$C_{m,\omega}:=
\min\left\{
C_{m,\omega}^{\rm sm},
(2m+1)^2\varepsilon_{m,\omega}^{\rm sm}
\right\}>0.$
This completes the proof.
\end{proof}


\section{Gauge-fixed local existence and rough initial data}\label{S:existence}

The geometric equation \eqref{E:GIF} prescribes only the normal velocity.
To obtain literal uniqueness of parametrised solutions we fix a tangential velocity.
The tangential velocity does not change the evolving immersed curve as an unparametrised curve, but it removes
the reparametrisation freedom.  We use the constant-speed gauge.

Throughout this section the parameter domain is $\R/\Z$.  If $\gamma:\R/\Z\to\R^2$ is a constant-speed
parametrisation, then
$ds=L\,du,
\qquad
\partial_s=L^{-1}\partial_u.$

\subsection{The constant-speed tangential gauge}
Let $\gamma$ be a smooth closed immersed curve and let $F$ be a scalar normal speed.
Define
\[
\overline{kF}:=\frac1L\int_\gamma kF\,ds.
\]
There is a unique smooth periodic function $\alpha_F$ satisfying
\begin{equation}\label{eq:alpha-gauge}
(\alpha_F)_s=kF-\overline{kF},
\qquad
\int_\gamma \alpha_F\,ds=0.
\end{equation}
The right-hand side has zero arclength mean, so its primitive is periodic.  The additive constant in that primitive is
then fixed by the mean-zero condition.  Equivalently, after choosing an arclength coordinate $s\in[0,L]$,
\[
\alpha_F(s)
=
\int_0^s\bigl(kF-\overline{kF}\bigr)(\sigma)\,d\sigma
-
\frac1L\int_0^L\int_0^r
\bigl(kF-\overline{kF}\bigr)(\sigma)\,d\sigma\,dr .
\]

\begin{lemma}[Constant-speed preservation]\label{lem:constant-speed-gauge}
Let $\gamma:\R/\Z\times I\to\R^2$ be a smooth solution of
\begin{equation}\label{eq:gauge-flow-general}
\partial_t\gamma=F\,N+\alpha_F\,T,
\end{equation}
where $\alpha_F$ is defined by \eqref{eq:alpha-gauge}.  If $|\partial_u\gamma(\cdot,0)|$ is constant in $u$,
then $|\partial_u\gamma(\cdot,t)|$ is constant in $u$ for every $t\in I$.
\end{lemma}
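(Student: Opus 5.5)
The plan is to derive an evolution equation for the speed $v:=|\partial_u\gamma|$ under \eqref{eq:gauge-flow-general} and show that the quantity $v_u/v$, or equivalently $\partial_s$ applied to $\log v$, is forced to stay zero. First I differentiate in $t$:
\[
\partial_t v=\frac{\ip{\partial_u\partial_t\gamma}{\partial_u\gamma}}{v}.
\]
I compute $\partial_u(FN+\alpha_F T)=v\,\partial_s(FN+\alpha_F T)=v\bigl(F_sN-kFT+(\alpha_F)_sT+\alpha_F kN\bigr)$ using the Frenet system. Pairing with $\partial_u\gamma=vT$ gives
\[
\partial_t v=v\bigl((\alpha_F)_s-kF\bigr)=-\overline{kF}\,v,
\]
by the defining relation \eqref{eq:alpha-gauge}. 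This is the key identity. The right-hand side factor $\overline{kF}(t)$ depends only on $t$, not on $u$.

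Next I integrate this linear ODE pointwise in $u$:
\[
v(u,t)=v(u,0)\exp\Bigl(-\int_0^t\overline{kF}(\tau)\,d\tau\Bigr).
\]
Since $v(\cdot,0)$ is constant in $u$ and the exponential factor is independent of $u$, $v(\cdot,t)$ is constant in $u$ for every $t\in I$. Smoothness of the solution justifies interchanging $\partial_t$ and $\partial_u$, and guarantees that $\overline{kF}$ is continuous in $t$. As a consistency check, the resulting $L(t)=v(t)$ satisfies $L'=-\int kF\,ds$, which is the usual length evolution.

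There is essentially no obstacle. The only points needing care are the sign convention $\partial_s N=-kT$, so that the normal term contributes $-kF$ to the tangential component, and the observation that $(\alpha_F)_s$ is exactly designed to cancel the $u$-dependent part of $kF$. Normalisation issues for $I$ not containing $0$ are irrelevant, since $0\in I$ by hypothesis.
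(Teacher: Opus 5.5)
Your proposal is correct and follows the paper's proof: the paper derives the same speed evolution $g_t=(\alpha_s-kF)g$ and notes that the gauge condition turns it into $g_t=-\overline{kF}\,g$, whose coefficient depends only on $t$, so $u$-independence of the speed is preserved. Your explicit integration of this linear ODE and the length-evolution check just spell out that step in more detail.
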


\begin{proof}
Let $g:=|\partial_u\gamma|$, so that $ds=g\,du$.  For a velocity
\[
\partial_t\gamma=F N+\alpha T
\]
one has
\[
g_t=(\alpha_s-kF)g.
\]
With $\alpha=\alpha_F$, this becomes
\[
g_t=-\overline{kF}\,g.
\]
Hence if $g(\cdot,0)$ is independent of $u$, then $g(\cdot,t)$ remains independent of $u$.
\end{proof}

The gauge-fixed unnormalised generalised ideal flow is
\begin{equation*}\label{eq:GFm}
\tag{GF$_m$}
\partial_t\gamma=\K_m\,N+\alpha_{\K_m}\,T,
\qquad
(\alpha_{\K_m})_s=k\K_m-\frac1L\int_\gamma k\K_m\,ds,
\qquad
\int_\gamma \alpha_{\K_m}\,ds=0.
\end{equation*}
The normal component of \eqref{eq:GFm} is exactly \eqref{E:GIF}.  Thus \eqref{eq:GFm} is a canonical
parametrised representative of the geometric flow.

\begin{theorem}[Smooth local existence in the fixed gauge]\label{T:STE}
Let $m\ge1$ and let $\gamma_0:\R/\Z\to\R^2$ be a smooth closed immersed curve parametrised with constant speed.  Then there exists $T_*>0$ and a unique
smooth solution
$\gamma:\R/\Z\times[0,T_*)\to\R^2$ of the gauge-fixed flow \eqref{eq:GFm} with $\gamma(\cdot,0)=\gamma_0$.
Moreover, $\gamma(\cdot,t)$ remains constant-speed parametrised for all $t<T_*$.
\end{theorem}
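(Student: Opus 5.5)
The plan is to reduce \eqref{eq:GFm} to a strictly parabolic quasilinear system of order $2m+4$ for the immersion, solve that by standard parabolic theory, and then recover the gauge and uniqueness from Lemma~\ref{lem:constant-speed-gauge}.

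\textbf{Step 1: leading-order structure.} Write the equation in the fixed parameter $u\in\R/\Z$. In the constant-speed gauge we have $\partial_s=L^{-1}\partial_u$ and $k N=\partial_s^2\gamma$. The highest-order part of $\K_m N$ is
\[
(-1)^{m+1}k_{s^{2m+2}}N=(-1)^{m+1}\partial_s^{2m+4}\gamma+\text{lower order}.
\]
The factor $(-1)^{m+1}\partial_s^{2m+4}$ has symbol $(-1)^{m+1}(i\xi)^{2m+4}|\gamma_u|^{-(2m+4)}=-\xi^{2m+4}|\gamma_u|^{-(2m+4)}$. So the normal part is parabolic in the normal direction, but degenerate in the tangential direction, which is the usual reparametrisation degeneracy.

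\textbf{Step 2: a strictly parabolic analytic problem.} For smooth local existence I would first solve a DeTurck-type problem,
\[
\partial_t\gamma=(-1)^{m+1}|\gamma_u|^{-(2m+4)}\partial_u^{2m+4}\gamma+\mathcal F(\gamma_u,\dots,\partial_u^{2m+3}\gamma).
\]
Here $\mathcal F$ is chosen so that the normal component of the right-hand side equals $\K_m$, and the tangential component is some smooth lower-order expression. This is a quasilinear, strictly parabolic system with diagonal leading part, smooth as long as $|\gamma_u|>0$. Short-time existence and uniqueness of a smooth solution for smooth periodic data then follow from standard theory, either by linearising, using Schauder estimates in parabolic H\"older spaces $C^{2m+4+\alpha,1+\alpha/(2m+4)}$ for the frozen operator and a contraction mapping, or by citing Eidelman/Solonnikov or Mantegazza--Martinazzi. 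Higher regularity comes from bootstrapping.

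\textbf{Step 3: return to the gauge \eqref{eq:GFm}.} The solution $\tilde\gamma$ from Step 2 moves with normal speed $\K_m$ and some tangential speed $\beta$. I would compose it with diffeomorphisms $\phi_t$ of $\R/\Z$ solving an ODE in $u$ with $\phi_0=\mathrm{id}$. The ODE is chosen so that the tangential speed of $\gamma=\tilde\gamma\circ\phi_t$ becomes $\alpha_{\K_m}$, which is a nonlocal but lower-order (order $2m+3$) functional of $\gamma$. This yields a smooth solution of \eqref{eq:GFm}. By Lemma~\ref{lem:constant-speed-gauge}, constant speed is preserved.

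This composition step is where I expect the main difficulty. Because $\alpha_{\K_m}$ depends on $\gamma$ itself, the equation for $\phi_t$ is really a PDE in $u$, not a plain ODE. The cleaner alternative, which I would try first, is to work directly in the constant-speed class. Since $g=|\gamma_u|=L$ is then independent of $u$, the immersion is determined by $L(t)$, a base point, and the tangent angle $\theta(u,t)$. Equivalently, \eqref{eq:GFm} is equivalent to the scalar evolution of the curvature,
\[
\partial_t k=(\K_m)_{ss}+k^2\K_m+\alpha_{\K_m}k_s.
\]
This is a strictly parabolic equation of order $2m+4$ in $k$ (equivalently for $\theta$), with leading term $(-1)^{m+1}L^{-(2m+4)}\partial_u^{2m+4}k$. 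It is coupled to the ODE $L'=-\int k\K_m\,ds$, and $\alpha_{\K_m}k_s$ is of lower order. The closure condition $\int_0^1 e^{i\theta}\,du=0$ is propagated: geometrically, because the induced immersion solves a flow, or directly, by differentiating it in time. The immersion is then recovered by integrating $\gamma_u=Le^{i\theta}$ together with an ODE for $\gamma(0,t)$.

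\textbf{Step 4: uniqueness.} Any smooth solution of \eqref{eq:GFm} stays constant speed by Lemma~\ref{lem:constant-speed-gauge}, hence yields a smooth solution $(k,L)$ of the parabolic system of Step 3. Uniqueness for that system comes from the contraction argument, or from an energy estimate on the difference of two solutions using the coercive leading term. Since $\gamma$ is reconstructed from $(\theta,L)$ and the base-point ODE, $\gamma$ itself is unique.
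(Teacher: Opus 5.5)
Your argument is correct, but it takes a different route from the paper's. The paper's proof is a three-sentence sketch. It writes \eqref{eq:GFm} in the fixed parameter $u$, notes that the leading term is strongly parabolic of order $2m+4$ in the normal direction while $\alpha_{\K_m}$ fixes the reparametrisation freedom, appeals to standard quasilinear parabolic theory for the parametrised system, and quotes Lemma~\ref{lem:constant-speed-gauge}.

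You confront the point that sketch leaves unaddressed. Since $\alpha_{\K_m}$ is a primitive of $k\K_m$, it has order only $2m+3$ in $\gamma$, so the gauge removes the reparametrisation freedom without making the immersion system strongly parabolic tangentially. Your main route resolves this in three steps.
First, you evolve $(\theta,L)$: $\theta_t=(\K_m)_s+\alpha_{\K_m}k$ has leading part $(-1)^{m+1}L^{-(2m+4)}\partial_u^{2m+4}\theta$ with spatially constant coefficient, lower-order (partly nonlocal) terms involving at most $2m+2$ derivatives of $\theta$, and is coupled to $L'=-\int k\K_m\,ds$.
Second, you propagate closure via $\frac{d}{dt}\int_0^1e^{i\theta}\,du=\overline{k\K_m}\int_0^1e^{i\theta}\,du$, which follows from $(FN+\alpha_F T)_s=(F_s+\alpha_F k)N-\overline{kF}\,T$ with $F=\K_m$.
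Third, you reconstruct $\gamma$ from $\gamma_u=Le^{i\theta}$ and a base-point ODE.

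This is what makes ``standard theory'' literally applicable. It is also the same mechanism the paper itself uses later for rough data: curvature coordinates, the phase equation driven by $\Omega[u]$, and Lemma~\ref{lem:chart-equation-strong}. The paper's version is shorter and stays at the level of the immersion. Yours costs the closure and reconstruction checks, but it genuinely justifies existence. It also gives uniqueness cleanly: by Lemma~\ref{lem:constant-speed-gauge} every smooth solution of \eqref{eq:GFm} is constant speed, and hence solves your scalar system.

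Two side remarks in your proposal need correcting, though neither affects the main route. First, in the DeTurck variant the tangential part of the right-hand side is of top order; that is exactly what buys strict parabolicity. Only the tangential part of $\mathcal F$ is lower order. Second, the reparametrisation equation there is a plain ODE, not a PDE. Because $\alpha_{\K_m}$ is geometric, $\alpha_{\K_m}[\tilde\gamma\circ\phi]=\alpha_{\K_m}[\tilde\gamma]\circ\phi$, so one only needs $\phi_t=V(\phi,t)$ with the known field $V=(\alpha_{\K_m}[\tilde\gamma]-\beta)/|\tilde\gamma_u|$. Finally, if you evolve $k$ rather than $\theta$, you must carry the rotational phase of $\theta$ separately, as the paper does with $\Omega$; evolving $\theta$ directly avoids this.
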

 
\begin{proof}
After writing the equation in the fixed parameter $u$, the leading term is a strongly parabolic operator of
order $2m+4$ in the normal direction, while the tangential velocity \eqref{eq:alpha-gauge} fixes the
reparametrisation freedom.  Standard quasilinear parabolic theory therefore gives existence and uniqueness
for the parametrised equation.  Lemma~\ref{lem:constant-speed-gauge} gives preservation of the constant-speed
parametrisation.
\end{proof}

\begin{theorem}[Continuation criterion in the fixed gauge]\label{T:cont}
Let $\gamma$ be a maximal smooth solution of \eqref{eq:GFm} on $[0,T_{\max})$.
If $T_{\max}<\infty$ and there are constants $0<L_-\le L_+<\infty$ such that
\[
L_-\le L[\gamma(t)]\le L_+
\qquad
(0\le t<T_{\max}),
\]
then at least one curvature Sobolev norm blows up.  More precisely, for some integer $\ell\ge2m+3$,
\[
\limsup_{t\uparrow T_{\max}}\int_{\gamma(t)}k_{s^\ell}^2\,ds=\infty.
\]
Equivalently, if the length is bounded above and below and all curvature Sobolev norms remain bounded, then the
solution extends smoothly past $T_{\max}$.
\end{theorem}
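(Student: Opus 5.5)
We argue by contraposition: assume $T_{\max}<\infty$, $L_-\le L\le L_+$, and $\sup_{t<T_{\max}}\int_\gamma k_{s^\ell}^2\,ds<\infty$ for every $\ell\ge0$. We then show the solution extends. The lower-order curvature norms $\ell<2m+3$ are then also bounded. For $1\le\ell<2m+3$ this follows from the interpolation/Poincaré inequalities \eqref{eq:PW-unit} (rescaled to length $L$) applied to the mean-zero functions $k_{s^\ell}$, using $L\le L_+$. For $k$ itself we use $\bar k=2\pi\omega/L$ and $L\ge L_-$. So every $\|k_{s^\ell}\|_{L^2(ds)}$ and, by \eqref{eq:Sob-unit-zero}, every $\|k_{s^\ell}\|_\infty$ is bounded uniformly on $[0,T_{\max})$.

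Next we convert these intrinsic bounds into bounds on the parametrisation. By Lemma~\ref{lem:constant-speed-gauge} the speed is $|\partial_u\gamma|=L(t)$, so $\partial_u=L\partial_s$. Since $\partial_u^{j}\gamma$ is a polynomial in $L$, $k,\dots,k_{s^{j-2}}$ times $T,N$, all $C^j$ norms of $\gamma(\cdot,t)$ in $u$ are bounded, apart from the zeroth-order term. The speed satisfies $L\ge L_->0$, so the immersion is uniformly nondegenerate. The velocity $\K_mN+\alpha_{\K_m}T$ is bounded in every $C^j$. Indeed $\K_m$ is a polynomial in $k,\dots,k_{s^{2m+2}}$. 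Also $|\alpha_{\K_m}|\le 2\int|k\K_m|\,ds$ and $\partial_s\alpha_{\K_m}=k\K_m-\overline{k\K_m}$. Hence $\gamma$ itself is bounded on the finite interval, and $\partial_t$ of each $\partial_u^j\gamma$ is bounded. It follows that $\gamma(\cdot,t)$ is Cauchy in every $C^j$ as $t\uparrow T_{\max}$. It therefore converges smoothly to a limit $\gamma(\cdot,T_{\max})$, which is a smooth closed constant-speed immersion of length at least $L_-$.

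Finally, we apply Theorem~\ref{T:STE} with initial datum $\gamma(\cdot,T_{\max})$. This gives a smooth solution on $[T_{\max},T_{\max}+T_*)$. Concatenating the two pieces yields a solution that is smooth across $T_{\max}$. The time derivatives match because both sides satisfy \eqref{eq:GFm}, and higher mixed derivatives at $T_{\max}$ are determined by the equation from spatial derivatives, which agree. By uniqueness this contradicts maximality.

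The main obstacle is the smooth gluing at $T_{\max}$, that is, justifying that the limit attains the flow smoothly up to $T_{\max}$. This should be routine given uniform bounds on all $\partial_t^i\partial_u^j\gamma$, which we obtain by repeatedly substituting the equation. A secondary point is that this argument shows that if all norms stay bounded the solution extends. Hence if $T_{\max}<\infty$, some $\int k_{s^\ell}^2$ must be unbounded. Since boundedness of a high-order norm controls all lower ones by the inequalities above, that $\ell$ can be taken to be at least $2m+3$.
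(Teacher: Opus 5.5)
Your proposal is correct and follows the same route as the paper's brief proof: the two-sided length bound makes the constant-speed parametrisation uniformly nondegenerate, the uniform curvature bounds give uniform smooth bounds on the immersion and on the velocity, so the solution converges smoothly as $t\uparrow T_{\max}$, and Theorem~\ref{T:STE} restarts it there, contradicting maximality. You supply the details that the paper compresses into ``the usual continuation argument'' (the Poincar\'e reduction to $\ell\ge 2m+3$, the bound on $\alpha_{\K_m}$, the Cauchy argument and the smooth gluing); the only thing to tidy is that your contrapositive hypothesis should be boundedness for all $\ell\ge 2m+3$ rather than all $\ell\ge0$, and the lower orders then follow exactly as you describe.
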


\begin{proof}
The proof is the usual continuation argument for quasilinear parabolic curve flows in a fixed gauge.  The two-sided
length bound keeps the constant-speed parametrisation uniformly nondegenerate.  Uniform curvature Sobolev bounds
give uniform bounds for the coefficients of the equation and for the immersion in smooth norms.  The local existence
theorem may then be restarted at time $T_{\max}$, contradicting maximality.
\end{proof}

\subsection{The gauge-fixed length-normalised flow}

The rough-data construction is most naturally formulated for the length-normalised flow.
For a unit-length curve set
\[
F:=\K_m+\Lambda h,
\qquad
h:=\eta\cdot N,
\qquad
\Lambda:=\int_\eta k\K_m\,ds.
\]
Then
\[
\int_\eta kF\,ds
=
\int_\eta k\K_m\,ds
+
\Lambda\int_\eta kh\,ds
=
\Lambda-\Lambda=0,
\]
because $\int_\eta kh\,ds=-1$ for a unit-length closed curve.  Thus normal flow speed $F$ preserves length.

We fix the parametrisation by the unique mean-zero periodic function $\beta_F$ satisfying
\begin{equation}\label{eq:beta-gauge}
(\beta_F)_s=kF,
\qquad
\int_\eta \beta_F\,ds=0.
\end{equation}
To fix translations we use the barycentric gauge
\[
\int_\eta \eta\,ds=0.
\]
The time variable $\tau$ is the length-normalised time.  If the unit-length flow is obtained by rescaling an
unnormalised solution $\gamma(t)$, then
\[
\frac{d\tau}{dt}=L[\gamma(t)]^{-(2m+4)}.
\]
When the unit-length problem is considered directly, $\tau$ is  its intrinsic evolution parameter.  Since the
constant-speed condition gives $\partial_\tau ds=0$, the barycentric gauge is preserved by subtracting the spatial
average of the velocity.  Thus the gauge-fixed length-normalised flow is
\begin{equation*}\label{eq:NGFm}
\tag{NGF$_m$}
\partial_\tau\eta
=
F\,N+\beta_F\,T-b_F,
\qquad
b_F:=\int_\eta(FN+\beta_FT)\,ds,
\qquad
F=\K_m+\Lambda h.
\end{equation*}
This flow has unit length, constant-speed parametrisation, and zero barycentre for all times for which it exists.
Its geometric normal velocity, modulo the imposed translation gauge, is the length-normalised normal velocity
\[
\partial_\tau^\perp\eta=(\K_m+\Lambda h)N.
\]

\subsection{\texorpdfstring{$W^{2,2}$}{W22} curves and the closure constraint}

We now formulate the rough initial-data class.  The length-normalised rough theory is local.  Global existence and
convergence in the small-$\Kosc$ basin are proved later, after the small-$\Kosc$ estimates have been established.

\begin{definition}[Unit-length centred $W^{2,2}$ immersion]\label{def:W22-curve}
A map $\eta_0:\R/\Z\to\R^2$
is a unit-length centred $W^{2,2}$ immersion if
\[
\eta_0\in W^{2,2}(\R/\Z;\R^2),
\qquad
|\partial_s\eta_0|=1\quad\text{a.e.},
\qquad
\int_0^1\eta_0(s)\,ds=0.
\]
The unit tangent $T_0=\partial_s\eta_0$ belongs to $W^{1,2}$ and is continuous.  If $T_0$ has degree
\[
\omega[\eta_0]=\omega\in\Z\setminus\{0\},
\]
we choose the continuous lift
\[
T_0=e^{i\theta_0},
\qquad
\theta_0(s+1)=\theta_0(s)+2\pi\omega,
\]
and define the curvature
\[
k_0:=\partial_s\theta_0\in L^2(\R/\Z).
\]
The curvature oscillation is
\[
\Kosc[\eta_0]
:=
\int_0^1\bigl(k_0-2\pi\omega\bigr)^2\,ds.
\]
\end{definition}

Fix $\omega\ne0$ and write $\kappa:=2\pi\omega.$  Let
\[
L^2_0:=\left\{u\in L^2(\R/\Z):\int_0^1u\,ds=0\right\}.
\]
For $u\in L^2_0$ let $\mathscr P u$ denote the unique mean-zero primitive:
\[
(\mathscr Pu)_s=u,
\qquad
\int_0^1\mathscr Pu\,ds=0.
\]
The closure map is
\begin{equation}\label{eq:closure-map}
\mathscr C(u)
:=
\int_0^1
\exp\bigl(i(\kappa s+\mathscr Pu(s))\bigr)\,ds
\in\C .
\end{equation}
The curvature $k=\kappa+u$ reconstructs a closed unit-length curve if and only if $\mathscr C(u)=0$.

\begin{lemma}[Non-degeneracy of the closure constraint]\label{lem:closure-submersion}
Let $u\in L^2_0$ satisfy $\mathscr C(u)=0$, and set
\[
T(s):=\exp\bigl(i(\kappa s+\mathscr Pu(s))\bigr).
\]
If $\omega\ne0$, then $D\mathscr C(u):L^2_0\to\C$
is surjective as a real-linear map.  Consequently the closed-curvature constraint is a codimension-two smooth
constraint at every closed unit-length $W^{2,2}$ curve with nonzero turning number.
\end{lemma}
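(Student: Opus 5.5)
We compute the derivative of $\mathscr C$ directly and then show that its image spans $\C$ over $\R$. Since $\mathscr P$ is a bounded linear map $L^2_0\to W^{1,2}$, and $W^{1,2}$ embeds continuously in $C^0$, the map $u\mapsto \exp(i(\kappa s+\mathscr Pu))$ is smooth from $L^2_0$ into $C^0(\R/\Z;\C)$. Integration is a bounded linear functional on $C^0$, so $\mathscr C$ is smooth and
\[
D\mathscr C(u)[v]=i\int_0^1 T(s)\,\mathscr Pv(s)\,ds,\qquad v\in L^2_0 .
\]
Writing $\phi:=\mathscr Pv$, the map $v\mapsto\phi$ is a bijection from $L^2_0$ onto the mean-zero functions in $W^{1,2}$. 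Surjectivity of $D\mathscr C(u)$ is therefore equivalent to the following: the real-linear map $\phi\mapsto\int_0^1 T\phi\,ds$, acting on real mean-zero $W^{1,2}$ functions, has image all of $\C$.

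We argue by contradiction. If the image is a proper real subspace of $\C\cong\R^2$, there is a unit vector $e^{i\alpha}$ with
\[
\int_0^1\Re\bigl(e^{-i\alpha}T\bigr)\,\phi\,ds=0
\]
for every real mean-zero $\phi\in W^{1,2}$. Set $g:=\Re(e^{-i\alpha}T)=\cos(\kappa s+\mathscr Pu-\alpha)$. This function is continuous. Mean-zero $W^{1,2}$ functions are dense in $L^2_0$, and $g$ is orthogonal to all of them, so $g$ must be constant. The closure hypothesis $\mathscr C(u)=0$ gives $\int_0^1T\,ds=0$, hence $\int_0^1 g\,ds=0$, so the constant is zero and $g\equiv0$.

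It remains to show that $g\equiv0$ is impossible. The continuous angle $\Theta(s):=\kappa s+\mathscr Pu(s)-\alpha$ would have to take values in $\pi/2+\pi\Z$ everywhere. Since $\Theta$ is continuous, it would then be constant on $\R$. But $\mathscr Pu$ is $1$-periodic, so $\Theta(s+1)-\Theta(s)=\kappa=2\pi\omega\neq0$, a contradiction. This is the only place where $\omega\ne0$ enters, and the whole argument hinges on this step, though it is short.

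Hence $D\mathscr C(u)$ is surjective onto $\C$ as a real-linear map. By the implicit function theorem in the Hilbert space $L^2_0$, the zero set $\{\mathscr C=0\}$ is then a smooth submanifold of codimension two near $u$. Under $k=\kappa+u$ this zero set parametrises the closed unit-length $W^{2,2}$ curves with turning number $\omega$, which gives the stated consequence.
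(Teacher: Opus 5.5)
Your proposal is correct and follows essentially the same route as the paper: the same derivative formula $D\mathscr C(u)[v]=i\int_0^1 T\,\mathscr Pv\,ds$, the same annihilator-plus-density argument showing that a component of $T$ in some fixed direction is constant, and the same continuity-plus-$\omega\ne0$ contradiction. The only cosmetic difference is that you use $\mathscr C(u)=0$ to force that constant to be zero, whereas the paper allows an arbitrary constant and notes that an affine line meets the unit circle in at most two points; so surjectivity does not actually need the closure hypothesis.
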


\begin{proof}
For $w\in L^2_0$,
\begin{equation}\label{eq:closure-derivative}
D\mathscr C(u)[w]
=
i\int_0^1T(s)\,\mathscr Pw(s)\,ds.
\end{equation}
Suppose that a nonzero vector $a\in\R^2\simeq\C$ annihilates the range.  Since $\mathscr P$ maps $L^2_0$ onto the
mean-zero subspace of $H^1(\R/\Z)$, we have
\[
\int_0^1 a\cdot iT(s)\,\varphi(s)\,ds=0
\]
for every mean-zero $\varphi\in H^1(\R/\Z)$.  Hence $a\cdot iT$ is constant as a distribution.  Since
$T\in H^1\subset C^0$, the image of $T$ is contained in the intersection of the unit circle with an affine line.
This intersection has at most two points, and the connectedness of $\R/\Z$ forces $T$ to be constant.  That gives
turning number zero, contradicting $\omega\ne0$.
\end{proof}

\begin{lemma}[Smooth density preserving closure]\label{lem:W22-density-closure}
Let $\eta_0$ be a unit-length centred $W^{2,2}$ immersion with turning number $\omega\ne0$.  Then there exist smooth
unit-length centred immersions $\eta_0^j$ with turning number $\omega$ such that $\eta_0^j\to\eta_0\quad\text{in }W^{2,2},$ and $\Kosc[\eta_0^j]\to\Kosc[\eta_0].$
\end{lemma}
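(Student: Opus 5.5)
We work entirely in curvature coordinates: we approximate the curvature perturbation, then restore closure with a two-dimensional correction.

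Write the curvature of $\eta_0$ as $k_0=\kappa+u_0$ with $u_0\in L^2_0$ and $\mathscr C(u_0)=0$. Lemma~\ref{lem:closure-submersion} says $D\mathscr C(u_0):L^2_0\to\C$ is a real-linear surjection. Fix two smooth mean-zero functions $w_1,w_2$ whose images under $D\mathscr C(u_0)$ span $\C\simeq\R^2$. Such functions exist because smooth mean-zero functions are dense in $L^2_0$, $D\mathscr C(u_0)$ is continuous, and the set of surjective choices is open. Define
\[
G(v,\lambda):=\mathscr C(v+\lambda_1w_1+\lambda_2w_2),\qquad v\in L^2_0,\ \lambda\in\R^2.
\]
The map $\mathscr C$ is $C^1$ (indeed smooth) on $L^2_0$, since $\mathscr P:L^2_0\to H^1\subset C^0$ is bounded linear and $\exp$ is smooth. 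Moreover $\p_\lambda G(u_0,0)$ is invertible. By the implicit function theorem there is a neighbourhood $U$ of $u_0$ in $L^2_0$ and a $C^1$ map $\lambda:U\to\R^2$ with $\lambda(u_0)=0$ and $G(v,\lambda(v))=0$.

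Next choose smooth mean-zero $v_j\to u_0$ in $L^2$, for instance by mollification. Set $u_j:=v_j+\lambda_1(v_j)w_1+\lambda_2(v_j)w_2$. Then $u_j$ is smooth, has mean zero, satisfies $\mathscr C(u_j)=0$, and $u_j\to u_0$ in $L^2$ because $\lambda(v_j)\to0$.

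To reconstruct the curves, put $\theta_j:=\theta_0(0)+\kappa s+\mathscr Pu_j$ (fixing the additive constant compatibly with $\theta_0$), $T_j:=e^{i\theta_j}$, and let $\eta^j_0$ be the primitive of $T_j$ with zero mean. Closure $\mathscr C(u_j)=0$ makes $\eta_0^j$ periodic. It is smooth, has unit speed and hence unit length, is centred, and has turning number $\omega$ since $\int k_j\,ds=\kappa$.

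For convergence, $u_j\to u_0$ in $L^2$ gives $\mathscr Pu_j\to\mathscr Pu_0$ in $H^1$, so $\theta_j\to\theta_0$ in $H^1$ after matching constants. Since $H^1$ is an algebra in one dimension, $T_j\to T_0$ in $H^1=W^{1,2}$. Integrating and using the barycentric normalisation gives $\eta_0^j\to\eta_0$ in $W^{2,2}$. Finally $\Kosc[\eta_0^j]=\|u_j\|_{L^2}^2\to\|u_0\|_{L^2}^2=\Kosc[\eta_0]$.

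The main obstacle is applying the implicit function theorem at a merely $L^2$ perturbation. This requires checking that $\mathscr C$ is $C^1$ on $L^2_0$ and keeping the correction directions $w_i$ smooth, so that the corrected curvatures remain smooth. Both points are handled above: continuity of $\mathscr P$ into $C^0$ gives the differentiability, and the fixed smooth $w_i$ keep each $u_j$ smooth.
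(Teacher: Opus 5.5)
Your argument is correct and is essentially the paper's: approximate by smooth data, then restore closure with a fixed two-dimensional smooth correction via the implicit function theorem, using the non-degeneracy of the closure constraint. The differences are that you work in curvature coordinates on $L^2_0$ (reusing Lemma~\ref{lem:closure-submersion}) and apply the Banach-space IFT once at $(u_0,0)$ to get a continuous correction map $\lambda$, which cleanly replaces the paper's per-$j$ finite-dimensional IFT on the approximating angles $\widehat\theta_j$. One small fix: use the phase $\vartheta_0=\int_0^1(\theta_0-\kappa s)\,ds$ rather than $\theta_0(0)$, since $\theta_0(0)+\kappa s+\mathscr Pu_j\to\theta_0+\mathscr Pu_0(0)$, which would make the limit a rotated copy of $\eta_0$.
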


\begin{proof}
Choose the lift $\theta_0$ from Definition~\ref{def:W22-curve}.  Since $\eta_0$ is closed,
\[
\int_0^1 e^{i\theta_0(s)}\,ds=0.
\]
Consider the angle-closure map
\[
\mathscr B(\varphi):=\int_0^1e^{i(\theta_0+\varphi)}\,ds,
\qquad
\varphi\in H^1(\R/\Z;\R).
\]
Its derivative at the origin is
\[
D\mathscr B(0)[\varphi]=i\int_0^1e^{i\theta_0}\varphi\,ds.
\]
The same annihilator argument as in Lemma~\ref{lem:closure-submersion}, now using arbitrary periodic test
functions $\varphi$, shows that this derivative is surjective.  By density of trigonometric polynomials in
$H^1$, choose smooth periodic functions $\varphi_1,\varphi_2$ such that
\[
(a,b)\mapsto i\int_0^1e^{i\theta_0}(a\varphi_1+b\varphi_2)\,ds
\]
is an isomorphism $\R^2\to\C$.

Choose smooth periodic functions $p_j$ with
\[
p_j\to \theta_0-\kappa s
\qquad\text{in }H^1(\R/\Z),
\]
and set $\widehat\theta_j:=\kappa s+p_j$.  Then $\widehat\theta_j\to\theta_0$ in $W^{1,2}$ and hence uniformly.
For $j$ large, the maps
\[
(a,b)\mapsto
\int_0^1\exp\bigl(i(\widehat\theta_j+a\varphi_1+b\varphi_2)\bigr)\,ds
\]
have derivative at $(0,0)$ close to the isomorphism above, while their value at $(0,0)$ tends to zero.  The
finite-dimensional Implicit Function Theorem therefore gives $(a_j,b_j)\to(0,0)$ such that
\[
\theta_j:=\widehat\theta_j+a_j\varphi_1+b_j\varphi_2
\]
satisfies
\[
\int_0^1e^{i\theta_j(s)}\,ds=0.
\]
Each $\theta_j$ is smooth and obeys $\theta_j(s+1)=\theta_j(s)+\kappa$.  Let
\[
\widetilde\eta_0^j(s):=\int_0^s e^{i\theta_j(\sigma)}\,d\sigma,
\qquad
\eta_0^j:=\widetilde\eta_0^j-
\int_0^1\widetilde\eta_0^j(r)\,dr .
\]
Then $\eta_0^j$ is smooth, closed, unit length, centred, and has turning number $\omega$.  Since
$\theta_j\to\theta_0$ in $W^{1,2}$, we have $e^{i\theta_j}\to e^{i\theta_0}$ in $W^{1,2}$ and hence
$\eta_0^j\to\eta_0$ in $W^{2,2}$.  Finally,
\[
\Kosc[\eta_0^j]=\|\partial_s\theta_j-\kappa\|_2^2
\to
\|\partial_s\theta_0-\kappa\|_2^2
=\Kosc[\eta_0].
\]
\end{proof}

\subsection{Local closure charts}

The small-$\Kosc$ chart used near a multiply-covered circle is not adequate for arbitrary rough data.  Instead we
use a closure chart centred at a nearby smooth closed curve.  The complement can be chosen from Fourier modes;
this is convenient because the constant-coefficient leading operator preserves both the complement and its
orthogonal complement.

\begin{lemma}[Closure chart at a smooth reference curve]\label{lem:closure-chart}
Let $u_\ast\in C^\infty(\R/\Z)\cap L^2_0$ satisfy $\mathscr C(u_\ast)=0$.  Then there are real trigonometric
functions $\xi_1,\xi_2\in C^\infty(\R/\Z)\cap L^2_0$ with the following property.  If
$E_\ast:=\operatorname{span}_{\R}\{\xi_1,\xi_2\},
\qquad
X_\ast:=E_\ast^\perp\cap L^2_0,$
and $\Pi_\ast:L^2_0\to X_\ast$ denotes the $L^2$-orthogonal projection, then
$D\mathscr C(u_\ast)|_{E_\ast}:E_\ast\to\C$ is an isomorphism.  Moreover, for some $\rho_\ast>0$ there is a smooth
map
$\Phi_\ast:B_{\rho_\ast}^{X_\ast}(0)\to E_\ast$
such that
\begin{equation}\label{eq:local-chart-closure}
\mathscr C\bigl(u_\ast+v+\Phi_\ast(v)\bigr)=0
\qquad
(v\in B_{\rho_\ast}^{X_\ast}(0)).
\end{equation}
Every sufficiently small $\zeta\in L^2_0$ satisfying $\mathscr C(u_\ast+\zeta)=0$ has a unique representation
\[
\zeta=v+\Phi_\ast(v),
\qquad
v=\Pi_\ast\zeta\in X_\ast .
\]
Finally,
\begin{equation}\label{eq:Phi-linear-bound}
\Phi_\ast(0)=0,
\qquad
\|\Phi_\ast(v)\|_{H^j}\le C_{j,\ast,R}\|v\|_{L^2}
\end{equation}
for every $j\ge0$ and every $R<\rho_\ast$ whenever $\|v\|_{L^2}\le R$, and
\begin{equation}\label{eq:Phi-Lipschitz}
\|\Phi_\ast(v)-\Phi_\ast(w)\|_{H^j}
\le
C_{j,\ast,R}\|v-w\|_{L^2}
\end{equation}
whenever $\|v\|_{L^2},\|w\|_{L^2}\le R<\rho_\ast$.
\end{lemma}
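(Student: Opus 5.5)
The plan is to apply the implicit function theorem to the closure map on $L^2_0$, with the two-dimensional complement spanned by low Fourier modes. By \eqref{eq:closure-derivative}, writing $T_\ast:=\exp(i(\kappa s+\mathscr Pu_\ast))$, we have $D\mathscr C(u_\ast)[w]=i\int_0^1T_\ast\,\mathscr Pw\,ds$. Lemma~\ref{lem:closure-submersion} shows this real-linear map $L^2_0\to\C$ is surjective. Real trigonometric polynomials with zero mean (the functions $\cos 2\pi ps$, $\sin 2\pi ps$ with $p\ge1$) are dense in $L^2_0$, and $D\mathscr C(u_\ast)$ is continuous on $L^2_0$ (indeed $|\mathscr Pw|_{L^2}\le C\|w\|_{L^2}$). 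Hence the images of finitely many such modes already span $\C$, and we may pick two of them, $\xi_1,\xi_2$, whose images are $\R$-linearly independent. Then $D\mathscr C(u_\ast)|_{E_\ast}$ is an isomorphism, and $L^2_0=X_\ast\oplus E_\ast$ orthogonally.

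Next, I would define $G:X_\ast\times E_\ast\to\C$ by $G(v,e):=\mathscr C(u_\ast+v+e)$. This map is smooth (indeed real analytic) on $L^2_0$. The reason is that $\mathscr P:L^2_0\to H^1\subset C^0$ is bounded linear, and $\psi\mapsto\int_0^1 e^{i(\kappa s+\psi)}\,ds$ is smooth on $C^0$, with all derivatives bounded on bounded sets. We have $G(0,0)=0$ and $\partial_eG(0,0)$ invertible, so the implicit function theorem gives $\rho_\ast>0$ and a smooth $\Phi_\ast:B^{X_\ast}_{\rho_\ast}(0)\to E_\ast$ with $\Phi_\ast(0)=0$ and \eqref{eq:local-chart-closure}. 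The same theorem gives local uniqueness: solutions $(v,e)$ of $G=0$ in a small neighbourhood of the origin satisfy $e=\Phi_\ast(v)$. For $\zeta$ small we decompose $\zeta=\Pi_\ast\zeta+(\zeta-\Pi_\ast\zeta)$ with both parts small, and this yields the unique representation $\zeta=v+\Phi_\ast(v)$ with $v=\Pi_\ast\zeta$.

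For the bounds, note that $E_\ast$ is two-dimensional and spanned by smooth functions. All norms on it are therefore equivalent, with $\|e\|_{H^j}\le C_{j,\ast}\|e\|_{L^2}$. So \eqref{eq:Phi-linear-bound} and \eqref{eq:Phi-Lipschitz} reduce to $L^2$ bounds for $\Phi_\ast$. Shrinking $\rho_\ast$ if needed, $D\Phi_\ast$ is bounded on each closed ball of radius $R<\rho_\ast$. For the Lipschitz estimate, the mean value inequality on the convex ball gives $\|\Phi_\ast(v)-\Phi_\ast(w)\|_{L^2}\le\sup\|D\Phi_\ast\|\,\|v-w\|_{L^2}$. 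Taking $w=0$ together with $\Phi_\ast(0)=0$ gives the linear bound.

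The main obstacle is the uniformity in $R<\rho_\ast$. It needs $D\Phi_\ast=-(\partial_eG)^{-1}\partial_vG$ bounded on closed sub-balls, which is not automatic from the implicit function theorem on an open ball. I would handle this by first obtaining $\Phi_\ast$ on a larger ball on which $\partial_eG(v,\Phi_\ast(v))$ stays uniformly invertible, by continuity and a further shrinking of the radius. Then I would define $\rho_\ast$ to be a strictly smaller radius, so that the estimates hold with $C_{j,\ast,R}$ even uniform in $R$.
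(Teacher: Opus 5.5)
Your proposal is correct and follows essentially the same route as the paper. Both arguments use density of Fourier modes to pick a two-dimensional complement $E_\ast$ on which $D\mathscr C(u_\ast)$ is an isomorphism, then the implicit function theorem for $(v,e)\mapsto\mathscr C(u_\ast+v+e)$, then the mean value theorem plus finite-dimensionality of $E_\ast$ for the $H^j$ bounds. Your extra step of shrinking the radius so that $\partial_e G$ stays uniformly invertible, and hence $D\Phi_\ast$ is bounded on closed sub-balls, makes explicit a point the paper's proof leaves implicit.
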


\begin{proof}
By Lemma~\ref{lem:closure-submersion}, $D\mathscr C(u_\ast)$ is surjective on $L^2_0$.  Since trigonometric
polynomials are dense in $L^2_0$, and since the range is two-dimensional, there exist two real Fourier basis
functions $\xi_1,\xi_2$ whose images under $D\mathscr C(u_\ast)$ form a basis of $\C$.  Thus
$D\mathscr C(u_\ast)|_{E_\ast}$ is an isomorphism.  Since $\xi_1,\xi_2$ are Fourier modes, the even
constant-coefficient operators $D^{2\ell}$ preserve both $E_\ast$ and $X_\ast$.

Apply the Implicit Function Theorem to
\[
(v,e)\mapsto \mathscr C(u_\ast+v+e),
\qquad
(v,e)\in X_\ast\times E_\ast,
\]
viewed as a smooth map from an $L^2$ neighbourhood of $(0,0)$ to $\C\simeq\R^2$.  The derivative in the
$E_\ast$ variable at $(0,0)$ is the isomorphism $D\mathscr C(u_\ast)|_{E_\ast}$.  This gives $\Phi_\ast$ and the
unique graph representation.  Since $\mathscr C(u_\ast)=0$, the implicit function satisfies $\Phi_\ast(0)=0$.  The estimates follow from the
Mean Value Theorem and the fact that $E_\ast$ is finite dimensional and consists of smooth functions.
\end{proof}

\subsection{The projected curvature equation in a local chart}

Let
\begin{equation}\label{eq:Amomega}
\mathcal A_m:=(-1)^mD^{2m+4},
\qquad
D:=\partial_s.
\end{equation}
For a smooth solution of the gauge-fixed length-normalised flow \eqref{eq:NGFm}, write
\[
u:=k-\kappa.
\]
The curvature equation has the form
\begin{equation}\label{eq:u-flow-abstract}
\partial_\tau u+\mathcal A_m u=\mathcal R_m(u),
\end{equation}
where $\mathcal R_m$ contains all terms of order at most $2m+2$ in the curvature fluctuation, together with the
dilation term and the constant-speed tangential gauge term.  This follows from
\[
\K_m=(-1)^{m+1}D^{2m+2}k+P_3^{2m}(k)
\]
and
\[
k_\tau=F_{ss}+k^2F+\beta_Fk_s=(F_s+\beta_Fk)_s .
\]

Fix a smooth reference curvature $u_\ast$ and a chart $(E_\ast,X_\ast,\Phi_\ast)$ from
Lemma~\ref{lem:closure-chart}.  Set
\[
\Psi_\ast(v):=u_\ast+v+\Phi_\ast(v),
\qquad
v\in B_{\rho_\ast}^{X_\ast}(0).
\]
Since $\mathcal A_m$ preserves $X_\ast$ and $E_\ast$, the projected curvature equation is
\begin{equation}\label{eq:v-flow}
\partial_\tau v+\mathcal A_m v=\mathscr F_{m,\ast}(v),
\qquad
v(0)=v_0\in X_\ast,
\end{equation}
where
\begin{equation}\label{eq:Fstar-definition}
\mathscr F_{m,\ast}(v)
:=
\Pi_\ast\Bigl(\mathcal R_m(\Psi_\ast(v))-\mathcal A_m u_\ast-\mathcal A_m\Phi_\ast(v)\Bigr).
\end{equation}
Unlike the circle-centred equation, $\mathscr F_{m,\ast}$ need not vanish to second order at the origin; the
reference curve is arbitrary and need not be stationary.  This is harmless for local existence, at the price of a
data-dependent existence time.

\begin{remark}[Equivalence with the full curvature equation]\label{rem:chart-equation}
The length-normalised flow preserves closedness, so the vector field in \eqref{eq:u-flow-abstract} is tangent to
the local closure manifold
\[
\mathcal M_\ast
:=
\{u_\ast+v+\Phi_\ast(v):v\in B_{\rho_\ast}^{X_\ast}(0)\}.
\]
On this graph a tangent vector is uniquely determined by its $X_\ast$-component.  Thus solving \eqref{eq:v-flow}
and setting $u=\Psi_\ast(v)$ is equivalent, as long as $v$ remains in the chart, to solving the full curvature
equation \eqref{eq:u-flow-abstract} on the closure manifold.
\end{remark}

\begin{lemma}[Lower-order structure in a local chart]\label{lem:F-lower-order}
Let
\[
r_m:=2m+4.
\]
For every integer $j\ge0$, the map
$\mathscr F_{m,\ast}:H^{j+r_m-2}\cap X_\ast\to H^j\cap X_\ast$
is smooth on bounded subsets of the chart.  More precisely, if $0<R<\rho_\ast$ and
\[
\|v_i\|_{L^2}\le R,
\qquad
\|v_i\|_{H^{j+r_m-2}}\le M
\qquad (i=1,2),
\]
then
\[
\|\mathscr F_{m,\ast}(v_1)-\mathscr F_{m,\ast}(v_2)\|_{H^j}
\le
C_{j,M,R,m,\ast}\|v_1-v_2\|_{H^{j+r_m-2}}.
\]
Moreover, for $v$ in such a bounded set,
\[
\|\mathscr F_{m,\ast}(v)\|_{H^j}\le C_{j,M,R,m,\ast}.
\]
\end{lemma}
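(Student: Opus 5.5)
The plan is to decompose $\mathscr F_{m,\ast}$ into its three constituents from \eqref{eq:Fstar-definition}, namely $\Pi_\ast\mathcal R_m(\Psi_\ast(v))$, the constant term $-\Pi_\ast\mathcal A_m u_\ast$, and $-\Pi_\ast\mathcal A_m\Phi_\ast(v)$, and to estimate each separately. Since $\Pi_\ast$ is an $L^2$-orthogonal projection onto the complement of a two-dimensional span of Fourier modes, it commutes with $D$ and is bounded on every $H^j$ with norm one, so it suffices to bound the unprojected expressions.

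The last two pieces are immediate. The term $\mathcal A_m u_\ast$ is a fixed smooth function, so it contributes only a constant to the bound and nothing to the difference estimate. For $\mathcal A_m\Phi_\ast(v)$, $\Phi_\ast$ takes values in the finite-dimensional space $E_\ast$ of trigonometric functions, on which $\mathcal A_m$ is a bounded linear map into every $H^j$. Hence \eqref{eq:Phi-linear-bound} and \eqref{eq:Phi-Lipschitz} give
\[
\|\mathcal A_m\Phi_\ast(v_1)-\mathcal A_m\Phi_\ast(v_2)\|_{H^j}\le C\|v_1-v_2\|_{L^2}\le C\|v_1-v_2\|_{H^{j+r_m-2}},
\]
together with a uniform bound for $\|v\|_{L^2}\le R$.

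The main work is $\mathcal R_m(u)$ with $u=\Psi_\ast(v)$. Here I would write out $\mathcal R_m$ explicitly from $k_\tau=(F_s+\beta_Fk)_s$ with $F=\K_m+\Lambda h$. The top part $(-1)^{m+1}k_{s^{2m+4}}$ is exactly $-\mathcal A_m k=-\mathcal A_m u$, which is removed. What remains is a sum of terms of three kinds:
\begin{itemize}
\item polynomial terms $D^2\bigl(P_3^{2m}(k)\bigr)$ and $k^2\K_m$, in which at most $2m+2$ derivatives fall on $k$;
\item the gauge term $(\beta_Fk)_s$, where $\beta_F$ is a primitive of $kF$ and so gains a derivative;
\item the dilation terms involving $\Lambda=\int k\K_m\,ds$ (a scalar, controlled after integrating by parts by $\|k\|_{H^{m+1}}$ norms) and $h=\eta\cdot N$.
\end{itemize}
For $h$, the curve $\eta$ is reconstructed from $k$ by $\theta=\kappa s+\mathscr P u$ and $\eta=$ the centred primitive of $e^{i\theta}$, so $h$ depends smoothly on $u$ with two derivatives of gain. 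Each term therefore involves at most $r_m-2=2m+2$ derivatives of $u$, or nonlocal smooth functionals of $u$.

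The $H^j$ norm of such terms is handled by the Leibniz rule and the algebra property of $H^{\ell}$ for $\ell\ge1$ in one dimension. Since $\|u\|_{H^{j+r_m-2}}\le \|u_\ast\|+M+C_R$, every factor is bounded in $H^{j+1}$ or $L^\infty$, which gives the uniform bound. The Lipschitz estimate follows by telescoping multilinear differences, together with smoothness (via the Mean Value Theorem) of the maps $u\mapsto e^{i\mathscr Pu}$ and $u\mapsto\Lambda$. Smoothness of $\mathscr F_{m,\ast}$ then follows from the same multilinear/composition structure.

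I expect the obstacle to be the bookkeeping needed to confirm that no term in $\mathcal R_m$ carries $2m+3$ derivatives. The potentially dangerous candidates are $(\K_m)_{ss}$, whose principal part was removed exactly, and $(\beta_Fk)_s$, which is a product involving only $k_s$ and $kF$ (order $2m+2$). The Lipschitz step would need $j+r_m-2\ge1$, which holds.
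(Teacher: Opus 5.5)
Your proposal is correct and follows essentially the same route as the paper: strip off the principal part $-\mathcal A_m u$, check that every remaining local term carries at most $2m+2=r_m-2$ derivatives of $u$, treat $h$, $q$, $\Lambda$ and $\beta_F$ as smooth nonlocal functions of $u$ via the angle reconstruction, and dispose of $\mathcal A_m u_\ast$ and $\mathcal A_m\Phi_\ast(v)$ using smoothness and the finite dimensionality of $E_\ast$. One small correction: $\Pi_\ast$ need not commute with $D$, because $E_\ast$ may be spanned by two cosines, and the paper only uses that even powers $D^{2\ell}$ preserve $E_\ast$ and $X_\ast$; this is harmless, since all you need is that $\Pi_\ast$ is bounded on $H^j$, which holds because it is the identity minus a rank-two projection onto smooth functions, and in fact $\Pi_\ast\mathcal A_m\Phi_\ast(v)=0$ outright because $\mathcal A_m$ maps $E_\ast$ into itself.
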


\begin{proof}
The leading part of the curvature equation is $-\mathcal A_m u$.  All remaining local terms contain at most
$2m+2=r_m-2$ derivatives of $u$.  The nonlocal quantities $h=\eta\cdot N$, $q=\eta\cdot T$, $\Lambda$, $\beta_F$
and the translation gauge $b_F$ are smooth functions of $u$ after reconstruction of the tangent angle
\[
\theta=\vartheta+\kappa s+\mathscr Pu.
\]
The scalar quantities are independent of the phase $\vartheta$.  The primitive operator, products, compositions,
and the finite-dimensional map $\Phi_\ast$ obey the usual one-dimensional Sobolev estimates.  Since
$\mathcal A_m u_\ast$ is smooth and $\mathcal A_m\Phi_\ast(v)$ lies in the fixed finite-dimensional space
$E_\ast$, \eqref{eq:Fstar-definition} has the claimed mapping properties.
\end{proof}

\begin{prop}[Curvature-coordinate local well-posedness]\label{prop:curv-LWP}
Fix $m\ge1$, a smooth reference curvature $u_\ast$ with $\mathscr C(u_\ast)=0$, and a corresponding chart
$(E_\ast,X_\ast,\Phi_\ast)$ with radius $\rho_\ast$.  Let $0<R<\rho_\ast$.  Then there exists
$T=T(m,u_\ast,R)>0$
such that, for every $v_0\in X_\ast$ with $\|v_0\|_{L^2}\le R$, the projected equation
\begin{equation}\label{eq:v-flow-local-proof}
\partial_\tau v+\mathcal A_m v=\mathscr F_{m,\ast}(v),
\qquad
v(0)=v_0,
\end{equation}
has a unique mild solution
$v\in C([0,T];X_\ast)
\cap C^\infty((0,T]\times\R/\Z),$
and $v(\tau)$ remains in $B_{\rho_\ast}^{X_\ast}(0)$ for $0\le\tau\le T$.
Moreover, for every $0<\delta<T$ and every integer $j\ge0$,
\begin{equation}\label{eq:positive-time-v-bounds}
\sup_{\tau\in[\delta,T]}\|v(\tau)\|_{H^j}
\le
C_{j,\delta,T,m,u_\ast,R}.
\end{equation}
The solution map is locally Lipschitz in $L^2$: if $v^1,v^2$ are two solutions with initial data in the same
$L^2$ ball $B_R^{X_\ast}(0)$, then
\begin{equation}\label{eq:L2-continuous-dependence-v}
\sup_{0\le\tau\le T}
\|v^1(\tau)-v^2(\tau)\|_{L^2}
\le
C_{T,m,u_\ast,R}\|v^1_0-v^2_0\|_{L^2}.
\end{equation}
For every $\delta>0$ and every $j\ge0$,
\begin{equation}\label{eq:smooth-continuous-dependence-v}
\sup_{\delta\le\tau\le T}
\|v^1(\tau)-v^2(\tau)\|_{H^j}
\le
C_{j,\delta,T,m,u_\ast,R}\|v^1_0-v^2_0\|_{L^2}.
\end{equation}
\end{prop}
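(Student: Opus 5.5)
The plan is to treat \eqref{eq:v-flow-local-proof} as a semilinear perturbation of the analytic semigroup $e^{-\tau\mathcal A_m}$ on $X_\ast$. Here $\mathcal A_m=(-1)^mD^{2m+4}$ has symbol $(2\pi p)^{2m+4}\ge0$ and preserves $X_\ast$. The mild formulation is
\[
v(\tau)=e^{-\tau\mathcal A_m}v_0+\int_0^\tau e^{-(\tau-\sigma)\mathcal A_m}\mathscr F_{m,\ast}(v(\sigma))\,d\sigma .
\]
The key inputs are the smoothing estimates
\[
\|e^{-\tau\mathcal A_m}w\|_{H^{a+b}}\le C(1+\tau^{-b/r_m})\|w\|_{H^a},
\qquad r_m=2m+4,
\]
read off from the Fourier multiplier. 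Combined with Lemma~\ref{lem:F-lower-order}, these show that $\mathscr F_{m,\ast}$ costs only $r_m-2$ derivatives, i.e. it is of lower order by $2$. Because the data are only in $L^2$, I would run a contraction in a time-weighted space rather than in $C([0,T];L^2)$ alone.

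Concretely, fix $\sigma:=r_m-2$ and work in
\[
\mathcal Y_T:=\Bigl\{v\in C([0,T];X_\ast)\;:\;\sup_{\tau\le T}\|v(\tau)\|_{L^2}\le R',\ \sup_{0<\tau\le T}\tau^{\sigma/r_m}\|v(\tau)\|_{H^{\sigma}}\le M\Bigr\},
\]
with $R<R'<\rho_\ast$. The main obstacle is that Lemma~\ref{lem:F-lower-order} with $j=0$ requires an $H^{\sigma}$ bound, which blows up like $\tau^{-\sigma/r_m}$ as $\tau\to0$. The Lipschitz constant $C_{0,M,R}$ there depends on $M$, so I would first sharpen it by tracing the polynomial structure of $\mathcal R_m$ in the proof of that lemma. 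The top terms are linear in $D^{2m+2}u$, with coefficients depending on lower derivatives, and the nonlocal terms are smooth in $u\in L^2$. This should give a tame estimate of the form
\[
\|\mathscr F(v_1)-\mathscr F(v_2)\|_{L^2}\le C_R\bigl(1+\|v_1\|_{H^\sigma}+\|v_2\|_{H^\sigma}\bigr)^{\mu}\|v_1-v_2\|_{H^\sigma}.
\]
Here one needs $\mu$ with $(\mu+1)\sigma/r_m<1$. If that fails, I would interpolate via Gagliardo--Nirenberg to trade $H^\sigma$ norms for $L^2$ and $H^{\sigma+}$ norms, and, as a fallback, iterate the fixed point in a scale of spaces $H^{\sigma}$ with intermediate weights. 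Since $\sigma/r_m<1$, the Duhamel integrals produce a positive power of $T$, for instance via the Beta integral $\int_0^\tau(\tau-s)^{-a}s^{-b}ds\sim\tau^{1-a-b}$. Choosing $T$ small depending only on $m,u_\ast,R$ (uniform in $v_0$, because $e^{-\tau\mathcal A_m}$ is a contraction in $L^2$ and $\|e^{-\tau\mathcal A_m}v_0\|_{H^\sigma}\le C\tau^{-\sigma/r_m}R$) then yields a contraction. This gives existence, uniqueness in $\mathcal Y_T$, continuity into $L^2$ (strong continuity of the semigroup), and confinement to $B^{X_\ast}_{\rho_\ast}(0)$.

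The $L^2$ Lipschitz dependence \eqref{eq:L2-continuous-dependence-v} follows from the same contraction estimate applied to the difference of two fixed points, since the fixed point depends Lipschitz on $v_0$ with the linear term bounded by $\|v_0^1-v_0^2\|_{L^2}$. Smoothness for positive time is a bootstrap. On $[\delta/2,T]$ the solution lies in $H^\sigma$ with bounds, so restarting the mild formulation at $\delta/2$ and using Lemma~\ref{lem:F-lower-order} with $j=1,2,\dots$ gains $2$ derivatives per step on slightly later intervals. This gives \eqref{eq:positive-time-v-bounds} and, via the equation, time regularity, hence $C^\infty((0,T]\times\R/\Z)$. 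Applying the same bootstrap to differences, with the Lipschitz bounds of Lemma~\ref{lem:F-lower-order} at each level $j$, yields \eqref{eq:smooth-continuous-dependence-v}.
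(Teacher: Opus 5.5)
Your architecture is the same as the paper's: Duhamel formula for $e^{-\tau\mathcal A_m}$ on $X_\ast$, a contraction in a time-weighted space that gains $T^{1-\beta}$, a positive-time bootstrap, and difference estimates for continuous dependence. The gap is in the one estimate that makes the contraction close.

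You propose $\|\mathscr F_{m,\ast}(v_1)-\mathscr F_{m,\ast}(v_2)\|_{L^2}\le C_R(1+\|v_1\|_{H^\sigma}+\|v_2\|_{H^\sigma})^{\mu}\|v_1-v_2\|_{H^\sigma}$ with $(\mu+1)\sigma/r_m<1$, that is, $\mu<1/(m+1)$. This cannot hold.
\begin{itemize}
\item The $j=m$ summand of $\K_m$ is $(-1)^{m+1}k^2k_{s^{2m}}$, so both $(\K_m)_{ss}$ and $k^2\K_m$ contribute $k^2k_{s^{2m+2}}$.
\item With $k=\kappa+u$, the curvature equation therefore contains a nonzero multiple of $\kappa\,u\,D^{2m+2}u$, whose difference contains $(v_1-v_2)\,D^{2m+2}v_2$.
\item Take $v_2$ with $\|v_2\|_{L^2}\le R$ but large $\|v_2\|_{H^\sigma}$, and $v_1-v_2$ a small multiple of a fixed low Fourier mode in $X_\ast$. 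This shows a full factor $\|v_2\|_{H^\sigma}$ is unavoidable when $v_1-v_2$ is measured only in $H^\sigma$.
\item Hence $\mu\ge1$, so $(\mu+1)\sigma/r_m\ge(2m+2)/(m+2)>1$. The time singularity in the Duhamel integral is then not integrable, and the contraction fails.
\end{itemize}
Your fallback names Gagliardo--Nirenberg but does not say which estimate replaces this one. Also, trading toward norms above $H^\sigma$ goes in the wrong direction.

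The missing ingredient is the per-monomial time-weight count, which the paper builds directly into its norm $\mathcal X_T$. Interpolate every factor, including the difference $w=v_1-v_2$, between $\sup_\tau\|w\|_{L^2}$ and $\sup_\tau\tau^{\sigma/r_m}\|w\|_{H^\sigma}$. This gives $\tau^{\ell/r_m}\|D^\ell w(\tau)\|_{L^2}$ and, for $\ell\le r_m-3$, $\tau^{(\ell+\frac12)/r_m}\|D^\ell w(\tau)\|_{L^\infty}$ bounded by that weighted norm. Put the highest-order factor in $L^2$ and the rest in $L^\infty$. A monomial with $a$ factors and total order $b$ then costs $\tau^{-(b+\frac{a-1}{2})/r_m}$. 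In the bad term, $v_1-v_2$ goes into $L^\infty$ at cost $\tau^{-1/(2r_m)}$ rather than into $H^\sigma$.

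You must then check that every exponent is below one.
\begin{itemize}
\item The local part of $\mathcal R_m$ is $P_3^{2m+2}(k)+P_5^{2m}(k)$. So $b=r_m-2$ occurs only with $a\le3$, and all exponents are at most $(2m+3)/(2m+4)$.
\item Contrary to your description, $\Lambda=\int k\K_m\,ds$ and $\beta_F$ are not smooth functions of $u\in L^2$; only $h$ and $q$ are. They must be handled through $|\Lambda|\lesssim\|D^{m+1}u\|_2^2+\dots$ and $\|\beta_F\|_{L^\infty}\le C\|kF\|_{L^1}$. These give $\Lambda(k+qk_s)$ and $\beta_Fk_s$ the exponent $(2m+3)/(2m+4)$ as well.
\end{itemize}

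With these bounds, $\|\mathscr F_{m,\ast}(v(\tau))\|_{L^2}\lesssim\tau^{-\beta_\ast}$ and its Lipschitz analogue hold with $\beta_\ast<1$. Your two-component space $\mathcal Y_T$ then suffices. The rest of your argument (uniform $T$, $L^2$ Lipschitz dependence, bootstrap via Lemma~\ref{lem:F-lower-order}) goes through as you describe.
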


\begin{proof}
Set
\[
r:=2m+4,
\qquad
A:=\mathcal A_m,
\qquad
H:=X_\ast\subset L^2_0(\R/\Z).
\]
Because $E_\ast$ is spanned by Fourier modes, both $E_\ast$ and $X_\ast$ are invariant under $A$.  On $H$, $A$ is
a positive self-adjoint constant-coefficient operator with compact resolvent and domain $D(A)=H^r(\R/\Z)\cap X_\ast$.
The operator $-A$ generates an analytic semigroup $e^{-\tau A}$ on $H$.  For $0\le\ell\le r$ the standard
analytic-semigroup estimates give
\begin{align}
\label{eq:analytic-semigroup-L2}
\|D^\ell e^{-\tau A}w\|_{L^2}
&\le
C_{\ell,m,\ast}\tau^{-\ell/r}\|w\|_{L^2},
\\
\label{eq:analytic-semigroup-Linfty}
\|D^\ell e^{-\tau A}w\|_{L^\infty}
&\le
C_{\ell,m,\ast}\tau^{-(\ell+\frac12)/r}\|w\|_{L^2},
\end{align}
for $0<\tau\le1$.  These estimates are a standard consequence of analyticity and one-dimensional Sobolev
embedding; see, for example, the analytic-semigroup treatment of semilinear parabolic equations in
\cite[Ch.~3]{H81} or \cite[Chs.~4--5]{L95}.  The corresponding quasilinear and maximal-regularity framework is
developed in \cite{A95,A05}.

Choose $R_1$ with $R<R_1<\rho_\ast$.  For $T\le1$ define
\[
\|v\|_{\mathcal X_T}
:=
\sup_{0\le\tau\le T}\|v(\tau)\|_{L^2}
+
\sum_{\ell=1}^{r-2}
\sup_{0<\tau\le T}
\tau^{\ell/r}\|D^\ell v(\tau)\|_{L^2}
+
\sum_{\ell=0}^{r-3}
\sup_{0<\tau\le T}
\tau^{(\ell+\frac12)/r}\|D^\ell v(\tau)\|_{L^\infty}.
\]
Let $\mathcal X_T(R_1,M)$ be the set of $v\in C([0,T];H)$ satisfying
\[
v(0)=v_0,
\qquad
\sup_{0\le\tau\le T}\|v(\tau)\|_{L^2}\le R_1,
\qquad
\|v\|_{\mathcal X_T}\le M.
\]
The weights match the smoothing estimates \eqref{eq:analytic-semigroup-L2}--\eqref{eq:analytic-semigroup-Linfty}.  We claim that there is $\beta_\ast=\beta_\ast(m)\in(0,1)$ such that, for all
$v,w\in\mathcal X_T(R_1,M)$ and $0<\tau\le T$,
\begin{align}
\label{eq:F-time-bound}
\|\mathscr F_{m,\ast}(v(\tau))\|_{L^2}
&\le
C_{m,u_\ast,R_1,M}\,\tau^{-\beta_\ast},
\\
\label{eq:F-time-Lipschitz}
\|\mathscr F_{m,\ast}(v(\tau))-\mathscr F_{m,\ast}(w(\tau))\|_{L^2}
&\le
C_{m,u_\ast,R_1,M}\,\tau^{-\beta_\ast}
\|v-w\|_{\mathcal X_T}.
\end{align}
Indeed, by Lemma~\ref{lem:F-lower-order}, each term is either smooth and bounded in the reference curve and the
finite-dimensional correction, or is a product of factors $D^{j_i}v$ with total derivative order
$j_1+\cdots+j_a\le r-2$.  We place one factor in $L^2$ and the others in $L^\infty$.  The worst linear term has
$j_1=r-2$ and contributes the exponent $(r-2)/r<1$.  For a typical nonlinear monomial with $a\ge2$ and total
order $b\le r-2$, the weights give
\[
\prod_i\|D^{j_i}v(\tau)\|
\lesssim
\tau^{-\frac{b+\frac12(a-1)}{r}},
\]
and the terms arising in the curvature equation have exponent strictly less than one.  The scalar nonlocal terms
$\Lambda$ and the primitive defining $\beta_F$ satisfy the same estimates, using the integrated structure of
$\Lambda$ and the bound
\[
\|\beta_F\|_{L^\infty}\le C\|kF\|_{L^1}.
\]
This proves \eqref{eq:F-time-bound}--\eqref{eq:F-time-Lipschitz}.

Consider the Duhamel map
\[
(\mathcal T v)(\tau)
:=
e^{-\tau A}v_0
+
\int_0^\tau e^{-(\tau-\sigma)A}\mathscr F_{m,\ast}(v(\sigma))\,d\sigma.
\]
The linear term satisfies
\[
\|e^{-\tau A}v_0\|_{\mathcal X_T}\le C_{m,\ast}R.
\]
For the nonlinear term, \eqref{eq:analytic-semigroup-L2}, \eqref{eq:analytic-semigroup-Linfty},
\eqref{eq:F-time-bound}, and the beta-function estimate give
\[
\left\|
\int_0^\tau e^{-(\tau-\sigma)A}\mathscr F_{m,\ast}(v(\sigma))\,d\sigma
\right\|_{\mathcal X_T}
\le
C_{m,u_\ast,R_1,M}T^{1-\beta_\ast}.
\]
The same argument with \eqref{eq:F-time-Lipschitz} yields
\[
\|\mathcal T v-\mathcal T w\|_{\mathcal X_T}
\le
C_{m,u_\ast,R_1,M}T^{1-\beta_\ast}\|v-w\|_{\mathcal X_T}.
\]
Choose
$M:=2C_{m,\ast}R+1.$
Then choose $T=T(m,u_\ast,R)>0$ sufficiently small that
\[
C_{m,u_\ast,R_1,M}T^{1-\beta_\ast}\le \frac12
\]
and the nonlinear contribution to the $L^2$ norm is at most $R_1-R$.  With these choices, $\mathcal T$ maps
$\mathcal X_T(R_1,M)$ into itself and is a contraction.  Its fixed point is the unique mild solution on $[0,T]$.

The solution is smooth for positive time by analytic bootstrapping.  For every $\delta>0$ the weighted estimates
imply $v(\delta)\in H^{r-2}$.  Restarting the equation at time $\delta$ and using Lemma~\ref{lem:F-lower-order},
the Duhamel formula and the smoothing estimates imply
$v\in C([\delta,T];H^r)\cap C^1([\delta,T];L^2).$
Iteration gives
$v\in C^\infty((0,T]\times\R/\Z)$
and the bounds \eqref{eq:positive-time-v-bounds}.  Continuous dependence follows from the same contraction estimate applied to the difference of two Duhamel
formulae.  This gives \eqref{eq:L2-continuous-dependence-v}.  Applying the smoothing estimates to the difference
on $[\delta,T]$ gives \eqref{eq:smooth-continuous-dependence-v}.
\end{proof}

\subsection{Local well-posedness for \texorpdfstring{$W^{2,2}$}{W22} curves}
\label{SS:W22-LWP}
We now reconstruct a curve from the curvature-coordinate solution.  The only point requiring some care is the
constant Fourier mode of the tangent angle.  The closure chart fixes the curvature fluctuation
$u=k-\kappa$, and hence fixes the tangent angle only up to a time-dependent ambient rotation.  This rotation is
not constant along the length-normalised flow, so it has to be included explicitly.

Let $u\in L^2_0$ satisfy $\mathscr C(u)=0$, and let $\vartheta\in\R$ be a phase.  Define
\begin{equation}\label{eq:theta-reconstruction-phase}
\theta_{u,\vartheta}(s):=\vartheta+\kappa s+\mathscr Pu(s),
\qquad
T_{u,\vartheta}:=e^{i\theta_{u,\vartheta}},
\qquad
N_{u,\vartheta}:=iT_{u,\vartheta}.
\end{equation}
The corresponding centred representative is
\begin{equation}\label{eq:eta-reconstruction-phase}
\widetilde\eta_{u,\vartheta}(s)
:=
\int_0^sT_{u,\vartheta}(\sigma)\,d\sigma,
\qquad
\eta_{u,\vartheta}(s)
:=
\widetilde\eta_{u,\vartheta}(s)-\int_0^1\widetilde\eta_{u,\vartheta}(r)\,dr .
\end{equation}
Since $\mathscr C(u)=0$, the curve is closed.  Changing $\vartheta$ rotates
$\eta_{u,\vartheta}$, $T_{u,\vartheta}$ and $N_{u,\vartheta}$ by the same ambient rotation, and therefore leaves
all scalar quantities $k$, $h=\eta\cdot N$, $q=\eta\cdot T$, $F$, $\Lambda$ and $\beta_F$ unchanged.  Thus, for a
smooth closed curvature $u$, the scalar angular velocity
\begin{equation}\label{eq:Omega-rotation-gauge}
\Omega[u]
:=
\int_0^1\bigl(F_s+\beta_F k\bigr)\,ds
=
\int_0^1\beta_F k\,ds
\end{equation}
is well-defined independently of the chosen phase.  Here $k=\kappa+u$, $F=\K_m+\Lambda h$, and $\beta_F$ is the
periodic mean-zero solution of $(\beta_F)_s=kF$.

\begin{lemma}[Equivalence of the chart equation and the geometric equation]\label{lem:chart-equation-strong}
Let $I\subset(0,\infty)$ be an interval, and let
$v\in C^1(I;X_\ast)\cap C^\infty(I\times\R/\Z)$
solve the projected curvature equation
\[
\partial_\tau v+\mathcal A_m v=\mathscr F_{m,\ast}(v)
\]
in a local chart $\Psi_\ast(v)=u_\ast+v+\Phi_\ast(v)$.  Set $u:=\Psi_\ast(v)$.  Then $u$ solves the full curvature
equation
\begin{equation}\label{eq:full-u-equation-from-chart}
\partial_\tau u+\mathcal A_m u=\mathcal R_m(u).
\end{equation}
Moreover, if $\vartheta$ solves the phase equation
\begin{equation}\label{eq:phase-ODE-local}
\frac{d\vartheta}{d\tau}=\Omega[u(\tau)],
\end{equation}
then
\[
\eta(s,\tau):=\eta_{u(\tau),\vartheta(\tau)}(s)
\]
is a classical solution of the gauge-fixed length-normalised flow \eqref{eq:NGFm} on $I$.
\end{lemma}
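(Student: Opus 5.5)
The proof has two parts. First we show that $u$ solves the full curvature equation, using the closure manifold. Then we reconstruct the curve and identify its velocity, checking tangent, normal and translation components in turn.

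\textbf{Step 1: the full curvature equation.} Since $\Phi_\ast$ is smooth and $v\in C^1(I;X_\ast)$, we get $u=\Psi_\ast(v)\in C^1$ with
\[
\partial_\tau u=\partial_\tau v+D\Phi_\ast(v)[\partial_\tau v].
\]
Applying $\Pi_\ast$ and using that $\mathcal A_m$ commutes with $\Pi_\ast$ (the Fourier-mode invariance from Lemma~\ref{lem:closure-chart}), the chart equation \eqref{eq:Fstar-definition} gives
\[
\Pi_\ast\bigl(\partial_\tau u+\mathcal A_m u-\mathcal R_m(u)\bigr)=0.
\]
It remains to control the $E_\ast$-component. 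The vector $\partial_\tau u$ is tangent to $\mathcal M_\ast$, because $\mathscr C(u(\tau))\equiv0$. The vector $V:=-\mathcal A_m u+\mathcal R_m(u)$ is also tangent to $\mathcal M_\ast$: it is the curvature velocity of the length-normalised flow, which preserves closedness. Concretely, $V=(F_s+\beta_Fk)_s$, so $\mathscr P V=F_s+\beta_F k-\Omega$. A direct computation of $D\mathscr C(u)[V]=i\int T\,\mathscr PV\,ds$ then shows that it equals the $s$-integral of a derivative of $\int T$-type quantities, and hence vanishes; this uses $T_s=kN$ and $N_s=-kT$. Since $D\mathscr C(u)|_{E_\ast}$ is injective near $u_\ast$, a tangent vector is determined by its $X_\ast$-component, exactly as in Remark~\ref{rem:chart-equation}. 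Therefore $\partial_\tau u=V$, which is \eqref{eq:full-u-equation-from-chart}.

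\textbf{Step 2: the tangent evolution.} We reconstruct $\theta=\vartheta+\kappa s+\mathscr Pu$. Then
\[
\partial_\tau\theta=\Omega+\mathscr P(\partial_\tau u)=\Omega+(F_s+\beta_Fk-\Omega)=F_s+\beta_F k.
\]
This is exactly the tangent-angle evolution for the velocity $FN+\beta_FT$: differentiating $\eta_s=T$ with $\partial_\tau ds=0$ gives $\partial_\tau T=(F_s+\beta_Fk)N$. This is precisely why the phase $\vartheta$ must obey \eqref{eq:phase-ODE-local}, since the mean of $\mathscr P(\partial_\tau u)$ is zero while the geometric angular velocity has mean $\Omega$.

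\textbf{Step 3: identifying the velocity.} Set $W:=FN+\beta_FT-b_F$. We check $\partial_s W=(F_s+\beta_Fk)N+(\beta_{F,s}-kF)T$, and $\beta_{F,s}=kF$, so
\[
\partial_sW=(F_s+\beta_Fk)N=\partial_\tau T=\partial_s\partial_\tau\eta.
\]
Thus $\partial_\tau\eta-W$ is independent of $s$. Both $\partial_\tau\eta$ and $W$ have zero spatial mean: the first because $\eta$ is centred for each $\tau$ by \eqref{eq:eta-reconstruction-phase}, the second by the choice of $b_F$. Hence $\partial_\tau\eta=W$, which is \eqref{eq:NGFm}. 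Smoothness of $\eta$ follows from smoothness of $u$ and $\vartheta$.

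\textbf{Main obstacle.} The delicate point is the tangency claim in Step 1. We need to verify $D\mathscr C(u)[V]=0$ directly from the formula for $V$, rather than circularly assuming a geometric flow exists. The plan is to write
\[
i\int T(F_s+\beta_Fk-\Omega)\,ds=\int\partial_s\bigl(iFT-\beta_F T\bigr)\cdot(\text{sign})\,ds-i\Omega\int T\,ds.
\]
Both terms vanish: the first by periodicity, the second by closure.
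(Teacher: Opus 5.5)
Your proposal is correct and follows essentially the same route as the paper. You show that $\mathcal G(u)=(F_s+\beta_Fk)_s$ is tangent to the closure manifold, since $D\mathscr C(u)[\mathcal G(u)]=\int_0^1(FN+\beta_FT)_s\,ds=0$ once the $\Omega$-term drops by closure; you then use that a tangent vector is determined by its $X_\ast$-component to get $\partial_\tau u=\mathcal G(u)$, and finally reconstruct $\theta$, obtain $T_\tau=(FN+\beta_FT)_s$, and fix the constant by centring. One slip: the total derivative in your ``main obstacle'' display should read $\partial_s(iFT+\beta_FT)=\partial_s(FN+\beta_FT)$ with no sign ambiguity, and cancelling its tangential part needs $(\beta_F)_s=kF$, as in your Step 3, not only the Frenet equations.
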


\begin{proof}
Write
\[
\mathcal G(u):=-\mathcal A_m u+\mathcal R_m(u)
\]
for the full curvature vector field.  For a smooth reconstructed curve this vector field has the intrinsic form
\begin{equation}\label{eq:G-as-angle-derivative}
\mathcal G(u)=\bigl(F_s+\beta_F k\bigr)_s,
\end{equation}
because $(\beta_F)_s=kF$ and
$k_\tau=F_{ss}+k^2F+\beta_F k_s=(F_s+\beta_F k)_s$.

First we verify tangency to the closure manifold.  Since $\mathscr C(u)=0$ and
$\mathscr P\mathcal G=F_s+\beta_F k-\Omega[u]$, we have
\begin{multline*}
D\mathscr C(u)[\mathcal G(u)]
=
 i\int_0^1 e^{i(\kappa s+\mathscr Pu)}\,\mathscr P\mathcal G(u)\,ds\\
=
 i\int_0^1 e^{i(\kappa s+\mathscr Pu)}\bigl(F_s+\beta_F k-\Omega[u]\bigr)\,ds
=
 \int_0^1 N\bigl(F_s+\beta_F k\bigr)\,ds,
\end{multline*}
where the term involving $\Omega[u]$ vanishes because
$\int_0^1N\,ds=i\int_0^1T\,ds=0$ for a closed curve.  Since $(\beta_F)_s=kF$,
\[
\int_0^1 N\bigl(F_s+\beta_F k\bigr)\,ds
=
\int_0^1 (FN+\beta_FT)_s\,ds
=
0 .
\]
Thus $\mathcal G(u)$ lies in the tangent space to the graph
$\{u_\ast+v+\Phi_\ast(v):v\in X_\ast\}$.  On this graph a tangent vector is uniquely determined by its
$X_\ast$-component, so
\[
\mathcal G(\Psi_\ast(v))
=
D\Psi_\ast(v)\,\Pi_\ast\mathcal G(\Psi_\ast(v)).
\]
By the definition of $\mathscr F_{m,\ast}$,
\[
\Pi_\ast\mathcal G(\Psi_\ast(v))
=
-\mathcal A_m v+\mathscr F_{m,\ast}(v)
=
\partial_\tau v .
\]
Therefore
\[
\partial_\tau u=D\Psi_\ast(v)\partial_\tau v=\mathcal G(u),
\]
which proves \eqref{eq:full-u-equation-from-chart}.

Now let $\theta=\vartheta+\kappa s+\mathscr Pu$.  Using \eqref{eq:phase-ODE-local},
\eqref{eq:full-u-equation-from-chart}, and \eqref{eq:G-as-angle-derivative},
\[
\theta_\tau
=
\Omega[u]+\mathscr Pu_\tau
=
\Omega[u]+\mathscr P\bigl(F_s+\beta_F k\bigr)_s
=
F_s+\beta_F k .
\]
Hence
\[
T_\tau=(F_s+\beta_F k)N.
\]
Since $(\beta_F)_s=kF$,
\[
(FN+\beta_FT)_s=(F_s+\beta_F k)N=T_\tau.
\]
It follows that
\[
\partial_s\bigl(\eta_\tau-FN-\beta_FT\bigr)=0.
\]
The representative \eqref{eq:eta-reconstruction-phase} is centred for every $\tau$, so integrating the last
identity over $s\in\R/\Z$ gives
\[
\eta_\tau
=
FN+\beta_FT-\int_0^1(FN+\beta_FT)\,ds,
\]
which is exactly \eqref{eq:NGFm}.
\end{proof}

\begin{theorem}[Canonical local flow from \texorpdfstring{$W^{2,2}$}{W22} data]\label{thm:W22-LWP}
Fix $m\ge1$ and $\omega\in\Z\setminus\{0\}$.  Let $\eta_0$ be a unit-length centred $W^{2,2}$ immersion with
turning number $\omega[\eta_0]=\omega$.  Then there exist a time $T=T(m,\eta_0)>0$ and a unique canonical gauge-fixed length-normalised flow
$\eta:\R/\Z\times[0,T]\to\R^2$ with initial value $\eta_0$ such that
$\eta\in C([0,T];W^{2,2})
\cap C^\infty((0,T]\times\R/\Z).$

For every $\tau>0$, $\eta(\cdot,\tau)$ is a classical solution of the gauge-fixed length-normalised equation
\eqref{eq:NGFm}.

The construction is locally well-posed in $W^{2,2}$.  More precisely, after fixing $\eta_0$, there are a
$W^{2,2}$ neighbourhood $\mathcal U$ of $\eta_0$ and a time $T_\mathcal U>0$ such that every initial curve in
$\mathcal U$ with the same turning number has a canonical solution on $[0,T_\mathcal U]$, and the solution map is
continuous from $\mathcal U$ to
$C([0,T_\mathcal U];W^{2,2})
\cap
C^\infty_{\mathrm{loc}}((0,T_\mathcal U]\times\R/\Z).$

In local curvature coordinates this dependence is Lipschitz: if the initial curvatures are represented in one
closure chart by $v_0^1,v_0^2\in X_\ast$, then, for $0<T\le T_\mathcal U$,
\[
\sup_{0\le\tau\le T}
\|v^1(\tau)-v^2(\tau)\|_{L^2}
\le
C_{T,\mathcal U}\|v_0^1-v_0^2\|_{L^2}.
\]
For every $\delta>0$ and every $j\ge0$,
\[
\sup_{\delta\le\tau\le T}
\|\eta^1(\cdot,\tau)-\eta^2(\cdot,\tau)\|_{C^j}
\le
C_{j,\delta,T,\mathcal U}\|\eta^1_0-\eta^2_0\|_{W^{2,2}} .
\]
Finally, for every smooth approximating sequence $\eta_0^j\to\eta_0$ in $W^{2,2}$ preserving closedness and
turning number, the corresponding smooth gauge-fixed flows converge to this same canonical solution in
$C([0,T];W^{2,2})
\cap
C^\infty_{\mathrm{loc}}((0,T]\times\R/\Z).$
\end{theorem}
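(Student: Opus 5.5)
The plan is to pass from the curve $\eta_0$ to curvature coordinates, solve there using Proposition~\ref{prop:curv-LWP}, and reconstruct the curve with Lemma~\ref{lem:chart-equation-strong}.

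\textbf{Choosing a chart.} Write $u_0:=k_0-\kappa\in L^2_0$. Then $\mathscr C(u_0)=0$, and the phase is $\vartheta_0:=\theta_0(0)$. By Lemma~\ref{lem:W22-density-closure} there is a smooth closed $u_\ast$ with $\|u_0-u_\ast\|_{L^2}$ as small as we like. Since the charts of Lemma~\ref{lem:closure-chart} vary with $u_\ast$, we must control $\rho_\ast$ uniformly. For this, build the chart at the limit point $u_0$ itself: Lemma~\ref{lem:closure-submersion} supplies the Fourier modes $\xi_1,\xi_2$ at $u_0$. These remain transversal for every smooth $u_\ast$ close to $u_0$ in $L^2$, because $D\mathscr C$ is continuous in $L^2$. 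The implicit function theorem then has a radius $\rho_\ast$ bounded below uniformly. Set $v_0:=\Pi_\ast(u_0-u_\ast)$; uniqueness of the graph representation gives $u_0=\Psi_\ast(v_0)$, and $\|v_0\|_{L^2}\le R<\rho_\ast$.

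\textbf{Existence and reconstruction.} Proposition~\ref{prop:curv-LWP} gives a mild solution $v$ on $[0,T]$ with $T=T(m,u_\ast,R)$, and therefore depending only on $\eta_0$. The solution is smooth for $\tau>0$. For $\tau>0$, Lemma~\ref{lem:chart-equation-strong} applies: solve the phase ODE $\vartheta'=\Omega[u]$ with $\vartheta(0)=\vartheta_0$. The function $\Omega[u]=\int\beta_Fk$ is integrable near $0$ by the weighted bounds \eqref{eq:F-time-bound}, since $|\Omega|\lesssim\|kF\|_{L^1}\|k\|_{L^1}$ and $F$ has order $\le 2m+2<r$. So $\vartheta\in C([0,T])$, and $\eta:=\eta_{u,\vartheta}$ is classical on $(0,T]$. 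The map $(u,\vartheta)\mapsto\eta_{u,\vartheta}$ is continuous from $L^2_0\times\R$ to $W^{2,2}$ (the primitive, exponentiation in $H^1$, and integration). Together with $v\in C([0,T];L^2)$, this gives $\eta\in C([0,T];W^{2,2})$ with $\eta(0)=\eta_0$.

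\textbf{Uniqueness and dependence.} Define the canonical flow as one whose curvature, after projection in the chart, is a mild solution attaining $u_0$ in $L^2$. Any such flow yields a $v$ solving \eqref{eq:v-flow-local-proof}, by Remark~\ref{rem:chart-equation} and the tangency computation. The contraction then forces it to equal our $v$. The phase is determined by the ODE, since the angular velocity is forced by $\theta_\tau=F_s+\beta_Fk$. Continuous dependence on the initial datum runs as follows. A $W^{2,2}$ neighbourhood $\mathcal U$ of $\eta_0$ maps to curvatures in $L^2$ and phases that are close, and Lipschitz continuously so. We use the same chart for all of $\mathcal U$. Estimates \eqref{eq:L2-continuous-dependence-v}--\eqref{eq:smooth-continuous-dependence-v} then give Lipschitz bounds for $v$. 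Gronwall's inequality on the phase ODE gives continuity of $\vartheta$, using the $L^1_\tau$ Lipschitz bound on $\Omega$ inherited from \eqref{eq:F-time-Lipschitz}. Composing with the reconstruction map gives the stated $W^{2,2}$ and $C^j$ bounds.

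\textbf{Smooth approximations.} A smooth approximating sequence $\eta_0^j$ eventually lies in $\mathcal U$. For smooth data the chart solution is smooth up to $\tau=0$ by parabolic regularity, and it coincides with the smooth gauge-fixed flow, by the uniqueness in Theorem~\ref{T:STE} transferred to the normalised gauge. Continuity of the solution map then gives convergence to $\eta$.

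\textbf{Main obstacle.} I expect the hardest step to be the uniformity of the chart and of $T$ across a $W^{2,2}$ neighbourhood, together with the integrability of $\Omega$ near $\tau=0$ for rough data. For the first, I will centre the complement $E_\ast$ at $u_0$ and vary only the base point. For the second, I will lean on the nonlocal bound $\|\beta_F\|_\infty\le C\|kF\|_{L^1}$ and the time weights, which give $\|kF\|_{L^1}\lesssim\tau^{-\beta_\ast}$ with $\beta_\ast<1$.
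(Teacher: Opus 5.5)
Your route is the paper's: a closure chart at a smooth closed curvature $u_\ast$ near $u_0$, Proposition~\ref{prop:curv-LWP} in that chart, and an integrable angular velocity $\Omega$ to fix the phase. Reconstruction, uniqueness and continuous dependence are then handled in one fixed chart, as in the paper. Two of your additions tighten the written argument. First, you choose $E_\ast$ at $u_0$, so the implicit-function radius is uniform over smooth $u_\ast$ near $u_0$; the paper only asserts that $u_0$ lies in the chart at $u_\ast$, although $\rho_\ast$ depends on $u_\ast$. Second, you explicitly identify the smooth gauge-fixed flows with the chart solutions for smooth data.

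The step that fails as written is the initial phase. Because \eqref{eq:theta-reconstruction-phase} uses the mean-zero primitive, $\theta_0(s)=c+\kappa s+\mathscr Pu_0(s)$ with $c=\int_0^1(\theta_0(s)-\kappa s)\,ds$. Your choice $\vartheta_0=\theta_0(0)=c+\mathscr Pu_0(0)$ therefore gives $\theta(\cdot,0)=\theta_0+\mathscr Pu_0(0)$. Hence $\eta(\cdot,0)=e^{i\mathscr Pu_0(0)}\eta_0$ is $\eta_0$ rotated about its barycentre, and $\eta(\cdot,0)=\eta_0$ fails unless $\mathscr Pu_0(0)\in2\pi\Z$. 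The choice is also inconsistent with your uniqueness step, where a competitor's phase is its mean phase and so starts at $c$. Take $\vartheta_0:=\int_0^1(\theta_0(s)-\kappa s)\,ds$, as in \eqref{eq:initial-phase-correct}.

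A minor point: $\Omega[u]$ does not depend on $\vartheta$, so no Gronwall argument is needed. It suffices to integrate a bound $|\Omega[u^1(\sigma)]-\Omega[u^2(\sigma)]|\le C\sigma^{-\beta}\|v^1-v^2\|_{\mathcal X_T}$ with $\beta<1$. Combined with the $\mathcal X_T$-Lipschitz dependence that the contraction provides, this gives the phase estimate directly.
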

\begin{proof}
Choose the lift $\theta_0\in W^{1,2}_{\mathrm{loc}}(\R)$ used to define $k_0$, so that
$\theta_0(s+1)=\theta_0(s)+\kappa$.  Set
\begin{equation}\label{eq:initial-phase-correct}
\vartheta_0
:=
\int_0^1\bigl(\theta_0(s)-\kappa s\bigr)\,ds
\quad\text{modulo }2\pi .
\end{equation}
With $u_0:=k_0-\kappa$, the mean-zero primitive convention gives
\begin{equation}\label{eq:initial-angle-reconstruction}
\theta_0(s)=\vartheta_0+\kappa s+\mathscr Pu_0(s).
\end{equation}
Since $\eta_0$ is closed, $\mathscr C(u_0)=0$.

By Lemma~\ref{lem:W22-density-closure}, choose a smooth closed curvature $u_\ast\in C^\infty\cap L^2_0$ with
$\mathscr C(u_\ast)=0$ and $\|u_\ast-u_0\|_{L^2}$ sufficiently small that $u_0$ lies in the closure chart supplied
by Lemma~\ref{lem:closure-chart}.  Thus
\[
u_0=u_\ast+v_0+\Phi_\ast(v_0)
\]
for some $v_0\in X_\ast$ with $\|v_0\|_{L^2}<\rho_\ast$.  Choose $R$ with
\[
\|v_0\|_{L^2}<R<\rho_\ast.
\]
Proposition~\ref{prop:curv-LWP} gives a unique mild solution
$v\in C([0,T];X_\ast)$ of the projected curvature equation on a time interval $[0,T]$, where $T=T(m,u_\ast,R)>0$.  Set
\[
u(\tau):=\Psi_\ast(v(\tau))=u_\ast+v(\tau)+\Phi_\ast(v(\tau)).
\]
Then $u\in C([0,T];L^2)$ and $u$ is smooth for positive time.

For $\tau>0$ define $\Omega(\tau):=\Omega[u(\tau)]$.  The time-weighted estimates in
Proposition~\ref{prop:curv-LWP}, together with the lower-order structure of $F$, imply that
\[
\|F(\tau)\|_{L^2}\le C\tau^{-\alpha_m},
\qquad
\alpha_m:=\frac{2m+2}{2m+4}<1,
\qquad
0<\tau\le T.
\]
Since $(\beta_F)_s=kF$ and $\int_0^1\beta_F\,ds=0$,
\[
\|\beta_F(\tau)\|_{L^\infty}
\le
C\|k(\tau)F(\tau)\|_{L^1}
\le
C\|k(\tau)\|_{L^2}\|F(\tau)\|_{L^2}.
\]
The $L^2$ norm of $k=\kappa+u$ is bounded on $[0,T]$, and therefore
\[
|\Omega(\tau)|
\le
\|\beta_F(\tau)\|_{L^2}\|k(\tau)\|_{L^2}
\le
C\tau^{-\alpha_m}.
\]
Thus $\Omega\in L^1(0,T)$.  Define the phase by
\begin{equation}\label{eq:phase-definition-W22}
\vartheta(\tau)
:=
\vartheta_0+
\int_0^\tau\Omega(\sigma)\,d\sigma .
\end{equation}
Then $\vartheta\in C([0,T])\cap C^\infty((0,T])$.

Finally set
\[
\eta(s,\tau):=\eta_{u(\tau),\vartheta(\tau)}(s).
\]
Since $u\in C([0,T];L^2)$ and $\vartheta$ is continuous,
\[
\theta(\cdot,\tau)
=
\vartheta(\tau)+\kappa s+\mathscr Pu(\cdot,\tau)
\to
\theta_0
\quad\text{in }W^{1,2}
\]
as $\tau\downarrow0$.  Hence $T=e^{i\theta}$ converges to $T_0$ in $W^{1,2}$, and the centred reconstruction
\eqref{eq:eta-reconstruction-phase} gives
\[
\eta\in C([0,T];W^{2,2}),
\qquad
\eta(\cdot,0)=\eta_0 .
\]
Positive-time smoothness follows from Proposition~\ref{prop:curv-LWP} and reconstruction.
Lemma~\ref{lem:chart-equation-strong} shows that $\eta$ is a classical solution of \eqref{eq:NGFm} for every
positive time.

Uniqueness follows in the same chart.  Let $\bar\eta$ be another gauge-fixed length-normalised solution with the
same initial value.  After shortening the interval if necessary, its curvature fluctuation remains in the chart by
continuity at $\tau=0$.  Its coordinate $\bar v$ solves the projected equation with initial value $v_0$.
Proposition~\ref{prop:curv-LWP} gives $\bar v=v$, hence $\bar u=u$.  The mean phase
\[
\bar\vartheta(\tau):=
\int_0^1(\bar\theta(s,\tau)-\kappa s)\,ds
\]
satisfies $\bar\vartheta'=\Omega[u]$ and $\bar\vartheta(0)=\vartheta_0$, so $\bar\vartheta=\vartheta$.  Since both
representatives are centred, $\bar\eta=\eta$.

The local continuous-dependence statement is obtained by taking the same smooth reference $u_\ast$ and the same
chart for all initial curves in a sufficiently small $W^{2,2}$ neighbourhood of $\eta_0$.  The coordinate map from
curves to $(\vartheta_0,v_0)$ is continuous, and locally Lipschitz after fixing the lift of the tangent angle.
The estimates \eqref{eq:L2-continuous-dependence-v} and \eqref{eq:smooth-continuous-dependence-v}, the Sobolev
estimates for $\Phi_\ast$, the reconstruction formulae, and the phase estimate
\[
\sup_{0\le\tau\le T}|\vartheta^1(\tau)-\vartheta^2(\tau)|
\le
C_{T,\mathcal U}\bigl(|\vartheta^1_0-\vartheta^2_0|+\|v^1_0-v^2_0\|_{L^2}\bigr)
\]
give the stated continuity in $W^{2,2}$ and the positive-time $C^j$ estimate.

If $\eta_0^j\to\eta_0$ is any smooth closed approximating sequence with the same turning number, then for $j$ large
the corresponding curvatures lie in the same chart and their coordinates converge to $v_0$ in $L^2$, while their
initial phases converge to $\vartheta_0$.  The smooth gauge-fixed flows therefore converge to the solution
constructed above in
\[
C([0,T];W^{2,2})\cap C^\infty_{\mathrm{loc}}((0,T]\times\R/\Z),
\]
by the coordinate and phase estimates.  This also shows that the canonical solution is independent of the chosen
smooth reference curvature $u_\ast$.
\end{proof}

\begin{corollary}[Uniform local theory near the circle]\label{cor:uniform-circle-W22-LWP}
Fix $m\ge1$ and $\omega\in\Z\setminus\{0\}$.  There exist
$\varepsilon_{m,\omega}^{\mathrm{wp}}>0,
\qquad
T_{m,\omega}^{\mathrm{wp}}>0$
with the following property.  If $\eta_0$ is a unit-length centred $W^{2,2}$ immersion with turning number $\omega$
and
\[
\Kosc[\eta_0]<\varepsilon_{m,\omega}^{\mathrm{wp}},
\]
then the canonical solution of Theorem~\ref{thm:W22-LWP} exists at least on $[0,T_{m,\omega}^{\mathrm{wp}}]$.
The time is uniform for all such data, and the corresponding local flow map is continuous in the $W^{2,2}$ topology.
\end{corollary}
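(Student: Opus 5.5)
The plan is to redo the proof of Theorem~\ref{thm:W22-LWP} with a \emph{single} closure chart centred at the circle, in place of a chart centred at a data-dependent smooth reference curve. Every constant in Proposition~\ref{prop:curv-LWP} then depends only on $(m,\omega)$ and the chart radius.

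\emph{The circle chart.} Take $u_\ast=0$, the curvature fluctuation of the unit-length $\omega$-circle, for which $\mathscr C(0)=\int_0^1 e^{i\kappa s}\,ds=0$. By \eqref{eq:closure-derivative}, $D\mathscr C(0)[w]=i\int_0^1 e^{2\pi i\omega s}\mathscr Pw\,ds$. This derivative is an isomorphism on the span $E_0$ of $\cos(2\pi\omega s)$ and $\sin(2\pi\omega s)$, and it vanishes on all other Fourier modes. So Lemma~\ref{lem:closure-chart} applies with these choices of $\xi_1,\xi_2$. It produces a radius $\rho_0=\rho_0(\omega)$, a map $\Phi_0$ and constants, all depending only on $m$ and $\omega$. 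For a curve $\eta_0$ as in the statement, $u_0=k_0-\kappa$ satisfies $\|u_0\|_{L^2}^2=\Kosc[\eta_0]$. Hence, if $\varepsilon^{\mathrm{wp}}_{m,\omega}$ is small, the uniqueness part of Lemma~\ref{lem:closure-chart} gives $u_0=v_0+\Phi_0(v_0)$ with $v_0=\Pi_0u_0$ and $\|v_0\|_{L^2}\le\|u_0\|_{L^2}<\rho_0/2$. One can also see this smallness directly from Lemma~\ref{lem:omega-mode-bound}-type closure estimates: the $E_0$-component of $u_0$ is quadratically small.

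\emph{Uniform existence time.} Fix $R:=\rho_0/2$ and apply Proposition~\ref{prop:curv-LWP} with $u_\ast=0$. This gives a time $T^{\mathrm{wp}}_{m,\omega}:=T(m,0,R)$, depending only on $(m,\omega)$, on which the projected equation has a unique mild solution for every $v_0$ in the ball of radius $R$. The phase construction and the reconstruction step in the proof of Theorem~\ref{thm:W22-LWP} then use only the bounds in that proposition and $\|k\|_{L^2}\le |\kappa|+\rho_0$. Therefore they produce a gauge-fixed length-normalised flow on $[0,T^{\mathrm{wp}}_{m,\omega}]$, smooth for positive time, by Lemma~\ref{lem:chart-equation-strong}.

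\emph{Identifying this flow with the canonical solution.} This is the step needing the most care. The issue is that the canonical solution was constructed using some smooth reference curve near $\eta_0$. The last paragraph of the proof of Theorem~\ref{thm:W22-LWP} shows that the canonical solution does not depend on the reference curve: both constructions are limits of the smooth gauge-fixed flows from any smooth closed approximating sequence, for instance the one from Lemma~\ref{lem:W22-density-closure}, whose members also satisfy $\Kosc<\varepsilon^{\mathrm{wp}}_{m,\omega}$ for large $j$. Alternatively, uniqueness within the circle chart can be invoked on the short common interval, followed by a continuation argument. Once the two solutions are identified, the canonical solution extends at least to $[0,T^{\mathrm{wp}}_{m,\omega}]$.

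\emph{Continuity of the flow map.} In the circle chart, the map $\eta_0\mapsto(\vartheta_0,v_0)$ is continuous in $W^{2,2}$. The solution map is Lipschitz by \eqref{eq:L2-continuous-dependence-v}–\eqref{eq:smooth-continuous-dependence-v}, and the phase estimate holds uniformly, so the flow map is continuous on the whole small-$\Kosc$ set.
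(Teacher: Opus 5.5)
Your proposal is correct and follows essentially the same route as the paper. You fix the closure chart at the circle ($u_\ast=0$, with $E$ spanned by $\cos(2\pi\omega s),\sin(2\pi\omega s)$), use $\|u_0\|_{L^2}^2=\Kosc[\eta_0]$ to place the chart coordinate in a fixed ball of radius $R<\rho_0$, and read off from Proposition~\ref{prop:curv-LWP} a time depending only on $(m,\omega)$. Your identification of the circle-chart solution with the canonical one, and your remark on continuity of the flow map, spell out steps the paper's short proof leaves implicit.
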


\begin{proof}
Apply Lemma~\ref{lem:closure-chart} at the round $\omega$-circle, i.e. at $u_\ast=0$.  For this reference one may
choose
\[
E_\omega:=\operatorname{span}_{\R}\{\cos(2\pi\omega s),\sin(2\pi\omega s)\},
\qquad
X_\omega:=E_\omega^\perp\cap L^2_0,
\]
because the derivative of the closure map in these two directions is an isomorphism.  If
$\Kosc[\eta_0]=\|u_0\|_2^2$ is sufficiently small, then $u_0$ belongs to this fixed chart and its coordinate has
$L^2$ norm bounded by a fixed radius $R<\rho_\omega$.  Proposition~\ref{prop:curv-LWP} gives a time depending only
on $m$, $\omega$, and $R$.  Decreasing the threshold if necessary gives the stated constants.
\end{proof}

\begin{proof}[Proof of Theorem~\ref{T:W22-LWP-main}]
This is Theorem~\ref{thm:W22-LWP}.  The formulation in the introduction is obtained by fixing the centred
unit-length representative of the initial curve.
\end{proof}


\section{The length-normalised barycentred flow and the small-\texorpdfstring{$\Kosc$}{Kosc} basin}
\label{S:len-normalised}

This section contains the analysis of the barycentred length-normalised flow introduced in
Section~\ref{S:existence}.  The main point is the invariant basin of attraction defined by small curvature
oscillation.  The evolution equations and Sobolev estimates needed for the argument are stated here, with the
perturbative remainder estimates for the small-$\Kosc$ basin proved below in this section.  The remaining routine
evolution and interpolation proofs are deferred to Appendices~\ref{A:normalised-evolution} and~\ref{A:normalised-apriori}.

Throughout this section $\eta$ denotes a smooth solution of the gauge-fixed length-normalised flow
\eqref{eq:NGFm}.  Thus
\begin{equation}\label{eq:NGFm-section}
\partial_\tau\eta
=
F\,N+\beta_F\,T-b_F,
\qquad
F=\K_m+\Lambda h,
\qquad
h:=\eta\cdot N,
\end{equation}
where
\begin{equation}\label{eq:beta-b-section}
(\beta_F)_s=kF,
\qquad
\int_\eta \beta_F\,ds=0,
\qquad
b_F:=\int_\eta(FN+\beta_FT)\,ds .
\end{equation}
The curve has unit length and zero arclength barycentre:
\begin{equation}\label{eq:unit-centred-section}
L[\eta(\cdot,\tau)]\equiv1,
\qquad
\int_{\eta(\tau)}\eta\,ds=0.
\end{equation}
We write
\[
q:=\eta\cdot T,
\qquad
D:=\partial_s .
\]
The functions $h$ and $q$ satisfy
\begin{equation}\label{eq:hq-identities-section}
h_s=-kq,
\qquad
q_s=1+kh,
\qquad
h_{ss}+k^2h=-k-qk_s .
\end{equation}

The tangential velocity fixes the constant-speed parametrisation.  For scalar intrinsic quantities we use the
transport-free derivative
\begin{equation}\label{eq:intrinsic-derivative-section}
\nabla_\tau u:=\partial_\tau u-\beta_Fu_s .
\end{equation}
The ambient translation $-b_F$ is necessary for the barycentric gauge, but it has no effect on intrinsic quantities.

\subsection{The barycentred normalisation}

\begin{proposition}[Gauge and scaling identities]\label{prop:barycentred-gauge-identities}
Along \eqref{eq:NGFm-section},
\begin{equation}\label{eq:ds-fixed-main}
\partial_\tau(ds)=0,
\qquad
\frac{\dd}{\dd \tau}\int_\eta\eta\,ds=0.
\end{equation}
Moreover,
\begin{equation}\label{eq:Lambda-main}
\Lambda(\tau)=\int_{\eta(\tau)}k\,\K_m\,ds.
\end{equation}

Conversely, let $\gamma$ be a smooth solution of the unnormalised geometric flow
\[
\partial_t^\perp\gamma=\K_m^\gamma N^\gamma
\]
with constant-speed parametrisation.  Define
\[
L_\gamma(t):=L[\gamma(\cdot,t)],
\qquad
\rho(t):=L_\gamma(t)^{-1},
\qquad
\frac{d\tau}{dt}=\rho(t)^{2m+4},
\]
and let
\[
\bar\gamma(t):=\frac1{L_\gamma(t)}\int_{\gamma(t)}\gamma\,ds_\gamma,
\qquad
\eta(u,\tau):=\rho(t)\bigl(\gamma(u,t)-\bar\gamma(t)\bigr).
\]
Then, up to a time-dependent constant shift of the parameter on $\R/\Z$, $\eta$ satisfies
\eqref{eq:NGFm-section}.  Finally,
\begin{equation}\label{eq:log-length-main}
\frac{\dd}{\dd \tau}\log \rho(\tau)=\Lambda(\tau),
\qquad
\frac{\dd}{\dd \tau}\log L_\gamma(t(\tau))=-\Lambda(\tau),
\end{equation}
and hence
\begin{equation}\label{eq:length-reconstruct-main}
L_\gamma(t(\tau))
=
L_\gamma(0)\exp\left(-\int_0^\tau\Lambda(\sigma)\,d\sigma\right).
\end{equation}
\end{proposition}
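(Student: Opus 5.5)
The plan is to verify each identity by direct computation with the standard variation formulas for a velocity $V=FN+\beta T-b$, where $b$ is constant in $s$. The key ingredients are $\partial_\tau(ds)=(\beta_s-kF)\,ds$, which is the same computation as in Lemma~\ref{lem:constant-speed-gauge}, and the fact that a spatially constant translation does not affect $ds$, $T$, $N$ or $k$.

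\textbf{Measure and barycentre.} With $(\beta_F)_s=kF$ from \eqref{eq:beta-b-section} the measure identity gives $\partial_\tau(ds)=0$, which is the first part of \eqref{eq:ds-fixed-main}. Using this,
\[
\frac{\dd}{\dd\tau}\int_\eta\eta\,ds=\int_\eta\partial_\tau\eta\,ds=\int_\eta(FN+\beta_FT)\,ds-b_F\,L=b_F-b_F=0,
\]
since $L=1$. Unit length is preserved because $\frac{\dd}{\dd\tau}L=\int_\eta(\beta_s-kF)\,ds=0$; the choice of $\Lambda$ in Section~\ref{S:existence} gives $\int kF\,ds=0$ anyway.

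\textbf{The formula for $\Lambda$.} Identity \eqref{eq:Lambda-main} is the definition of $\Lambda$. It remains to check consistency, namely $\int_\eta kh\,ds=-1$, so that $\Lambda$ is exactly the multiplier that kills $\int kF\,ds$. This follows by integrating $q_s=1+kh$ from \eqref{eq:hq-identities-section} over the closed unit-length curve.

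\textbf{Scaling.} Given the unnormalised solution $\gamma$ with normal speed $\K_m^\gamma$, we may first assume it is in the constant-speed gauge \eqref{eq:GFm}; any other tangential term only changes the parametrisation, and the constant-speed condition pins this down up to a shift of the parameter. Set $\eta=\rho(\gamma-\bar\gamma)$. Then
\[
\partial_\tau\eta=\rho^{-(2m+4)}\bigl(\rho'(\gamma-\bar\gamma)+\rho(\partial_t\gamma-\bar\gamma')\bigr).
\]
Use the scaling $\K_m[\rho\gamma]=\rho^{-(2m+3)}\K_m[\gamma]$ already derived in Section~\ref{S:grad-ineq}. The normal part of $\rho^{-(2m+3)}\partial_t\gamma$ is then $\K_m^\eta N$. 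The term $\rho^{-(2m+4)}\rho'(\gamma-\bar\gamma)=(\log\rho)_\tau\,\eta$ has normal part $(\log\rho)_\tau h$.

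Next compute the length change. From $\frac{\dd}{\dd t}L_\gamma=-\int k\K_m\,ds$ we get $(\log\rho)_t=L_\gamma^{-1}\int_\gamma k\K_m^\gamma\,ds_\gamma$. Rescaling gives $(\log\rho)_\tau=\int_\eta k\K_m^\eta\,ds=\Lambda$. This is the first identity in \eqref{eq:log-length-main}; the second follows since $L_\gamma=\rho^{-1}$, and integrating gives \eqref{eq:length-reconstruct-main}.

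The normal speed of $\eta$ is therefore $\K_m+\Lambda h=F$. The tangential component is fixed because, at unit length, $\eta$ is again constant-speed, so its tangential velocity $\alpha$ must satisfy $\alpha_s=kF$. This forces $\alpha=\beta_F+c(\tau)$. The constant $c(\tau)$ is exactly the time-dependent parameter shift allowed in the statement: reparametrising $u\mapsto u+\int c$ removes it. Finally, the translational part is $-\rho^{-(2m+3)}\bar\gamma'$, a spatial constant. Because $\eta$ is centred for all $\tau$, integrating $\partial_\tau\eta$ forces this constant to equal $-b_F$.

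\textbf{Main obstacle.} The delicate step is the tangential and parameter bookkeeping: showing the leftover tangential term is constant in $s$ and absorbing it into a parameter shift. I would handle it through the constant-speed preservation argument, comparing $g_\tau=(\alpha_s-kF)g$ with $g$ constant.
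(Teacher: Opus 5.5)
Your proposal is correct and follows essentially the same route as the paper's proof in Appendix~A: the measure identity $\partial_\tau(ds)=((\beta_F)_s-kF)\,ds$ with the constant translation contributing nothing, the barycentre computation $b_F-b_F=0$, the Minkowski identity $\int_\eta kh\,ds=-1$ from $q_s=1+kh$, and the scaling laws for $k$, $ds$ and $\K_m$ giving $(\log\rho)_\tau=\Lambda$. Your handling of the tangential part is more explicit than the paper's, which only asserts it: you derive $\alpha_s=kF$ from the constant-speed property of $\eta$, absorb the additive constant $c(\tau)$ into a parameter shift, and identify the translation with $-b_F$ through the barycentric gauge.
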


\begin{proof}
See Appendix~\ref{A:normalised-evolution}.
\end{proof}

\subsection{Evolution identities}

We use the polynomial notation $P_a^b(k)$: this denotes a finite linear combination of monomials
\[
\prod_{i=1}^a D^{j_i}k,
\qquad
j_1+\cdots+j_a=b,
\qquad
j_i\ge0,
\]
with universal coefficients depending only on $a,b$ and $m$.

\begin{proposition}[Intrinsic evolution identities]\label{prop:normalised-evolution-identities}
Along the length-normalised barycentred flow, the curvature satisfies
\begin{equation}\label{eq:k-evol-main}
\nabla_\tau k
=
(\K_m)_{ss}+k^2\K_m-\Lambda k-\Lambda qk_s .
\end{equation}
The Euler--Lagrange operator has the structure
\begin{equation}\label{eq:Km-structure-main}
\K_m
=
(-1)^{m+1}D^{2m+2}k+P_3^{2m}(k),
\end{equation}
and therefore
\begin{equation}\label{eq:Km-derivative-main}
D^\ell\K_m
=
(-1)^{m+1}D^{2m+2+\ell}k+P_3^{2m+\ell}(k)
\qquad(\ell\ge0).
\end{equation}

For
\[
H_p(\tau):=\int_{\eta(\tau)}k_{s^p}^2\,ds
\qquad(p\in\N_0),
\]
one has
\begin{align}
\label{eq:Hp-evol-main}
\frac{\dd}{\dd \tau}H_p
&=
-2\int_\eta k_{s^{p+m+2}}^2\,ds
+
2\int_\eta k_{s^p}\,P_3^{p+2m+2}(k)\,ds
+
2\int_\eta k_{s^p}\,P_5^{p+2m}(k)\,ds
\nonumber\\
&\qquad
-
\int_\eta k\,\K_m\,k_{s^p}^2\,ds
-
(2p+1)\Lambda(\tau)H_p .
\end{align}

The higher-order energy satisfies
\begin{equation}\label{eq:Em-evol-main}
\frac{\dd}{\dd \tau}E_m[\eta(\tau)]
=
-\int_\eta \K_m^2\,ds
-(2m+1)\Lambda(\tau)E_m[\eta(\tau)].
\end{equation}
Finally,
\begin{equation}\label{eq:Lambda-structure-main}
\Lambda
=
\int_\eta k_{s^{m+1}}^2\,ds
+
\int_\eta P_4^{2m}(k)\,ds .
\end{equation}
\end{proposition}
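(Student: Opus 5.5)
The plan is to derive each identity from the normal velocity $F=\K_m+\Lambda h$, using the general evolution formulas for a curve moving by $\partial_\tau\eta=FN+\beta_FT-b_F$ with $(\beta_F)_s=kF$. The translation $-b_F$ does not affect intrinsic quantities. The tangential term only contributes transport, which $\nabla_\tau$ removes.

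\textbf{Curvature equation.} The standard computation gives $\nabla_\tau k=F_{ss}+k^2F$. I will substitute $F=\K_m+\Lambda h$ and use \eqref{eq:hq-identities-section}, namely $h_{ss}+k^2h=-k-qk_s$. This yields $\Lambda(h_{ss}+k^2h)=-\Lambda k-\Lambda qk_s$, which is \eqref{eq:k-evol-main}.

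\textbf{Structure of $\K_m$ and the $H_p$ evolution.} The structure formula \eqref{eq:Km-structure-main} is read off from \eqref{E:Km}, since every nonleading term is cubic with total derivative order $2m$. Differentiating it $\ell$ times gives \eqref{eq:Km-derivative-main}. For $H_p$ I will use the commutator $\nabla_\tau\partial_s=\partial_s\nabla_\tau+(kF-(\beta_F)_s)\partial_s=\partial_s\nabla_\tau$, valid because $(\beta_F)_s=kF$, together with $\partial_\tau ds=0$. Then
\[
\tfrac{d}{d\tau}H_p=2\int k_{s^p}D^p(\nabla_\tau k)\,ds,
\]
where the transport term integrates to zero. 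The three pieces are handled as follows.
\begin{itemize}
\item The $\K_m$ part, $D^p((\K_m)_{ss}+k^2\K_m)$, is expanded with \eqref{eq:Km-derivative-main}. Integrating the top term by parts $m+2$ times gives $-2\|k_{s^{p+m+2}}\|^2$. The remainders are of type $P_3^{p+2m+2}$ and $P_5^{p+2m}$. I will write $k^2\K_m$ so that the piece $-\int k\K_m k_{s^p}^2$ appears explicitly, as in the statement, and absorb the rest into the $P$ terms.
\item The $\Lambda$ terms are $-\Lambda D^p(k+qk_s)$. Using $q_s=1+kh$ and $h_s=-kq$, I expect $\int k_{s^p}D^p(qk_s)$ to reduce, after one integration by parts, to $\tfrac{2p-1}{2}H_p$ plus terms involving $kh$. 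Combining this with $-\Lambda H_p$ should give $-(2p+1)\Lambda H_p$.
\end{itemize}
The main obstacle is this $\Lambda$ bookkeeping. The pieces with $kh$ coming from $q_s$ must either cancel or be absorbed into the displayed $-\int k\K_m k_{s^p}^2$ term. If they do not, I would recheck the $\Lambda h$ contribution against the first-variation formula \eqref{eq:ds-var}.

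\textbf{$E_m$ and $\Lambda$.} For \eqref{eq:Em-evol-main} I will instead use the first variation, Lemma~\ref{L:firstvar}. Tangential motion and translation leave $E_m$ unchanged, so
\[
\tfrac{d}{d\tau}E_m=-\int\K_mF=-\int\K_m^2-\Lambda\int h\K_m .
\]
Scaling invariance gives $\int h\K_m=(2m+1)E_m$: differentiate $E_m[\lambda\eta]=\lambda^{-(2m+1)}E_m$ at $\lambda=1$, noting that the normal component of the variation is $h$. Finally, \eqref{eq:Lambda-structure-main} follows by integrating $\int k\K_m$ by parts. The top term $(-1)^{m+1}\int kk_{s^{2m+2}}$ equals $\int k_{s^{m+1}}^2$, and the remaining terms are quartic with derivative total $2m$.
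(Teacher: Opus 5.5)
There is a genuine gap in your $H_p$ computation, and it is the actual source of the obstacle you anticipate. The commutator $(kF-(\beta_F)_s)\partial_s$ you quote is the commutator of the fixed-parameter derivative $\partial_\tau$ with $\partial_s$; indeed $[\partial_\tau,D]=0$ in this gauge. For the transport-free derivative $\nabla_\tau=\partial_\tau-\beta_FD$ one has instead $[\nabla_\tau,D]=(\beta_F)_sD=kF\,D$. Equivalently, the transport term does not integrate to zero. Since $\partial_\tau k=\nabla_\tau k+\beta_Fk_s$ and $ds$ is fixed,
\[
\frac{\dd}{\dd\tau}H_p=2\int_\eta k_{s^p}D^p(\nabla_\tau k)\,ds+2\int_\eta k_{s^p}D^p(\beta_Fk_s)\,ds .
\]
Expanding the second integral by the Leibniz rule, using $D^j\beta_F=D^{j-1}(kF)$ for $j\ge1$ and $2\int_\eta k_{s^p}\beta_Fk_{s^{p+1}}\,ds=-\int_\eta(\beta_F)_s\,k_{s^p}^2\,ds$, gives
\[
2\int_\eta k_{s^p}D^p(\beta_Fk_s)\,ds
=-\int_\eta kF\,k_{s^p}^2\,ds
+2\sum_{j=0}^{p-1}\binom{p}{j+1}\int_\eta k_{s^p}\,D^j(kF)\,D^{p-j}k\,ds .
\]
For $F=\K_m$ the omission is invisible in the final statement. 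The first term is exactly the displayed $-\int_\eta k\K_m k_{s^p}^2\,ds$, and the sum consists of integrals of type $\int_\eta k_{s^p}P_3^{p+2m+2}(k)\,ds$ and $\int_\eta k_{s^p}P_5^{p+2m}(k)\,ds$. This is why your $\K_m$ part seems to go through.

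For $F=\Lambda h$, however, these are precisely the terms that make the bookkeeping close, and without them the identity you derive is false. Already for $p=0$ your formula gives $-2\Lambda H_0-2\Lambda\int_\eta kqk_s\,ds=-\Lambda H_0+\Lambda\int_\eta hk^3\,ds$ instead of $-\Lambda H_0$. The residue involves $h$, so it cannot be absorbed into $-\int_\eta k\K_mk_{s^p}^2\,ds$ (which contains no $\Lambda$ and no $h$). Nor can it go into the $P$-terms, which are polynomials in $k$ alone. The $\Lambda h$ contribution to $\nabla_\tau k$, which you propose to recheck, is already correct. What is missing are the two dropped terms above; in a purely normal parametrisation they would appear as the $-kF$ from $\partial_\tau ds$ and as the commutator.

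With them restored, the computation closes as in the paper. Using $kh=q_s-1$ and $D^j(kh)=D^{j+1}q$ for $j\ge1$,
\[
\sum_{j=0}^{p-1}\binom{p}{j+1}\int_\eta k_{s^p}D^j(kh)\,D^{p-j}k\,ds=\sum_{r=1}^{p}\binom{p}{r}\int_\eta k_{s^p}D^rq\,D^{p-r+1}k\,ds-pH_p .
\]
This cancels the corresponding terms of $D^p(qk_s)=qk_{s^{p+1}}+\sum_{r=1}^{p}\binom{p}{r}D^rq\,D^{p-r+1}k$. Meanwhile $-2\int_\eta q\,k_{s^p}k_{s^{p+1}}\,ds=H_p+\int_\eta kh\,k_{s^p}^2\,ds$ has its $h$-part cancelled by the metric term $-\Lambda\int_\eta kh\,k_{s^p}^2\,ds$. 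The $\Lambda$-bracket therefore collapses to $-2H_p+H_p-2pH_p=-(2p+1)H_p$.

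Alternatively, you can avoid the bookkeeping by arguing exactly as you do for $E_m$. $H_p$ is invariant under reparametrisation and translation, so its rate of change is linear in the normal speed alone. Moreover, $h$ is the normal speed of the dilation field $\eta=qT+hN$. Hence the $\Lambda h$ contribution is $\Lambda\frac{\dd}{\dd\rho}H_p[\rho\eta]\big|_{\rho=1}=-(2p+1)\Lambda H_p$.

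The rest of your proposal agrees with the paper: the curvature equation, the structure of $\K_m$, the $E_m$ identity and the formula for $\Lambda$.
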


\begin{proof}
See Appendix~\ref{A:normalised-evolution}.
\end{proof}

\subsection{A priori Sobolev control}

The detailed interpolation argument is deferred to Appendix~\ref{A:normalised-apriori}.  The form used in the
main stability proof is the following.

\begin{proposition}[Sobolev differential inequalities]\label{prop:normalised-Sobolev-control}
Fix $m\ge1$ and $p\in\N_0$.  For every $\delta>0$ there is a constant
$C_{\delta}=C_{\delta}(m,p,\omega)$ and a polynomial $\mathcal P_{m,p}$ with nonnegative coefficients such that
every smooth length-normalised solution satisfies
\begin{equation}\label{eq:Sobolev-diff-main}
\frac{\dd}{\dd \tau}H_p
+
(2-\delta)\int_\eta k_{s^{p+m+2}}^2\,ds
\le
C_{\delta}\,
\mathcal P_{m,p}\bigl(1,H_0,\dots,H_{p+m+1}\bigr).
\end{equation}
More precisely, each non-leading term in \eqref{eq:Hp-evol-main} is bounded by
\[
\delta\int_\eta k_{s^{p+m+2}}^2\,ds
+
C_{\delta}\,
\mathcal P_{m,p}\bigl(1,H_0,\dots,H_{p+m+1}\bigr).
\]
\end{proposition}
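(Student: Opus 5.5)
The plan is to start from the evolution identity \eqref{eq:Hp-evol-main} and bound each non-leading term separately by interpolation on the unit-length curve. The target form is $\delta\int_\eta k_{s^{p+m+2}}^2\,ds+C_\delta\mathcal P_{m,p}(1,H_0,\dots,H_{p+m+1})$. Moving the leading term $-2\int k_{s^{p+m+2}}^2$ to the left then leaves $(2-\delta)$ after relabelling $\delta$. Since $L\equiv1$, all constants in the Gagliardo--Nirenberg inequalities on $\R/\Z$ are uniform. Lower-order norms are written in terms of the $H_j$ with $j\le p+m+1$, and the $H_j$ themselves are allowed to appear polynomially on the right, so no smallness is needed anywhere.

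\emph{The $P_3$ term.} Integrate $\int k_{s^p}P_3^{p+2m+2}(k)\,ds$ by parts to balance derivatives. In each monomial $k_{s^p}D^{j_1}kD^{j_2}kD^{j_3}k$ with $j_1+j_2+j_3=p+2m+2$, move derivatives so that no factor carries more than $p+m+2$ derivatives; this is possible since $p+2m+2+p\le 2(p+m+2)+ (p-2)$, and in the worst case one may put one factor at order $p+m+2$.

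Now use Hölder with the top factor in $L^2$ and the others in $L^\infty$ (or $L^4$). Each lower factor $\|D^jk\|_\infty$ with $j\le p+m+1$ is bounded by $C(H_j+H_{j+1}+H_0+\omega^2)^{1/2}$, using $\|u\|_\infty\le |\bar u|+\|u_s\|_1$ and $\bar k=2\pi\omega$. This produces $\|k_{s^{p+m+2}}\|_2\cdot\mathcal P(H)$, and Young's inequality gives $\delta\int k_{s^{p+m+2}}^2+C_\delta\mathcal P^2$.

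If the balancing leaves two factors of order $p+m+2$ (only when $p$ is small), I instead interpolate one of them. I write $\|D^{p+m+2}k\|$ via Gagliardo--Nirenberg between $H_0$ and $H_{p+m+2}$ with exponent $<1$ overall. Since the total derivative count after integrating by parts is at most $2(p+m+2)-2$, the total power of $\|k_{s^{p+m+2}}\|_2$ is strictly less than $2$. Young's inequality with exponent conjugate to that power then yields the required form.

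\emph{The $P_5$ term.} The term $\int k_{s^p}P_5^{p+2m}$ is handled identically but with lower total order, so it is easier.

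\emph{The $k\K_m k_{s^p}^2$ term.} Substitute \eqref{eq:Km-structure-main}, so that $\int k\K_m k_{s^p}^2$ becomes $\int k k_{s^p}^2\big((-1)^{m+1}k_{s^{2m+2}}+P_3^{2m}\big)$. This is a sum of $P_4^{2p+2m+2}$ and $P_6^{2p+2m}$ integrals, and it is treated by the same integrate-by-parts/interpolation scheme.

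\emph{The $\Lambda H_p$ term.} By \eqref{eq:Lambda-structure-main}, $|\Lambda|\le H_{m+1}+C\,\mathcal P(H_0,\dots,H_m)$, using $\int P_4^{2m}\le$ a product of $L^\infty$ norms. Hence $|(2p+1)\Lambda H_p|$ is directly a polynomial in $H_0,\dots,H_{p+m+1}$, since $m+1\le p+m+1$.

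\emph{Main obstacle.} The delicate step is the bookkeeping in the $P_3^{p+2m+2}$ term, namely ensuring after integration by parts that the highest-order factor appears with power strictly below $2$ when $p$ is small (e.g.\ $p=0$, where orders $2m+2$ exceed $m+2$). There I would first try the generic Gagliardo--Nirenberg count: for a monomial $\prod_{i=1}^a D^{j_i}k$ with $\sum j_i=b$, interpolating every factor between $H_0$ and $H_{p+m+2}$ gives power $\bigl(b+\tfrac{a}{2}-1\bigr)/(p+m+2)$ of $\|k_{s^{p+m+2}}\|_2$ (counting $k_{s^p}$ as one factor). For $a=4$, $b=2p+2m+2$ this is $(2p+2m+3)/(p+m+2)<2$. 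This is the same computation for every term, and it gives the conclusion uniformly.
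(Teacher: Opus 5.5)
Your proposal is correct and is essentially the paper's argument: you balance derivatives by integration by parts, run the Dziuk--Kuwert--Sch\"atzle count (a power $(b+\frac a2-1)/R<2$ of $\|k_{s^R}\|_2$, with $R=p+m+2$) on the $P_3$, $P_5$ and metric terms, close with Young, and handle $\Lambda H_p$ through \eqref{eq:Lambda-structure-main}; your observation that $H_{m+1}H_p$ is already admissible on the right is slightly more direct than the paper's interpolation. One bookkeeping slip: every monomial has total order at most $2R-2$, so balancing brings all factors to order at most $R-1$ and the ``two factors of order $R$'' case never occurs, and the same balancing must be applied to $\int P_4^{2m}$ before taking $L^\infty$ norms when $p<m$.
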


\begin{proof}
See Appendix~\ref{A:normalised-apriori}.
\end{proof}

\subsection{The small higher-order-energy basin}

We first record the standard consequence of the gradient inequality from Section~\ref{S:grad-ineq}.  On a
unit-length curve, the scale-invariant energy is simply $E_m$.

\begin{theorem}[Small higher-order-energy basin]\label{thm:eta-converges-circle}
Fix $m\ge1$ and $\omega\in\Z\setminus\{0\}$.  There exist constants
$\varepsilon^E_{m,\omega}>0,
\qquad
c^E_{m,\omega}>0,
\qquad
C^E_{m,\omega}<\infty$
with the following property.  Let $\eta$ be a smooth length-normalised barycentred solution with turning number $\omega$ and
\[
E_m[\eta(0)]\le \varepsilon^E_{m,\omega}.
\]
Then the length-normalised flow exists for all $\tau\ge0$ and
\begin{equation}\label{eq:Em-exp-decay-main}
E_m[\eta(\tau)]
\le
C^E_{m,\omega}E_m[\eta(0)]e^{-c^E_{m,\omega}\tau}.
\end{equation}
Moreover, for every $r\in\N_0$ there are constants $C_r,c_r>0$ such that
\begin{equation}\label{eq:normalised-speed-decay-main}
\|\partial_\tau\eta(\cdot,\tau)\|_{C^r}
+
\|\K_m(\cdot,\tau)\|_{C^r}
+
|\Lambda(\tau)|
\le
C_r e^{-c_r\tau}.
\end{equation}
Consequently, $\eta(\cdot,\tau)\to\eta_\infty \qquad\text{in }C^\infty(\R/\Z),$
where $\eta_\infty$ is the unit-length $\omega$-circle in the barycentric gauge.  If $\eta$ is obtained from an unnormalised solution $\gamma$, then
$L_\gamma(t(\tau))\to L_\infty\in(0,\infty),$ and, after translating by the arclength barycentre,
\[
\frac{\gamma(\cdot,t)-\bar\gamma(t)}{L_\gamma(t)}
\to
\eta_\infty
\qquad\text{in }C^\infty .
\]
In particular, the unnormalised curve converges, modulo translations and reparametrisation, to an $\omega$-circle of
length $L_\infty$.
\end{theorem}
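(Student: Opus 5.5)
The plan is to run a continuity (bootstrap) argument on the normalised energy, using the gradient inequality to close it. By \eqref{eq:Em-evol-main},
\[
\frac{\dd}{\dd\tau}E_m=-\int_\eta\K_m^2\,ds-(2m+1)\Lambda E_m .
\]
On unit-length curves Theorem~\ref{thm:grad-ineq} gives $\int\K_m^2\ge C^{\rm sm}_{m,\omega}E_m$ whenever $E_m<\varepsilon^{\rm sm}_{m,\omega}$. For $\Lambda$, I use the structure identity \eqref{eq:Lambda-structure-main}, $\Lambda=\int k_{s^{m+1}}^2+\int P_4^{2m}(k)$. The first term is nonnegative, so it only helps: the term $-(2m+1)\Lambda E_m$ is then at most $-(2m+1)\int P_4^{2m}(k)\,E_m$. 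I bound $\int P_4^{2m}(k)$ by interpolation on the unit circle. Splitting $k=\kappa+f$, each monomial with total order $2m$ and at least one derivative is controlled by $C(1+E_m)\|k_{s^{m}}\|_2^2\le C E_m$, using \eqref{eq:f-infty-unit}, \eqref{eq:lower-derivative-infty-unit} and \eqref{eq:k-infty-unit}. The monomial $\kappa^4\cdot$const cannot occur, since every term of order $2m\ge2$ carries derivatives. Hence $|\Lambda|\le \int k_{s^{m+1}}^2+C E_m$, and I get
\[
\frac{\dd}{\dd\tau}E_m\le -C^{\rm sm}E_m+CE_m^2\le -\tfrac12C^{\rm sm}E_m
\]
as long as $E_m\le\varepsilon$ small. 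This preserves smallness and yields \eqref{eq:Em-exp-decay-main}.

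Next I must rule out finite-time breakdown and obtain the higher-norm decay. Because the length is fixed at one, Theorem~\ref{T:cont} reduces global existence to bounds on all $H_p$. On $\R/\Z$ with $E_m$ small, $H_p\le C H_{m}$ for $p\le m$ by Poincaré, since the derivatives have mean zero and $H_0$ is bounded via $\kappa$. For $p\ge m$ I apply Proposition~\ref{prop:normalised-Sobolev-control}. The refinement needed is that, in the small-energy regime, every non-leading term is bounded by $\delta H_{p+m+2}+C(1+\cdots)E_m^{\theta}$ with some decaying factor. I would redo the interpolation of Appendix~\ref{A:normalised-apriori} with $k$ replaced by $f=k-\kappa$. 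Each monomial in $P_3,P_5$ either contains a derivative factor or is compensated by the $-(2p+1)\Lambda H_p$ term. Gagliardo--Nirenberg between $\|k_{s^m}\|_2$ and $\|k_{s^{p+m+2}}\|_2$ then gives
\[
\frac{\dd}{\dd\tau}H_p+H_{p+m+2}\le C E_m^{\,a}H_{p+m+2}^{\,b}+\dots
\]
with $b<1$. Combining this with $H_{p+m+2}\ge cH_p$ by Poincaré yields exponential decay of each $H_p$ for $p\ge1$.

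From the $H_p$ decay, \eqref{eq:Km-derivative-main} and Sobolev embedding give $\|\K_m\|_{C^r}\le Ce^{-c_r\tau}$. The bound $|\Lambda|\le C(H_{m+1}+E_m)$ gives decay of $\Lambda$. Since $h,q,\beta_F,b_F$ are bounded in $C^r$ by the curvature, $\|\partial_\tau\eta\|_{C^r}$ decays exponentially as well. Integrability of $\|\partial_\tau\eta\|_{C^r}$ over $[0,\infty)$ then gives a $C^\infty$ limit $\eta_\infty$. This limit is unit length, barycentred, has turning number $\omega$ and satisfies $\K_m=0$, so by Theorem~\ref{T:rigidity} it is the $\omega$-circle. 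For the unnormalised statement, \eqref{eq:length-reconstruct-main} with $\Lambda\in L^1(0,\infty)$ gives $L_\gamma\to L_\infty\in(0,\infty)$. Moreover $\tau\to\infty$ as $t\to\infty$, since $d\tau/dt=L_\gamma^{-(2m+4)}$ is bounded below.

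I expect the main obstacle to be the second step: showing that the lower-order terms in \eqref{eq:Hp-evol-main} are genuinely small, not merely bounded, when $E_m$ is small. The constant curvature $\kappa$ produces terms like $\kappa^2 k_{s^p}k_{s^{p+2m+2}}$ that are of the same order as the leading term. My first attempt would be to treat these as part of a frozen linear operator, as with $F_0$ in Lemma~\ref{lem:F0-lower}, and to use the Fourier gap away from $p=\pm\omega$ together with Lemma~\ref{lem:omega-mode-bound} to absorb them.
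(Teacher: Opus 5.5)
Your proof is correct and has the same overall architecture as the paper's: the gradient inequality gives logistic decay of $E_m$; the Sobolev inequalities and the continuation criterion give global existence; then come decay of the speed, a Cauchy-tail argument, identification of the limit, and integrability of $\Lambda$ for the length. It differs from the paper in three sub-steps.

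\textbf{The dilation term.} The paper never uses \eqref{eq:Lambda-structure-main}. It writes $|\Lambda|\le\|k\|_2\|\K_m\|_2$ and absorbs $(2m+1)|\Lambda|E_m\le\delta\|\K_m\|_2^2+C_\delta E_m^2\|k\|_2^2$ into the dissipation, so no structure of $\Lambda$ is needed. Your sign argument also works. However, the quartic part of $\Lambda$ contains $k^3k_{s^{2m}}$ and $k^2k_{s^{m-j}}k_{s^{m+j}}$. You must first integrate by parts until every factor has order at most $m$; only then do the $L^\infty$ bounds give $\bigl|\int P_4^{2m}(k)\,ds\bigr|\le C(1+E_m)E_m$.

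\textbf{Higher norms.} The paper only needs \emph{uniform} bounds on every $H_p$, from Proposition~\ref{prop:normalised-Sobolev-control} and the continuation criterion. It then obtains exponential decay by interpolating between the decaying $\|k_{s^m}\|_2^2=2E_m$ and those bounds. On that route the lower-order terms need only be bounded, so the obstacle you flag never arises.

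Even on your direct route it is not a real obstacle. The $\kappa$-terms are not of leading order: for example $\kappa^2\int k_{s^p}k_{s^{p+2m+2}}\,ds=\pm\kappa^2\|k_{s^{p+m+1}}\|_2^2$. Fourier interpolation applied to the mean-zero function $k_{s^m}$ then gives $\|k_{s^{p+m+1}}\|_2^2\le\delta H_{p+m+2}+C_\delta E_m$. Every such quadratic term is therefore a forcing term decaying like $E_m$, which you already control, so no frozen-operator or Fourier-gap argument is required. Your route yields an explicit inequality $H_p'+cH_p\le Ce^{-c\tau}$ for each $p\ge1$; the paper's route is shorter.

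\textbf{The limit.} You identify $\eta_\infty$ via $\K_m[\eta_\infty]=0$ and Theorem~\ref{T:rigidity}. The paper instead uses $E_m[\eta_\infty]=0$, that is $k_{s^m}\equiv0$, which is more elementary and does not need the rigidity theorem.
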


\begin{proof}
From \eqref{eq:Em-evol-main} and using the Cauchy--Schwarz inequality,
\[
|\Lambda|=\left|\int_\eta k\K_m\,ds\right|
\le
\|k\|_2\|\K_m\|_2.
\]
Therefore, for any fixed $\delta\in(0,1)$,
\[
\frac{\dd}{\dd \tau}E_m
\le
-(1-\delta)\|\K_m\|_2^2
+
C_{\delta,m}E_m^2\|k\|_2^2 .
\]
If $E_m$ is below the small-energy threshold in Theorem~\ref{thm:grad-ineq}, then
\[
\|\K_m\|_2^2\ge c_{m,\omega}E_m .
\]
Since $L\equiv1$ and $\bar k=2\pi\omega$, the PSW inequality gives
\[
\|k\|_2^2
\le
C_{m,\omega}(1+E_m).
\]
Thus, while $E_m$ is small,
\[
\frac{\dd}{\dd \tau}E_m
\le
-c_0E_m+C_0E_m^2 .
\]
Choosing $\varepsilon^E_{m,\omega}$ sufficiently small makes the small-energy condition invariant and gives
\eqref{eq:Em-exp-decay-main} by comparison with the logistic ODE.

The Sobolev differential inequalities in Proposition~\ref{prop:normalised-Sobolev-control}, together with the
exponential decay of $E_m$, give uniform bounds for all curvature derivatives.  These bounds, with the unit-length
normalisation, rule out finite-time breakdown by the continuation criterion in the fixed gauge.  Interpolation then
gives exponential decay of the curvature derivatives, and the gauge equations \eqref{eq:beta-b-section} give
exponential decay of $\K_m$, $\Lambda$, $\beta_F$, and $b_F$ in every $C^r$ norm.  This proves
\eqref{eq:normalised-speed-decay-main}.  Hence, for every $r\in\N_0$,
\[
\int_0^\infty\|\partial_\tau\eta(\cdot,\sigma)\|_{C^r}\,d\sigma<\infty .
\]
Consequently, if $\tau_2>\tau_1$, then
\[
\|\eta(\cdot,\tau_2)-\eta(\cdot,\tau_1)\|_{C^r}
\le
\int_{\tau_1}^{\tau_2}\|\partial_\tau\eta(\cdot,\sigma)\|_{C^r}\,d\sigma
\to0
\qquad(\tau_1\to\infty).
\]
Thus the whole tail of the trajectory is Cauchy in every $C^r$ norm.  In particular, every sequence
$\tau_j\to\infty$ has the same $C^r$ limit; full $C^\infty$ convergence to a smooth limit $\eta_\infty$ follows.
Since $E_m[\eta(\tau)]\to0$, the limit satisfies $k_{s^m}\equiv0$, so $k$ is constant.  The turning number and
unit-length normalisation give $k\equiv2\pi\omega$, hence $\eta_\infty$ is the unit-length $\omega$-circle.

Finally, \eqref{eq:normalised-speed-decay-main} gives $\Lambda\in L^1([0,\infty))$.  The length formula
\eqref{eq:length-reconstruct-main} then implies convergence of $L_\gamma(t(\tau))$ to a finite positive limit.
The reconstruction formula
\[
\gamma(\cdot,t)
=
\bar\gamma(t)+L_\gamma(t)\eta(\cdot,\tau(t))
\]
gives the claimed convergence of the unnormalised flow.
\end{proof}

\subsection{Curvature oscillation and the resonant modes}

We now pass from small $E_m$ to small curvature oscillation.  Since $L[\eta]\equiv1$, set
\[
\kappa:=2\pi\omega,
\qquad
f:=k-\kappa,
\qquad
e(\tau):=\int_\eta f^2\,ds=\Kosc[\eta(\tau)].
\]
Then $\int_\eta f\,ds=0$.  Define the curvature-level quadratic form
\begin{equation}\label{eq:Dm-curv-form-main}
\mathfrak D_m[f]
:=
\int_\eta\left(D^m(D^2+\kappa^2)f\right)^2\,ds .
\end{equation}

\begin{lemma}[Closure controls the resonant curvature modes]\label{lem:resonant-Kosc}
Let $\eta$ be a smooth closed unit-length curve with turning number $\omega\ne0$.  Write
\[
f(s)=k(s)-2\pi\omega
=
\sum_{n\in\Z}a_ne^{2\pi ins},
\qquad
a_0=0.
\]
Then
\begin{equation}\label{eq:resonant-Kosc-bound-main}
|a_\omega|+|a_{-\omega}|
\le
C_\omega e,
\qquad
e:=\int_0^1 f^2\,ds .
\end{equation}
\end{lemma}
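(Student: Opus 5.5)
The plan is to repeat the closure argument of Lemma~\ref{lem:omega-mode-bound} verbatim. The only change is to stop the Poincar\'e chain at the $L^2$ norm of $f$ rather than continuing up to $k_{s^m}$.

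First, let $\psi$ be the mean-zero primitive of $f$, so that $\psi_s=f$ and $\int_0^1\psi\,ds=0$. After a rotation (which leaves $k$, and hence the Fourier coefficients $a_n$, unchanged) the tangent angle is $\theta=\kappa s+\psi$, so $\gamma_s=e^{i\theta}$. Closure of the curve gives
\[
0=\int_0^1 e^{2\pi i\omega s}e^{i\psi}\,ds .
\]
Since $\omega\neq0$, we also have $\int_0^1 e^{2\pi i\omega s}\,ds=0$. Subtracting, we obtain
\[
\Bigl|\int_0^1 e^{2\pi i\omega s}\psi\,ds\Bigr|\le\int_0^1|e^{i\psi}-1-i\psi|\,ds\le\tfrac12\|\psi\|_2^2 ,
\]
using the elementary bound $|e^{ix}-1-ix|\le\frac12x^2$ for real $x$, exactly as in \eqref{eq:closure-psi-bound}.

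Second, we identify the left-hand side in Fourier terms. Since $\psi=\sum_{n\ne0}\frac{a_n}{2\pi i n}e^{2\pi i n s}$, the integral $\int_0^1 e^{2\pi i\omega s}\psi\,ds$ equals $a_{-\omega}/(2\pi i(-\omega))$. Hence
\[
|a_{-\omega}|\le \pi|\omega|\,\|\psi\|_2^2 .
\]
Applying the Poincar\'e--Wirtinger inequality \eqref{eq:PW-unit} once to the mean-zero function $\psi$ gives $\|\psi\|_2^2\le(2\pi)^{-2}\|f\|_2^2=(2\pi)^{-2}e$. Therefore $|a_{-\omega}|\le \frac{|\omega|}{4\pi}e$.

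Finally, reality of $f$ gives $a_\omega=\overline{a_{-\omega}}$, which yields \eqref{eq:resonant-Kosc-bound-main} with $C_\omega=|\omega|/(2\pi)$.

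There is no genuine obstacle here. The only point requiring care is that the argument uses only $f\in L^2$, through $\psi\in H^1$, and not higher regularity; this matters for later application to rough data, but the lemma as stated is for smooth curves anyway.
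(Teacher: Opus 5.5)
Your proposal is correct and is essentially the paper's own proof. It uses the same closure identity for the mean-zero primitive $\psi$, the bound $|e^{ix}-1-ix|\le\frac12x^2$, extraction of $a_{-\omega}$ from $\int_0^1e^{2\pi i\omega s}\psi\,ds$, a single Poincar\'e--Wirtinger step $\|\psi\|_2^2\le(2\pi)^{-2}e$, and the reality of $f$; your explicit constant $C_\omega=|\omega|/(2\pi)$ is also correct.
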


\begin{proof}
Let $\psi$ be the mean-zero primitive of $f$:
\[
\psi_s=f,
\qquad
\int_0^1\psi\,ds=0.
\]
After fixing the additive constant in the tangent angle,
\[
\theta(s)=2\pi\omega s+\psi(s),
\qquad
\eta_s=e^{i\theta(s)}.
\]
Closedness gives
\[
0=\int_0^1e^{2\pi i\omega s}e^{i\psi(s)}\,ds.
\]
Since $\omega\ne0$,
\[
\int_0^1e^{2\pi i\omega s}\,ds=0,
\]
and therefore
\[
i\int_0^1e^{2\pi i\omega s}\psi(s)\,ds
=
-\int_0^1e^{2\pi i\omega s}\bigl(e^{i\psi(s)}-1-i\psi(s)\bigr)\,ds .
\]
Using $|e^{ix}-1-ix|\le \frac12x^2$,
\[
\left|\int_0^1e^{2\pi i\omega s}\psi(s)\,ds\right|
\le
\frac12\|\psi\|_2^2.
\]
Since
\[
\psi(s)=\sum_{n\ne0}\frac{a_n}{2\pi in}e^{2\pi ins},
\]
the left-hand side contains $a_{-\omega}/(2\pi i(-\omega))$.  Hence
\[
|a_{-\omega}|
\le
\pi|\omega|\,\|\psi\|_2^2
\le
C_\omega\|f\|_2^2
=
C_\omega e,
\]
where the last inequality is the PSW inequality for the mean-zero primitive.  Since $f$ is real,
$a_\omega=\overline{a_{-\omega}}$.
\end{proof}

\begin{lemma}[Curvature spectral gap modulo closure]\label{lem:curv-spectral-gap-Kosc}
Let
\begin{equation}\label{eq:mu-m-omega-curv-main}
\mu_{m,\omega}
:=
(2\pi)^{2m+4}
\min_{n\in\Z\setminus\{0,\pm\omega\}}
|n|^{2m}(n^2-\omega^2)^2
>0 .
\end{equation}
Then
\begin{equation}\label{eq:Dm-controls-e-main}
\mathfrak D_m[f]
\ge
\mu_{m,\omega}e-C_{m,\omega}e^2.
\end{equation}
Moreover, for each $0\le r\le m+2$,
\begin{equation}\label{eq:Dm-controls-derivatives-main}
\|D^rf\|_2^2
\le
C_{m,r,\omega}\bigl(\mathfrak D_m[f]+e^2\bigr).
\end{equation}
\end{lemma}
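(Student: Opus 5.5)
The argument mirrors Lemma~\ref{lem:F0-lower}, with Lemma~\ref{lem:resonant-Kosc} supplying control of the resonant modes. Expand $f=\sum_{n\ne0}a_ne^{2\pi ins}$, which has $a_0=0$ since $f$ has mean zero. The operator $D^m(D^2+\kappa^2)$ has constant coefficients, so on the mode $e^{2\pi ins}$ it acts by multiplication by $(2\pi i n)^m\bigl(\kappa^2-(2\pi n)^2\bigr)=(2\pi i n)^m(2\pi)^2(\omega^2-n^2)$. Parseval then gives
\[
\mathfrak D_m[f]=(2\pi)^{2m+4}\sum_{n\ne0}|a_n|^2|n|^{2m}(n^2-\omega^2)^2 .
\]
The modes $n=\pm\omega$ contribute nothing. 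Every other mode $n\notin\{0,\pm\omega\}$ has weight at least $\mu_{m,\omega}$, directly from the definition \eqref{eq:mu-m-omega-curv-main}. The minimum there is positive and attained, because the weight tends to infinity as $|n|\to\infty$ and vanishes only at $0,\pm\omega$. Hence
\[
\mathfrak D_m[f]\ge\mu_{m,\omega}\sum_{n\ne0,\pm\omega}|a_n|^2=\mu_{m,\omega}\bigl(e-|a_\omega|^2-|a_{-\omega}|^2\bigr).
\]
By Lemma~\ref{lem:resonant-Kosc}, $|a_\omega|^2+|a_{-\omega}|^2\le C_\omega e^2$, and \eqref{eq:Dm-controls-e-main} follows with $C_{m,\omega}=\mu_{m,\omega}C_\omega$.

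For \eqref{eq:Dm-controls-derivatives-main} I would again split into resonant and nonresonant modes. Parseval gives $\|D^rf\|_2^2=\sum_n(2\pi n)^{2r}|a_n|^2$.

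\emph{Nonresonant modes.} For $n\notin\{0,\pm\omega\}$ and $0\le r\le m+2$, we have $(2\pi n)^{2r}\le C_{m,r,\omega}(2\pi)^{2m+4}|n|^{2m}(n^2-\omega^2)^2$. This holds because the ratio $|n|^{2r}/\bigl(|n|^{2m}(n^2-\omega^2)^2\bigr)$ is bounded on this set. For large $|n|$ the denominator grows like $|n|^{2m+4}\ge|n|^{2r}$. The finitely many remaining $n$ all have a nonzero denominator, since these modes avoid $0$ and $\pm\omega$. So the nonresonant part of $\|D^rf\|_2^2$ is at most $C\,\mathfrak D_m[f]$.

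\emph{Resonant modes.} These contribute $(2\pi\omega)^{2r}\bigl(|a_\omega|^2+|a_{-\omega}|^2\bigr)\le C_{m,r,\omega}e^2$, again by Lemma~\ref{lem:resonant-Kosc}.

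Summing the two parts gives the claim.

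The argument is elementary once Lemma~\ref{lem:resonant-Kosc} is available. The only point needing care is the bookkeeping that the resonant modes are annihilated by $D^m(D^2+\kappa^2)$, so they can be controlled only through the closure constraint, at quadratic order in $e$. One subtlety is that $\mathfrak D_m$ is written as an integral over $\eta$. Since $L=1$ and the parametrisation is by arclength, this is the standard $L^2(0,1)$ integral in $s$, which is what makes Parseval applicable.
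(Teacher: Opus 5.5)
Your proposal is correct and follows the paper's proof essentially line for line. It uses Parseval for the constant-coefficient operator $D^m(D^2+\kappa^2)$, the modewise lower bound $\mu_{m,\omega}$ off $\{0,\pm\omega\}$, and Lemma~\ref{lem:resonant-Kosc} to bound the resonant modes at order $e^2$, together with the same modewise comparison $(2\pi|n|)^{2r}\le C\,(2\pi)^{2m+4}|n|^{2m}(n^2-\omega^2)^2$ for the derivative estimate. One cosmetic point: that lemma bounds $|a_\omega|+|a_{-\omega}|$ by $C_\omega e$, so the squared resonant contribution is at most $C_\omega^2e^2$; you have silently absorbed this into a relabelled constant, which is harmless.
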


\begin{proof}
For the Fourier mode $e^{2\pi ins}$,
\[
D^m(D^2+\kappa^2)e^{2\pi ins}
=
(2\pi in)^m\bigl((2\pi\omega)^2-(2\pi n)^2\bigr)e^{2\pi ins}.
\]
Thus
\[
\mathfrak D_m[f]
=
\sum_{n\ne0}
(2\pi)^{2m+4}|n|^{2m}(n^2-\omega^2)^2|a_n|^2 .
\]
The modes $n=\pm\omega$ are the translation modes and do not contribute to $\mathfrak D_m$.  Lemma~\ref{lem:resonant-Kosc}
gives
\[
\sum_{n=\pm\omega}|a_n|^2\le C_\omega e^2.
\]
Therefore
\[
\mathfrak D_m[f]
\ge
\mu_{m,\omega}
\sum_{n\in\Z\setminus\{0,\pm\omega\}}|a_n|^2
\ge
\mu_{m,\omega}e-C_{m,\omega}e^2 .
\]
The derivative estimate follows similarly, because for $n\notin\{0,\pm\omega\}$ and $0\le r\le m+2$,
\[
(2\pi|n|)^{2r}
\le
C_{m,r,\omega}
(2\pi)^{2m+4}|n|^{2m}(n^2-\omega^2)^2,
\]
while the resonant contribution is again bounded by $C_{m,r,\omega}e^2$.
\end{proof}

\begin{lemma}[Quadratic expansion of curvature oscillation]\label{lem:Kosc-quadratic-expansion}
Along the length-normalised barycentred flow,
\begin{equation}\label{eq:eprime-quadratic-form-main}
\frac{\dd}{\dd \tau}e
=
-2\mathfrak D_m[f]+\mathcal R_m[f],
\end{equation}
where there are constants $\theta_m\in(0,1)$ and $C_{m,\omega}<\infty$ such that, whenever $e\le1$,
\begin{equation}\label{eq:Kosc-remainder-bound-main}
|\mathcal R_m[f]|
\le
C_{m,\omega}e^{\theta_m}\mathfrak D_m[f]
+
C_{m,\omega}e^{1+\theta_m}.
\end{equation}
\end{lemma}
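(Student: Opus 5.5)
Since $\partial_\tau(ds)=0$ by Proposition~\ref{prop:barycentred-gauge-identities} and $\kappa$ is constant, I start from
\[
\frac{\dd}{\dd\tau}e=2\int_\eta f\,\nabla_\tau k\,ds .
\]
The transport term $\beta_F f_s$ integrates against $f$ to $-\tfrac12\int(\beta_F)_s f^2=-\tfrac12\int kFf^2$, which is cubic. Substituting \eqref{eq:k-evol-main} gives
\[
\frac{\dd}{\dd\tau}e=2\int f\bigl((\K_m)_{ss}+k^2\K_m\bigr)\,ds-2\Lambda\int f(k+qk_s)\,ds+(\text{cubic}).
\]
I then linearise about $k\equiv\kappa$. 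By the splitting \eqref{eq:K-splitting}, $\K_m=F_0+R$ with $F_0=(-1)^{m+1}D^{2m}(D^2+\kappa^2)f$. The leading term is therefore
\[
2\int f\,(D^2+\kappa^2)F_0\,ds=-2\int\bigl(D^m(D^2+\kappa^2)f\bigr)^2\,ds=-2\mathfrak D_m[f],
\]
after integrating by parts $m$ times and using self-adjointness of $D^2+\kappa^2$. Everything else goes into $\mathcal R_m[f]$. That consists of the $(k^2-\kappa^2)\K_m$ term, the $R$ contributions, and the $\Lambda$ terms. For the $\Lambda$ terms I use \eqref{eq:Lambda-structure-main}, so $\Lambda=\|k_{s^{m+1}}\|_2^2+\int P_4^{2m}(k)$, which is quadratic in $f$. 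The factor $\int f(k+qk_s)$ is at least linear in $f$: integrating by parts, $\int qff_s=-\tfrac12\int(1+kh)f^2$. So every term of $\mathcal R_m$ is at least cubic in $f$, counting derivatives.

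Next I bound each term by interpolation. Write a typical monomial as $\int D^{j_1}f\cdots D^{j_a}f$ with $a\ge3$, possibly multiplied by bounded factors $k^\ell$, $h$, $q$. The total derivative count is at most $2(m+2)$, the order of $\mathfrak D_m$. By Lemma~\ref{lem:curv-spectral-gap-Kosc}, $\|D^rf\|_2^2\le C(\mathfrak D_m+e^2)$ for $r\le m+2$. Gagliardo--Nirenberg between $\|f\|_2^2=e$ and $\|D^{m+2}f\|_2^2\lesssim\mathfrak D_m+e^2$ then gives a bound
\[
C\,e^{\frac{a}{2}-\sigma}\,(\mathfrak D_m+e^2)^{\sigma},
\qquad \sigma=\frac{\sum j_i+\frac12(a-2)}{2(m+2)} .
\]
For the highest-order terms, such as $\int f\,(k^2-\kappa^2)D^{2m+2}f$ after splitting derivatives evenly, we have $\sigma\le1$. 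When $\sigma<1$, Young's inequality gives $\le\epsilon$-small multiples of $\mathfrak D_m$ plus a power of $e$. When $\sigma=1$, the prefactor $e^{a/2-1}$ with $a\ge3$ supplies $e^{\theta_m}$, where $\theta_m\le\tfrac12$. The bounded factors $h,q,k$ are controlled in $L^\infty$: $h$ and $q$ by unit length, and $k$ by $\kappa+\|f\|_\infty$. Here $\|f\|_\infty^2\lesssim e^{1/2}\|f_s\|_2$ by \eqref{eq:Sob-unit-zero}, and the condition $e\le1$ keeps such powers bounded. The result is $|\mathcal R_m|\le Ce^{\theta_m}\mathfrak D_m+Ce^{1+\theta_m}$. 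The $e^2$ pieces coming from the resonant modes, via Lemma~\ref{lem:resonant-Kosc}, are absorbed into $e^{1+\theta_m}$ because $e\le1$.

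The main obstacle is the derivative bookkeeping at top order. One must check that every cubic term can be arranged, by integrating by parts, so that no factor carries more than $m+2$ derivatives. One must also check that the scaling exponent never exceeds the borderline, which in particular requires that quartic and higher terms with total order $2m+4$ still carry a power of $e$. I would handle this uniformly by counting homogeneity, as in Lemma~\ref{lem:R-upper}, rather than term by term. The $\Lambda$ terms also need separate care, since $\Lambda$ contains $\|f_{s^{m+1}}\|_2^2$. I would pair it with $|\int f(k+qk_s)|\lesssim e$, using $\int f\,ds=0$ and the integration by parts above, to get $\lesssim e\,(\mathfrak D_m+e^2)$, which is acceptable.
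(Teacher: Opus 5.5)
Your skeleton matches the paper's. Your $F_0$ is the paper's $\mathcal L_mf=(-1)^{m+1}D^{2m}\Phi$, the leading term $2\int f(D^2+\kappa^2)F_0\,ds=-2\mathfrak D_m[f]$ is the same computation, and the remainder is handled by Gagliardo--Nirenberg plus \eqref{eq:Dm-controls-derivatives-main}. The difference is that you keep $\Lambda$ from \eqref{eq:k-evol-main}. The paper instead removes it at the outset with the scaling identity $\int(2k_{ss}+k^3)h\,ds=-H_0$, writing $e'=\int(A-H_0k)\K_m\,ds$ with $A-H_0k=2\Phi+B$.

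Your treatment of $\Lambda$ has a genuine flaw. You assert that $k$ is a bounded factor because $e\le1$. But \eqref{eq:Sob-unit-zero} only gives $\|f\|_\infty^2\le2e^{1/2}\|f_s\|_2$, and $e\le1$ does not control $\|f_s\|_2$. Your key claim $|\int f(k+qk_s)\,ds|\lesssim e$ depends on this, since integrating by parts gives exactly $\int f(k+qk_s)\,ds=\tfrac12e-\tfrac12\int khf^2\,ds$. The same issue affects the piece $-\Lambda\int khf^2\,ds$ inside your ``cubic'' transport term. Without $\|k\|_\infty\lesssim1$, your crude bound $|\Lambda|\lesssim\mathfrak D_m+e^2$ multiplied by $e\|f\|_\infty$ is superlinear in $\mathfrak D_m$, because $\|f\|_\infty$ carries a positive power of $\|D^{m+2}f\|_2$. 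Such a quantity cannot be bounded by $Ce^{\theta_m}\mathfrak D_m+Ce^{1+\theta_m}$.

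The fix is to keep all dilation terms together. The term $-2\Lambda\int f(k+qk_s)\,ds$ equals $-\Lambda e+\Lambda\int khf^2\,ds$. The $\Lambda h$ part of the transport term $-\int kFf^2\,ds$ is $-\Lambda\int khf^2\,ds$, which cancels the second piece exactly. So the total $\Lambda$-contribution to $e'$ is $-\Lambda e$, which is precisely the paper's $\int B_e\K_m\,ds$ with $B_e=-e(\kappa+f)$. This is $e$ times terms of total order at most $2m+2$, so it is harmless. Your remaining terms combine to $\int(2f_{ss}+2k^2f-kf^2)\K_m\,ds=\int(2\Phi+B_2+B_3)\K_m\,ds$, so your $e'$ coincides with the paper's $\int(2\Phi+B)\K_m\,ds$.

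Elsewhere, expand $k=\kappa+f$ rather than treating $k^\ell$ as bounded, so that every extra $f$ enters the homogeneity count. Your ceiling ``total derivative count at most $2(m+2)$'' is also too weak. By your own formula, a cubic monomial of total order $2m+4$ has $\sigma=1+\frac1{4(m+2)}>1$ and could not be absorbed. The estimate works because every cubic or quartic remainder monomial has total order at most $2m+2$, and every quintic or sextic one at most $2m$. These are the paper's classes (i)--(ii). Hence $\sigma<1$ always, and a weighted Young inequality gives $e^{\theta_m}\mathfrak D_m+e^{1+\theta_m}$ for any $\theta_m\le\frac12$, since $a\ge3$.
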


\begin{proof}
Since $L\equiv1$ and $\bar k=\kappa=2\pi\omega$ is fixed,
\[
e=\int_\eta(k-\kappa)^2\,ds=\int_\eta k^2\,ds-\kappa^2.
\]
For a normal speed $F$, the intrinsic identity gives
\[
\frac{\dd}{\dd \tau}\int_\eta k^2\,ds
=
\int_\eta(2k_{ss}+k^3)F\,ds.
\]
Set
\[
A:=2k_{ss}+k^3,
\qquad
H_0:=\int_\eta k^2\,ds=\kappa^2+e.
\]
For $F=\K_m+\Lambda h$ and $\Lambda=\int_\eta k\K_m\,ds$, the scaling identity for
$E_0=\frac12\int k^2\,ds$ gives
\[
\int_\eta Ah\,ds=-H_0.
\]
Hence
\[
e'
=
\int_\eta A\K_m\,ds-\Lambda H_0
=
\int_\eta(A-H_0k)\K_m\,ds .
\]
Write
\[
\Phi:=(D^2+\kappa^2)f,
\qquad
B:=3\kappa f^2+f^3-e(\kappa+f).
\]
Since $k=\kappa+f$, a direct expansion gives
\[
A-H_0k=2\Phi+B .
\]
The formula \eqref{E:Km} gives the exact decomposition
\[
\K_m=\mathcal L_m f+\mathcal N_m^{(2)}[f]+\mathcal N_m^{(3)}[f],
\qquad
\mathcal L_m f:=(-1)^{m+1}D^{2m}\Phi,
\]
where, with the sums absent when $m=1$,
\begin{align*}
\mathcal N_m^{(2)}[f]
&:=
2(-1)^{m+1}\kappa fD^{2m}f
-\frac{\kappa}{2}(D^mf)^2
+\kappa\sum_{j=1}^{m-1}(-1)^{j+1}D^{m-j}f\,D^{m+j}f,\\
\mathcal N_m^{(3)}[f]
&:=
(-1)^{m+1}f^2D^{2m}f
-\frac12 f(D^mf)^2
+f\sum_{j=1}^{m-1}(-1)^{j+1}D^{m-j}f\,D^{m+j}f.
\end{align*}
Thus the quadratic part of $e'$ is
\[
2\int_\eta\Phi\,\mathcal L_mf\,ds
=
2(-1)^{m+1}\int_\eta\Phi\,D^{2m}\Phi\,ds
=
-2\int_\eta(D^m\Phi)^2\,ds
=
-2\mathfrak D_m[f].
\]
Consequently
\begin{align}
\label{E:Rm}
\mathcal R_m[f] 
&=
2\int_\eta\Phi\,\mathcal N_m^{(2)}\,ds
+2\int_\eta\Phi\,\mathcal N_m^{(3)}\,ds +\int_\eta B_2\,\mathcal L_mf\,ds
+\int_\eta B_3\,\mathcal L_mf\,ds\\ &\quad
+\int_\eta B_e\,\mathcal L_mf\,ds 
+\int_\eta(B_2+B_3)(\mathcal N_m^{(2)}+\mathcal N_m^{(3)})\,ds
+\int_\eta B_e(\mathcal N_m^{(2)}+\mathcal N_m^{(3)})\,ds, \nonumber
\end{align}
where
\[
B_2:=3\kappa f^2,
\qquad
B_3:=f^3,
\qquad
B_e:=-e(\kappa+f).
\]
It remains to estimate these displayed groups one by one.

We first record the interpolation consequence used below.  Put
\[
R:=m+2,
\qquad
X:=\|D^Rf\|_2.
\]
By the standard one-dimensional periodic Gagliardo--Nirenberg inequality, H\"older's inequality, and the spectral estimate
\eqref{eq:Dm-controls-derivatives-main}, there is $\theta_m\in(0,1)$ such that every monomial integral occurring
in one of the following three classes 
\[
\begin{array}{ll}
\text{(i)} & a=3\text{ or }4,\qquad \sum_i j_i\le 2R-2,\\[2mm]
\text{(ii)}& a=4,5,\text{ or }6,\qquad \sum_i j_i\le 2R-4,\\[2mm]
\text{(iii)}& \text{an explicit extra factor }e\text{ times an integral with }\\
& a=2,3,4\text{ and }\sum_i j_i\le2R-4.
\end{array}
\]
satisfies
\begin{equation}\label{eq:Kosc-proof-monomial-bound}
\int_\eta\prod_{i=1}^a |D^{j_i}f|\,ds
\le
C_{m,\omega}e^{\theta_m}\bigl(\mathfrak D_m[f]+e^2\bigr)
+
C_{m,\omega}e^{1+\theta_m}.
\end{equation}

Indeed, after integrating by parts until no factor has more than $R$ derivatives, placing any $D^Rf$ factor in $L^2$ directly and using H\"older together with
Lemma~\ref{lem:appendix-GN} for the lower-order factors gives a finite sum of terms
\[
C X^\beta e^{(a-\beta)/2},
\qquad
0\le\beta\le\frac{\sum_i j_i+a/2-1}{R-1/2},
\]
with lower-order Gagliardo--Nirenberg contributions corresponding to smaller values of $\beta$ and larger powers
of $e$.  In class (i), either $\beta<2$, or $a=4$, $\sum_i j_i=2R-2$, and the borderline term is $CeX^2$.
In class (ii) one has $\beta<2$.  In class (iii) the explicit factor $e$ supplies the same small coefficient.
Young's inequality, with $\theta_m$ chosen sufficiently small depending only on $m$, gives
\eqref{eq:Kosc-proof-monomial-bound}; finally
$X^2\le C_{m,\omega}(\mathfrak D_m[f]+e^2)$ by \eqref{eq:Dm-controls-derivatives-main}.

We now estimate the seven groups in \eqref{E:Rm}.

First, $\mathcal N_m^{(2)}$ consists of the quadratic monomials
\[
fD^{2m}f,
\qquad
(D^mf)^2,
\qquad
D^{m-j}f\,D^{m+j}f\quad(1\le j\le m-1),
\]
each of total derivative order at most $2m=2R-4$.  Multiplication by
$\Phi=D^2f+\kappa^2f$ gives three-factor monomials of total derivative order at most
$2m+2=2R-2$.  After integrating by parts if necessary to lower derivatives above $R$,
\eqref{eq:Kosc-proof-monomial-bound}, class (i), gives
\begin{equation}\label{eq:Kosc-proof-Phi-N2}
\left|2\int_\eta\Phi\,\mathcal N_m^{(2)}\,ds\right|
\le
C_{m,\omega}e^{\theta_m}\bigl(\mathfrak D_m[f]+e^2\bigr)
+
C_{m,\omega}e^{1+\theta_m}.
\end{equation}

Second, $\mathcal N_m^{(3)}$ consists of the cubic monomials
\[
f^2D^{2m}f,
\qquad
f(D^mf)^2,
\qquad
fD^{m-j}f\,D^{m+j}f\quad(1\le j\le m-1),
\]
again of total derivative order at most $2m=2R-4$.  Multiplication by $\Phi$ gives four-factor monomials of total
order at most $2R-2$.  Hence class (i) gives
\begin{equation}\label{eq:Kosc-proof-Phi-N3}
\left|2\int_\eta\Phi\,\mathcal N_m^{(3)}\,ds\right|
\le
C_{m,\omega}e^{\theta_m}\bigl(\mathfrak D_m[f]+e^2\bigr)
+
C_{m,\omega}e^{1+\theta_m}.
\end{equation}

Third, consider the term with $B_2=3\kappa f^2$.  Since
$\mathcal L_mf=(-1)^{m+1}D^{2m}\Phi$, periodic integration by parts gives
\[
\int_\eta B_2\mathcal L_mf\,ds
=
C_\kappa\int_\eta D^m(f^2)D^m\Phi\,ds.
\]
The Leibniz expansion of $D^m(f^2)$ gives terms
\[
D^af\,D^{m-a}f\,D^m\Phi,
\qquad
0\le a\le m,
\]
and $D^m\Phi=D^{m+2}f+\kappa^2D^mf$.  Thus every resulting monomial has three factors and total derivative
order at most $m+(m+2)=2R-2$.  Class (i) yields
\begin{equation}\label{eq:Kosc-proof-B2-L}
\left|\int_\eta B_2\mathcal L_mf\,ds\right|
\le
C_{m,\omega}e^{\theta_m}\bigl(\mathfrak D_m[f]+e^2\bigr)
+
C_{m,\omega}e^{1+\theta_m}.
\end{equation}

Fourth, the term with $B_3=f^3$ is identical except that $D^m(f^3)$ is a sum of products of three derivatives of
$f$ whose derivative orders add to $m$.  Multiplication by $D^m\Phi$ gives four-factor monomials of total order at
most $2R-2$, and therefore
\begin{equation}\label{eq:Kosc-proof-B3-L}
\left|\int_\eta B_3\mathcal L_mf\,ds\right|
\le
C_{m,\omega}e^{\theta_m}\bigl(\mathfrak D_m[f]+e^2\bigr)
+
C_{m,\omega}e^{1+\theta_m}.
\end{equation}

Fifth, the explicit-$e$ part of $B$ is lower order.  Since
\[
\int_\eta\mathcal L_mf\,ds=0,
\]
the constant part $-\kappa e$ of $B_e$ does not contribute.  The remaining part satisfies
\begin{align*}
\left|\int_\eta (-ef)\mathcal L_mf\,ds\right|
&\le
e\left(\|D^{m+1}f\|_2^2+\kappa^2\|D^mf\|_2^2\right)\\
&\le
C_{m,\omega}e\bigl(\mathfrak D_m[f]+e^2\bigr)
\le
C_{m,\omega}e^{\theta_m}\mathfrak D_m[f]+C_{m,\omega}e^{1+\theta_m},
\end{align*}
where we used \eqref{eq:Dm-controls-derivatives-main} and $e\le1$.

Sixth, multiply the non-explicit part $B_2+B_3$ by the nonlinear curvature terms.  The product
$(B_2+B_3)\mathcal N_m^{(2)}$ contains four- and five-factor monomials of total derivative order at most
$2m=2R-4$, while $(B_2+B_3)\mathcal N_m^{(3)}$ contains five- and six-factor monomials of the same total order.
Class (ii) gives
\begin{equation}\label{eq:Kosc-proof-B23-N}
\left|\int_\eta(B_2+B_3)(\mathcal N_m^{(2)}+\mathcal N_m^{(3)})\,ds\right|
\le
C_{m,\omega}e^{\theta_m}\bigl(\mathfrak D_m[f]+e^2\bigr)
+
C_{m,\omega}e^{1+\theta_m}.
\end{equation}

Seventh, consider the remaining explicit-$e$ nonlinear term.  The factor $-\kappa e$ multiplying
$\mathcal N_m^{(2)}$ gives $e$ times quadratic monomials of total derivative order at most $2R-4$; multiplying
$\mathcal N_m^{(3)}$ gives $e$ times cubic monomials of total derivative order at most $2R-4$.  The factor
$-ef$ adds one undifferentiated $f$ and gives $e$ times three- or four-factor monomials of the same total
derivative order.  Class (iii) gives
\begin{equation}\label{eq:Kosc-proof-Be-N}
\left|\int_\eta B_e(\mathcal N_m^{(2)}+\mathcal N_m^{(3)})\,ds\right|
\le
C_{m,\omega}e^{\theta_m}\bigl(\mathfrak D_m[f]+e^2\bigr)
+
C_{m,\omega}e^{1+\theta_m}.
\end{equation}

Combining \eqref{eq:Kosc-proof-Phi-N2}--\eqref{eq:Kosc-proof-Be-N} gives
\[
|\mathcal R_m[f]|
\le
C_{m,\omega}e^{\theta_m}\bigl(\mathfrak D_m[f]+e^2\bigr)
+
C_{m,\omega}e^{1+\theta_m}.
\]
When $e\le1$, the term $e^{\theta_m}e^2$ is bounded by $e^{1+\theta_m}$.  Therefore
\[
|\mathcal R_m[f]|
\le
C_{m,\omega}e^{\theta_m}\mathfrak D_m[f]
+
C_{m,\omega}e^{1+\theta_m},
\]
which proves \eqref{eq:Kosc-remainder-bound-main} and hence the lemma.
\end{proof}

\begin{proposition}[Invariant small-oscillation basin]\label{prop:Kosc-decay}
There exist constants
$\varepsilon^{\mathrm{osc}}_{m,\omega}>0,
\qquad
c^{\mathrm{osc}}_{m,\omega}>0,
\qquad
C^{\mathrm{osc}}_{m,\omega}<\infty$
such that if a smooth length-normalised solution satisfies
\[
e(0)=\Kosc[\eta(0)]\le\varepsilon^{\mathrm{osc}}_{m,\omega},
\]
then, on its interval of existence,
\begin{equation}\label{eq:Kosc-exp-decay-main}
e(\tau)\le
2e(0)e^{-c^{\mathrm{osc}}_{m,\omega}\tau}.
\end{equation}
Moreover,
\begin{equation}\label{eq:integrated-Dm-bound-main}
\int_\tau^{\tau+1}\mathfrak D_m[f(\sigma)]\,d\sigma
\le
C^{\mathrm{osc}}_{m,\omega}e(\tau)
\end{equation}
whenever the interval $[\tau,\tau+1]$ lies in the existence interval.
\end{proposition}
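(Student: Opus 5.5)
The plan is a differential-inequality plus continuity (bootstrap) argument built on Lemmas~\ref{lem:Kosc-quadratic-expansion} and~\ref{lem:curv-spectral-gap-Kosc}. The idea is to show that, as long as $e\le\varepsilon$ for a sufficiently small $\varepsilon$, the remainder $\mathcal R_m[f]$ is absorbed and
\[
\frac{\dd}{\dd\tau}e\le-\mathfrak D_m[f]\le-\tfrac12\mu_{m,\omega}e .
\]
This gives exponential decay, which in turn keeps $e$ below $\varepsilon$ for all time.

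Concretely, assume $e(\sigma)\le \varepsilon\le 1$ on $[0,\tau]$. Lemma~\ref{lem:Kosc-quadratic-expansion} gives
\[
e'\le -2\mathfrak D_m+C e^{\theta_m}\mathfrak D_m+Ce^{1+\theta_m}.
\]
Choose $\varepsilon$ so that $C\varepsilon^{\theta_m}\le \tfrac12$; then $e'\le-\tfrac32\mathfrak D_m+Ce^{1+\theta_m}$. Next apply \eqref{eq:Dm-controls-e-main}, namely $\mathfrak D_m\ge\mu e-Ce^2$, to write
\[
-\tfrac12\mathfrak D_m\le -\tfrac12\mu e+Ce^2 .
\]
Hence
\[
e'\le -\mathfrak D_m-\tfrac12\mu e+C(e^2+e^{1+\theta_m}).
\]
Shrinking $\varepsilon$ further so that $C(\varepsilon+\varepsilon^{\theta_m})\le \tfrac14\mu$, we obtain
\[
e'+\mathfrak D_m+\tfrac14\mu e\le 0
\]
whenever $e\le\varepsilon$. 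Set $c^{\rm osc}:=\mu/4$.

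To close the bootstrap, take $\varepsilon^{\rm osc}:=\varepsilon/4$ and let $\tau^*$ be the supremum of times in the existence interval on which $e\le\varepsilon$. Since $e$ is continuous (the solution is smooth), $\tau^*>0$. On $[0,\tau^*)$ we have $e'\le -c\,e$, so $e(\tau)\le e(0)e^{-c\tau}\le \varepsilon/4$. If $\tau^*$ were interior to the existence interval, continuity would give $e(\tau^*)\le\varepsilon/4<\varepsilon$, contradicting maximality. Therefore \eqref{eq:Kosc-exp-decay-main} holds, in fact with factor $1$, which is stronger than the stated $2$. For \eqref{eq:integrated-Dm-bound-main}, integrate $e'+\mathfrak D_m\le0$ over $[\tau,\tau+1]$ to get
\[
\int_\tau^{\tau+1}\mathfrak D_m\,d\sigma\le e(\tau)-e(\tau+1)\le e(\tau).
\]
So one may take $C^{\rm osc}=1$.

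The main obstacle is really upstream, in the remainder estimate of Lemma~\ref{lem:Kosc-quadratic-expansion}, which we take as given. Within this proof, the only delicate point is bookkeeping. The error term $Ce^{1+\theta_m}$ has no $\mathfrak D_m$ factor, so it must be absorbed by the linear decay $\mu e$ recovered from the spectral gap; this is why we sacrifice half of $\mathfrak D_m$ to produce $\mu e$, and why the resonant-mode error $Ce^2$ from Lemma~\ref{lem:resonant-Kosc} enters only at quadratic order. We also need the hypothesis $e\le1$ of the lemma, which is guaranteed by $\varepsilon\le1$.
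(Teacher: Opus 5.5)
Your argument is correct and follows the paper's proof. You absorb $Ce^{\theta_m}\mathfrak D_m[f]$ by smallness, use the spectral gap \eqref{eq:Dm-controls-e-main} to turn the remaining dissipation into $\mu_{m,\omega}e$, which swallows the $Ce^{1+\theta_m}$ error, and close with a continuity argument. The only difference is bookkeeping: you keep a copy of $\mathfrak D_m[f]$ on the left, so the integrated bound follows by telescoping with $C^{\mathrm{osc}}_{m,\omega}=1$, whereas the paper integrates $\mathfrak D_m\le -e'+Ce^{1+\theta_m}$ and then uses the decay of $e$.
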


\begin{proof}
Combining Lemma~\ref{lem:Kosc-quadratic-expansion} with Lemma~\ref{lem:curv-spectral-gap-Kosc} gives, whenever
$e\le1$,
\[
e'
\le
-2\mathfrak D_m[f]
+
C_{m,\omega}e^{\theta_m}\mathfrak D_m[f]
+
C_{m,\omega}e^{1+\theta_m}.
\]
Choose $\varepsilon^{\mathrm{osc}}_{m,\omega}$ so small that
$C_{m,\omega}e^{\theta_m}\le1$ whenever $e\le2\varepsilon^{\mathrm{osc}}_{m,\omega}$.  Then
\[
e'
\le
-\mathfrak D_m[f]+C_{m,\omega}e^{1+\theta_m}.
\]
Using \eqref{eq:Dm-controls-e-main} and $e^2\le e^{1+\theta_m}$ for $e\le1$ gives
\[
e'
\le
-\mu_{m,\omega}e+C_{m,\omega}e^{1+\theta_m}.
\]
After decreasing $\varepsilon^{\mathrm{osc}}_{m,\omega}$ once more, the last term is bounded by
$\frac12\mu_{m,\omega}e$ whenever $e\le2\varepsilon^{\mathrm{osc}}_{m,\omega}$.  Hence
\[
e'
\le
-\frac12\mu_{m,\omega}e
\]
as long as $e\le2\varepsilon^{\mathrm{osc}}_{m,\omega}$.  A standard continuation argument proves
\eqref{eq:Kosc-exp-decay-main} with
\[
c^{\mathrm{osc}}_{m,\omega}:=\frac12\mu_{m,\omega}.
\]

For the integrated estimate, the same smallness gives
\[
\mathfrak D_m[f]
\le
-e' + C_{m,\omega}e^{1+\theta_m}.
\]
Integrating from $\tau$ to $\tau+1$ and using the exponential decay of $e$ gives
\[
\int_\tau^{\tau+1}\mathfrak D_m[f(\sigma)]\,d\sigma
\le
e(\tau)+C_{m,\omega}\int_\tau^{\tau+1}e(\sigma)^{1+\theta_m}\,d\sigma
\le
C^{\mathrm{osc}}_{m,\omega}e(\tau).
\]
\end{proof}

\begin{corollary}[Positive-time smoothing in the small-\texorpdfstring{\(\Kosc\)}{Kosc} basin]\label{cor:positive-time-smoothing-Kosc}
After possibly decreasing $\varepsilon^{\mathrm{osc}}_{m,\omega}$, every smooth length-normalised solution with
\[
\Kosc[\eta(0)]\le\varepsilon^{\mathrm{osc}}_{m,\omega}
\]
exists for all $\tau\ge0$ and satisfies, for every $p\in\N_0$ and every $\delta>0$,
\begin{equation}\label{eq:positive-time-smoothing-main}
\sup_{\tau\ge\delta}\int_{\eta(\tau)}k_{s^p}^2\,ds
+
\sup_{\tau\ge\delta}\|k_{s^p}(\cdot,\tau)\|_{L^\infty}
\le
C_{p,\delta,m,\omega,\Kosc[\eta(0)]}.
\end{equation}
The same estimates hold for canonical relaxed solutions obtained from Theorem~\ref{thm:W22-LWP}.
\end{corollary}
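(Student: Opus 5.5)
The plan is to turn the integrated dissipation bound \eqref{eq:integrated-Dm-bound-main} into pointwise-in-time control of $E_m$ at a good time, and then apply the small higher-order-energy basin theorem from that time on.

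First, take $\varepsilon^{\mathrm{osc}}_{m,\omega}$ as in Proposition~\ref{prop:Kosc-decay}. On any unit interval $[\tau,\tau+1]$ in the existence interval, \eqref{eq:integrated-Dm-bound-main} gives a time $\sigma\in[\tau,\tau+1]$ with
\[
\mathfrak D_m[f(\sigma)]\le C^{\mathrm{osc}}_{m,\omega}e(\tau).
\]
By \eqref{eq:Dm-controls-derivatives-main} with $r=m$ and $r=m+2$, this yields
\[
2E_m[\eta(\sigma)]=\|D^mf(\sigma)\|_2^2\le C(\mathfrak D_m+e^2)\le C e(0).
\]
After decreasing $\varepsilon^{\mathrm{osc}}_{m,\omega}$, this gives $E_m[\eta(\sigma)]\le\varepsilon^E_{m,\omega}$. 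If the existence interval is shorter than $1$, apply the same argument on $[0,T_{\max}/2]$. We also have $H_{m+2}(\sigma)$ bounded.

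Second, apply Theorem~\ref{thm:eta-converges-circle} with initial time $\sigma$. This gives global existence past $\sigma$, exponential decay of $E_m$, and uniform bounds on all $H_p$ for $\tau\ge\sigma+1$. The bounds come from the Sobolev differential inequalities \eqref{eq:Sobolev-diff-main} combined with interpolation against the decaying $E_m$.

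Third, to rule out breakdown before $\sigma$ and to obtain the $\delta$-dependent bounds, I would use the differential inequalities \eqref{eq:Sobolev-diff-main} with time weights. Consider the weighted quantity $\tau^{p'}H_p$ on $[0,\delta]$ for each $p$. Its lower-order right-hand side is controlled by $e$ and by the integrated $\mathfrak D_m$ through \eqref{eq:Dm-controls-derivatives-main}, which bounds $H_0,\dots,H_{m+2}$ in $L^1$ in time. Inducting upward in $p$, each step gains $m+2$ derivatives at the cost of a time weight. Together with the continuation criterion (Theorem~\ref{T:cont}, with the length identically $1$), this shows that all $H_p$ stay bounded on $[\delta,\infty)$ and the solution is immortal. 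The $L^\infty$ bounds then follow from Sobolev embedding on the unit circle. Alternatively, for $\tau\le1$ one can invoke the uniform local theory of Corollary~\ref{cor:uniform-circle-W22-LWP} together with \eqref{eq:positive-time-v-bounds}, which already gives smoothing with constants depending only on $m,\omega$ and the data.

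For canonical relaxed solutions, approximate by smooth data using Lemma~\ref{lem:W22-density-closure}, which keeps $\Kosc$ below the threshold. The bounds above are uniform in the approximation, so they pass to the limit by the convergence statement of Theorem~\ref{thm:W22-LWP}.

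The main obstacle is the third step. \eqref{eq:Sobolev-diff-main} is superlinear in the $H_j$, so the weighted induction is only safe over short times and under smallness. I would therefore first try to rely on the chart-based uniform local theory near the circle for $\tau\in(0,1]$, and use the Kosc decay to keep the curve inside that chart at every later restart time.
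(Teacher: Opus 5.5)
Your final plan is correct: restart the near-circle chart theory of Corollary~\ref{cor:uniform-circle-W22-LWP} at every time, let Proposition~\ref{prop:Kosc-decay} ($e\le 2e(0)$) keep the $L^2$ curvature coordinate inside the fixed ball, and read the bounds off \eqref{eq:positive-time-v-bounds}. This is the alternative the paper records after its main argument. To make it airtight you need three things:
\begin{itemize}
\item $2\varepsilon^{\mathrm{osc}}_{m,\omega}<\varepsilon^{\mathrm{wp}}_{m,\omega}$, which is the ``possibly decreasing'' in the statement;
\item uniqueness in Proposition~\ref{prop:curv-LWP}, to identify the smooth solution with the chart solution on each restart interval;
\item for $\tau\ge\delta$, a restart at $\tau-\min\{\delta,T^{\mathrm{wp}}_{m,\omega}\}$, so that the constants do not depend on $\tau$. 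Immortality then follows by iterating restarts of uniform length.
\end{itemize}
The paper's primary argument is the energy route instead. It uses \eqref{eq:integrated-Dm-bound-main} and \eqref{eq:Dm-controls-derivatives-main} for spacetime control of $H_0,\dots,H_{m+2}$ on unit intervals. It then applies \eqref{eq:Sobolev-diff-main} block by block with the uniform Gronwall lemma, and gets immortality from the continuation criterion. The energy route needs only the geometric a priori estimates. The chart route gives a uniform lower bound on the existence time and data-independent smoothing constants, at the cost of the analytic-semigroup machinery.

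Drop your Steps~1--2 rather than repair them. Once the chart is restarted at all times they are redundant; the small-$E_m$ basin is only needed later, in Theorem~\ref{thm:smooth-small-Kosc-stability}. As written they also have two defects.
\begin{itemize}
\item If $T_{\max}<1$, averaging over $[0,T_{\max}/2]$ only gives $\mathfrak D_m[f(\sigma)]\lesssim e(0)/T_{\max}$, which is not small. A uniform lower bound on $T_{\max}$ has to come first.
\item Theorem~\ref{thm:eta-converges-circle} does not assert constants depending only on $E_m$ at the starting time, and at $\sigma$ you control only $H_j$ for $j\le m+2$. So bounds of the form $C_{p,\delta,m,\omega,\Kosc[\eta(0)]}$ for $\tau\ge\sigma+1$ still need a smoothing argument.
\end{itemize}

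The superlinearity you flag in Step~3 can also be avoided. Set $R=p+m+2$ and interpolate every factor between $\|k\|_2$ and $\|D^{R}k\|_2$, as in Dziuk--Kuwert--Sch\"atzle. This bounds all non-leading terms of \eqref{eq:Hp-evol-main} by $\|D^Rk\|_2^2+C$, with $C$ depending only on $H_0=\kappa^2+e\le\kappa^2+2e(0)$. Combined with $H_p\le H_R^{p/R}e^{1-p/R}$ and $e\le1$, this gives $\frac{d}{d\tau}H_p\le -H_p^{1+(m+2)/p}+C$ for $p\ge1$. Hence $H_p(\tau)\le C\bigl(1+\tau^{-p/(m+2)}\bigr)$ and $H_p\le\max\{H_p(0),C\}$, with no time weights and no induction.
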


\begin{proof}
See Appendix~\ref{A:normalised-apriori}.
\end{proof}

\begin{corollary}[Entrance into the small-\(E_m\) basin]\label{cor:Kosc-enters-Em-basin}
Let $\varepsilon^E_{m,\omega}$ be the threshold from Theorem~\ref{thm:eta-converges-circle}.  If
\[
\Kosc[\eta(0)]\le\varepsilon^{\mathrm{osc}}_{m,\omega},
\]
with $\varepsilon^{\mathrm{osc}}_{m,\omega}$ sufficiently small, then there exists $\tau_1\ge0$ such that
\[
E_m[\eta(\tau_1)]<\varepsilon^E_{m,\omega}.
\]
\end{corollary}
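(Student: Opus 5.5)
The idea is to convert the integrated dissipation bound \eqref{eq:integrated-Dm-bound-main} from Proposition~\ref{prop:Kosc-decay} into a pointwise-in-time bound on $E_m$, using the spectral control \eqref{eq:Dm-controls-derivatives-main}. By Corollary~\ref{cor:positive-time-smoothing-Kosc}, after possibly decreasing $\varepsilon^{\mathrm{osc}}_{m,\omega}$, the solution exists for all $\tau\ge0$. Hence \eqref{eq:Kosc-exp-decay-main} and \eqref{eq:integrated-Dm-bound-main} hold for every $\tau\ge0$, and we may work on any unit time interval.

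\textbf{Step 1 (control of $E_m$ by $\mathfrak D_m$).} Since $f_{s^m}=k_{s^m}$ and $m\le m+2$, inequality \eqref{eq:Dm-controls-derivatives-main} with $r=m$ gives
\[
2E_m[\eta(\sigma)]=\|D^mf(\sigma)\|_2^2\le C_{m,\omega}\bigl(\mathfrak D_m[f(\sigma)]+e(\sigma)^2\bigr).
\]

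\textbf{Step 2 (averaging over a unit interval).} Integrate this over $[0,1]$. Then use \eqref{eq:integrated-Dm-bound-main} with $\tau=0$, together with $e(\sigma)\le 2e(0)$ from \eqref{eq:Kosc-exp-decay-main}, to get
\[
\int_0^1 E_m[\eta(\sigma)]\,d\sigma\le C_{m,\omega}\bigl(e(0)+e(0)^2\bigr)\le C'_{m,\omega}\,\varepsilon^{\mathrm{osc}}_{m,\omega}.
\]

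\textbf{Step 3 (choice of $\tau_1$).} By the mean value theorem for integrals, or simply because a nonnegative function cannot exceed its average everywhere, there is some $\tau_1\in[0,1]$ with $E_m[\eta(\tau_1)]\le C'_{m,\omega}\varepsilon^{\mathrm{osc}}_{m,\omega}$. Decreasing $\varepsilon^{\mathrm{osc}}_{m,\omega}$ so that $C'_{m,\omega}\varepsilon^{\mathrm{osc}}_{m,\omega}<\varepsilon^E_{m,\omega}$ gives the claim. Since the solution is smooth for positive time, $E_m(\tau_1)$ is finite, so $\tau_1$ is a genuine smooth starting time for Theorem~\ref{thm:eta-converges-circle}. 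If $\tau_1=0$ is not admissible, for rough data, we apply the same argument on $[\delta,1+\delta]$ instead.

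\textbf{Main obstacle.} The only delicate point is that the constants in \eqref{eq:integrated-Dm-bound-main} and \eqref{eq:Dm-controls-derivatives-main} must be independent of the solution, which they are: they depend only on $m$ and $\omega$. Also, $\mathfrak D_m$ must be finite almost everywhere on $[0,1]$, which follows from the integrated bound. No regularity beyond Corollary~\ref{cor:positive-time-smoothing-Kosc} is needed.
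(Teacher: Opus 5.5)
Your argument is correct and uses the same ingredients as the paper: the integrated bound \eqref{eq:integrated-Dm-bound-main}, the spectral estimate \eqref{eq:Dm-controls-derivatives-main} with $r=m$, and an averaging choice of time. The only difference is where the smallness comes from. You shrink $\varepsilon^{\mathrm{osc}}_{m,\omega}$ relative to $\varepsilon^E_{m,\omega}$ and take $\tau_1\in[0,1]$, whereas the paper picks $\sigma_\tau\in[\tau,\tau+1]$ for large $\tau$ and lets the exponential decay of $e$ make $E_m[\eta(\sigma_\tau)]\le C_{m,\omega}e(\tau)$ small, so the threshold never has to be reduced further.
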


\begin{proof}
By \eqref{eq:integrated-Dm-bound-main}, for every $\tau\ge0$ there is
$\sigma_\tau\in[\tau,\tau+1]$ such that
\[
\mathfrak D_m[f(\sigma_\tau)]
\le
C_{m,\omega}e(\tau).
\]
Taking $r=m$ in \eqref{eq:Dm-controls-derivatives-main},
\[
\|D^mf(\sigma_\tau)\|_2^2
\le
C_{m,\omega}\bigl(\mathfrak D_m[f(\sigma_\tau)]+e(\sigma_\tau)^2\bigr)
\le
C_{m,\omega}e(\tau).
\]
Since $D^mk=D^mf$ and
\[
E_m[\eta(\sigma_\tau)]=\frac12\|D^mk(\sigma_\tau)\|_2^2,
\]
we obtain
\[
E_m[\eta(\sigma_\tau)]
\le
C_{m,\omega}e(\tau).
\]
The right-hand side tends to zero exponentially by Proposition~\ref{prop:Kosc-decay}.  Choosing $\tau$ large gives
the desired time $\tau_1:=\sigma_\tau$.
\end{proof}

\subsection{Smooth small-\texorpdfstring{\(\Kosc\)}{Kosc} stability}

\begin{theorem}[Smooth small-\texorpdfstring{\(\Kosc\)}{Kosc} stability]\label{thm:smooth-small-Kosc-stability}
Fix $m\ge1$ and $\omega\in\Z\setminus\{0\}$.  There exists
$\varepsilon^{\mathrm{sm}}_{m,\omega}>0$ with the following property.  Let $\gamma_0$ be a smooth closed immersed curve with turning number $\omega$ and
\[
\Kosc[\gamma_0]<\varepsilon^{\mathrm{sm}}_{m,\omega}.
\]
Then the unnormalised generalised ideal flow exists for all $t\ge0$, its length remains uniformly bounded and
converges to a finite positive limit, and the length-normalised barycentred flow satisfies
\begin{equation}\label{eq:small-Kosc-decay-theorem}
\Kosc[\eta(\tau)]
\le
C_{m,\omega}\Kosc[\gamma_0]e^{-c_{m,\omega}\tau}.
\end{equation}
Moreover,
$\eta(\cdot,\tau)\to\eta_\infty
\qquad\text{in }C^\infty,$
where $\eta_\infty$ is the unit-length $\omega$-circle.  Equivalently, the unnormalised flow converges, modulo
translations and reparametrisation, to an $\omega$-circle of finite positive length.
\end{theorem}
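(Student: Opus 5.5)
The plan is to pass to the length-normalised barycentred picture and chain the earlier results. Since $\Kosc$ is scale-invariant, the unit-length centred representative $\eta_0:=L[\gamma_0]^{-1}(\gamma_0-\bar\gamma_0)$, reparametrised by arclength, satisfies $e(0)=\Kosc[\eta_0]=\Kosc[\gamma_0]$. By Theorem~\ref{T:STE} there is a smooth gauge-fixed unnormalised flow. Proposition~\ref{prop:barycentred-gauge-identities} then converts it, on its interval of existence, into a smooth solution $\eta$ of \eqref{eq:NGFm} with $\eta(0)=\eta_0$. Conversely, the length formula \eqref{eq:length-reconstruct-main} lets us recover $\gamma$ from $\eta$. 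We choose $\varepsilon^{\mathrm{sm}}_{m,\omega}\le\varepsilon^{\mathrm{osc}}_{m,\omega}$, with the latter taken small enough for Proposition~\ref{prop:Kosc-decay} and Corollaries~\ref{cor:positive-time-smoothing-Kosc} and~\ref{cor:Kosc-enters-Em-basin} to apply.

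First, Corollary~\ref{cor:positive-time-smoothing-Kosc} gives immortality of $\eta$ together with uniform bounds on all $H_p$ for $\tau\ge\delta$. Proposition~\ref{prop:Kosc-decay} gives \eqref{eq:small-Kosc-decay-theorem} with $C_{m,\omega}=2$ and $c_{m,\omega}=c^{\mathrm{osc}}_{m,\omega}$. Next, Corollary~\ref{cor:Kosc-enters-Em-basin} produces a time $\tau_1$ with $E_m[\eta(\tau_1)]<\varepsilon^E_{m,\omega}$. Restarting at $\tau_1$, Theorem~\ref{thm:eta-converges-circle} yields exponential decay of $E_m$, of $\K_m$ in every $C^r$, and of $|\Lambda|$. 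It also gives $C^\infty$ convergence $\eta(\tau)\to\eta_\infty$, the unit-length $\omega$-circle.

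To return to the original time, note that $\Lambda\in L^1(0,\infty)$: it is exponentially decaying after $\tau_1$ and bounded on $[\delta,\tau_1]$ by the smoothing estimates. The behaviour near $\tau=0$ is harmless for smooth data, since $\Lambda$ is continuous there. Formula \eqref{eq:length-reconstruct-main} then shows that $L_\gamma$ is bounded above and below on the whole trajectory and converges to some $L_\infty\in(0,\infty)$.

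The one remaining concern is that $\tau$ exhausts $[0,\infty)$ exactly when $t$ does. This holds because $dt/d\tau=L_\gamma^{2m+4}$ lies between positive constants. Hence the unnormalised flow exists for all $t\ge0$, the continuation criterion Theorem~\ref{T:cont} not being triggered thanks to the two-sided length bound and the curvature bounds. The convergence statement modulo translation and reparametrisation then follows from $\gamma=\bar\gamma+L_\gamma\eta$, as at the end of the proof of Theorem~\ref{thm:eta-converges-circle}.

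The main obstacle I expect is the global-existence bookkeeping. Proposition~\ref{prop:Kosc-decay} is stated only on the interval of existence, so immortality of the normalised flow must come from Corollary~\ref{cor:positive-time-smoothing-Kosc}. That corollary in turn depends on $e\le 2\varepsilon^{\mathrm{osc}}$ being preserved. I would therefore run a continuity argument on the maximal existence interval. On that interval, the bounds on $e$, the bounds on $H_p$ and the boundedness of $\int\Lambda$ give two-sided length control and curvature control, which contradicts finite maximal time by Theorem~\ref{T:cont}.
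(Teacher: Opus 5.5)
Your proposal is correct and takes essentially the same route as the paper. Both arguments length-normalise, take $\varepsilon^{\mathrm{sm}}_{m,\omega}\le\varepsilon^{\mathrm{osc}}_{m,\omega}$, chain Proposition~\ref{prop:Kosc-decay}, Corollaries~\ref{cor:positive-time-smoothing-Kosc} and~\ref{cor:Kosc-enters-Em-basin} and Theorem~\ref{thm:eta-converges-circle}, and then return to the unnormalised flow via integrability of $\Lambda$ in \eqref{eq:length-reconstruct-main} and $dt/d\tau=L_\gamma^{2m+4}$; your explicit handling of $\Lambda$ on the compact interval $[0,\tau_1]$ only spells out a detail the paper leaves implicit.
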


\begin{proof}
Length-normalise the initial curve and run the barycentred flow.  Since $\Kosc$ is scale invariant, the initial
smallness is exactly the smallness of $e(0)$ for the unit-length curve.  Choose
\[
\varepsilon^{\mathrm{sm}}_{m,\omega}\le\varepsilon^{\mathrm{osc}}_{m,\omega}.
\]
Proposition~\ref{prop:Kosc-decay} gives the exponential decay of $\Kosc$ on the existence interval.  Corollary
\ref{cor:positive-time-smoothing-Kosc} prevents loss of smoothness and gives global existence in $\tau$.
Corollary~\ref{cor:Kosc-enters-Em-basin} gives a finite time $\tau_1$ at which the flow enters the small-$E_m$
basin.  Applying Theorem~\ref{thm:eta-converges-circle} from time $\tau_1$ gives smooth exponential convergence to
the unit-length $\omega$-circle.

The dilation coefficient $\Lambda$ is integrable by Theorem~\ref{thm:eta-converges-circle}.  Hence
\eqref{eq:length-reconstruct-main} shows that the original length converges to a finite positive limit.  Since this
limit is finite and positive, the relation $dt/d\tau=L_\gamma^{2m+4}$ implies $t\to\infty$ as $\tau\to\infty$.
The reconstruction formula gives the convergence of the unnormalised flow.
\end{proof}

\subsection{Bounded length and eventual circularity}

The preceding smooth basin theorem also gives the bounded-length convergence result after any positive time.

\begin{proposition}[Bounded length forces vanishing curvature oscillation]\label{prop:length-bound-Kosc}
Let $m\ge1$ and let $\gamma:\R/\Z\times[0,\infty)\to\R^2$ be a smooth unnormalised solution with turning number
$\omega$.  Assume that
\[
L[\gamma(\cdot,t)]\le L_\ast
\qquad(t\ge0).
\]
Then
\[
\Kosc[\gamma(\cdot,t)]\to0
\qquad(t\to\infty).
\]
More precisely, if $E_m[\gamma_0]>0$, then
\begin{equation}\label{eq:Kosc-rate-length-bound-main}
\Kosc[\gamma(\cdot,t)]
\le
\frac{
2(2\pi)^{-2m}L_\ast^{2m+1}
}{
E_m[\gamma_0]^{-1}+(2m+1)^2L_\ast^{-3}t
}.
\end{equation}
\end{proposition}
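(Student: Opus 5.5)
The plan is to combine the energy dissipation identity (Corollary~\ref{C:diss}) with the weak gradient inequality (Proposition~\ref{prop:weak-grad-ineq}) to get an ODE inequality for $E_m$ along the unnormalised flow. Then I would convert decay of $E_m$ into decay of $\Kosc$ using the Poincar\'e--Wirtinger inequality and the length bound.

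\textbf{Step 1: an ODE for $E_m$.} Write $E(t):=E_m[\gamma(\cdot,t)]$. Squaring \eqref{eq:weak-grad-ineq-Em} gives
\[
\int_\gamma\K_m^2\,ds\ge(2m+1)^2L^{-3}E^2\ge(2m+1)^2L_\ast^{-3}E^2 .
\]
Together with \eqref{E:diss-Em} this yields
\[
E'\le-(2m+1)^2L_\ast^{-3}E^2 .
\]
If $E_m[\gamma_0]=0$ then $\gamma_0$ is a circle and $E\equiv0$ by monotonicity. Otherwise $E>0$ on any interval where it does not vanish, and integrating $(1/E)'\ge(2m+1)^2L_\ast^{-3}$ gives
\[
E(t)\le\frac{1}{E_m[\gamma_0]^{-1}+(2m+1)^2L_\ast^{-3}t}.
\]
If $E$ vanishes at some time, it stays zero, and the bound holds trivially.

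\textbf{Step 2: from $E_m$ to $\Kosc$.} The function $k-\bar k$ has zero mean on a circle of length $L$. Iterating \eqref{eq:PW-unit}, scaled to length $L$, $m$ times gives
\[
\int_\gamma(k-\bar k)^2\,ds\le\Bigl(\frac{L}{2\pi}\Bigr)^{2m}\int_\gamma k_{s^m}^2\,ds=2\Bigl(\frac{L}{2\pi}\Bigr)^{2m}E .
\]
Multiplying by $L\le L_\ast$ gives
\[
\Kosc\le2(2\pi)^{-2m}L_\ast^{2m+1}E .
\]
Substituting the bound from Step~1 yields exactly \eqref{eq:Kosc-rate-length-bound-main}, and hence $\Kosc\to0$.

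\textbf{Main obstacle.} The only delicate point is regularity. For a smooth solution on $[0,\infty)$ all the quantities are differentiable, so Corollary~\ref{C:diss} applies directly. If one wanted rough data with $E_m[\gamma_0]=\infty$, the formula would degenerate to the bound $C/t$, which still holds by applying the argument from any positive time. For this smooth statement, though, the regularity issue does not arise.
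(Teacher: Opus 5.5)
Your proposal is correct and is essentially the paper's own proof. In both, the dissipation identity combined with the squared weak gradient inequality of Proposition~\ref{prop:weak-grad-ineq} gives $E_m'\le-(2m+1)^2L_\ast^{-3}E_m^2$, which integrates to the reciprocal bound, and the iterated Poincar\'e--Wirtinger inequality applied to $k-\bar k$, together with $L\le L_\ast$, converts this into the stated $\Kosc$ estimate, including the same treatment of the degenerate case $E_m[\gamma_0]=0$.
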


\begin{proof}
The dissipation identity gives
\[
\frac{d}{dt}E_m[\gamma(t)]
=
-\|\K_m(\cdot,t)\|_{L^2(\gamma(t))}^2.
\]
By Proposition~\ref{prop:weak-grad-ineq},
\[
\|\K_m(\cdot,t)\|_{L^2(\gamma(t))}^2
\ge
(2m+1)^2L(t)^{-3}E_m[\gamma(t)]^2
\ge
(2m+1)^2L_\ast^{-3}E_m[\gamma(t)]^2.
\]
Thus
\[
\frac{d}{dt}E_m[\gamma(t)]
\le
-(2m+1)^2L_\ast^{-3}E_m[\gamma(t)]^2.
\]
If $E_m[\gamma_0]=0$, then $k$ is initially constant, so the solution is already an $\omega$-circle and the claim is
trivial.  Otherwise integration gives
\[
E_m[\gamma(t)]
\le
\frac{1}{E_m[\gamma_0]^{-1}+(2m+1)^2L_\ast^{-3}t}.
\]
Finally, the PSW inequality applied to $k-\bar k$ gives
\[
\Kosc[\gamma(t)]
=
L(t)\int_{\gamma(t)}(k-\bar k)^2\,ds
\le
2(2\pi)^{-2m}L(t)^{2m+1}E_m[\gamma(t)],
\]
and \eqref{eq:Kosc-rate-length-bound-main} follows.
\end{proof}

\begin{theorem}[Bounded-length convergence]\label{thm:bounded-length-convergence}
Let $m\ge1$ and $\omega\ne0$.  Let $\gamma_0$ be a closed $W^{2,2}$ immersed curve with turning number
$\omega$, and let $\gamma$ be an immortal canonical relaxed unnormalised $m$-ideal flow with initial value
$\gamma_0$.  Assume that $\gamma$ is smooth for every positive time and that its length remains uniformly bounded:
\[
\sup_{t\ge0}L[\gamma(\cdot,t)]<\infty .
\]
Then $\gamma$ converges, modulo translations and reparametrisation, to an $\omega$-circle of finite positive length.
The convergence is exponential in the length-normalised time, and hence exponential in the original time after a
finite positive time shift.  The canonical relaxed flow is unique and is the positive-time limit of every smooth
approximating sequence preserving closedness and turning number.
\end{theorem}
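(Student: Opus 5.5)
The plan is to reduce to the smooth small-$\Kosc$ stability theorem, Theorem~\ref{thm:smooth-small-Kosc-stability}, by waiting a positive time. Since $\gamma$ is smooth for every positive time, we restart at $t_0:=1$ (any fixed positive time will do). The curve $\gamma_1:=\gamma(\cdot,t_0)$ is then a smooth closed immersion with turning number $\omega$. On $[t_0,\infty)$ the flow is a smooth unnormalised solution with $L\le L_\ast:=\sup_t L[\gamma(\cdot,t)]<\infty$. Proposition~\ref{prop:length-bound-Kosc}, applied to the time-shifted flow, gives
\[
\Kosc[\gamma(\cdot,t)]\le \frac{2(2\pi)^{-2m}L_\ast^{2m+1}}{E_m[\gamma_1]^{-1}+(2m+1)^2L_\ast^{-3}(t-t_0)}\to0 .
\]
The finiteness of $E_m[\gamma_1]$ is exactly why we wait until positive time: rough data may have $E_m=\infty$. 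The degenerate case $E_m[\gamma_1]=0$ means the flow is already a static circle. Hence there is $t_1\ge t_0$ with $\Kosc[\gamma(\cdot,t_1)]<\varepsilon^{\mathrm{sm}}_{m,\omega}$.

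Next we apply Theorem~\ref{thm:smooth-small-Kosc-stability} with initial curve $\gamma(\cdot,t_1)$. This gives a smooth unnormalised flow that exists for all time and converges, modulo translations and reparametrisation, to an $\omega$-circle of finite positive length. Its length-normalised barycentred representative converges exponentially in $\tau$ to the unit-length $\omega$-circle. The theorem's flow must coincide with $\gamma$ on $[t_1,\infty)$. For this we use uniqueness of smooth solutions in the fixed gauge (Theorem~\ref{T:STE}), together with the uniqueness part of Theorem~\ref{thm:W22-LWP} for the normalised flow, since both flows start from the same smooth datum. We also need the gauges to match. The canonical relaxed flow's normalised representative is the gauge-fixed flow \eqref{eq:NGFm}, and Proposition~\ref{prop:barycentred-gauge-identities} relates it to the unnormalised constant-speed flow up to a parameter shift. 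So the two agree as parametrised curves modulo that shift. Exponential convergence in $\tau$ then transfers to $t$: since $L_\gamma\to L_\infty\in(0,\infty)$ and $L_\gamma$ is bounded above and below on $[t_1,\infty)$, we have $d\tau/dt=L_\gamma^{-(2m+4)}$ comparable to a constant. Thus $\tau$ and $t-t_1$ are comparable, and we get exponential decay in original time after the finite shift $t_1$.

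The uniqueness and approximation statements follow from the local theory. On $[0,t_0]$ the normalised representative is the canonical solution of Theorem~\ref{thm:W22-LWP}. The continuation is handled by covering any compact time interval by finitely many local charts, each restarted from a smooth positive-time datum. On each piece, uniqueness and the convergence of smooth approximating sequences follow from Theorem~\ref{thm:W22-LWP}. Continuous dependence over finitely many steps gives these properties on every compact interval.

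The step I expect to be the main obstacle is this identification on compact intervals: showing the immortal relaxed flow is the concatenation of local canonical solutions, with a uniform lower bound on the local existence times. I would derive that lower bound from the positive-time smoothing and the two-sided length bounds on compact intervals; the lower bound follows from $\Lambda$ being bounded there, via \eqref{eq:length-reconstruct-main}. I would then invoke Theorem~\ref{T:cont} to exclude breakdown.
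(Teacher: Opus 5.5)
Your proposal is correct and follows essentially the same route as the paper. You restart at a positive time where the curve is smooth, so that $E_m$ is finite, and use Proposition~\ref{prop:length-bound-Kosc} to drive $\Kosc$ below $\varepsilon^{\mathrm{sm}}_{m,\omega}$. You then apply Theorem~\ref{thm:smooth-small-Kosc-stability}, convert $\tau$-decay into $t$-decay using the two-sided length bound, and obtain uniqueness and approximation from the local $W^{2,2}$ theory by restarting on compact intervals. Your explicit identification of the restarted flow with $\gamma$ via uniqueness, and your remark on why one must wait until positive time, are details the paper leaves implicit.
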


\begin{proof}
Choose $t_0>0$.  The time-shifted curve $\gamma(\cdot,t_0+\cdot)$ is a smooth immortal unnormalised solution
with the same turning number and with uniformly bounded length.  Proposition~\ref{prop:length-bound-Kosc}, applied
to this time-shifted smooth flow, gives
\[
\Kosc[\gamma(\cdot,t)]\to0
\qquad(t\to\infty).
\]
Choose $t_1>t_0$ so large that
\[
\Kosc[\gamma(\cdot,t_1)]<\varepsilon^{\mathrm{sm}}_{m,\omega}.
\]
Since $\gamma(\cdot,t_1)$ is smooth, Theorem~\ref{thm:smooth-small-Kosc-stability} applies to the time-shifted
flow starting at $t_1$.  The time-shifted length converges to a finite positive limit, so
$d\tau/dt=L(t)^{-(2m+4)}$ is bounded above and below by positive constants for all sufficiently large time.
Exponential convergence in $\tau$ therefore gives exponential convergence in the original time variable after
$t_1$.  The uniqueness and approximation assertions are inherited from the canonical local $W^{2,2}$ construction
and propagated by restarting the flow on compact positive-time intervals.
\end{proof}

\begin{proof}[Proof of Theorem~\ref{T:bounded-length-convergence}]
This is Theorem~\ref{thm:bounded-length-convergence}.
\end{proof}

\subsection{Global relaxed flows from \texorpdfstring{$W^{2,2}$}{W22} data}

We finally combine the curvature-coordinate local theory from Section~\ref{S:existence} with the small-\(\Kosc\)
basin.

\begin{theorem}[Canonical global relaxed flow in the small-\texorpdfstring{\(\Kosc\)}{Kosc} basin]\label{thm:canonical-W22-global}
Fix $m\ge1$ and $\omega\in\Z\setminus\{0\}$.  There exists
$\varepsilon^{W^{2,2}}_{m,\omega}>0$ with the following property.

Let $\eta_0$ be a unit-length, centred, arclength-parametrised $W^{2,2}$ immersed closed curve with
\[
\omega[\eta_0]=\omega,
\qquad
\Kosc[\eta_0]<\varepsilon^{W^{2,2}}_{m,\omega}.
\]
Then there exists a unique canonical gauge-fixed length-normalised $m$-ideal flow
$\eta:\R/\Z\times[0,\infty)\to\R^2$
with initial value $\eta_0$, satisfying
$\eta\in C([0,\infty);W^{2,2})\cap C^\infty((0,\infty)\times\R/\Z).$
For every $\tau>0$, $\eta(\cdot,\tau)$ is a classical solution of the gauge-fixed length-normalised equation
\eqref{eq:NGFm}.  For every smooth approximating sequence $\eta_0^j\to\eta_0$ in $W^{2,2}$ preserving
closedness and turning number, the corresponding smooth gauge-fixed length-normalised flows converge to $\eta$ in
\[
C([0,T];W^{2,2})\cap C^\infty_{\mathrm{loc}}((0,T]\times\R/\Z)
\]
for every $T<\infty$.  The resulting flow map is a continuous semiflow on the small-$\Kosc$ $W^{2,2}$
neighbourhood, modulo Euclidean motions and reparametrisation.

Moreover,
\begin{equation}\label{eq:W22-Kosc-decay-main}
\Kosc[\eta(\tau)]
\le
C_{m,\omega}\Kosc[\eta_0]e^{-c_{m,\omega}\tau},
\end{equation}
and
$\eta(\cdot,\tau)\to\eta_\infty
\qquad\text{in }C^\infty$ as
$\tau\to\infty$, where $\eta_\infty$ is the unit-length $\omega$-circle in the barycentric gauge.
\end{theorem}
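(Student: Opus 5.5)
The plan is to glue the uniform local theory near the circle (Corollary~\ref{cor:uniform-circle-W22-LWP}) to the smooth small-$\Kosc$ basin estimates (Proposition~\ref{prop:Kosc-decay}, Corollary~\ref{cor:positive-time-smoothing-Kosc}, Theorem~\ref{thm:smooth-small-Kosc-stability}), using smooth approximation. First I fix
\[
\varepsilon^{W^{2,2}}_{m,\omega}\le\min\{\varepsilon^{\mathrm{wp}}_{m,\omega},\varepsilon^{\mathrm{osc}}_{m,\omega}/2,\varepsilon^{\mathrm{sm}}_{m,\omega}/2\}.
\]
Given rough data $\eta_0$ with $\Kosc[\eta_0]<\varepsilon^{W^{2,2}}_{m,\omega}$, Lemma~\ref{lem:W22-density-closure} gives smooth closed unit-length centred approximants $\eta_0^j\to\eta_0$ in $W^{2,2}$ with the same turning number. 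Since $\Kosc[\eta_0^j]\to\Kosc[\eta_0]$, for large $j$ these approximants also lie in the basin. Theorem~\ref{thm:smooth-small-Kosc-stability} and Corollary~\ref{cor:positive-time-smoothing-Kosc} then give, for each such $j$, a global smooth solution $\eta^j$ with the decay $e^j(\tau)\le 2e^j(0)e^{-c\tau}$ and with uniform $C^p$ bounds on $[\delta,\infty)$ depending only on $p,\delta,m,\omega$ and an upper bound for $\Kosc$.

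Next I would build the canonical solution globally by continuation. Theorem~\ref{thm:W22-LWP} gives the canonical solution $\eta$ on $[0,T]$, and its last assertion says that $\eta^j\to\eta$ in $C([0,T];W^{2,2})\cap C^\infty_{\rm loc}((0,T])$. Passing to the limit in the decay estimate for $e^j$ shows $\Kosc[\eta(\tau)]\le 2\Kosc[\eta_0]<\varepsilon^{\mathrm{wp}}_{m,\omega}$ on $[0,T]$, so $\eta(T)$ is again admissible rough (in fact smooth) data. Corollary~\ref{cor:uniform-circle-W22-LWP} then restarts the flow from $\eta(T)$ on an interval of \emph{uniform} length $T^{\mathrm{wp}}_{m,\omega}$. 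The approximants, which are global, converge on each successive interval by the same local statement applied to the restarted chart. Iterating with uniform step length covers $[0,\infty)$, and at each stage the decay bound \eqref{eq:W22-Kosc-decay-main} is inherited from the $\eta^j$ in the limit.

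Uniqueness is handled the same way: two canonical solutions agree on each uniform step by Theorem~\ref{thm:W22-LWP}, and a connectedness argument on the set of agreement times extends this to all of $[0,\infty)$. Continuity of the semiflow on compact time intervals follows from composing the finitely many local continuous-dependence maps of Theorem~\ref{thm:W22-LWP} along the chain, because all intermediate states stay inside the uniform neighbourhood where Corollary~\ref{cor:uniform-circle-W22-LWP} applies.

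Finally, for the asymptotics I would pick any $\tau_0>0$. Then $\eta(\tau_0)$ is smooth with $\Kosc$ below $\varepsilon^{\mathrm{sm}}_{m,\omega}$, so the smooth Theorem~\ref{thm:smooth-small-Kosc-stability}, applied from $\tau_0$, gives $C^\infty$ convergence to the unit-length $\omega$-circle in the barycentric gauge. By uniqueness, that smooth flow coincides with $\eta$ for $\tau\ge\tau_0$.

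I expect the main obstacle to be the restart step. One has to check that the local existence time depends only on the $\Kosc$-smallness and not on the particular curve; this is exactly what Corollary~\ref{cor:uniform-circle-W22-LWP} gives. One also has to check that the limit solution genuinely inherits $\Kosc$-smallness at the restart time. I would get the latter from $C^\infty_{\rm loc}$ convergence at positive times, or from $W^{2,2}$ convergence, under which $\Kosc$ is continuous since it is the squared $L^2$ distance of $k$ from $2\pi\omega$. A secondary point is that the decay constant in \eqref{eq:W22-Kosc-decay-main} must be uniform near $\tau=0$. This holds because the bound $e\le 2e(0)e^{-c\tau}$ is valid for all smooth approximants from time zero.
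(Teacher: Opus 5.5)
Your proposal is correct. The existence, uniqueness and continuity part follows the paper's scheme. The asymptotic part takes a genuinely different route.

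\textbf{Where you agree with the paper.} You approximate by smooth data, pass the $\Kosc$ decay of the smooth flows to the limit on each local interval, and restart with the uniform time from Corollary~\ref{cor:uniform-circle-W22-LWP}. You then propagate uniqueness, the approximation statement and continuity of the semiflow along the chain of uniform steps. This is exactly what the paper does.

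\textbf{Where you differ: convergence to the circle.} The paper stays with the approximants.
\begin{itemize}
\item It bounds $\int_{[T,T+1]}E_m[\eta^j(\sigma)]\,d\sigma\le C_{m,\omega}e_j(0)e^{-c T}$ using \eqref{eq:integrated-Dm-bound-main} and \eqref{eq:Dm-controls-derivatives-main}.
\item It extracts a weak $L^2(I_T;H^m)$ limit, identifies it with the relaxed curvature, and uses lower semicontinuity to find $\tau_1\in[T,T+1]$ with $E_m[\eta(\tau_1)]<\varepsilon^E_{m,\omega}$.
\item It then invokes Theorem~\ref{thm:eta-converges-circle} from $\tau_1$.
\end{itemize}
You instead observe that $\eta(\tau_0)$ is already a smooth unit-length centred curve with $\Kosc<\varepsilon^{\mathrm{sm}}_{m,\omega}$. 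So Theorem~\ref{thm:smooth-small-Kosc-stability} applies from $\tau_0$; that theorem performs the same entrance into the small-$E_m$ basin for smooth flows via Corollary~\ref{cor:Kosc-enters-Em-basin}. Uniqueness of smooth gauge-fixed solutions then identifies that flow with the tail of $\eta$.

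\textbf{What each route buys.} Your route is shorter and reduces the rough case entirely to the already-proved smooth theorem. It avoids the compactness and lower-semicontinuity step, which is in any case not essential, since $C^\infty_{\mathrm{loc}}$ convergence at positive times already gives $E_m[\eta^j(\sigma)]\to E_m[\eta(\sigma)]$. The paper's route works directly with the small-$E_m$ basin theorem. It exhibits an entrance time controlled explicitly by $\Kosc[\eta_0]$, uniformly over the basin.

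\textbf{One bookkeeping slip.} With only $\varepsilon^{W^{2,2}}_{m,\omega}\le\varepsilon^{\mathrm{wp}}_{m,\omega}$ you cannot conclude $2\Kosc[\eta_0]<\varepsilon^{\mathrm{wp}}_{m,\omega}$. There are two easy fixes:
\begin{itemize}
\item put $\varepsilon^{\mathrm{wp}}_{m,\omega}/2$ in your minimum; or
\item note that the proof of Proposition~\ref{prop:Kosc-decay} actually gives $e'\le-\frac12\mu_{m,\omega}e$ throughout, so $e$ is nonincreasing and the factor $2$ is unnecessary.
\end{itemize}
The paper's own choice $\varepsilon^{W^{2,2}}_{m,\omega}\le\min\{\varepsilon^{\mathrm{osc}}_{m,\omega},\varepsilon^{\mathrm{wp}}_{m,\omega}\}$ carries the same slack.
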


\begin{proof}
Let
$\varepsilon^{W^{2,2}}_{m,\omega}
\le
\min\{\varepsilon^{\mathrm{osc}}_{m,\omega},\varepsilon^{\mathrm{wp}}_{m,\omega}\},$
where $\varepsilon^{\mathrm{wp}}_{m,\omega}$ is the uniform near-circle local well-posedness threshold from
Corollary~\ref{cor:uniform-circle-W22-LWP}.  Choose smooth approximations $\eta_0^j$ from
Lemma~\ref{lem:W22-density-closure}.  For $j$ sufficiently large,
\[
\Kosc[\eta_0^j]<\varepsilon^{\mathrm{osc}}_{m,\omega},
\qquad
\Kosc[\eta_0^j]\to\Kosc[\eta_0].
\]
Let $\eta^j$ be the corresponding smooth length-normalised flows.  Proposition~\ref{prop:Kosc-decay} gives
uniformly in $j$
\begin{equation}\label{eq:W22-proof-smooth-Kosc-decay}
e_j(\tau):=\Kosc[\eta^j(\tau)]
\le
2e_j(0)e^{-c^{\mathrm{osc}}_{m,\omega}\tau},
\end{equation}
and Corollary~\ref{cor:positive-time-smoothing-Kosc} gives uniform positive-time curvature bounds.

We first pass these estimates to the canonical relaxed solution.  On the local interval supplied by
Theorem~\ref{thm:W22-LWP}, the smooth flows converge to the canonical solution in
$C([0,T];W^{2,2})$ and smoothly on compact positive-time subintervals.  Hence
\eqref{eq:W22-proof-smooth-Kosc-decay} passes to the limit there.  In particular the relaxed curvature coordinate
stays in the same small $L^2$ ball.  Since the near-circle local existence time in
Corollary~\ref{cor:uniform-circle-W22-LWP} is uniform on this ball, the curvature-coordinate solution can be
restarted at the end of the interval.  Iterating this
argument gives a global relaxed solution and the decay estimate \eqref{eq:W22-Kosc-decay-main}.  The same passage
gives the positive-time smoothing estimates for the relaxed flow.

It remains to justify entrance into the small-$E_m$ basin for the relaxed solution.  Fix $T>0$ and put
$I_T:=[T,T+1]$.  For the smooth approximating flows, \eqref{eq:Dm-controls-derivatives-main},
\eqref{eq:integrated-Dm-bound-main}, and \eqref{eq:W22-proof-smooth-Kosc-decay} give
\begin{multline}\label{eq:W22-proof-space-time-Em}
\int_{I_T}E_m[\eta^j(\sigma)]\,d\sigma
=
\frac12\int_{I_T}\|D^m(k^j-\kappa)(\sigma)\|_2^2\,d\sigma \\
\le
C_{m,\omega}
\int_{I_T}\bigl(\mathfrak D_m[f_j(\sigma)]+e_j(\sigma)^2\bigr)\,d\sigma 
\le
C_{m,\omega}e_j(T)
\le
C_{m,\omega}e_j(0)e^{-c^{\mathrm{osc}}_{m,\omega}T},
\end{multline}
where $f_j=k^j-\kappa$.  The constants are independent of $j$.

By \eqref{eq:W22-proof-space-time-Em}, the sequence $f_j$ is bounded in $L^2(I_T;H^m)$.  After passing to a
subsequence it converges weakly in $L^2(I_T;H^m)$ to some limit.  The already established
$C(I_T;L^2)$ convergence of the curvature coordinates identifies that weak limit with the curvature fluctuation
$f=k-\kappa$ of the relaxed solution.  Lower semicontinuity of the $L^2$ norm therefore yields
\begin{equation}\label{eq:W22-proof-lsc-Em}
\int_{I_T}E_m[\eta(\sigma)]\,d\sigma
\le
\liminf_{j\to\infty}
\int_{I_T}E_m[\eta^j(\sigma)]\,d\sigma
\le
C_{m,\omega}
\Kosc[\eta_0]e^{-c^{\mathrm{osc}}_{m,\omega}T}.
\end{equation}
Choose $T$ so large that the right-hand side is strictly smaller than
$\varepsilon^E_{m,\omega}$, the threshold in Theorem~\ref{thm:eta-converges-circle}.  Since $I_T$ has length
$1$ and the relaxed solution is smooth for positive time, \eqref{eq:W22-proof-lsc-Em} gives a time
$\tau_1\in I_T$ with
\[
E_m[\eta(\tau_1)]<\varepsilon^E_{m,\omega}.
\]
Applying Theorem~\ref{thm:eta-converges-circle} to the smooth solution starting at $\tau_1$ gives smooth
convergence to the unit-length $\omega$-circle.

The convergence statement for arbitrary smooth approximating sequences follows in the same way.  On the first
uniform local interval it is exactly the approximation statement in Theorem~\ref{thm:W22-LWP}; the small-$\Kosc$
decay keeps all approximating solutions in the same uniform restart neighbourhood, so iteration gives convergence
on every compact time interval.  The positive-time smoothing estimates upgrade the convergence to
$C^\infty_{\mathrm{loc}}((0,T]\times\R/\Z)$.

Continuity of the semiflow follows from the $L^2$ curvature-coordinate continuous dependence in
Theorem~\ref{thm:W22-LWP}, the uniform restart just described, and the reconstruction estimates.
\end{proof}

\begin{corollary}[Relaxed unnormalised flow from rough data]\label{cor:W22-relaxed-unnormalised}
Fix $m\ge1$ and $\omega\in\Z\setminus\{0\}$.
There exists $\varepsilon^{W^{2,2}}_{m,\omega}>0$ with the following property.
Let $\gamma_0$ be a closed $W^{2,2}$ immersed curve with turning number $\omega$ and
\[
\Kosc[\gamma_0]<\varepsilon^{W^{2,2}}_{m,\omega}.
\]
Then, after choosing a unit-length centred representative, there is a unique global canonical relaxed
length-normalised $m$-ideal flow starting from $\gamma_0$.  It is smooth for every positive time, depends
continuously on the initial curve in the $W^{2,2}$ topology, is the positive-time limit of every smooth
approximating sequence preserving closedness and turning number, and converges smoothly to the unit-length
$\omega$-circle.  Undoing the length normalisation gives a unique relaxed unnormalised flow converging, modulo
Euclidean motions and reparametrisation, to an $\omega$-circle of finite positive length.
\end{corollary}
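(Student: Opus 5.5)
The corollary is essentially a repackaging of Theorem~\ref{thm:canonical-W22-global} together with the rescaling relations of Proposition~\ref{prop:barycentred-gauge-identities}. The plan is to reduce to that theorem and then undo the normalisation.

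\textbf{Reduction to the normalised problem.} Given a closed $W^{2,2}$ immersion $\gamma_0$ with turning number $\omega$, I reparametrise it by arclength, translate it so its arclength barycentre is the origin, and dilate by $L[\gamma_0]^{-1}$. This produces a unit-length centred $W^{2,2}$ immersion $\eta_0$ in the sense of Definition~\ref{def:W22-curve}. Since $\Kosc$ is scale and translation invariant, $\Kosc[\eta_0]=\Kosc[\gamma_0]$. Taking $\varepsilon^{W^{2,2}}_{m,\omega}$ to be the constant of Theorem~\ref{thm:canonical-W22-global} then gives the following for free: existence and uniqueness of the global canonical relaxed length-normalised flow, smoothness for $\tau>0$, approximation by every smooth closedness- and turning-number-preserving sequence, and $C^\infty$ convergence to the unit-length $\omega$-circle. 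I would also check that the choice of representative does not matter. The arclength parametrisation is fixed up to a constant shift of the parameter, and the canonical gauge is equivariant under such shifts and under ambient rotations. Hence uniqueness holds modulo these symmetries. The same equivariance gives continuous dependence on $\gamma_0$ in $W^{2,2}$: the map $\gamma_0\mapsto(L[\gamma_0],\bar\gamma_0,\eta_0)$ is continuous in $W^{2,2}$ near an immersion, because length, barycentre and the arclength reparametrisation depend continuously on the curve. The semiflow continuity in Theorem~\ref{thm:canonical-W22-global} then transfers.

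\textbf{Undoing the normalisation.} Define the unnormalised length by \eqref{eq:length-reconstruct-main},
\[
L(\tau)=L[\gamma_0]\exp\Bigl(-\int_0^\tau\Lambda(\sigma)\,d\sigma\Bigr),
\]
and the original time by $dt/d\tau=L(\tau)^{2m+4}$. Then set $\gamma=L\,\eta$, plus a translation for the barycentre that I take to be fixed. This requires $\Lambda$ to make sense near $\tau=0$ for rough data, and this is the main obstacle. I would try the same time-weighted estimate used for $\Omega$ in the proof of Theorem~\ref{thm:W22-LWP}, namely $|\Lambda(\tau)|\le\|k\|_2\|\K_m\|_2\le C\tau^{-\alpha_m}$ with $\alpha_m<1$. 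This gives $\Lambda\in L^1(0,1)$, so $L(\tau)$ is continuous, positive, and satisfies $L(0)=L[\gamma_0]$.

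\textbf{Large-time behaviour.} For large $\tau$, the estimate \eqref{eq:normalised-speed-decay-main}, applied after the entrance time furnished in the proof of Theorem~\ref{thm:canonical-W22-global}, shows that $\Lambda$ decays exponentially. Hence $\Lambda\in L^1(0,\infty)$, and $L(\tau)\to L_\infty\in(0,\infty)$. It follows that $t(\tau)$ is a homeomorphism of $[0,\infty)$, since $dt/d\tau$ is bounded above and below by positive constants, so $t\to\infty$ as $\tau\to\infty$. Proposition~\ref{prop:barycentred-gauge-identities}, read in reverse on each positive-time interval where everything is smooth, shows that $\gamma$ solves the unnormalised geometric flow. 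Its uniqueness follows from the uniqueness of $\eta$ and of the scalar reconstruction data $L$ and $t$. Finally, the identity $\gamma=\bar\gamma+L\eta$, together with the $C^\infty$ convergence $\eta\to\eta_\infty$, gives convergence of $\gamma$, modulo Euclidean motions and reparametrisation, to an $\omega$-circle of length $L_\infty$.
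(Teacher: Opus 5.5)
Your route is the paper's: normalise, invoke Theorem~\ref{thm:canonical-W22-global}, then undo the normalisation with \eqref{eq:length-reconstruct-main}, using integrability of $\Lambda$ after the entrance time into the small-$E_m$ basin. Your extra points fill in steps the paper's four-line proof leaves implicit. These are the $C\tau^{-\alpha_m}$ bound giving $\Lambda\in L^1$ near $\tau=0$ (so that $L(0)=L[\gamma_0]$), the continuity of the normalisation map, and representative independence.

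One step is wrong as written: the barycentre of the unnormalised flow is not fixed. If $\gamma=c+L\eta$ with $c$ constant, then $L_\tau=-\Lambda L$ and \eqref{eq:NGFm} give
\[
\langle\partial_\tau\gamma,N\rangle=L\,\K_m[\eta]-L\langle b_F,N\rangle .
\]
Multiplying by $d\tau/dt=L^{-(2m+4)}$, the normal velocity in $t$-time is $\K_m[\gamma]-L^{-(2m+3)}\langle b_F,N\rangle$. Since $b_F\neq0$ in general, reading Proposition~\ref{prop:barycentred-gauge-identities} backwards does not produce a solution of the geometric flow.

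The fix is to set $\gamma=\bar\gamma(\tau)+L(\tau)\eta$ with
\[
\bar\gamma(\tau)=\bar\gamma_0+\int_0^\tau L(\sigma)\,b_F(\sigma)\,d\sigma .
\]
This is well defined because $|b_F|\le\|F\|_{L^1}+\|\beta_F\|_{L^1}\le C\tau^{-\alpha_m}$ near $\tau=0$. That bound follows from the same weighted estimates you already use for $\Lambda$, as in the proof of Theorem~\ref{thm:W22-LWP}. After the entrance time, \eqref{eq:normalised-speed-decay-main} gives exponential decay of $F$ and $\beta_F$, hence of $b_F$, so $\bar\gamma$ even converges. Your uniqueness claim must then also include the barycentre path, which is determined by $\eta$ and $L$. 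With this correction the argument is complete.
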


\begin{proof}
After translating and rescaling the initial curve, Theorem~\ref{thm:canonical-W22-global} gives the unique global
unit-length barycentred relaxed flow.  Undoing the length normalisation gives the corresponding unnormalised relaxed
flow.  Since the length-normalised solution converges smoothly to the unit-length \(\omega\)-circle and the dilation
coefficient is integrable after entrance into the small-\(E_m\) basin, the unnormalised length converges to a finite
positive limit.  Thus the unnormalised relaxed flow converges, modulo Euclidean motions and reparametrisation, to the
corresponding \(\omega\)-circle.
\end{proof}

\begin{proof}[Proof of Theorem~\ref{T:small-Kosc}]
The length-normalised statement is Theorem~\ref{thm:canonical-W22-global}.  The equivalent unnormalised
formulation follows from Corollary~\ref{cor:W22-relaxed-unnormalised} and the reconstruction formula
\eqref{eq:length-reconstruct-main}.
\end{proof}


\appendix

\section{Evolution identities for the barycentred length-normalised flow}
\label{A:normalised-evolution}

This appendix proves the evolution identities stated in Section~\ref{S:len-normalised}.

\begin{proof}[Proof of Proposition~\ref{prop:barycentred-gauge-identities}]
Let
\[
\partial_\tau\eta=F N+\beta_FT-b_F.
\]
The vector $-b_F$ decomposes as
\[
-b_F=-\langle b_F,N\rangle N-\langle b_F,T\rangle T.
\]
For a velocity $V_nN+V_tT$ one has
\[
\partial_\tau(ds)=(\partial_sV_t-kV_n)\,ds.
\]
The translation contribution cancels because
\[
\partial_s\bigl(-\langle b_F,T\rangle\bigr)
-
k\bigl(-\langle b_F,N\rangle\bigr)
=
-k\langle b_F,N\rangle+k\langle b_F,N\rangle
=0.
\]
Thus
\[
\partial_\tau(ds)=\bigl((\beta_F)_s-kF\bigr)\,ds=0
\]
by the gauge condition $(\beta_F)_s=kF$.  Hence the length and the constant-speed parametrisation are preserved.

Since $ds$ is fixed,
\[
\frac{\dd}{\dd \tau}\int_\eta\eta\,ds
=
\int_\eta\partial_\tau\eta\,ds
=
\int_\eta(FN+\beta_FT-b_F)\,ds
=
b_F-b_F=0.
\]
This proves the preservation of the barycentric gauge.

Next, length preservation gives
\[
0
=
\frac{\dd}{\dd \tau}L[\eta]
=
-\int_\eta kF\,ds
=
-\int_\eta k\K_m\,ds-\Lambda\int_\eta kh\,ds.
\]
The Minkowski identity follows from $q_s=1+kh$:
\[
0=\int_\eta q_s\,ds=1+\int_\eta kh\,ds,
\]
so $\int_\eta kh\,ds=-1$.  Therefore
\[
\Lambda=\int_\eta k\K_m\,ds.
\]

Now let $\gamma$ be an unnormalised smooth flow and define
\[
\eta=\rho(\gamma-\bar\gamma),
\qquad
\rho=L_\gamma^{-1},
\qquad
\frac{d\tau}{dt}=\rho^{2m+4}.
\]
The scaling laws are
\[
ds=\rho\,ds_\gamma,
\qquad
k=\rho^{-1}k^\gamma,
\qquad
\K_m=\rho^{-(2m+3)}\K_m^\gamma .
\]
Since
\[
\frac{\dd}{\dd t }L_\gamma=-\int_{\gamma(t)}k^\gamma\K_m^\gamma\,ds_\gamma,
\]
we have
\[
\frac{\rho'}{\rho}
=
-\frac{L_\gamma'}{L_\gamma}
=
\rho\int_{\gamma(t)}k^\gamma\K_m^\gamma\,ds_\gamma.
\]
Therefore
\[
\rho^{-(2m+4)}\frac{\rho'}{\rho}
=
\rho^{-(2m+3)}
\int_{\gamma(t)}k^\gamma\K_m^\gamma\,ds_\gamma
=
\int_{\eta(\tau)}k\K_m\,ds
=
\Lambda .
\]
This proves \eqref{eq:log-length-main}.  Differentiating
\[
\eta=\rho(\gamma-\bar\gamma)
\]
and rewriting in $\tau$-time gives the normal velocity
\[
(\K_m+\Lambda h)N
\]
and tangential velocity equal to the constant-speed gauge up to an additive constant.  The additive constant only
generates a constant shift of the parameter on $\R/\Z$.  Subtracting the spatial average of the uncentred velocity is
exactly the term $-b_F$, and hence the centred representative solves \eqref{eq:NGFm-section}.  Integrating
\eqref{eq:log-length-main} gives \eqref{eq:length-reconstruct-main}.
\end{proof}

\begin{lemma}[Translation terms are intrinsically invisible]\label{lem:appendix-translation-invisible}
Let $b\in\R^2$ be independent of $s$.  Decompose the velocity $-b$ as
\[
-b=F_bN+\alpha_bT,
\qquad
F_b=-\langle b,N\rangle,
\qquad
\alpha_b=-\langle b,T\rangle .
\]
Then
\[
(\alpha_b)_s-kF_b=0,
\qquad
(F_b)_{ss}+k^2F_b+\alpha_bk_s=0.
\]
\end{lemma}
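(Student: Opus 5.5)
Both identities are direct computations with the Frenet system $T_s=kN$, $N_s=-kT$, using only that $b$ is constant in $s$, so $b_s=0$.

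For the first identity, I differentiate $\alpha_b=-\langle b,T\rangle$. This gives
\[
(\alpha_b)_s=-\langle b,T_s\rangle=-k\langle b,N\rangle=kF_b,
\]
and hence $(\alpha_b)_s-kF_b=0$. This is the statement that a rigid translation does not change the arclength element, matching the cancellation already used in the proof of Proposition~\ref{prop:barycentred-gauge-identities}.

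For the second identity, I first compute
\[
(F_b)_s=-\langle b,N_s\rangle=k\langle b,T\rangle=-k\alpha_b .
\]
Differentiating once more and using the first identity $(\alpha_b)_s=kF_b$, I get
\[
(F_b)_{ss}=-k_s\alpha_b-k(\alpha_b)_s=-k_s\alpha_b-k^2F_b .
\]
This rearranges to $(F_b)_{ss}+k^2F_b+\alpha_bk_s=0$. Equivalently, the expression is $\bigl((F_b)_s+\alpha_bk\bigr)_s$, which vanishes because $(F_b)_s+\alpha_bk=0$ pointwise. This matches the intrinsic form $k_\tau=(F_s+\beta k)_s$ used in Lemma~\ref{lem:chart-equation-strong}, and shows that the translation contributes nothing to the curvature evolution.

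There is no real obstacle beyond keeping the signs consistent with the orientation $N=\Rot T$ fixed in Section~\ref{S:firstvar}.
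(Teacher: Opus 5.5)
Your proposal is correct and follows essentially the same direct Frenet-system computation as the paper. The only difference is cosmetic: in the second derivative you use $(\alpha_b)_s=kF_b$, whereas the paper differentiates $\langle b,T\rangle$ and $\langle b,N\rangle$ again explicitly.
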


\begin{proof}
Since $b$ is constant,
\[
(\alpha_b)_s
=
-\partial_s\langle b,T\rangle
=
-\langle b,kN\rangle
=
kF_b.
\]
Also
\[
(F_b)_s
=
-\partial_s\langle b,N\rangle
=
\langle b,kT\rangle,
\]
and hence
\[
(F_b)_{ss}
=
k_s\langle b,T\rangle+k^2\langle b,N\rangle.
\]
Thus
\[
(F_b)_{ss}+k^2F_b+\alpha_bk_s
=
k_s\langle b,T\rangle+k^2\langle b,N\rangle
-k^2\langle b,N\rangle
-k_s\langle b,T\rangle
=0.
\]
\end{proof}

\begin{proof}[Proof of Proposition~\ref{prop:normalised-evolution-identities}]
We compute intrinsic quantities using the translation-reduced velocity
\[
F N+\beta_FT,
\qquad
F=\K_m+\Lambda h,
\qquad
(\beta_F)_s=kF.
\]
This is legitimate by Lemma~\ref{lem:appendix-translation-invisible}.

For a scalar quantity $u$ along the curve,
\[
\frac{\dd}{\dd \tau}\int_\eta u\,ds
=
\int_\eta \partial_\tau u\,ds
=
\int_\eta(\nabla_\tau u+\beta_Fu_s)\,ds
=
\int_\eta(\nabla_\tau u-kFu)\,ds,
\]
because $(\beta_F)_s=kF$.  The curvature evolution for the velocity $F N+\beta_FT$ is
\[
\partial_\tau k=F_{ss}+k^2F+\beta_Fk_s,
\]
hence
\[
\nabla_\tau k=F_{ss}+k^2F.
\]
Substituting $F=\K_m+\Lambda h$ and using
\[
h_{ss}+k^2h=-k-qk_s
\]
gives
\[
\nabla_\tau k
=
(\K_m)_{ss}+k^2\K_m-\Lambda k-\Lambda qk_s,
\]
which is \eqref{eq:k-evol-main}.

Since $\partial_\tau(ds)=0$ in the fixed gauge, $[\partial_\tau,D]=0$.  Therefore
\[
[\nabla_\tau,D]=[\partial_\tau-\beta_FD,D]=(\beta_F)_sD=kFD.
\]
By induction,
\begin{equation}\label{eq:appendix-commutator-power}
[\nabla_\tau,D^p]
=
\sum_{j=0}^{p-1}\binom{p}{j+1}D^j(kF)\,D^{p-j}
\end{equation}
for $p\ge1$, with empty sum for $p=0$.

The structure of $\K_m$ follows from the first variation formula \eqref{E:Km}:
\[
\K_m
=
(-1)^{m+1}D^{2m+2}k+P_3^{2m}(k),
\]
and differentiating gives \eqref{eq:Km-derivative-main}.

Let
\[
u:=D^pk.
\]
Using the integral identity and \eqref{eq:appendix-commutator-power},
\begin{align}
\label{eq:appendix-Hp-start}
\frac{\dd}{\dd \tau}H_p
&=
2\int_\eta u\,\nabla_\tau u\,ds
-
\int_\eta kF\,u^2\,ds \nonumber\\
&=
2\int_\eta u\,D^p(F_{ss}+k^2F)\,ds
+
2\sum_{j=0}^{p-1}\binom{p}{j+1}
\int_\eta u\,D^j(kF)\,D^{p-j}k\,ds
-
\int_\eta kF\,u^2\,ds .
\end{align}

We first isolate from \eqref{eq:appendix-Hp-start} all terms obtained by taking the $\Lambda h$ component of the
normal speed $F=\K_m+\Lambda h$; the remaining terms are those containing $\K_m$.  This is the contribution generated
by the infinitesimal rescaling in the length-normalised flow, and we call it the dilation contribution.  This is only
 bookkeeping. 
 When differentiating $D^pk$, one must also include the corresponding $\Lambda h$ contribution to
the commutator \eqref{eq:appendix-commutator-power}.  Since
\[
h_{ss}+k^2h=-k-qk_s,
\]
the direct $\Lambda h$ contribution to the curvature equation is
\[
\left.\nabla_\tau k\right|_\Lambda
=
\Lambda(h_{ss}+k^2h)
=
-\Lambda(k+qk_s).
\]
Applying $D^p$ to this direct term and adding the $\Lambda h$ part of the commutator gives
\[
\left.\nabla_\tau u\right|_\Lambda
=
-\Lambda u-\Lambda D^p(qk_s)
+
\Lambda
\sum_{j=0}^{p-1}\binom{p}{j+1}D^j(kh)\,D^{p-j}k .
\]
Consequently,
\begin{align}
\label{eq:appendix-Lambda-raw}
\left.\frac{\dd}{\dd \tau}H_p\right|_\Lambda
&=
\Lambda\Biggl[
-2H_p
-2\int_\eta uD^p(qk_s)\,ds \nonumber\\
&\qquad
+
2\sum_{j=0}^{p-1}\binom{p}{j+1}
\int_\eta uD^j(kh)D^{p-j}k\,ds
-
\int_\eta kh\,u^2\,ds
\Biggr].
\end{align}
We now simplify the bracket.  For $j\ge1$, $D^j(kh)=D^{j+1}q$, while $kh=q_s-1$ for $j=0$.  Hence
\[
\sum_{j=0}^{p-1}\binom{p}{j+1}
\int_\eta uD^j(kh)D^{p-j}k\,ds
=
\sum_{r=1}^{p}\binom{p}{r}
\int_\eta uD^rq\,D^{p-r+1}k\,ds
-
pH_p.
\]
Also,
\[
D^p(qk_s)
=
q\,u_s+
\sum_{r=1}^{p}\binom{p}{r}D^rq\,D^{p-r+1}k.
\]
Therefore
\[
-2\int_\eta uD^p(qk_s)\,ds
+
2\sum_{j=0}^{p-1}\binom{p}{j+1}
\int_\eta uD^j(kh)D^{p-j}k\,ds
=
-2\int_\eta q\,u\,u_s\,ds-2pH_p .
\]
Since
\[
0=\int_\eta(q\,u^2)_s\,ds
=
\int_\eta q_su^2\,ds+2\int_\eta q\,u\,u_s\,ds
\]
and $q_s=1+kh$, we obtain
\[
-2\int_\eta q\,u\,u_s\,ds
=
H_p+\int_\eta kh\,u^2\,ds.
\]
Substituting this into \eqref{eq:appendix-Lambda-raw} gives the homogeneity law
\begin{equation}\label{eq:appendix-Lambda-Hp}
\left.\frac{\dd}{\dd \tau}H_p\right|_\Lambda
=
-(2p+1)\Lambda H_p.
\end{equation}

It remains to treat $F=\K_m$.  The leading term in
\[
2\int_\eta u\,D^p(\K_m)_{ss}\,ds
\]
is
\[
2(-1)^{m+1}\int_\eta D^pk\,D^{p+2m+4}k\,ds
=
-2\int_\eta D^{p+m+2}k\,D^{p+m+2}k\,ds,
\]
after integrating by parts $m+2$ times.  The remaining terms from $D^p(\K_m)_{ss}$ contribute
\[
2\int_\eta k_{s^p}P_3^{p+2m+2}(k)\,ds.
\]
The terms from $D^p(k^2\K_m)$ and from the commutator sum in
\eqref{eq:appendix-Hp-start} have the Dziuk--Kuwert--Sch\"atzle structure
\[
2\int_\eta k_{s^p}P_3^{p+2m+2}(k)\,ds
+
2\int_\eta k_{s^p}P_5^{p+2m}(k)\,ds,
\]
after enlarging the polynomial symbols.  The metric term with $F=\K_m$ is
\[
-\int_\eta k\K_m k_{s^p}^2\,ds.
\]
Together with \eqref{eq:appendix-Lambda-Hp}, this proves \eqref{eq:Hp-evol-main}.

For the energy, the first variation gives
\[
\frac{\dd}{\dd \tau}E_m[\eta(\tau)]
=
-\int_\eta \K_mF\,ds
=
-\int_\eta\K_m^2\,ds-\Lambda\int_\eta h\K_m\,ds .
\]
The dilation identity
\[
\int_\eta h\K_m\,ds=(2m+1)E_m
\]
follows by differentiating the scaling law
\[
E_m[\rho\eta]=\rho^{-(2m+1)}E_m[\eta]
\]
at $\rho=1$ and using the first variation with dilation vector field $\eta=qT+hN$.  This proves
\eqref{eq:Em-evol-main}.

Finally, insert \eqref{E:Km} into
\[
\Lambda=\int_\eta k\K_m\,ds.
\]
The leading term gives
\[
\int_\eta (-1)^{m+1}kD^{2m+2}k\,ds
=
\int_\eta k_{s^{m+1}}^2\,ds,
\]
and all remaining terms are quartic with total derivative order $2m$.  This proves
\eqref{eq:Lambda-structure-main}.
\end{proof}

\section{A priori Sobolev estimates for the barycentred length-normalised flow}
\label{A:normalised-apriori}

This appendix gives the interpolation estimates behind Proposition~\ref{prop:normalised-Sobolev-control} and the
positive-time smoothing statement used in the small-\(\Kosc\) basin.

\begin{lemma}[Periodic Gagliardo--Nirenberg]\label{lem:appendix-GN}
Let $R\in\N$, $0\le j<R$, and $q\in[2,\infty]$.  There is a constant $C=C(R,j,q)$ such that every smooth periodic
function $u$ on a unit-length circle satisfies
\begin{equation}\label{eq:appendix-GN}
\|D^ju\|_{L^q}
\le
C\|D^Ru\|_{L^2}^{\theta}\|u\|_{L^2}^{1-\theta}
+
C\|u\|_{L^2},
\end{equation}
where
\[
j+\frac12-\frac1q
=
\theta\left(R-\frac12\right).
\]
\end{lemma}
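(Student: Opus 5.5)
The plan is to reduce to the mean-zero case, prove a pure interpolation inequality by Fourier series, and then pass from $L^2$ to $L^q$ using the one-dimensional Sobolev embedding.

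\emph{Reduction to mean zero.} Write $u=\bar u+w$ with $\bar u=\int_0^1u\,ds$ and $\int_0^1 w\,ds=0$. For $j\ge1$ we have $D^ju=D^jw$, $D^Ru=D^Rw$ and $\|w\|_{L^2}\le\|u\|_{L^2}$, so the estimate for $u$ follows from the estimate for $w$. For $j=0$ we have $\|\bar u\|_{L^q}=|\bar u|\le\|u\|_{L^2}$ on a unit-length circle, and this is absorbed by the additive term $C\|u\|_{L^2}$. Hence it suffices to treat mean-zero $w$. In fact we will prove the stronger bound without the additive term, which is clearly permitted.

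\emph{$L^2$ interpolation.} For integers $0\le \ell\le R$ and mean-zero $w$, Parseval together with Hölder's inequality in the form $\sum |a_n|^2|n|^{2\ell}\le(\sum|a_n|^2|n|^{2R})^{\ell/R}(\sum|a_n|^2)^{1-\ell/R}$ gives
\[
\|D^\ell w\|_2\le \|D^Rw\|_2^{\ell/R}\|w\|_2^{1-\ell/R}.
\]
This covers the case $q=2$, where $\theta=j/(R-\tfrac12)\cdot$ adjusted; more precisely, the $q=2$ exponent is $\theta=j/R$. The scaling formula in the statement is written with $R-\tfrac12$ in the denominator, and this needs care. For $q=2$ that formula gives $\theta=j/(R-\tfrac12)\ge j/R$. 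A larger $\theta$ on the higher norm is still valid for mean-zero $w$, because the Poincaré inequality gives $\|w\|_2\le C\|D^Rw\|_2$, so one may trade a power of $\|w\|_2$ for the same power of $\|D^Rw\|_2$. Indeed, for $\theta\ge\theta_0$ we have
\[
\|D^Rw\|^{\theta_0}\|w\|^{1-\theta_0}=\|D^Rw\|^{\theta_0}\|w\|^{\theta-\theta_0}\|w\|^{1-\theta}\le C\|D^Rw\|^{\theta}\|w\|^{1-\theta}.
\]
The hypotheses guarantee $\theta\le1$: since $j\le R-1$, we get $j+\tfrac12-\tfrac1q\le R-\tfrac12$.

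\emph{$L^q$ bound.} For $q=\infty$, apply the mean-zero Sobolev inequality \eqref{eq:Sob-unit-zero} to $D^jw$, which has mean zero when $j\ge1$, and to $w$ itself when $j=0$. This gives
\[
\|D^jw\|_\infty^2\le 2\|D^jw\|_2\|D^{j+1}w\|_2 .
\]
Inserting the $L^2$ interpolation with exponents $j/R$ and $(j+1)/R$, both valid since $j+1\le R$, yields the exponent $\theta_0=(j+\tfrac12)/R$. This is at most the required $\theta=(j+\tfrac12)/(R-\tfrac12)$, so the Poincaré trade above upgrades it to $\theta$. For $2<q<\infty$, use
\[
\|g\|_q\le\|g\|_\infty^{1-2/q}\|g\|_2^{2/q}
\]
with $g=D^jw$. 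Combining the $L^\infty$ and $L^2$ bounds gives a convex combination of exponents $\theta_0\le\theta$, which is upgraded in the same way.

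\emph{Main obstacle.} The argument is routine. The only point needing care is that the stated exponent uses $R-\tfrac12$ rather than the sharp $R$-scaling. I would handle this uniformly through the Poincaré trade on mean-zero functions, checking $\theta\le1$ in every case, and let the additive $C\|u\|_{L^2}$ term absorb the constant mode.
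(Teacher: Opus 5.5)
Your proof is correct, and it does more than the paper's proof, which is a one-line appeal to ``the standard'' periodic Gagliardo--Nirenberg inequality.

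\textbf{What you prove directly.} You establish that inequality from first principles:
\begin{itemize}
\item Parseval with H\"older gives $\|D^\ell w\|_2\le\|D^Rw\|_2^{\ell/R}\|w\|_2^{1-\ell/R}$;
\item the mean-zero bound \eqref{eq:Sob-unit-zero} gives the $L^\infty$ case;
\item $\|g\|_q\le\|g\|_\infty^{1-2/q}\|g\|_2^{2/q}$ fills in $2<q<\infty$.
\end{itemize}
Together these yield the scaling exponent $\theta_0=(j+\frac12-\frac1q)/R$.

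\textbf{The exponent issue.} You also notice something the paper's citation glosses over. The exponent in the statement, $\theta=(j+\frac12-\frac1q)/(R-\frac12)$, is not the exponent of the standard inequality. So the lemma as written is not literally ``the standard'' one; it is a weaker consequence of it. Your Poincar\'e trade on mean-zero functions derives it correctly, and it is valid because $0\le\theta_0\le\theta\le1$. Your reduction to mean zero, using $1-\theta\ge0$ and $|\bar u|\le\|u\|_{L^2}$, then handles the constant mode.

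\textbf{Comparison.} The paper's route buys brevity. Yours buys a complete proof of the statement as written, together with the sharper exponent $\theta_0$ obtained along the way.

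The sharper form matters later. In Lemma~\ref{lem:appendix-DKS-absorption}, consider a four-factor monomial of total order $2R-2$. With the stated exponent it receives total power $(2R-1)/(R-\frac12)=2$ of $\|D^Rk\|_2$. With $\theta_0$ it receives $(2R-1)/R<2$. Only the latter gives the ``strictly less than $2$'' that the absorption argument actually requires.
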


\begin{proof}
This is the standard one-dimensional periodic Gagliardo--Nirenberg inequality on an interval of length one.  The
additional lower-order term accounts for the mean of $u$.
\end{proof}

\begin{lemma}[DKS absorption estimate]\label{lem:appendix-DKS-absorption}
Fix $R\in\N$.  Let
\[
M(k)=\int_\eta \prod_{i=1}^a |D^{j_i}k|\,ds
\]
be a monomial integral with $a\ge2$ and
\[
j_1+\cdots+j_a\le 2R-2.
\]
Then, for every $\delta>0$,
\begin{equation}\label{eq:appendix-DKS-absorption}
M(k)
\le
\delta\|D^Rk\|_2^2
+
C_{\delta}\mathcal P(1,H_0,\dots,H_{R-1}),
\end{equation}
where $\mathcal P$ is a polynomial with nonnegative coefficients depending only on the monomial, and
$C_\delta$ depends only on $\delta$, the monomial, $R$, and the fixed turning number.
\end{lemma}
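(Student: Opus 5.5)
The plan is a direct H\"older--Sobolev estimate, without integrating by parts: the integrand $\prod_i|D^{j_i}k|$ carries absolute values, so derivatives cannot be moved between factors. Because the right-hand side of \eqref{eq:appendix-DKS-absorption} may be an arbitrary polynomial in $H_0,\dots,H_{R-1}$, no smallness is needed anywhere. The only task is to make sure that $\|D^Rk\|_2$ enters with total power strictly less than $2$, so that Young's inequality yields $\delta\|D^Rk\|_2^2$.

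\textbf{Caveat on the hypotheses.} The lemma as worded needs $\max_i j_i\le R$. Otherwise a factor $|D^{j}k|$ with $R<j\le 2R-2$ cannot be bounded by $\|D^Rk\|_2$ and $H_0,\dots,H_{R-1}$ at all; for instance $\int|k|\,|D^{2R-2}k|\,ds$ with $R\ge3$ is already out of reach. This is the situation in which the lemma is invoked, after the signed terms have been integrated by parts. I will therefore state this assumption explicitly and prove the lemma under it. I will also take $R\ge2$; the case $R=1$ forces all $j_i=0$, and it is addressed at the end.

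\textbf{Step 1: sort the factors by derivative order.}
\begin{itemize}
\item For $j\le R-2$, the one-dimensional Sobolev inequality on the unit circle gives $\|D^jk\|_\infty\le C(\|D^{j+1}k\|_2+\|D^jk\|_2)$. This is bounded by a polynomial in $H_0,\dots,H_{R-1}$.
\item For $j=R-1$, Lemma~\ref{lem:appendix-GN} (applied to $D^{R-1}k$ with one extra derivative) gives
\[
\|D^{R-1}k\|_\infty\le C\|D^Rk\|_2^{1/2}H_{R-1}^{1/4}+CH_{R-1}^{1/2},
\qquad
\|D^{R-1}k\|_2=H_{R-1}^{1/2}.
\]
\item For $j=R$, the factor is estimated only in $L^2$.
\end{itemize}
Since $\sum_i j_i\le2R-2$, the factors with $j_i\ge R-1$ are very limited. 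There are at most two of them, and if there are two, either both have order $R-1$, or $R=2$ and the factors have orders $2$ and $1$ (with every other factor of order $0$).

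\textbf{Step 2: case analysis.} Using $a\ge2$, one factor is always placed in $L^2$ and all remaining factors in $L^\infty$.
\begin{enumerate}
\item No factor has order $\ge R-1$. Then $M(k)\le\mathcal P(1,H_0,\dots,H_{R-1})$ directly.
\item Exactly one factor has order $R$. Put it in $L^2$ and the rest in $L^\infty$ by Step 1, which gives $M\le\|D^Rk\|_2\,\mathcal P$. This includes the $R=2$ configuration with orders $2$ and $1$: the order-$1$ factor is then of order $R-1$, so its $L^\infty$ bound from Step 1 contributes a further $\|D^Rk\|_2^{1/2}$. The total power is then $3/2$, still below $2$.
\item One or two factors of order $R-1$ (and none of order $R$). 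Put one of them in $L^2$, giving $H_{R-1}^{1/2}$, and the other, if present, in $L^\infty$. This gives $M\le(\|D^Rk\|_2^{1/2}+1)\,\mathcal P$.
\end{enumerate}
In every case $\|D^Rk\|_2$ appears to a power at most $3/2<2$. Young's inequality $xy\le\delta x^{p}+C_\delta y^{p'}$ with $p=2/\text{(power)}$ then gives \eqref{eq:appendix-DKS-absorption}. The resulting polynomial has nonnegative coefficients, and its constants depend only on $R$, the monomial and $\delta$. The dependence on the turning number is harmless, since $H_0\ge(2\pi\omega)^2$ on unit-length curves.

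\textbf{Step 3: the main obstacle.} The real difficulty is the bookkeeping that guarantees at most two high-order factors, and hence a total $\|D^Rk\|_2$-power below $2$; Step 1 settles this using only $\sum_ij_i\le2R-2$ and $j_i\le R$. For $R=1$ all factors are undifferentiated, and the same scheme gives
\[
\int|k|^a\,ds\le\|k\|_\infty^{a-2}H_0,
\qquad
\|k\|_\infty\le C\|Dk\|_2^{1/2}H_0^{1/4}+CH_0^{1/2},
\]
so the power of $\|Dk\|_2$ is $(a-2)/2$. This is below $2$ exactly when $a<6$. If $R=1$ must be covered in general, I would restrict to that range, or else record the borderline term $\|Dk\|_2^{2}H_0^{2}$ separately.
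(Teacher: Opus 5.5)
Under your added hypotheses your proof is correct, and both caveats point to real defects in the printed statement; they are not over-caution.

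\textbf{The statement is false without your hypotheses.} With absolute values, the inequality fails as soon as some $j_i>R$. Take $k=2\pi\omega+A\cos(2\pi ns)$ with $n>|\omega|$. This is the curvature of a closed unit-length curve: the closure integral vanishes by the Jacobi--Anger expansion. With $A=(2\pi n)^{-R}$, all of $\|D^Rk\|_2,H_0,\dots,H_{R-1}$ stay bounded, while $\int|k|\,|D^{2R-2}k|\,ds\gtrsim(2\pi n)^{R-2}\to\infty$ for $R\ge3$.

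Likewise, for $R=1$ and $a\ge6$, a curvature bump of height $\varepsilon^{-1/2}$ and width $\varepsilon$ violates the inequality for small $\delta$. Closure is restored by an $O(\varepsilon^{1/2})$ correction in the modes $\pm\omega$. So your restriction $a<6$ is necessary and is not an artefact of your method.

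The paper's proof disposes of high-order factors by integrating by parts, which is not available under absolute values. In the only application, the proof of Proposition~\ref{prop:normalised-Sobolev-control}, the monomials are signed and $R=p+m+2\ge3$. There, integrating by parts first and then invoking your version with $\max_ij_i\le R$ is exactly the right repair.

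\textbf{Comparison of routes.} After integrating by parts, the paper interpolates every factor via Lemma~\ref{lem:appendix-GN} and asserts, DKS-style, that the total power of $\|D^Rk\|_2$ is below $2$. In such a count the power grows with the number of factors $a$, which is precisely what breaks at $R=1$, and the paper does not address that dependence. You instead use the fact that the right-hand side may contain all of $H_0,\dots,H_{R-1}$. Factors of order $\le R-2$ are then bounded in $L^\infty$ outright, and only factors of order $R-1$ or $R$ ever see $\|D^Rk\|_2$. The constraint $\sum_ij_i\le2R-2$ leaves room for at most two of them. For $R\ge2$ this gives power at most $1$, uniformly in $a$ and with no integration by parts. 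What you give up is a DKS-type bound in terms of $H_0$ alone, which the statement does not ask for.

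\textbf{Small corrections.}
\begin{enumerate}
\item The configuration ``$R=2$ with orders $2$ and $1$'' cannot occur, since $2+1>2R-2$. A factor of order $R$ forces every other factor to have order $\le R-2$, so your worst exponent is $1$, not $3/2$.
\item The bound $\|D^{R-1}k\|_\infty\le C\|D^Rk\|_2^{1/2}H_{R-1}^{1/4}+CH_{R-1}^{1/2}$ is the Agmon inequality \eqref{eq:Sob-unit-zero}, applied to $k-2\pi\omega$ when $R=1$. Lemma~\ref{lem:appendix-GN} as printed only yields $\theta=1$ there. For $R\ge2$ even $\theta=1$ suffices for your count, so this matters only for the $R=1$ threshold $a<6$.
\item Half-integer powers of the $H_j$ are absorbed into a polynomial with nonnegative coefficients via $x^\alpha\le1+x^{\lceil\alpha\rceil}$.
\end{enumerate}
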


\begin{proof}
If one of the factors has derivative order at least $R$, integrate by parts until all factors have derivative order at
most $R-1$, leaving only monomial integrals with total derivative order at most $2R-2$.  Apply H\"older's
inequality, placing one factor in $L^2$ and the remaining factors in suitable $L^{q_i}$ spaces.  Estimate each
factor using Lemma~\ref{lem:appendix-GN}.  Terms involving only lower-order $L^2$ norms are included directly in
$\mathcal P(1,H_0,\dots,H_{R-1})$.  For the remaining terms, the total derivative order assumption gives a power of
$\|D^Rk\|_2$ strictly less than $2$, so Young's inequality gives
\[
M(k)
\le
\delta\|D^Rk\|_2^2
+
C_\delta\mathcal P(1,H_0,\dots,H_{R-1}).
\]
The fixed mean curvature
\[
\bar k=2\pi\omega
\]
is absorbed into the polynomial through the constant term.
\end{proof}

\begin{proof}[Proof of Proposition~\ref{prop:normalised-Sobolev-control}]
Set
\[
R:=p+m+2.
\]
In \eqref{eq:Hp-evol-main}, the leading term is
\[
-2\|D^Rk\|_2^2.
\]
Every monomial in
\[
\int_\eta k_{s^p}P_3^{p+2m+2}(k)\,ds
\]
has at least four factors and total derivative order
\[
p+(p+2m+2)=2R-2.
\]
Every monomial in
\[
\int_\eta k_{s^p}P_5^{p+2m}(k)\,ds
\]
has at least six factors and total derivative order
\[
p+(p+2m)=2R-4.
\]
For the metric term, use
\[
\K_m=(-1)^{m+1}D^{2m+2}k+P_3^{2m}(k).
\]
The top-order part of
\[
\int_\eta k\K_mk_{s^p}^2\,ds
\]
has total derivative order
\[
2m+2+2p=2R-2,
\]
and the lower-order part has still smaller total derivative order.  Hence Lemma~\ref{lem:appendix-DKS-absorption}
applies to all these terms.

It remains to treat $\Lambda H_p$.  By \eqref{eq:Lambda-structure-main},
\[
|\Lambda|
\le
\|D^{m+1}k\|_2^2
+
\left|\int_\eta P_4^{2m}(k)\,ds\right|.
\]
The first term is controlled by interpolation between $H_0$ and $\|D^Rk\|_2^2$, while the quartic integral is
controlled by Lemma~\ref{lem:appendix-DKS-absorption}.  Multiplying by $H_p$ and applying Young's inequality gives
\[
|\Lambda|H_p
\le
\delta\|D^Rk\|_2^2
+
C_\delta\mathcal P_{m,p}(1,H_0,\dots,H_{R-1}).
\]
Substituting these estimates into \eqref{eq:Hp-evol-main} proves \eqref{eq:Sobolev-diff-main}.
\end{proof}

\begin{proof}[Proof of Corollary~\ref{cor:positive-time-smoothing-Kosc}]
For smooth data, Proposition~\ref{prop:Kosc-decay} gives
\[
e(\tau)\le C e(0)e^{-c\tau}
\]
and
\[
\int_\tau^{\tau+1}\mathfrak D_m[f(\sigma)]\,d\sigma
\le
C e(\tau).
\]
The spectral estimate \eqref{eq:Dm-controls-derivatives-main} gives spacetime control of
\[
\|D^rf\|_2^2
\qquad(0\le r\le m+2)
\]
on each interval $[\tau,\tau+1]$.  Since $k=\kappa+f$, the same is true for the corresponding curvature norms.

For higher derivatives, apply the differential inequality \eqref{eq:Sobolev-diff-main} successively.  On each unit
time interval in the maximal existence interval, the lower-order terms are already controlled from the previous
step, while the leading term gives spacetime control of the next block of derivatives.  The uniform Gronwall lemma
then yields pointwise bounds for $H_p$ on every interval $[\delta,T)$, with constants independent of $T$, for every
$p$.  These bounds rule out finite-time singularity by the continuation criterion, so the maximal interval is
$[0,\infty)$.  The $L^\infty$ bounds follow from Lemma~\ref{lem:appendix-GN}.

Equivalently, one may express this argument in the curvature-coordinate formulation of Section~\ref{S:existence}:
small \(\Kosc\) keeps the \(L^2\)-curvature coordinate inside the same small ball, so the analytic-semigroup local
theory can be restarted uniformly, and the semigroup smoothing estimates give the displayed bounds.

For relaxed \(W^{2,2}\) solutions, approximate the initial curve by smooth data using
Lemma~\ref{lem:W22-density-closure}.  The smooth estimates above are uniform for approximating data with the same
small oscillation bound.  Passing to the limit by the continuous-dependence statement in Theorem~\ref{thm:W22-LWP}
gives the result for the canonical relaxed flow.
\end{proof}

\section{Linearisation about an \texorpdfstring{$\omega$}{omega}-circle and the spectral gap}\label{SS:linear-circle}

Let $\eta_\circ$ be an $\omega$-circle of length $1$. Then its curvature is constant,
\[
k_\circ \equiv 2\pi\,\omega,
\qquad
E_m[\eta_\circ]=0,
\qquad
\K_m[\eta_\circ]\equiv 0.
\]

\begin{lemma}\label{lem:linearised-graph-definite}
Let $\eta$ be a normal graph over $\eta_\circ$ with normal displacement $\varepsilon\phi$:
\[
\eta=\eta_\circ+\varepsilon\phi\,N_\circ+O(\varepsilon^2),
\qquad \phi:\R/\Z\to\R.
\]
Then the curvature variation, with the normal orientation used throughout the paper, is
\begin{equation}\label{eq:delta-k-circle}
f=\delta k = \phi_{ss}+k_\circ^{\,2}\phi,
\end{equation}
and the linearisation of $\K_m$ can be written as
\begin{equation}\label{eq:Km-linear-phi}
\K_m
=
(-1)^{m+1}\,\varepsilon\,D^{2m}\,(D^2+k_\circ^{\,2})^2\phi
+O(\varepsilon^2),
\qquad D:=\partial_s.
\end{equation}
In particular, the quadratic dissipation involves the nonnegative quadratic form
\[
\int_{\eta_\circ}\bigl[D^{2m}(D^2+k_\circ^{\,2})^2\phi\bigr]^2\,ds.
\]
Its kernel contains the infinitesimal translation modes $n=\pm\omega$; these modes also have zero quadratic
$E_m$-energy and are handled in the nonlinear argument by the closure condition.
\end{lemma}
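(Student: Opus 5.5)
The plan is to linearise directly, using the first variation computations of Lemma~\ref{L:firstvar} together with the splitting \eqref{eq:K-splitting}, and to exploit the fact that every derivative $k_{s^j}$ with $j\ge1$ vanishes on $\eta_\circ$.

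\emph{Step 1: the curvature variation.} For \eqref{eq:delta-k-circle}, I take the variation $\eta_\varepsilon=\eta_\circ+\varepsilon\phi N_\circ+O(\varepsilon^2)$. Its normal part at first order is $\phi N_\circ$; the $O(\varepsilon^2)$ remainder and any tangential component only reparametrise the curve at first order. Then \eqref{eq:k-var} gives
\[
\p_\varepsilon k|_{\varepsilon=0}=\phi_{ss}+k_\circ^2\phi .
\]
Hence $k=k_\circ+\varepsilon f+O(\varepsilon^2)$ with $f=\phi_{ss}+k_\circ^2\phi$. I must check that the orientation convention $N=\Rot T$ matches \eqref{eq:k-var}; it does, since that formula was derived with the same Frenet system.

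\emph{Step 2: linearising $\K_m$.} I use the form \eqref{eq:K-splitting}:
\[
\K_m=(-1)^{m+1}\bigl(k_{s^{2m+2}}+k^2k_{s^{2m}}\bigr)-\sum_{j=1}^{m-1}(-1)^j k\,k_{s^{m-j}}k_{s^{m+j}}-\tfrac12 k\,k_{s^m}^2 .
\]
Every term in the sum, and the last term, contains two factors $k_{s^i}$ with $i\ge1$. Each such factor is $O(\varepsilon)$ because $k_\circ$ is constant, so these terms are $O(\varepsilon^2)$.

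In the leading bracket I expand $k=k_\circ+\varepsilon f$. Since $\p_s$ on $\eta_\varepsilon$ differs from $\p_s$ on $\eta_\circ$ by $O(\varepsilon)$ (see \eqref{eq:commutator}) and acts on the $O(\varepsilon)$ quantity $k-k_\circ$, the change of arclength contributes only $O(\varepsilon^2)$. This gives
\[
\K_m=(-1)^{m+1}\varepsilon\bigl(D^{2m+2}f+k_\circ^2D^{2m}f\bigr)+O(\varepsilon^2)=(-1)^{m+1}\varepsilon D^{2m}(D^2+k_\circ^2)f+O(\varepsilon^2).
\]
Substituting Step~1 yields \eqref{eq:Km-linear-phi}, because constant-coefficient operators commute. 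As a consistency check, this agrees with $\mathcal L_m f$ in Lemma~\ref{lem:Kosc-quadratic-expansion}.

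\emph{Step 3: the quadratic form and its kernel.} I expand in Fourier modes $\phi=\sum b_n e^{2\pi i n s}$. The symbol of the quadratic form is
\[
(2\pi)^{4m+8}\,n^{4m}\bigl(n^2-\omega^2\bigr)^4\ \ge 0,
\]
since $D^2+k_\circ^2$ has symbol $(2\pi)^2(\omega^2-n^2)$. This is nonnegative and vanishes exactly at $n=0$ and $n=\pm\omega$.

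The modes $n=\pm\omega$ are the translation modes: for a fixed vector $a$, the function $\langle a,N_\circ\rangle$ is a combination of $e^{\pm2\pi i\omega s}$, because $N_\circ=ie^{i(2\pi\omega s+\vartheta)}$. For these modes $f=(D^2+k_\circ^2)\phi=0$, so the quadratic energy $\tfrac12\varepsilon^2\int(D^mf)^2\,ds$ vanishes as well. The mode $n=0$ (dilation) is also annihilated, which is consistent with the lemma saying the kernel \emph{contains} the translation modes; dilation is removed by the length normalisation. How the translation modes are handled nonlinearly is already done in Lemma~\ref{lem:resonant-Kosc}, so I only need to point back to it.

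\emph{Main obstacle.} The only delicate point is the bookkeeping in Step~2: I must justify that the $\varepsilon$-dependence of the arclength derivative and the $O(\varepsilon^2)$ tangential reparametrisation do not enter at first order. This follows because every term at which $D$ acts carries either a factor that is constant on $\eta_\circ$ or an explicit factor of $\varepsilon$.
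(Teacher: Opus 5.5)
Your proposal is correct and follows essentially the same route as the paper: you get the curvature variation from \eqref{eq:k-var}, then linearise $\K_m$ by discarding the terms of \eqref{eq:K-splitting} that are quadratic in derivatives of $k$, and finally substitute $f=(D^2+k_\circ^2)\phi$. You justify more than the paper does, namely the arclength bookkeeping and the Fourier-symbol check that the kernel consists of the modes $n=0,\pm\omega$; the paper only asserts the linearised formula for $\K_m$ and leaves the Fourier analysis to the subsequent spectral-gap lemma.
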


\begin{proof}
The curvature variation formula consistent with \eqref{eq:k-var} is
$\delta k=\phi_{ss}+k^2\phi$; since $k\equiv k_\circ$ on $\eta_\circ$ this gives
\eqref{eq:delta-k-circle}.  Substituting $f=(D^2+k_\circ^{\,2})\phi$ into
\begin{equation}\label{eq:Km-linear-correct}
\K_m[k]
=
(-1)^{m+1}\varepsilon\Bigl(f_{s^{2m+2}}+k_\circ^{\,2}f_{s^{2m}}\Bigr)
+O(\varepsilon^2)
\qquad\text{in }L^2(ds)
\end{equation}
yields
\[
\K_m
=
(-1)^{m+1}\varepsilon\,D^{2m}(D^2+k_\circ^{\,2})\,f
+O(\varepsilon^2)
=
(-1)^{m+1}\varepsilon\,D^{2m}(D^2+k_\circ^{\,2})^2\phi
+O(\varepsilon^2),
\]
which is \eqref{eq:Km-linear-phi}.
\end{proof}

\begin{lemma}[Spectral gap for the linearised dissipation on unit length]\label{lem:spectral-gap-linearised}
Let $\phi:\R/\Z\to\R$ be smooth with $\int_0^1 \phi\,ds=0$ (this is the linearised length constraint on
$\eta_\circ$).  Set
\begin{equation}\label{eq:linearised-gap-constant}
\nu_{m,\omega}
:=
(2\pi)^{2m+4}
\min_{n\in\Z\setminus\{0,\pm\omega\}}
|n|^{2m}(n^2-\omega^2)^2
>0 .
\end{equation}
Then
\begin{equation}\label{eq:spectral-gap-linearised}
\int_0^1\bigl[D^{2m}(D^2+k_\circ^{\,2})^2\phi\bigr]^2\,ds
\ge
\nu_{m,\omega}
\int_0^1\bigl[D^{m}(D^2+k_\circ^{\,2})\phi\bigr]^2\,ds.
\end{equation}
Equivalently, in terms of the quadratic approximation
\[
E_m^{(2)}(\phi):=\frac12\int_0^1\bigl[D^{m}(D^2+k_\circ^{\,2})\phi\bigr]^2\,ds
\quad\Bigl(\;=\frac12\int_0^1 f_{s^m}^2\,ds\ \text{with }f=(D^2+k_\circ^{\,2})\phi\Bigr),
\]
we have
\begin{equation}\label{eq:spectral-gap-linearised-Em}
\int_0^1\bigl[D^{2m}(D^2+k_\circ^{\,2})^2\phi\bigr]^2\,ds
\ge
2\nu_{m,\omega}E_m^{(2)}(\phi).
\end{equation}
\end{lemma}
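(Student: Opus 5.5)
The plan is to prove \eqref{eq:spectral-gap-linearised} by Fourier diagonalisation on the unit circle, exactly as in the proof of Lemma~\ref{lem:curv-spectral-gap-Kosc}. The operators $D^{2m}(D^2+k_\circ^2)^2$ and $D^m(D^2+k_\circ^2)$ have constant coefficients, so both sides are weighted sums of $|\hat\phi_n|^2$. It then suffices to compare the weights mode by mode.

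Write $\phi(s)=\sum_{n\in\Z}\hat\phi_n e^{2\pi i n s}$, with $\hat\phi_0=0$ by the mean-zero hypothesis, and $k_\circ=2\pi\omega$. On the mode $e^{2\pi ins}$ we have
\[
D^m(D^2+k_\circ^2)e^{2\pi ins}=(2\pi i n)^m(2\pi)^2(\omega^2-n^2)e^{2\pi ins},
\]
\[
D^{2m}(D^2+k_\circ^2)^2e^{2\pi ins}=(2\pi i n)^{2m}(2\pi)^4(\omega^2-n^2)^2e^{2\pi ins}.
\]
Parseval's identity then gives
\[
\text{LHS}=\sum_{n\neq0}(2\pi)^{4m+8}n^{4m}(n^2-\omega^2)^4|\hat\phi_n|^2,
\qquad
\text{RHS}/\nu_{m,\omega}=\sum_{n\neq0}(2\pi)^{2m+4}n^{2m}(n^2-\omega^2)^2|\hat\phi_n|^2 .
\]

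The comparison then splits into two cases. For $n=\pm\omega$ both weights vanish, so these translation modes contribute nothing to either side. For $n\notin\{0,\pm\omega\}$ the ratio of the left weight to the right weight is exactly $(2\pi)^{2m+4}|n|^{2m}(n^2-\omega^2)^2$. By the definition \eqref{eq:linearised-gap-constant}, this ratio is at least $\nu_{m,\omega}$. Summing over $n$ gives \eqref{eq:spectral-gap-linearised}.

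We must also check that $\nu_{m,\omega}>0$ and that the minimum is attained. The expression $|n|^{2m}(n^2-\omega^2)^2$ is a positive integer for $n\notin\{0,\pm\omega\}$ and tends to infinity as $|n|\to\infty$, so the minimum is taken over finitely many relevant $n$ and is at least $(2\pi)^{2m+4}$.

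Finally, \eqref{eq:spectral-gap-linearised-Em} is a restatement of \eqref{eq:spectral-gap-linearised}, since by definition $\int_0^1[D^m(D^2+k_\circ^2)\phi]^2\,ds=2E_m^{(2)}(\phi)$.

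I expect no real obstacle here. The only point needing care is the resonant modes $n=\pm\omega$: they must be seen to vanish on both sides simultaneously, rather than being excluded by a closure argument. This is why, unlike in Lemma~\ref{lem:curv-spectral-gap-Kosc}, no quadratic error term $e^2$ appears.
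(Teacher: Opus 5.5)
Your proposal is correct and follows the paper's proof. Both expand in Fourier modes with $\hat\phi_0=0$, apply Parseval to both sides, note that the modes $n=\pm\omega$ contribute zero to each sum, and use the modewise bound $(2\pi)^{2m+4}|n|^{2m}(n^2-\omega^2)^2\ge\nu_{m,\omega}$ on the remaining modes; your multipliers and weights match the paper's, and your check that $\nu_{m,\omega}\ge(2\pi)^{2m+4}$ is a harmless addition.
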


\begin{proof}
Expand $\phi(s)=\sum_{n\in\Z}\hat\phi_n e^{2\pi i ns}$ on $\R/\Z$.  The mean-zero condition gives
$\hat\phi_0=0$.  For $n\neq0$,
\[
D^{2m}(D^2+k_\circ^{\,2})^2 e^{2\pi i ns}
=
(2\pi i n)^{2m}\bigl(k_\circ^{\,2}-(2\pi n)^2\bigr)^2 e^{2\pi i ns},
\]
and
\[
D^{m}(D^2+k_\circ^{\,2}) e^{2\pi i ns}
=
(2\pi i n)^{m}\bigl(k_\circ^{\,2}-(2\pi n)^2\bigr)e^{2\pi i ns}.
\]
Since $\kappa=k_\circ=2\pi\omega$, Parseval's identity gives
\begin{align*}
\int_0^1\bigl[D^{2m}(D^2+k_\circ^{\,2})^2\phi\bigr]^2ds
&=
\sum_{n\neq0}
(2\pi)^{4m+8}|n|^{4m}(n^2-\omega^2)^4|\hat\phi_n|^2,\\
\int_0^1\bigl[D^{m}(D^2+k_\circ^{\,2})\phi\bigr]^2ds
&=
\sum_{n\neq0}
(2\pi)^{2m+4}|n|^{2m}(n^2-\omega^2)^2|\hat\phi_n|^2.
\end{align*}
The modes $n=\pm\omega$ contribute zero to both sums.  For every remaining nonzero mode,
\[
(2\pi)^{2m+4}|n|^{2m}(n^2-\omega^2)^2
\ge
\nu_{m,\omega},
\]
by the definition of $\nu_{m,\omega}$.  Multiplying this modewise inequality by the corresponding coefficient in
the second sum and summing proves \eqref{eq:spectral-gap-linearised}.  The reformulation
\eqref{eq:spectral-gap-linearised-Em} is immediate from the definition of $E_m^{(2)}$.
\end{proof}


\begin{bibdiv}
\begin{biblist}


\bib{A95}{book}{
  author={Amann, Herbert},
  title={Linear and quasilinear parabolic problems. Vol. I},
  subtitle={Abstract linear theory},
  series={Monographs in Mathematics},
  volume={89},
  publisher={Birkh\"auser},
  date={1995},
}

\bib{A05}{article}{
  author={Amann, Herbert},
  title={Quasilinear parabolic problems via maximal regularity},
  journal={Adv. Differential Equations},
  volume={10},
  number={10},
  date={2005},
  pages={1081--1110},
}


\bib{AMWW20}{article}{
  author={Andrews, Ben},
  author={McCoy, James},
  author={Wheeler, Glen},
  author={Wheeler, Valentina-Mira},
  title={Closed ideal planar curves},
  journal={Geom. Topol.},
  volume={24},
  number={2},
  date={2020},
  pages={1019--1049},
}

\bib{AW25}{article}{
  author        = {Andrews, Ben},
  author={Wheeler, Glen},
  title         = {On the planar free elastic flow with small oscillation of curvature},
  date          = {2025},
    status={preprint},
  eprint        = {arXiv:2509.11129},
}


\bib{AW26}{article}{
  author={Andrews, Ben},
  author={Wheeler, Glen},
  title={Jellyfish exist},
  status={preprint},
  eprint={arXiv:2601.21227},
  date={2026},
}

\bib{B}{book}{
  author={Baker, Charles},
  title={The mean curvature flow of submanifolds of high codimension},
  date={2011},
  publisher={PhD Thesis, Australian National University; also arXiv:1104.4409},
}

\bib{DKS02}{article}{
  author={Dziuk, Gerhard},
  author={Kuwert, Ernst},
  author={Sch\"atzle, Reiner},
  title={Evolution of elastic curves in {$\mathbb{R}^n$}: existence and computation},
  journal={SIAM J. Math. Anal.},
  volume={33},
  number={5},
  pages={1228--1245},
  date={2002},
}

\bib{HT}{inproceedings}{
  author={Harary, Gur},
  author={Tal, Ayellet},
  title={3D Euler spirals for 3D curve completion},
  booktitle={Proceedings of the Twenty-Sixth Annual Symposium on Computational Geometry},
  date={2010},
  pages={393--402},
  organization={ACM},
}

\bib{H81}{book}{
  author={Henry, Daniel},
  title={Geometric theory of semilinear parabolic equations},
  series={Lecture Notes in Mathematics},
  volume={840},
  publisher={Springer-Verlag},
  date={1981},
}
\bib{Levien2008EulerSpiralHistory}{article}{
  author      = {Levien, Raphael Linus},
  title       = {The Euler Spiral: A Mathematical History},
  institution = {EECS Department, University of California, Berkeley},
  number      = {UCB/EECS-2008-111},
  year        = {2008},
  url         = {https://www2.eecs.berkeley.edu/Pubs/TechRpts/2008/EECS-2008-111.html},
}

\bib{Levien2009FromSpiralToSpline}{article}{
  author = {Levien, Raphael Linus},
  title  = {From Spiral to Spline: Optimal Techniques in Interactive Curve Design},
  school = {EECS Department, University of California, Berkeley},
  year   = {2009},
  number = {UCB/EECS-2009-162},
  url    = {https://www2.eecs.berkeley.edu/Pubs/TechRpts/2009/EECS-2009-162.html},
}



\bib{LX}{article}{
  author={Liu, D},
  author={Xu, G},
  title={A general sixth order geometric partial differential equation and its application in surface modeling},
  journal={J. Inf. Comput. Sci.},
  volume={4},
  date={2007},
  pages={1--12},
}

\bib{L95}{book}{
  author={Lunardi, Alessandra},
  title={Analytic semigroups and optimal regularity in parabolic problems},
  series={Progress in Nonlinear Differential Equations and their Applications},
  volume={16},
  publisher={Birkh\"auser},
  date={1995},
  label={L95},
}


\bib{MWW1}{article}{
  author={McCoy, James},
  author={Wheeler, Glen},
  author={Wu, Yuhan},
  title={A sixth order flow of plane curves with boundary conditions},
  journal={Tohoku Math. J.},
  volume={72},
  number={3},
  date={2020},
  pages={379--393},
}

\bib{MWW2}{article}{
  author={McCoy, James},
  author={Wheeler, Glen},
  author={Wu, Yuhan},
  title = {High order curvature flows of plane curves with generalised Neumann boundary conditions},
pages = {497--513},
volume = {15},
number = {3},
journal = {Advances in Calculus of Variations},
year = {2022},
}


\bib{MWW}{article}{
  author  = {McCoy, James},
  author={Wheeler, Glen},
  author={Wu, Yuhan},
  title   = {A Length-Constrained Ideal Curve Flow},
  journal = {The Quarterly Journal of Mathematics},
  volume  = {73},
  number  = {2},
  pages   = {685--699},
  year    = {2022},
  doi     = {10.1093/qmath/haab050},
  note    = {Published online 15 November 2021},
}


\bib{MW25}{article}{
  author  = {Miura, Tatsuya},
  author={Wheeler, Glen},
  title   = {The free elastic flow for closed planar curves},
  journal = {Journal of Functional Analysis},
  volume  = {289},
  number  = {7},
  pages   = {111030},
  year    = {2025},
  doi     = {10.1016/j.jfa.2025.111030},
}

\bib{OW261}{article}{
  author = {Okabe, Shinya},
  author = {Wheeler, Glen},
  title = {On ideal lemniscates and circular waves},
  journal = {preprint},
}

\bib{OW262}{article}{
  author = {Okabe, Shinya},
  author = {Wheeler, Glen},
  title = {Finite-time blowup and asymptotic circularity for the length-penalised ideal flow of planar curves},
  journal = {preprint},
}


\bib{PW16}{article}{
  author={Parkins, Scott},
  author={Wheeler, Glen},
  title={The polyharmonic heat flow of closed plane curves},
  journal={J. Math. Anal. Appl.},
  volume={439},
  date={2016},
  pages={608--633},
}

\bib{P03}{article}{
  author={Poppenberg, Martin},
  title={Nash-Moser techniques for nonlinear boundary-value problems},
  journal={Electron. J. Differential Equations},
  volume={2003},
  number={54},
  date={2003},
  pages={1--33},
}

\bib{UW}{article}{
  author={Ugail, Hassan},
  author={Wilson, Michael},
  title={Modeling of oedemous limbs and venous ulcers using partial differential equations},
  journal={Theor. Biol. Med. Model.},
  volume={2},
  number={28},
  date={2005},
}






\bib{Wu2021ShortTimeExistence}{article}{
  author    = {Wu, Yuhan},
  title     = {Short time existence for higher order curvature flows with and without boundary conditions},
  booktitle = {2019--20 {MATRIX} Annals},
  series    = {MATRIX Book Series},
  volume    = {4},
  publisher = {Springer},
  year      = {2021},
  pages     = {773--783},
}


\end{biblist}
\end{bibdiv}
\end{document}